\pgfplotsset{compat=1.13}
\newenvironment{enumalg}
{\begin{enumerate}}
{\end{enumerate}}
\newenvironment{enumstep}
  {\begin{enumerate}[leftmargin=0pt, itemindent=1.8cm]
     \setcounter{enumi}{-1}%
     }
  {\end{enumerate}}
\newcommand{\calC}{\mathcal{C}}
\newcommand{\C}{\mathcal{C}}
\newcommand{\Q}{{\mathbb{Q}}}
\newcommand{\A}{{\mathbb{A}}}
\newcommand{\R}{{\mathbb{R}}}
\newcommand{\G}{\mathbb{G}}
\newcommand{\Z}{{\mathbb{Z}}}
\newcommand{\F}{\mathbb{F}}
\newcommand{\f}{\mathfrak{f}}
\newcommand{\p}{\mathfrak{p}}
\newcommand{\pbar}{\bar{\mathfrak{p}}}
\newcommand{\frakp}{\mathfrak{p}}
\newcommand{\fraka}{\mathfrak{a}}
\newcommand{\ol}{\overline}
\newcommand{\loc}{\mathrm{loc}}
\newcommand{\I}{\mathfrak{i}}
\newcommand{\J}{\mathfrak{j}}
\renewcommand{\r}{\mathfrak{r}}
\newcommand{\z}{\mathfrak{z}}
\newcommand{\ubar}{\overline{u}}
\newcommand{\calX}{\mathcal{X}}
\newcommand{\X}{\mathcal{X}}
\newcommand{\calY}{\mathcal{Y}}
\newcommand{\Y}{\mathcal{Y}}
\newcommand{\calP}{\mathcal{P}}
\newcommand{\constFI}{\kappa_{1}}
\newcommand{\constTaub}{\kappa_{2}}
\newcommand{\real}{\mathrm{re}}
\newcommand{\imag}{\mathrm{im}}
\renewcommand{\div}{\ensuremath{\mathrm{div}}}
\DeclareMathOperator{\Nm}{Nm}
\DeclareMathOperator{\Cl}{Cl}
\DeclareMathOperator{\Disc}{Disc}
\DeclareMathOperator{\Spec}{Spec}
\DeclareMathOperator{\Aut}{Aut}
\DeclareMathOperator{\im}{im}
\newcommand{\val}{v}
\newcommand{\sizedbracket}[1]{\left[ #1 \right]}
\newcommand{\sizedparen}[1]{\left( #1 \right)}
\newcommand{\sizedcurly}[1]{\left\{ #1 \right\}}
\newcommand{\YBCarg}[4][\@nil]{%
  \def\tmp{#1}  
   \ifx\tmp\@nnil
       \Y_{#2, #3} \left( \Z ; \gcd(z,f) = #4 \right)
    \else
       \Y_{#2, #3} \left( \Z ; \gcd(x,y) = #1,\ \gcd(z,f) = #4 \right)
    \fi
} \makeatother
\renewcommand{\O}{\ensuremath\mathcal{O}}
\newcommand{\orb}{\mathcal{O}}
\renewcommand{\P}{\ensuremath{\mathbb{P}}}
\newcommand{\annulus}{*}
\newcommand{\annulusTwo}{\tilde\annulus}
\newcommand{\invertedprimessplit}{\mathcal{S}_{\mathrm{split}}}
\numberwithin{equation}{section}
\newtheorem{theorem}[equation]{Theorem}
\newtheorem{proposition}[equation]{Proposition}
\newtheorem{lemma}[equation]{Lemma}
\newtheorem{corollary}[equation]{Corollary}
\theoremstyle{definition}
\newtheorem{definition}[equation]{Definition}
\newtheorem{ex}[equation]{Example}
\newtheorem{notation}[equation]{Notation}
\newtheorem{observation}[equation]{Observation}
\newtheorem{assumption}[equation]{Assumption}
\newtheorem{remark}[equation]{Remark}
\theoremstyle{plain}
\newtheorem{thma}{Theorem} 
\newtheorem{thmb}{Theorem} 
\newtheorem{thmc}{Theorem} 
\newtheorem{thmd}{Theorem} 
\NewDocumentCommand{\colorizewords}{m}
 {
  \colorizewords_aux:n {#1}
 }
\newcommand{\YYprime}{\sigma} 
\newcommand{\twist}{d} 
\newcommand{\twistmap}{\pi_\twist'} 
\newcommand{\invertedprimes}{\mathcal{S}}
\newcommand{\invertedprimesCsplit}{\mathcal{S}_{C,\mathrm{split}}}
\newcommand{\admissible}{A}
\title[Integral Hasse principle for stacky curves]{The integral Hasse principle for stacky curves associated to a family of generalized Fermat equations}
\author[J. Duque-Rosero]{Juanita Duque-Rosero}
\address{Juanita Duque-Rosero, Department of Mathematics and Statistics, Boston University, Boston, MA, USA}
\email{juanita@bu.edu}
\urladdr{\url{https://juanitaduquer.github.io}}
\author[C. Keyes]{Christopher Keyes}
\address{Christopher Keyes, Department of Mathematics, King's College London, London, UK, and Heilbronn Institute of Mathematical Research, Bristol, UK}
\email{christopher.keyes@kcl.ac.uk}
\urladdr{\url{https://c-keyes.github.io}}
\author[A. Kobin]{Andrew Kobin}
\address{Andrew Kobin, Center for Communications Research, La Jolla, CA, USA}
\email{ajkobinmath@gmail.com}
\urladdr{\url{https://www.andrewkobin.com}}
\author[M. Roy]{Manami Roy}
\address{Manami Roy, Department of Mathematics, Lafayette College, Easton, PA, USA}
\email{royma@lafayette.edu}
\urladdr{\url{https://manamiroy.github.io}}
\author[S. Sankar]{Soumya Sankar}
\address{Soumya Sankar, Mathematical Institute, Utrecht University, Utrecht, Netherlands}
\email{s.sankar@uu.nl}
\urladdr{\url{https://sites.google.com/site/soumya3sankar/}}
\author[Y. Wang]{Yidi Wang}
\address{Yidi Wang, Department of Mathematics, University of Western Ontario, 
London, Ontario, Canada}
\email{ywan6443@uwo.ca}
\urladdr{\url{https://ywang-math.github.io}}
\keywords{Arithmetic statistics, Diophantine equations, Generalized Fermat equations, Integral Hasse principle, Stacky curves}
\subjclass{Primary 11D45, 14G12, 14A20; Secondary 11D41, 14G05, 14D23, 14Q25}
\begin{document}

\begin{abstract}
We characterize the integral Hasse principle for an infinite family of spherical stacky curves with genus $g\in [2/3,1)$ that are defined using generalized Fermat equations, extending a result of Darmon and Granville.  We then apply our methods to find that a positive proportion of curves in our family satisfy the integral Hasse principle.

\end{abstract}

\maketitle

\tableofcontents

\section{Introduction}
\label{sec:introduction}

An algebraic curve $X$ over \(\Q\) is said to satisfy the Hasse principle if the existence of a $\Q$-point on $X$ is equivalent to the existence of points over all completions $\Q_{v}$ of $\Q$. There is a dichotomy in the arithmetic of the curve: when \(X\) has genus \(0\), the Hasse principle holds for $X$ by the Hasse--Minkowski theorem. On the other hand, when its genus is at least \(1\) there are counterexamples to the Hasse principle \cite{Poonen} (in fact, they exist over every global field).

When the curve is replaced by a \emph{stacky curve}, \(\X\), the nature of the problem changes. If $\X$ has coarse space $X$, then away from the stacky locus, the coarse moduli map $\pi \colon \X\rightarrow X$ is an isomorphism, so understanding $\X(\Q)$ is nearly equivalent to understanding $X(\Q)$. However the following \emph{integral} version of the Hasse principle  exhibits more interesting phenomena. 

\begin{definition}
    A (relative) stacky curve $\X$ over $\Z$ satisfies the integral Hasse principle if $\X(\Z)\not = \emptyset$ is equivalent to $\X(\Z_{p})\not = \emptyset$ for all primes $p$ and $\X(\R)\not = \emptyset$.
\end{definition}

This problem was first studied in \cite{BhargavaPoonen}, where the authors show that the integral Hasse principle holds for all tame stacky curves of genus $g < \frac{1}{2}$. They also exhibit counterexamples to the Hasse principle for $g = \frac{1}{2}$. More counterexamples to the Hasse principle may be constructed using \cite[Prop.~8.1]{DarmonGranville}, which characterizes local and global solutions to certain \emph{generalized Fermat equations}, i.e.~Diophantine equations of the form 
\begin{equation}\label{eq:genfermat}
Ax^{p} + By^{q} = Cz^{r}
\end{equation}
for some $A,B,C,p,q,r\in\Z$ with $ABC\ne 0$ and $p,q,r\geq 2$. See \S\ref{subsec:gen-fermat} for more background on these equations, which are well-studied in the literature. 

In this article, we study the case when $A=1$ and $(p,q,r) = (2,2,n)$ for $n$ odd. For each odd $n$, we construct a stacky curve $\Y_{B,C}$ of genus $\frac{n - 1}{n}$ such that integral points on \(\Y_{B,C}\) correspond exactly to primitive solutions of~\eqref{eq:genfermat}. We generalize the local characterization of \cite{DarmonGranville} to all pairs of non-zero integers \((B,C)\) in Proposition~\ref{prop:localTest}. Extending their criterion for global points to allow \(B\) to be non-squarefree or in any congruence class modulo \(4\) turns to be quite subtle: it requires working with norm forms of and factorizations in arbitrary orders in quadratic fields. Our first main result towards a characterization of the existence of global integral points on the curves \(\Y_{B,C}\) is as follows. 

\begin{thma}[Theorem~\ref{thm:TFAEunified}]
\label{thm:TFAE-intro}
    Let $B = f^{2}B_{0}$, with $-1\ne B_{0}$ squarefree, $\gcd(f,C) = 1$, and $B_0\not\equiv 3\pmod 4$ if $C$ is even. Let $K = \Q\left (\sqrt{-B_{0}}\right )$ with ring of integers $\O_{K}$. Then there exist a localization $R_{K}$ of $\O_{K}$, an order $\O\subseteq\O_{K}$, and a finite and explicitly computable set $\admissible_{B,C}\subseteq H^{1}(R_{K},\mu_{n})$ such that the following are equivalent.
    \begin{enumerate}[label = (\roman*)]
        \item\label{part:intro-i} There exists an integral point on $\Y_{B,C}$, corresponding to a primitive integer solution $(x,y,z)$ to the equation $x^{2} + By^{2} = Cz^{n}$, such that $x + f\sqrt{-B_0}y \notin p\O_K$ for all prime numbers $p$. 
        \item\label{part:intro-ii} Each prime number $p$ divides $C$ at most once if $p$ is ramified in $K$ (resp.~zero times if $p$ is inert) and $\admissible_{B,C}$ is nonempty. 
        \item\label{part:intro-iii} In the ring $\O_{K}$, $C$ splits as a product of coprime ideals $C\O_{K} = \J_{+}\J_{-}\r$, where $\J_{\pm}$ are each products of split primes, $\r$ is a product of ramified primes, $\J_{+} = \overline{\J_{-}}$, and $[\J_{+}\cap\O]\in n\Cl(\O)$. 
    \end{enumerate}
\end{thma}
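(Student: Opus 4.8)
The plan is to turn the Diophantine equation into a norm equation over $K$ and to read each of \ref{part:intro-i}--\ref{part:intro-iii} off from the arithmetic of ideals, units, and class groups of the order $\O = \Z[f\theta]$, where $\theta := \sqrt{-B_0}$, so that $K = \Q(\theta)$ and $\Nm_{K/\Q}(x + fy\theta) = x^2 + By^2$. Writing $\alpha := x + fy\theta$ for the element of $K$ in~\ref{part:intro-i}, a primitive solution gives $\Nm_{K/\Q}(\alpha) = x^2 + By^2 = Cz^n$, equivalently $(\alpha)(\bar\alpha) = (C)(z)^n$ as ideals of $\O_K$. Since $\alpha \in \O$ and $\gcd(f,C) = 1$ forces every prime dividing $C$ to be coprime to the conductor of $\O$, one may pass freely between ideals of $\O$ and ideals of $\O_K$ that are prime to the conductor. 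I would prove the cycle $\ref{part:intro-i}\Rightarrow\ref{part:intro-iii}\Rightarrow\ref{part:intro-i}$ and, separately, the equivalence $\ref{part:intro-ii}\Leftrightarrow\ref{part:intro-iii}$, treating \ref{part:intro-ii} as the cohomological and local shadow of the ideal-theoretic statement \ref{part:intro-iii}.

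For $\ref{part:intro-i}\Rightarrow\ref{part:intro-iii}$ I would carry out a prime-by-prime valuation analysis of $(\alpha)$; the hypothesis $\alpha\notin p\O_K$ for all $p$ is exactly the statement that $(\alpha)$ has no rational prime factor. At an inert prime it forces $v_\p(\alpha) = 0$, hence $p\nmid C$; at a ramified prime it forces $v_\p(\alpha)\le 1$, whence (using $n\ge 3$) $p\nmid z$ and $v_p(C)\le 1$; at a split prime $p = \p\bar\p$ it forces $\min(v_\p(\alpha), v_{\bar\p}(\alpha)) = 0$, so that the whole valuation $v_p(C) + n\,v_p(z)$ sits on a single prime above $p$. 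Collecting these choices assembles the factorization $C\O_K = \J_+\J_-\r$ of \ref{part:intro-iii} with $\J_- = \overline{\J_+}$, and simultaneously yields the divisibility bounds of \ref{part:intro-ii} (the delicate case being $p = 2$). Writing $(\alpha) = \J_+\,\mathfrak c^{\,n}\,\mathfrak D$ with $\mathfrak c$ the split part of $(z)$ and $\mathfrak D$ the ramified part, principality of $(\alpha)$ yields a class-group relation; contracting to $\O$ via the proper-ideal correspondence prime to the conductor converts it into $[\J_+\cap\O]\in n\Cl(\O)$.

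The converse $\ref{part:intro-iii}\Rightarrow\ref{part:intro-i}$ reverses this construction. The membership $[\J_+\cap\O]\in n\Cl(\O)$ says that $\J_+\cap\O$ differs from an $n$-th power of an ideal by a principal ideal, so after multiplying by a suitable $n$-th power one obtains a principal ideal $\J_+\,\mathfrak c^{\,n}\,\mathfrak D$ generated by some $\alpha\in\O$; defining $(x,y)$ through $\alpha = x + fy\theta$ and $z$ through the norm, and checking $\gcd(x,y,z) = 1$ together with $\alpha\notin p\O_K$, recovers the integral point of \ref{part:intro-i}. I expect this to be the main obstacle, for the two reasons flagged in the introduction. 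First, one must work in the non-maximal order $\O = \Z[f\theta]$ and control its conductor, which absorbs an extra factor of $2$ precisely when $B_0\equiv 1\pmod 4$; this is the source of the $p = 2$ dichotomy in \ref{part:intro-ii}, and one must ensure the generator can be chosen inside $\O$ (so that $x,y\in\Z$) rather than merely inside $\O_K$. Second, the generator is defined only up to $\O^\times$: in the imaginary quadratic case ($B_0 > 0$) the units are roots of unity and harmless, but when $B_0 < 0$ and $K$ is real quadratic one must adjust by a unit, which requires understanding $\O^\times$ modulo $n$-th powers.

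Finally, for $\ref{part:intro-ii}\Leftrightarrow\ref{part:intro-iii}$ I would identify $\admissible_{B,C}\subseteq H^1(R_K,\mu_n)$, where $R_K$ is the localization of $\O_K$ away from $C$ and $n$, with the set of Kummer classes satisfying the local conditions of Proposition~\ref{prop:localTest}. Through the Kummer sequence, $H^1(R_K,\mu_n)$ sits between $R_K^\times/(R_K^\times)^n$ and $\Pic(R_K)[n]$, and the class of $\J_+$ becomes an $n$-th power in $\Pic(R_K)$ exactly when $[\J_+\cap\O]\in n\Cl(\O)$; the divisibility constraints on $C$ are precisely those cutting out $\admissible_{B,C}$ as a nonempty subset. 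Matching the two descriptions, so that nonemptiness of $\admissible_{B,C}$ together with the constraints on $C$ becomes equivalent to the factorization and class-group membership of \ref{part:intro-iii}, completes the equivalence.
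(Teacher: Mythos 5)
Your treatment of the cycle \ref{part:intro-i} $\Leftrightarrow$ \ref{part:intro-iii} is essentially the paper's argument stripped of the descent packaging: the prime-by-prime valuation analysis of $(\alpha)$ at inert, ramified, and split primes, the contraction to $\O$ along the prime-to-conductor correspondence, and the reversal via a principal ideal $(\J_+\r\cap\O)\z^n = u\O$ all match what the paper does (the paper routes \ref{part:intro-i} $\Rightarrow$ \ref{part:intro-iii} through \ref{part:intro-ii}, but the computations are the same). You correctly flag the two delicate points there (the conductor of $\O$ at $2$, and units/signs when $K$ is real quadratic, which the paper handles by noting $\Cl^+(\O)/n \simeq \Cl(\O)/n$ for $n$ odd). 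One detail you should make explicit: contracting $(\alpha)$ to $\O$ requires $(\alpha)$ itself to be prime to the conductor, which is not automatic from $\gcd(f,C)=1$ but does follow from the $\annulus^0$ hypothesis (if $p \mid f$ and $\p \mid (\alpha)$ one is forced into $p \mid x$, violating $\alpha \notin p\O_K$).

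The genuine gap is in your handling of \ref{part:intro-ii}. You propose to identify $\admissible_{B,C}$ with ``Kummer classes satisfying the local conditions of Proposition~\ref{prop:localTest}'' and to detect \ref{part:intro-iii} by asking whether the class of $\J_+$ is an $n$-th power in $\Pic(R_K)$. Both halves of this fail. First, $\Pic(R_K)$ is trivial by the very choice of $R_K$ (the inverted primes generate $\Cl(\O_K)$), and even working in $\Cl(\O_K)$ the condition $[\J_+]\in n\Cl(\O_K)$ is strictly weaker than $[\J_+\cap\O]\in n\Cl(\O)$: Example~\ref{ex:relorderfactors} ($x^2+339y^2=29z^3$) is exactly a case where the former holds and the latter fails, so your proposed criterion would wrongly predict solutions. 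Second, $\admissible_{B,C}$ cannot be cut out by local solubility conditions: $x^2+29y^2=3z^3$ is everywhere locally soluble yet has $\admissible_{29,3}=\emptyset$ and no primitive solutions, so nonemptiness of a ``locally defined'' $\admissible_{B,C}$ together with the divisibility constraints on $C$ would not be equivalent to \ref{part:intro-iii}. What is missing is the global, order-sensitive condition $\div(\twist)\in\im\psi_\O$ in the definition of admissibility (Definition~\ref{def:admissible_twists}), where $\psi_\O$ compares $P(\O,\f)/nP(\O,\f)$ with $P(\O_K)/nP(\O_K)$ via the diagram~\eqref{seq:thebigdiagram}; the equivalence \ref{part:intro-ii} $\Leftrightarrow$ \ref{part:intro-iii} is a translation through that diagram, not through the Kummer sequence over $R_K$.
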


We denote by \(\Y_{B,C}(\Z; \annulus^0)\) the groupoid of integral points satisfying Theorem~\ref{thm:TFAE-intro}\ref{part:intro-i}.  When $B_0\equiv 3\pmod{4}$ and $C$ is even, slight modifications are required, but we show that a similar statement holds (Theorem~\ref{thm:TFAEunified_prime}).  The results we highlight in the rest of the introduction have analogous versions for the modified set when $B_0\equiv 3\pmod{4}$.

In Section~\ref{sec:cascade}, we develop methods that allow us to use Theorem~\ref{thm:TFAE-intro} to handle equations $x^{2} + By^{2} = Cz^{n}$ for general $(B,C)$, and detect when \(\Y_{B,C}(\Z) \neq \emptyset\), rather than just \(\Y_{B,C}(\Z; \annulus^0)\). We also discuss how to approach the problem when $A\ne 1$ in~\eqref{eq:genfermat} (see~\S\ref{subsec:Acoeff}). The main result is summarized in the following theorem. 

\begin{thmb}
\label{thm:MainCascadeTheorem}
   For any $(B,C)$,  there is a finite, explicitly described set of pairs $\mathcal{P} \subset \Z^2$ such that
   \[\Y_{B,C}(\Z) \neq \emptyset \iff \coprod_{(B', C') \in \mathcal{P}} S_{B', C'} \neq \emptyset,\]
   where $S_{B', C'} \subseteq \Y_{B', C'}(\Z)$ is a subgroupoid (depending on $B,C$) described explicitly in terms of divisibility conditions in Section~\ref{sec:cascade}.
\end{thmb}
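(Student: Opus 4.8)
The plan is to regard an integral point of $\Y_{B,C}$ as a primitive solution $(x,y,z)$ of $x^{2}+By^{2}=Cz^{n}$ and to encode it by the element $\alpha = x + fy\sqrt{-B_{0}}$ of the order $\O\subseteq\O_{K}$ furnished by Theorem~\ref{thm:TFAE-intro}, so that $\Nm_{K/\Q}(\alpha)=x^{2}+By^{2}=Cz^{n}$. Theorem~\ref{thm:TFAE-intro} already settles the subgroupoid $\Y_{B,C}(\Z;\annulus^{0})$ of those solutions for which $\alpha$ is \emph{primitive}, i.e.\ lies in no ideal $p\O_{K}$ with $p$ a rational prime. Every remaining integral point satisfies $\alpha\in p\O_{K}$ for some $p$, and the strategy is a descent---the \emph{cascade}---which uniformly divides $\alpha$ by a rational prime $p$ whenever $p\mid\alpha$. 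Each such step replaces $(x,y,z)$ by a solution of a strictly smaller equation $(x')^{2}+B'(y')^{2}=C'(z')^{n}$ with a new pair $(B',C')$; iterating until $\alpha$ becomes primitive lands the point in an explicit primitive locus $S_{B',C'}$ of some terminal pair, while running the descent backwards (\emph{inflation}) rebuilds a point of $\Y_{B,C}(\Z)$. Here $\mathcal{P}$ is the finite set of terminal pairs and $S_{B',C'}$ collects the primitive solutions cut out by the divisibility conditions that record the descent.

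First I would make a single step precise by a case analysis on how the stripped prime $p\mid\alpha$ meets $K$ and the conductor $f$. When $p$ is odd and $p\nmid f$, the membership $\alpha\in p\O_{K}$ forces $p\mid\gcd(x,y)$, whence primitivity of $(x,y,z)$ together with $\gcd(f,C)=1$ gives $p^{2}\mid C$; the update is then $(B,C)\mapsto(B,C/p^{2})$ with $(x,y)\mapsto(x/p,y/p)$. When $p\mid f$ the membership instead forces $p\mid x$, so $\alpha/p$ lies in the order of conductor $f/p$ and $B=f^{2}B_{0}\mapsto B'=(f/p)^{2}B_{0}=B/p^{2}$; the norm identity then forces the factor $p^{2}$ to be reabsorbed into $C'$ and $(z')^{n}$, a redistribution governed by $v_{p}(z)$ modulo $n$ (here $n$ odd controls the parities) that can disturb the coprimality $\gcd(f,C)=1$ and so feeds the more elaborate branches of the cascade. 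The inert and ramified primes govern \emph{when} imprimitivity is unavoidable, matching the local conditions of Theorem~\ref{thm:TFAE-intro}\ref{part:intro-ii} and dictating which reductions are forced. Because the objects are groupoids, I would check that each step is a functor compatible with the $\mu_{2}$- and $\mu_{n}$-automorphisms coming from the stacky structure, so that the coproduct $\coprod_{(B',C')\in\mathcal{P}}S_{B',C'}$ is well defined and the two directions of the equivalence are mutually inverse on isomorphism classes.

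Termination and finiteness follow quickly: since every step replaces $\alpha$ by $\alpha/p$, the positive integer $\abs{\Nm_{K/\Q}(\alpha)}$ strictly decreases by a factor $p^{2}$, so the cascade halts; and as each successive $(B',C')$ is assembled from divisors of $B$ and $C$ with a bounded redistribution across $z^{n}$, only finitely many pairs arise and $\mathcal{P}$ is explicit. Combining the forward reduction with backward inflation then yields $\Y_{B,C}(\Z)\neq\emptyset\iff\coprod_{(B',C')\in\mathcal{P}}S_{B',C'}\neq\emptyset$, with disjointness of the coproduct coming from the uniqueness of the prime stripped at each stage.

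The main obstacle, and the source of the excluded set $\mathbb{E}$, is exactly the redistribution across the $n$-th power at primes $p\mid n$: when a step would move a factor of $p\mid n$ through $z^{n}$, the valuation arithmetic modulo $n$ degenerates and the descent fails to close up unless $v_{p}(C)$ is small, because the stacky $n$-th root obstructs the passage. I would define $\mathbb{E}$ (Definition~\ref{def:E}) to be precisely the pairs $(B,C)$ for which some $p\mid n$ produces such an unresolvable configuration---a divisibility condition of the shape $v_{p}(C)\ge(p+1)/2$---and bound its natural density by a union bound over $p\mid n$ of the corresponding $p$-adic densities $p^{-(p+1)/2}$, which gives $\sum_{p\mid n}p^{-(p+1)/2}$. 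Verifying that outside $\mathbb{E}$ the cascade always terminates at a pair whose primitive locus is governed by Theorem~\ref{thm:TFAE-intro} is the technical core of Section~\ref{sec:cascade}.
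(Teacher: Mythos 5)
Your outline does follow the paper's cascade in spirit---strip rational primes dividing $u = x + f\sqrt{-B_0}\,y$, pass to a smaller pair $(B',C')$, and record the divisibility conditions that survive---and your single-step case analysis at odd primes $p\nmid 2f$ and your termination argument (strict decrease of $|\Nm(\alpha)|$, in place of the paper's finiteness of the sets $T_p$) are sound. But two load-bearing pieces are wrong or missing. First, the excluded set: you define $\mathbb{E}$ by a condition of the shape $v_p(C)\ge (p+1)/2$, whereas Definition~\ref{def:E} constrains the \emph{square part of $B$}: it requires $v_p(f) > v_p(C)/2$ together with $v_p(f)-v_p(C)/2 \ge n$ (resp.\ $\ge (n+1)/2$) according to the parity of $v_p(C)$, and the density bound $\sum_{p\mid n}p^{-(p+1)/2}$ is a (loose) union bound coming from $p^{p+1}\mid B$. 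Your version excludes the wrong set and the stated density bound does not follow from it. More importantly, you misidentify \emph{why} $p\mid n$ is the problem. It is not that the valuation redistribution through $z^n$ degenerates---the cascade isomorphisms (Lemmas~\ref{lem:cascade_f} and~\ref{lem:cascade_nnot|2r-1}) go through for any prime. The problem is that the terminal groupoids carry a residual condition $p\nmid y$, which Theorem~\ref{thm:TFAEunified} alone cannot detect; one needs Proposition~\ref{prop:cascade-with-gcds}, whose proof modifies the generator $u\O$ by a unit or by an ideal class in $\ker\bigl(\Cl(\O_{Np})\to\Cl(\O_N)\bigr)\simeq\Z/p\Z$, on which multiplication by $n$ must be invertible---hence the hypothesis $\gcd(N,nC)=1$ and the genuine failure for $p\mid n$ (Remark~\ref{remark:gcd-N-n-required}; Examples~\ref{ex:3,31} and~\ref{ex:243,93}). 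Your proposal contains no mechanism for handling the $p\nmid y$ conditions at all, which is the technical core of the reduction.

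Second, you treat all primes as if they behaved like odd primes. At $p=2$ the implication $2^r\parallel u \Rightarrow 2^r\mid x$ fails ($2^r\parallel u$ only gives $2^{r-1}\mid x$, Lemma~\ref{lem:2_divides_u}), so the paper must split $\Y_{B,C}(\Z;\annulus_2^t)$ into $\annulusTwo_2^{t-1}$ and its complement, use the modified admissible set $\widetilde{\admissible}_{B,C}$ and Theorem~\ref{thm:TFAEunified_prime}, and run separate cascade lemmas (Lemmas~\ref{lem:cascade_f_2}, \ref{lem:cascade_y_2}, \ref{lem:p2:cascade_nnot|2r-1}). Without this, your decomposition $\Y_{B,C}(\Z)\cong\coprod_{(B',C')\in\mathcal{P}}S_{B',C'}$ is not correct whenever $2\mid fC$ or $B_0\equiv 3\pmod 4$. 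You should also separate out Step~0 of the paper's algorithm (dividing out primes with $p^2\mid\gcd(B,C)$ via Lemma~\ref{lem:loc-bijection}) before invoking the lemmas that assume $\gcd(f,C)=1$ or $p\parallel C$.
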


The subgroupoids $S_{B', C'}$ are in fact contained in the points described in Theorem~\ref{thm:TFAE-intro}\ref{part:intro-i}, with additional divisibility conditions needed on the $y$- and $z$-coordinates. Of the two, the conditions on $y$ are more difficult to remove (see Proposition~\ref{prop:cascade-with-gcds}). All together, Proposition~\ref{prop:localTest} and Theorems~\ref{thm:TFAE-intro} and~\ref{thm:MainCascadeTheorem} characterize the Hasse principle for $\Y_{B,C}$ for all pairs of non-zero integers \((B,C)\).

\subsection{Approach via descent}
\label{subsec:intro-descent}

Our approach to Theorem~\ref{thm:TFAE-intro}, inspired by \cite{PSS}, involves identifying an \'etale $\mu_{n}$-torsor $\mathcal{C}'\to(\Y_{B,C})_{R_{K}}$ and computing a set of admissible twists $\{\mathcal{C}_{d}'\to(\Y_{B,C})_{R_{K}}\}$, whose $R_K$-points descend to points in \(\Y_{B,C}(\Z; \annulus^0).\) In particular, the set $A_{B,C}$ in Theorem~\ref{thm:TFAE-intro}\ref{part:intro-ii} consists of those $d$ for which $\mathcal{C}_{d}'\to(\Y_{B,C})_{R_{K}}$ is admissible, so that the equivalence of parts~\ref{part:intro-i}~and~\ref{part:intro-ii} of Theorem~\ref{thm:TFAE-intro} says that $\Y_{B,C}(\Z; \annulus^0) \neq \emptyset$ if and only if there exists an admissible twist $\mathcal{C}_{d}'\to(\Y_{B,C})_{R_{K}}$. In parallel to this, \cite[Prop.~8.1]{DarmonGranville} provides a class group obstruction to the existence of $\Z$-points on $\Y_{B,C}$. Theorem~\ref{thm:TFAE-intro} illustrates the connection between these two obstructions.

Having a clean description in terms of the class group of \(\O\) also explains why Theorem~\ref{thm:TFAE-intro} restricts to the case \(\gcd(f,C)=1\) and only detects points in \(\Y_{B,C}(\Z; \annulus^0)\). Indeed, the candidates for points that descend to \(\Z\) come from ideals trivial in \(\Cl(\O)\) and these principal ideals are typically of the form \((a + bf\sqrt{-B_0})\) with \(a \in \Z\) and \(\gcd(a,f)=1\) (see \S\ref{subsec:prelim-algebraic-number-theory}). Theorem~\ref{thm:MainCascadeTheorem} makes the jump to the more general case using primarily elementary techniques. It might be possible to combine Theorems~\ref{thm:TFAE-intro} and~\ref{thm:MainCascadeTheorem} into one by making different choices of torsors, but we choose not to do so in the interest of readability and the comparison with the class group. 

Theorem~\ref{thm:TFAE-intro} can be interpreted as showing that an explicitly computable \emph{refined} descent obstruction is the only obstruction to the Hasse principle for points in \(\Y_{B,C}(\Z; \annulus^0)\).
Indeed, Santens showed that for a stacky curve \(\Y\) of genus less than \(1\) there exists a finite \'etale group scheme $G$ and a $G$-torsor $\pi \colon \mathcal{C} \to \Y$ such that the descent along $\pi$ obstruction is the only obstruction to the integral Hasse principle (\cite[Thm.~1.1]{santens23}). However, in practice, it is difficult to study the integral Hasse principle in this way as it is hard to construct non-trivial \'etale covers of \(\Y\) over $\Z$.

To get around this issue, we compute a smaller descent locus from the subset $A_{B,C}\subseteq H^1(R_K, \mu_n)$ (which can be viewed as a ``refined Selmer set'') that allows one to describe the obstruction to $\Z$-points from the usual descent locus for $R_K$-points. More precisely, define the refined descent locus as
\[ \Y^{\pi, \textnormal{refined}}(\A_{R_K}) \coloneqq\coprod_{\twist \in {\admissible}_{B,C}} \pi_\twist(\C(\A_{R_K})) \subseteq \coprod_{\twist\in H^1(R_K, \mu_n)} \pi_{\twist}(\C_d(\A_{R_K})). \] 
Then, Theorem~\ref{thm:TFAE-intro} characterizes the following refined Hasse principle: \( \Y^{\pi, \textnormal{refined}}(\A_{R_K}) \neq \emptyset \implies \Y_{B,C}(\Z; \annulus^0) \neq \emptyset\).

\subsection{Statistics}
\label{subsec:intro-statistics}

Fixing the signature $(2,2,n)$ and letting $B$ and $C$ vary, we can study the following counting functions for \(T \in \R_{>0}\):
\begin{align*}
    N^{\loc}_n(T) &\colonequals \#\{(B,C) \in \Z^2 : |B|, |C| < T,\ \Y_{B,C}(\Z_p) \neq \emptyset \text{ for all primes } p\},\\
    N_n(T) &\colonequals \#\{(B,C) \in \Z^2 : |B|, |C| < T,\ \Y_{B,C}(\Z) \neq \emptyset \}. 
\end{align*}
These capture the statistics of the existence of local and global points, respectively, in the family of stacky curves $\Y_{B,C}$. We obtain the following asymptotic results for these functions. 

\begin{thmc}[Theorem~\ref{thm:local_stats}]
\label{thm:intro-local-stats}
    For all odd $n \geq 3$, $N_n^\loc(T)$ is asymptotically bounded above and below by a constant times $T^2/\sqrt{\log T}$, i.e.,\
    \[ N_{n}^{\loc}(T) \asymp \frac{T^{2}}{\sqrt{\log T}}. \]
\end{thmc}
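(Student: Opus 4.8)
The plan is to first convert everywhere-local-solubility into an explicit statement about the factorization of $C$ relative to the field $K = \Q(\sqrt{-B_0})$, and then to count by analytic means. Using Proposition~\ref{prop:localTest}, I would dispose of the easy places first: the real place imposes no condition because $n$ is odd, so $Cz^n$ attains every sign; and at every prime $p \nmid 2nBC$ a primitive $\Z_p$-point exists by Hensel's lemma applied to the smooth reduction. The substantive place-by-place analysis comes from writing $x^2 + By^2$ as (a square factor times) the norm form of $K$ and examining the valuation of a norm. For $p$ split in $K$ the norm form is isotropic and represents elements of every valuation primitively, so no condition on $v_p(C)$ arises; for $p$ inert in $K$ (with $p \nmid 2nB$) the norm form is anisotropic, so $v_p(x^2+By^2)$ is always even, and primitivity forces $v_p(C)$ to be even too. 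The finitely many primes dividing $2nB$ (including the ramified ones) contribute local conditions cutting out a positive proportion, affecting only implied constants. The upshot I would record is: $\Y_{B,C}(\Z_p) \neq \emptyset$ for all $p$ if and only if every prime dividing $C$ to an odd power is non-inert (split or ramified) in $K$, together with benign conditions at the bad primes.

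With this reduction, I would write
\[ N_n^{\loc}(T) = \sum_{|B| < T}\ \#\{\,|C| < T : \text{every } p \text{ dividing } C \text{ to an odd power is non-inert in } \Q(\sqrt{-B_0})\,\}, \]
up to the bounded local factors at $2$, $n$, and the ramified primes. Writing $C = C_1 m^2$ with $C_1$ squarefree, the condition becomes that every $p \mid C_1$ satisfies $\left(\tfrac{-B_0}{p}\right) \neq -1$; detecting this (for $p \nmid 2B_0$) by $\tfrac12\bigl(1 + \left(\tfrac{-B_0}{p}\right)\bigr)$, the indicator factors as $\prod_{p \mid C_1} \tfrac12\bigl(1 + \left(\tfrac{-B_0}{p}\right)\bigr) = 2^{-\omega(C_1)}\sum_{d \mid C_1}\left(\tfrac{-B_0}{d}\right)$. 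The diagonal term $d = 1$ is $B$-independent, so summing it over $|B|<T$ gives $\asymp T\cdot 2^{-\omega(C_1)}$, and then $\sum_{C = C_1 m^2 < T} 2^{-\omega(C_1)}$ is the summatory function of a multiplicative function whose value at each prime is $\approx \tfrac12$, i.e.\ a Dirichlet series of the shape $\zeta(s)^{1/2}H(s)$ with $H$ holomorphic and non-vanishing at $s=1$. The Landau--Selberg--Delange method then gives $\sum_{C<T}(\text{diagonal weight}) \asymp T/\sqrt{\log T}$, hence a main term of the desired order $T^2/\sqrt{\log T}$.

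The off-diagonal terms $d > 1$ involve the character sums $\sum_{|B| < T}\left(\tfrac{-B_0}{d}\right)$ over the squarefree parts $B_0$ of $B$, to moduli $d \mid C_1$ that can be as large as $T$. This is the crux of both bounds, and where I expect the main difficulty to lie. One cannot invoke pointwise equidistribution of $B_0$ in residues modulo $4d$, since $d$ ranges up to $T$ and the accumulated error would swamp the main term; nor can one restrict to $C$ of small radical, since the integers with small squarefree part are too sparse to carry the $T/\sqrt{\log T}$ mass. Instead I would control the full double sum over $B_0$ and over $d \mid C_1$ using the large sieve for real (quadratic) characters, in the form of Heath-Brown's quadratic large sieve, which exploits the oscillation in both variables simultaneously. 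For the upper bound this shows the off-diagonal contribution is $o(T^2/\sqrt{\log T})$; for the lower bound it shows the diagonal main term survives uncancelled. Because the large sieve is unconditional and averages over the modulus, it also rules out a conspiracy from a potential exceptional (Siegel) character among the $\left(\tfrac{-B_0}{\cdot}\right)$, which a pointwise Pólya--Vinogradov bound would not.

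Two remaining technical points I would address. First, the large-sieve estimates must be combined with the restriction to squarefree $B_0$ and the recovery of $B$ from $B_0$ and its square factor $f^2$; this introduces standard Möbius bookkeeping and Kronecker-symbol sign factors but no new oscillation. Second, for the lower bound I would confirm that the benign local conditions at $2$, $n$, and the ramified primes retain a positive proportion of pairs, so that the constant in front of $T^2/\sqrt{\log T}$ is genuinely positive and the bound is two-sided as claimed.
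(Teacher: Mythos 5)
Your outline agrees with the paper's in its skeleton: reduce everywhere-local solubility to ``no inert prime divides $C$ to an odd power'' (plus controlled conditions at finitely many bad primes), detect that condition with the Jacobi symbol $\tfrac12\bigl(1+\left(\tfrac{-B}{p}\right)\bigr)$, extract the $D=1$ (diagonal) term whose $C$-sum is governed by the multiplicative function $\mu(C)^2 2^{-\omega(C)}$ and a Selberg--Delange/Landau-type theorem (the paper quotes Friedlander--Iwaniec for exactly this), and show the off-diagonal character sums are negligible. The diagonal analysis and the resulting order $T^2/\sqrt{\log T}$ are fine.

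The gap is in the off-diagonal treatment, which you correctly identify as the crux and then do not actually close. First, your reason for rejecting P\'olya--Vinogradov is wrong: P\'olya--Vinogradov is unconditional and is not affected by a putative exceptional character --- Siegel zeros degrade lower bounds on $L(1,\chi)$ and prime-counting error terms, not the bound $\sum_{m\le X}\chi(m)\ll \sqrt{q}\log q$. Second, the difficulty you run into is self-inflicted by evaluating the character at the squarefree kernel $B_0$ of $B$; since $\left(\tfrac{-B}{p}\right)=\left(\tfrac{-B_0}{p}\right)$ for $p\nmid B$, one can (as the paper does) factor $C=C_1C_2C_\Box^2$ with $C_1\mid B$, write $B=C_1B'$, and evaluate the character at $-C_1B'$ with $B'$ running over a \emph{complete interval of integers}. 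Then P\'olya--Vinogradov applied to the linear sum over $B'$ gives $\ll\sqrt{D}\log D$ per modulus $D\mid C_2$, and summing over $C_\Box, C_1, E, D$ yields a total off-diagonal contribution $O(T^{3/2}\log T)$, which is negligible. Third, the replacement you propose --- Heath-Brown's quadratic large sieve applied to the double sum over squarefree $B_0$ and $d\mid C_1$ --- is asserted but not estimated, and with the natural weights it does not obviously suffice: the weight on the modulus variable is roughly $b_d\approx T/d$ (each $d$ divides about $T/d$ values of $C_1$), so $\|b\|_2\asymp T$ and $\|a\|_2\gg\sqrt{T}$, and the large-sieve bound $(M+N)^{1+\epsilon}\|a\|_2\|b\|_2$ with $M,N\asymp T$ comes out around $T^{5/2+\epsilon}$, far above the target $o(T^2/\sqrt{\log T})$. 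The mass sits on small moduli $d$, precisely where mean-value large-sieve inequalities are weak and where an individual-modulus bound like P\'olya--Vinogradov is the right tool. To repair the argument you would either adopt the paper's change of variables and use P\'olya--Vinogradov on the complete sum, or, if you insist on summing over squarefree $B_0$, combine the M\"obius expansion of the squarefree condition with the trivial bound in the range where the interval is short; both routes are elementary, and neither needs the quadratic large sieve.
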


\begin{thmd}[Theorem~\ref{thm:pos_prop_HP_n=3}]
\label{thm:intro-global-stats}
    We have
    \[\liminf_{T \to \infty} \frac{N_3(T)}{N_3^\loc(T)} > 0.00988.\]
\end{thmd}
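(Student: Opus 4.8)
The plan is to sandwich the ratio $N_3(T)/N_3^\loc(T)$ between the count of an explicit ``good'' subfamily of pairs $(B,C)$ for which Theorem~\ref{thm:TFAE-intro} forces a global integral point, and the upper bound $N_3^\loc(T) \ll T^2/\sqrt{\log T}$ supplied by Theorem~\ref{thm:intro-local-stats}. Since global solubility implies local solubility, it suffices to exhibit a subfamily $\mathcal{G} \subseteq \{(B,C) : |B|,|C| < T\}$ on which everywhere-local solubility already guarantees $\Y_{B,C}(\Z) \neq \emptyset$, and to show that the mass of $\mathcal{G}$ is at least $0.00988$ times that of the locally soluble locus. Both sides are of order $T^2/\sqrt{\log T}$, so the $T$-dependence cancels to leading order and only the leading constants enter the liminf.

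First I would restrict to $B = B_0 > 0$ squarefree, so that $f = 1$, $\O = \O_K$, and $K = \Q(\sqrt{-B_0})$ is imaginary quadratic; I would further fix the class of $B_0$ modulo $4$ so that Theorem~\ref{thm:TFAE-intro} (or its variant Theorem~\ref{thm:TFAEunified_prime} when $B_0 \equiv 3 \pmod 4$) applies and the hypothesis $\gcd(f,C) = 1$ holds automatically. The key observation is that if $3 \nmid h_K$, then multiplication by $3$ is an automorphism of $\Cl(\O)$, so $3\,\Cl(\O) = \Cl(\O)$ and the class-group condition $[\J_+ \cap \O] \in n\,\Cl(\O)$ in part~\ref{part:intro-iii} holds for free. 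For such $B$, condition~\ref{part:intro-iii} collapses to the splitting condition on $C$, which by Proposition~\ref{prop:localTest} is exactly everywhere-local solubility; hence every locally soluble $C$ produces a point of $\Y_{B,C}(\Z; \annulus^0) \subseteq \Y_{B,C}(\Z)$. Consequently, for every good $B$ the global count of valid $C$ equals the local count, and Landau's theorem gives this count as $\sim \lambda(B)\, T/\sqrt{\log T}$, with $\lambda(B)$ depending only on the splitting type of small primes in $K$.

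It then remains to bound from below the proportion of good $B$, weighted by the densities $\lambda(B)$. Here I would invoke the Davenport--Heilbronn theorem in the form that the average of $\#\Cl_K[3]$ over imaginary quadratic fields equals $2$, together with the first-moment inequality: writing $p_0$ for the proportion with trivial $3$-torsion, the bound $2 \geq p_0 + 3(1 - p_0)$ forces $p_0 \geq \tfrac{1}{2}$, i.e.\ at least half of imaginary quadratic fields satisfy $3 \nmid h_K$. To combine this with the local weights I would use an equidistributed version of Davenport--Heilbronn over congruence classes (of Nakagawa--Horie type), ensuring the first-moment bound survives after restricting $B_0$ to the fixed residue class and after weighting by $\lambda(B)$. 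Multiplying the resulting proportion by the losses from the sign restriction $B > 0$, from squarefreeness, and from the congruence restriction on $B_0$, and dividing by the explicit upper constant for $N_3^\loc(T)$ from Theorem~\ref{thm:intro-local-stats}, yields the bound $0.00988$. The specialization to $n = 3$ is essential precisely because Davenport--Heilbronn pins down the $3$-torsion average; no comparably clean first moment is available for larger $n$.

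The main obstacle is this last step: carrying an \emph{explicit} constant through the weighted count. The difficulty is that three pieces of data interact — the $3$-indivisibility of $h_K$, the congruence classes of $B_0$ that Theorem~\ref{thm:TFAE-intro} requires, and the varying densities $\lambda(B)$ governing the admissible $C$ — and one must verify that the first-moment lower bound $p_0 \geq \tfrac12$ is not degraded when these are imposed simultaneously. Controlling this interaction, rather than any single estimate, is what forces the final constant to be as small as $0.00988$.
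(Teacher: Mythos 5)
Your overall strategy is the one the paper uses: restrict to squarefree $B$ and squarefree $C$ coprime to $2B$, observe that when the relevant class group has trivial $3$-torsion the condition $[\J_+\cap\O]\in 3\Cl(\O)$ in Theorem~\ref{thm:TFAEunified} is automatic so that local solubility forces a point of $\Y_{B,C}(\Z;\annulus^0)$, count the admissible $C$ by a Landau/Friedlander--Iwaniec estimate, bound the density of bad $B$ by the first-moment inequality from Davenport--Heilbronn (made uniform in residue classes via Nakagawa--Horie), and divide by the explicit upper bound for $N_3^{\loc}(T)$ from Theorem~\ref{thm:local_stats}.

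There is, however, a genuine quantitative gap: you restrict to $B=B_0>0$, i.e.\ to imaginary quadratic $K$, and further propose fixing $B_0$ modulo $4$. The paper sums \emph{two} contributions, $N_3^{\imag}(T)$ from $B\geq 1$ and $N_3^{\real}(T)$ from $B\leq -1$, and the real quadratic side is the dominant one: there the Davenport--Heilbronn average of $\#\Cl(\O_K)[3]$ is $4/3$, giving $\delta_3^{\real}\leq 1/6$ and hence $\widetilde{\delta_3}^{\real}\leq 11/36$, far smaller than the bound $\widetilde{\delta_3}^{\imag}\leq 7/12$ available on the imaginary side. Running your argument with the imaginary contribution alone yields
\[
\liminf_{T\to\infty}\frac{N_3(T)}{N_3^{\loc}(T)} \;\geq\; \frac{9(1-\alpha)^{1/2}}{\pi^4}\left(\frac{4\constTaub}{5}-\frac{6\widetilde{\delta_3}^{\imag}}{\pi^2}\right) \approx 0.0022,
\]
which proves a positive proportion but falls well short of $0.00988$; discarding congruence classes of $B_0$ modulo $4$ only shrinks this further. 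A second, smaller issue is that the obstruction in Theorem~\ref{thm:TFAEunified} lives in $\Cl(\O)$ for the order $\O$ of Definition~\ref{def:orders}, which for squarefree $B$ is $\Z[\sqrt{-B}]$ and not always maximal; rather than excluding $B\equiv 3\pmod 8$ (where Lemma~\ref{lem:class_number_formula} forces $\Cl(\Z[\sqrt{-B}])[3]\neq 0$), the paper absorbs those $B$ into the modified densities $\widetilde{\delta_3}^{\imag},\widetilde{\delta_3}^{\real}$, keeping all residue classes in play. To reach the stated constant you must add the $B<0$ (real quadratic) contribution and track the order $\Z[\sqrt{-B}]$ as the paper does.
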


That is, a positive proportion of the stacky curves $\Y_{B,C}$ of signature $(2,2,3)$ non-vacuously satisfy the Hasse principle for integral points\footnote{The Hasse principle is vacuously satisfied if $\Y_{B,C}(\Z_p) = \emptyset$ for some prime $p$.}. We obtain a similar result for odd primes $n > 3$ assuming a consequence of the Cohen--Lenstra heuristics for the $n$-part of the class group of $K = \Q\left (\sqrt{-B_0}\right )$, where $B_0$ is squarefree; see Theorem~\ref{thm:pos_prop_HP_n>3} and Assumption~\ref{assumption:CL}. 

Asymptotics for the number of locally soluble generalized Fermat equations of the form $Ax^n + By^n = Cz^n$ were recently established \cite{KPSS_locsolfermat}, fitting into an existing framework for local solubility statistics for fibrations of varieties. In particular, given a geometrically irreducible fibration $X \to \P^n$ over $\Q$, let $X_P$ denote the fiber over a point $P$. If there exist codimension 1 fibers which are irreducible but not geometrically integral, then 0\% of the fibers $X_P$ for $P \in \P^n(\Q)$ are everywhere locally soluble \cite[Thm.~1.1]{LoughranSmeets}. This is made more precise in terms of an asymptotic upper bound of the form $T^{n+1}/(\log T)^\Delta$ for the number of everywhere locally soluble fibers $X_P$ for $P \in \P^n(\Q)$ of height at most $T$, where $\Delta$ depends on the Galois action on these nonsplit codimension 1 fibers \cite[Thm.~1.2]{LoughranSmeets}.

While this perspective has yet to be extended to the setting of fibrations of stacky curves or to integral points, at a glance, Theorem~\ref{thm:intro-local-stats} follows the same pattern if we consider the fibration $\Y \to \A^2$ whose fiber over a point $(B,C) \in \A^2(\Z)$ is $\Y_{B,C}$. There appears to be one codimension 1 point $D \in \A^2$, namely $\{C = 0\}$, for which $\Y\vert_D$ is irreducible but not geometrically integral. Its components are defined over a quadratic extension, with the Galois group acting with no fixed components. In the setting of varieties as in \cite{LoughranSmeets}, this would suggest an asymptotic of the form $N_n^{\loc}(T) \ll T^2/\sqrt{\log T}$, which is indeed confirmed by Theorem~\ref{thm:intro-local-stats}. It would be interesting to see how asymptotics for the number of everywhere locally soluble fibers behave in other families of stacky curves, or perhaps to investigate an analogue of \cite[Thm.~1.2]{LoughranSmeets} in the setting of fibrations of stacks. 

\subsection{Structure of the paper}\label{subsec:structure}

In Section~\ref{sec:preliminaries}, we define stacky curves (\S\ref{subsec:stacky-curves}) and recall some of their properties, including quotient structures (\S\ref{subsec:prelim-points-pqr}), as well as the specific properties of the stacky curves $\Y_{B,C}$ attached to a generalized Fermat equation (\S\ref{subsec:gen-fermat}). We also collect some results from algebraic number theory in \S\ref{subsec:prelim-algebraic-number-theory} that we will use throughout the paper. 

In Section~\ref{sec:local-solubility}, we characterize $\Z_{p}$-points on the stacky curves $\Y_{B,C}$. Section~\ref{sec:global-descent} sets up the \'{e}tale descent problem over $\Y_{B,C}$ that will be used to characterize $\Z$-points on these curves. 

Sections~\ref{sec:mainthm} and~\ref{sec:cascade} are devoted to the proofs of Theorems~\ref{thm:TFAE-intro} and~\ref{thm:MainCascadeTheorem}, respectively. Additionally, we interpret our results in terms of cohomological descent in \S\ref{subsec:coho-descent}  and comment briefly on the case $A\not = 1$ and $(p,q,r) = (2,2,n)$ of \eqref{eq:genfermat}  in \S\ref{subsec:Acoeff}. In \S\ref{subsec:detailedExample}, we give a detailed example illustrating some of the main features of Sections~\ref{sec:mainthm} and~\ref{sec:cascade}.

Finally, in Section~\ref{sec:statistics} we analyze the counting functions $N_{n}^{\loc}(T)$ and $N_{n}(T)$ and prove Theorems~\ref{thm:intro-local-stats} and~\ref{thm:intro-global-stats}. All examples and the code accompanying this paper are available at~\cite{githubRepo}.

\subsection*{Acknowledgments} The authors would like to thank Brandon Alberts, Santiago Arango-Pi\~{n}eros, Stephanie Chan, Frits Beukers, Peter Koymans, Bjorn Poonen, John Voight, and David Zureick-Brown for helpful discussions.  This project began during the 2023 AMS Mathematics Research Communities: \textit{Explicit Computations with Stacks}.  We would like to thank the organizers for providing an energizing environment for starting this research. We are also grateful to the International Centre for Mathematical Sciences for their hospitality during a Research in Groups event, which allowed us to make progress on this manuscript.

JD was partially supported by the Simons Foundation grant \#550023. CK was partially supported by the Additional Funding Programme for Mathematical Sciences, delivered by EPSRC (EP/V521917/1) and the Heilbronn Institute for Mathematical Research, as well as an AMS-Simons Travel Grant. AK was partially supported by the American Mathematical Society and the Simons Foundation. MR was partially supported by an AMS-Simons Travel Grant. SS was supported by the Dutch Research Council (NWO) grant OCENW.XL21.XL21.011. YW was partially supported by the NSF grant DMS-2102987.

For the purpose of open access, a CC BY public copyright license is applied to any Author Accepted Manuscript version arising from this submission.

\section{Preliminaries}
\label{sec:preliminaries}

\subsection{Stacky curves}
\label{subsec:stacky-curves}
We proceed to give a brief introduction to the main objects of this paper, following \cite{kob, vzb}.

A \emph{stacky curve} $\calX$ is a smooth, proper, irreducible, $1$-dimensional Deligne--Mumford stack $\X$ over a field $k$ which is generically a scheme (an algebraic curve) \cite[Def.~5.2.1]{vzb}. 

A \emph{point} on a stacky curve $\X$ is an equivalence class of morphisms $x \colon \Spec K\rightarrow\X$, with $K/k$ a field extension, where two points $x \colon \Spec K\rightarrow\X$ and $x' \colon \Spec K'\rightarrow\X$ are equivalent if there is a common refinement 
\begin{center}
\begin{tikzpicture}[scale=1.5]
    \node at (0,0) (a) {$\Spec L$};
    \node at (1,.7) (b) {$\Spec K$};
    \node at (1,-.7) (c) {$\Spec K'$};
    \node at (2,0) (d) {$\X$};
    \draw[->] (a) -- (b);
    \draw[->] (a) -- (c);
    \draw[->] (b) -- (d) node[above,pos=.6] {$x$};
    \draw[->] (c) -- (d) node[below,pos=.6] {$x'$};
\end{tikzpicture}
\end{center}
with $L$ a field extension of $K$ and $K'$. 

The \emph{automorphism group} of a point $x$ on $\X$ is the group $\Aut(x) = \operatorname{Isom}_{\X(k)}(x,x)$ of automorphisms of $x$ as an object of the groupoid $\X(k)$. Since $\X$ is Deligne--Mumford, each $\Aut(x)$ is a reduced, finite group scheme. In particular, if $\bar{k}$ is an algebraic closure of $k$, then $\Aut(x)(\bar{k})$ is a finite group. The \emph{degree} of a stacky point $x$ is $\deg(x) \colonequals \frac{[k(x) : k]}{|\Aut(x)(\bar{k})|}$, where $k(x)/k$ is the minimal field of definition of $x$. We call $x$ a \emph{stacky point} if $|\Aut(x)(\bar{k})| > 1$. The condition that $\X$ is generically a scheme further implies that only finitely many points of $\X$ are stacky. We call $\X$ {\it tame} if the characteristic of $k$ does not divide any of the orders of the nontrivial automorphism groups of $\X$; otherwise $\X$ is {\it wild}. 

Following \cite[Sec.~11.2]{vzb} and other authors, we will also consider \emph{relative stacky curves} $\X/S$ over a scheme $S$, which are smooth, proper families of stacky curves $\X\rightarrow S$. In this article, these are just integral models of stacky curves over a number field, such as $\X$ and $\Y$ in the introduction; we will refer to them simply as stacky curves when the context is clear. 

A stacky curve $\X$ has a \emph{coarse moduli space} $X$ \cite[Thm.~11.1.2]{ols}, which can be viewed as the ``underlying curve'' of $\X$ (or underlying relative curve in the relative setting). If $K_{\X}$ is a canonical divisor on $\X$, we may define the genus $g(\X)$ of $\X$ by $\deg(K_{\X}) = 2g(\X) - 2$. The stacky Riemann--Hurwitz formula\footnote{See \cite[Prop.~5.5.6]{vzb} or \cite[Prop.~4.13]{kob} for the tame case and \cite[Prop.~7.1]{kob} for the wild case.} allows one to compute $g(\X)$ from the data of $g(X)$ and the structure of the nontrivial automorphism groups of $\X$; when $\X$ is tame, 
\begin{equation}\label{eq:genus}
    g(\X) = g(X) + \frac{1}{2}\sum_{x\in\X(k)} \left (1 - \frac{1}{|G_{x}|}\right )\deg\pi(x), 
\end{equation}
where $G_{x}$ is the automorphism group at $x$ and $\pi \colon \X\rightarrow X$ is the coarse moduli map. The formula in the wild case is more complicated \cite[Cor.~7.3]{kob} but will not be used here. 

\subsection{Points on quotient stacks}
\label{subsec:prelim-points-pqr}

Let 
\begin{equation}\label{eq:Sdef}
    S\colonequals\Spec\Z[x,y,z]/(f(x,y,z))\smallsetminus \{x=y=z=0\}\subseteq\A^3_\Z\smallsetminus\{0\},
\end{equation} 
where $f(x,y,z)$ is a ternary polynomial cutting out a codimension $1$ subscheme in $\A_{\Z}^{3}\smallsetminus\{0\}$.  Let the multiplicative group $\G_{m}$ act on the coordinates of $\A_{\Z}^{3}$ with weights $a,b,c$ and suppose that the action restricts to a $\G_{m}$-action on $S$ (that is, $f$ is weighted homogeneous with weights $a,b,c$).  Then the quotient stack $\calY\colonequals [S/\G_m]\subseteq \P(a,b,c)$ is a proper $1$-dimensional Deligne--Mumford stack with generic automorphism group $\mu_{d}$, where $d = \gcd(a,b,c)$. When $S$ is smooth and irreducible (i.e., $f$ has everywhere-nonvanishing differential), such a stack is a gerbe over a stacky curve. In our main case of interest, when $(a,b,c) = (2,2,n)$ with $n$ odd, $\calY$ will always be a genuine stacky curve. 

The presentation of \(\Y\) as a quotient stack gives the following description of its points. For a scheme \(T\), the groupoid \(\Y(T)\) consists of objects which are \(\G_m\)-torsors over \(T\) with \(\G_m\)-equivariant maps to \(S\). Thus we can represent an object in \(\Y(T)\) as a tuple \((L_T, x, y, z)\) such that \(L_T\) is a line bundle on \(T\), and \(x,y, z\) are sections of \(L_T^{\otimes a}, L_T^{\otimes b}, L_T^{\otimes c}\) respectively satisfying \(f(x,y,z) = 0\).  In particular, if \(T = \Spec R\) for a PID \(R\), we may represent an object in \(\Y(T)\) as a point \([x:y:z] \in \P(n,n,2)(R)\) satisfying the weighted homogeneous equation \(f(x,y,z) = 0\). 

For ease of notation, we will drop the the line bundle from the tuple $(L_{T},x,y,z)$ when it is clear from context.

The following proposition will be helpful when considering morphisms of stacky curves.
\begin{proposition}[{\cite[Lemma~3.5]{kob}}]  
\label{prop:eqCategories}  
    If $F \colon \calX \to \calY$ is a functor between categories fibred in groupoids over the category of schemes, then $F$ is an equivalence of categories fibred in groupoids if and only if for each scheme $T$, the functor $F_T\colon \calX(T)\to \calY(T)$ is an equivalence of categories.
\end{proposition}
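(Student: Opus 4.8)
The plan is to use the standard characterization that a base-preserving functor between categories fibred in groupoids (CFGs) is an equivalence of CFGs precisely when it is an equivalence of the underlying categories, i.e.\ fully faithful and essentially surjective. Write $p_\calX$ and $p_\calY$ for the structure functors to $\Sch$, so that $p_\calY\circ F = p_\calX$ and for each scheme $T$ the fiber functor $F_T\colon \calX(T)\to\calY(T)$ is the restriction of $F$ to objects and morphisms lying over $T$ and $\mathrm{id}_T$. The forward direction is immediate: if $F$ is an equivalence of CFGs with quasi-inverse $G$, then the natural isomorphisms $GF \Rightarrow \mathrm{id}_{\calX}$ and $FG \Rightarrow \mathrm{id}_{\calY}$ are base-preserving, so they restrict over each $T$ to natural isomorphisms exhibiting $F_T$ and $G_T$ as mutually quasi-inverse; hence each $F_T$ is an equivalence.

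For the reverse direction I would assume every $F_T$ is an equivalence and verify essential surjectivity and full faithfulness separately. Essential surjectivity follows directly: given an object $\eta$ of $\calY$ over $T = p_\calY(\eta)$, essential surjectivity of $F_T$ produces $\xi\in\calX(T)$ with $F_T(\xi)\cong\eta$ in $\calY(T)$, hence in $\calY$. The substance lies in full faithfulness. Fixing objects $\xi,\xi'$ of $\calX$ over schemes $T,T'$, the key is that since $\calX$ and $\calY$ are fibred in groupoids and $F$ is base-preserving, the Hom-sets decompose according to the induced morphism in $\Sch$, namely $\Hom_\calX(\xi,\xi') = \coprod_{f\colon T\to T'}\Hom_\calX^{f}(\xi,\xi')$ and likewise for $\calY$, with $F$ respecting this decomposition. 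It therefore suffices to treat morphisms over a fixed $f$. Choosing a cartesian lift $f^*\xi'\to\xi'$ of $f$, the defining property of a CFG supplies a natural bijection $\Hom_\calX^{f}(\xi,\xi')\cong\Hom_{\calX(T)}(\xi,f^*\xi')$, and similarly on the $\calY$ side.

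The final step compares these fiberwise Hom-sets. Because $F$ is a morphism of CFGs it preserves cartesian morphisms, so $F(f^*\xi')\to F(\xi')$ is itself a cartesian lift of $f$, canonically identifying $F(f^*\xi')$ with a pullback $f^*F(\xi')$ in $\calY(T)$; under this identification the bijection $\Hom_\calY^{f}(F\xi,F\xi')\cong\Hom_{\calY(T)}(F_T\xi, F_T(f^*\xi'))$ results, and the map induced by $F$ is exactly the fiberwise map of $F_T$. Since $F_T$ is an equivalence, it is fully faithful, so this map is a bijection; chaining the three bijections shows $F$ is a bijection on morphisms over $f$, and summing over $f$ yields full faithfulness. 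I expect the main obstacle to be the bookkeeping here: decomposing the Hom-sets by their image in $\Sch$, invoking the cartesian factorization of morphisms in a CFG, and checking that preservation of cartesian arrows makes the comparison isomorphisms compatible so the three bijections compose coherently. Having shown $F$ is fully faithful and essentially surjective, it is an equivalence of underlying categories, and since $F$ is base-preserving one may upgrade a quasi-inverse to a morphism of CFGs, so $F$ is an equivalence of categories fibred in groupoids.
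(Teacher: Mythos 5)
Your argument is correct and is essentially the standard proof of the result the paper invokes: the paper's own ``proof'' is just a citation to \cite[{\href{https://stacks.math.columbia.edu/tag/003Z}{Tag 003Z}}]{stacks-project}, and your write-up (decomposing Hom-sets over morphisms in $\Sch$, factoring through cartesian lifts, and reducing to the fiberwise equivalences) is precisely the content of that tag. No discrepancy to report.
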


\begin{proof}
    Follows from \cite[{\href{https://stacks.math.columbia.edu/tag/003Z}{Tag 003Z}}]{stacks-project}.
\end{proof}

\subsection{Generalized Fermat equations}
\label{subsec:gen-fermat}

A {\it generalized Fermat equation} is a Diophantine equation of the form 
\begin{equation}\label{eq:generalizedFermat}
    Ax^p + By^q = Cz^r,
\end{equation}
where $A, B, C\in\Z$ and $p, q, r \in \Z_{\ge 2}$. The integer solutions to~\eqref{eq:generalizedFermat} have been studied extensively in the literature; see~\cite{PSS} for a survey in the case $A = B = C = 1$ (and a complete description of the solutions in the case $(p,q,r) = (2,3,7)$) and \cite{DarmonGranville,Beukers} for a general treatment. 

Of principal interest are the {\it primitive solutions}, i.e.~solutions $(x,y,z)$ with $\gcd(x,y,z) = 1$. There may be a finite or an infinite number of primitive solutions to~\eqref{eq:genfermat} depending on whether $\chi = \frac{1}{p} + \frac{1}{q} + \frac{1}{r} - 1$ is positive, in which case there are either no primitive solutions or infinitely many \cite{Beukers}, or $\chi$ is negative, in which case there are at most finitely many primitive solutions \cite{DarmonGranville}. In many cases when $\chi < 0$, the full set of solutions has been found; see \cite{wilcox-grechuk24} for a recent survey. The case $\chi = 0$ also occurs --- in fact only when $(p,q,r) = (2,3,6),(2,4,4)$, or $(3,3,3)$, or permutations of these --- and reduces to classical descent techniques; see~\cite[Sec.~6]{DarmonGranville}. 

Set $d = \gcd(qr,pr,pq)$. Using \eqref{eq:generalizedFermat}, we can define the scheme $S$ as in~\eqref{eq:Sdef} so that $S(\Z)$ is the set of nontrivial primitive integer solutions to~\eqref{eq:generalizedFermat}. Each choice of parameters determines a $1$-dimensional stack 
\begin{equation}\label{eq:Ypqr}
    \Y \colonequals [S/\G_{m}] \subset [\A^{3}\smallsetminus\{0\}/\G_{m}] = \P\left(\frac{qr}{d},\frac{pr}{d},\frac{pq}{d} \right),
\end{equation}
One can check that $\Y$ is smooth over $\Z\left[\frac{1}{pqr}\right]$; it follows that $\Y$ is a relative stacky curve over this ring. 

\begin{remark}
    Note that there is an additional action by roots of unity in each coordinate of $\A^{3}$: setting $H = \G_{m} \cdot (\mu_{p}\times\mu_{q}\times\mu_{r})$ and taking the quotient of $S$ by this group action gives us another stacky curve $\X = [S/H]$. When $p,q$ and $r$ are pairwise coprime, the two quotient stacks are isomorphic, but in general $\Y$ can be a nontrivial cover of $\X$. For instance, when \((p,q,r)= (2,2,n)\) for \(n\) odd, the map $\Y\rightarrow\X$ is a degree $2$ \'{e}tale cover, under which the two stacky points of \(\Y\) map to a single integral point in \(\X\). Thus the problem for \(\X\) is not as interesting. 
    
    The main practical difference between these stacks is that $\X$ has geometric fundamental group $D_{2n}$, the dihedral group with $2n$ elements, while $\Y$ has geometric fundamental group $\mu_{n}$. Our stacky approach to  
    \eqref{eq:generalizedFermat}, adapted from \cite{PSS}, is much simpler over $\Y$ since its \'{e}tale covers are all abelian. 
\end{remark}

\subsubsection{Integrality of points on weighted projective stacks}
\label{subsec:prelin-integral-pts}

Let \(R\) be a Dedekind domain (not necessarily a PID) with fraction field \(F\), let \(\p\) a prime ideal of \(R\) and let $F_\p$ denote the completion of $F$ at $\p$ with normalized discrete valuation $v_\p$. Let \(\P(a, b, c)\) be the weighted projective stack with weights $a,b,c$. 

An \emph{$R$-point} (resp.~\emph{$F_\p$-point}) of $\P(a,b,c)$ is a morphism of stacks $\varphi\colon\Spec R\to\P(a,b,c)$ (resp. $\Spec F_\p\to\P(a,b,c)$). A \emph{representative} of such a point is an object in the isomorphism class of $\varphi$ in the groupoid $\P(a,b,c)(R)$, which can be identified with a tuple of $(x,y,z)$ defined over \(R\). We write $\varphi = [x : y : z]$, to be thought of as weighted homogeneous coordinates. 

A point \([x:y:z] \in \P(a,b,c)(F_\p)\) is called \emph{\(\p\)-integral} if
\[
    \min \left\{ \frac{v_{\p}(x)}{a}, \frac{v_{\p}(y)}{b}, \frac{v_\p(z)}{c}\right\} \in \Z
\]
for any representative $(x,y,z)$. Further, a representative $(x,y,z)$ is called \emph{\(\p\)-minimal} if the above minimum is in the interval \([0,1)\). In particular, if $R$ is a PID and $(x,y,z)$ is a fixed representative of a point in \(\P(a,b,c)(R)\) with \(x, y, z \in R\), then it is \(p\)-minimal if and only if \(p\nmid \gcd(x,y,z)\). 

Returning to the stacky curve $\Y = [S/\G_{m}]$ from~\eqref{eq:Ypqr}, for a PID \(R\), a point $[x : y : z]\in\Y(R)$ is by definition $\p$-integral at all primes $\p$. Taking a representative which is $\p$-minimal at all $\p$ yields a \emph{primitive} solution, i.e.~we have $(x,y,z)\in R^{3}$ with $\gcd(x,y,z) = 1$. 

\begin{remark}\label{rem:primitivity}
    The property of $\p$-integrality belongs to a point of $\P(a,b,c)$, i.e.\ the equivalence class. A representative of that class can be taken to be $\p$-minimal. As we are chiefly interested in the case where $R$ is a PID, namely the cases $R=\Z$, $\Z_p$, or $\O_{K,\invertedprimes}$ for $K$ a quadratic number field, we will use ``$[x:y:z] \in \P(a,b,c)(R)$" to mean that $(x,y,z)$ is a particular primitive representative.
\end{remark}

\subsubsection{Signature \((2,2,n)\) stacky curves}
\label{subsec:prelim-22n}

Let $\Y_{B,C}$ denote the stacky curve defined as in~\eqref{eq:Ypqr} from the generalized Fermat equation
\begin{equation}\label{eq:Y_BC}
    x^2 + By^2 = Cz^n.
\end{equation}
Throughout the paper we assume that $n$ is odd and that $B,C\not = 0$. Further, we will assume that \(-B\) is not a square in Sections~\ref{sec:global-descent},~\ref{sec:mainthm},~and~\ref{sec:cascade}.

Indeed, geometrically, the stacky curve $(\Y_{B,C})_{\bar{\Q}}$ has two stacky points $\sizedbracket{\pm \sqrt{-B}:1:0}$, each of degree $\frac{1}{n}$. These points are in $\Y_{B,C}(\Z)$ if and only if $-B$ is a square in $\Z$. When $-B$ is not a square, these points are not defined over $\Q$, so since \(n\) is odd, $(\Y_{B,C})_{\Q}$ has no stacky $\Q$-points. The relative stacky curve \(\Y_{B,C} \to \Spec \Z\), on the other hand, can have fibers with stacky points not visible over the generic point. If for instance \(p \mid C\), then the fiber \((\Y_{B,C})_{\F_p}\) contains the stacky \(\mu_2\) point \([0:0:1]\). If \(p \mid B\), then the fiber  \((\Y_{B,C})_{\F_p}\) contains the stacky \(\mu_2\) point \([0:1:0]\).

\begin{remark}
\label{rem:groupoids}
    Throughout this paper, we consider various integral models of our stacky curve $\Y_{B,C}$, which is a priori defined over $\Spec\Z$. When \(-B\) is not a square, $\Y_{B,C}(\Z)$ is already (equivalent to) a set. We will denote by $\Y_{B,C}(\Z;\bullet)$ the full subgroupoid of $\Y_{B,C}(\Z)$ supported on points satisfying condition ($\bullet$). Most of the time, these subgroupoids will be represented by sets, and we will make clear when groupoids are involved. 
    
    Setting $K = \Q\left (\sqrt{-B}\right )$ and passing to $\Y_{B,C/\orb_{K}}(\orb_{K})$, the stacky points $[\pm\sqrt{-B} : 1 : 0]$ are now defined over $K$. In fact, they are defined over any subring $\orb$ of $\O_K$ containing $\Z[\sqrt{-B}]$, so the existence of a point in $\Y_{B,C/\O}(\O)$ is not sufficient to guarantee an integer solution to~\eqref{eq:Y_BC}. This is one of the main difficulties that requires the delicate descent theory arguments in Sections~\ref{sec:global-descent}~and~\ref{sec:mainthm}.
\end{remark}

By formula~\eqref{eq:genus}, the genus of $(\Y_{B,C})_{\Q}$ is
\begin{equation}\label{eq:YBCgenus}
    g(\Y_{B,C}) = g(\mathbb{P}^1) + \frac{1}{2}\cdot \left(1 - \frac{1}{n}\right)\cdot 2 = \frac{n-1}{n}.
\end{equation}
In particular, since its genus is less than \(1\), $(\Y_{B,C})_{\Q}$  is spherical and so \(\Y_{B,C}\) either has no integral points or infinitely many by \cite[Thm.~1.2]{Beukers}.

\begin{remark}\label{rmk:geometric-genus}
    The genus/Euler characteristic calculated in~\eqref{eq:YBCgenus} is that of the generic fiber of \(\Y_{B,C}\). The Euler characteristic of the relative curve \(\Y_{B,C} \to \Spec \Z\) is not as well behaved, and can vary on fibers. 
\end{remark}

\subsection{Preliminaries from algebraic number theory}
\label{subsec:prelim-algebraic-number-theory}

We record some basic facts from algebraic number theory regarding ideal class groups of orders in number fields beginning with the fundamental exact sequence describing the ideal class group of the maximal order \(\O_K\) of \(K\). 
\begin{equation}\label{seq:ClK}
    \begin{tikzcd}
        1 \arrow[r] & K^\times/\O_K^\times \arrow[r, "\div"] & \oplus_\p \Z \arrow[r, "{[\cdot]}"]               & \Cl(\O_K) \arrow[r]  & 0 
    \end{tikzcd}
\end{equation}
Here $\p$ runs over all prime ideals of \(\O_K\). We denote by $\mathrm{div}$ the map sending an element $\alpha \in K^\times$ to the prime ideal decomposition of the principal fractional ideal $\alpha\O_K$, and use $[I]$ to denote the ideal class of a fractional ideal $I$ of $\O_K$.

The sequence~\eqref{seq:ClK} can be generalized to localizations. In particular, suppose $\invertedprimes$ is a finite set of primes of $\O_K$ such that $\Cl(\O_K) = \langle [\p]\,\colon\,\p \in \invertedprimes \rangle$. Set $R_K$ to be the localization of $\O_K$ obtained by inverting all elements supported at $\invertedprimes$. Then a diagram chase gives the following exact sequence.
\begin{equation}
\label{seq:ClK_with_RK}
    \begin{tikzcd}
        1 \arrow[r] & R_K^\times / \O_K^\times \arrow[r, "\div"] & \oplus_{\p \in \invertedprimes} \Z \arrow[r, "{[\cdot]}"] & \Cl(\O_K) \arrow[r] & 0
    \end{tikzcd}
\end{equation}

Sometimes it will be convenient to refer to the group of fractional \(\O_K\)-ideals of $K$, $I(\O_K)$, which we identify with $\oplus_{\p} \Z$. Making this identification, in~\eqref{seq:ClK} and~\eqref{seq:ClK_with_RK} we replace $\div$ with the inclusion $\alpha \mapsto \alpha \O_K$. Similarly, we may identify $K^\times / \O_K^\times$ with $P(\O_K)$, the group of principal fractional ideals of $K$.

Taking~\eqref{seq:ClK} and~\eqref{seq:ClK_with_RK} and applying the multiplication by $n$ map, we obtain the following exact sequences.

\begin{align}
    \label{seq:ClK_mod_n}
    0 \rightarrow {\Cl(\O_K)[n]} \rightarrow P(\O_K)/nP(\O_K) \rightarrow I(\O_K)/nI(\O_K) \rightarrow \Cl(\O_K) / n\Cl(\O_K) \rightarrow 0\\[1ex]
    \label{seq:ClK_with_RK_mod_n}
    0 \rightarrow {\Cl(\O_K)[n]} \rightarrow R_K^\times / ((R_K^\times)^n, \O_K^\times) \rightarrow \oplus_{\p \in \invertedprimes} \Z/n\Z \rightarrow \Cl(\O_K) / n\Cl(\O_K) \rightarrow 0
\end{align}

Let $\O \subset \O_K$ be an order of conductor $\f$. In general, $\O$ may fail to be a Dedekind domain. In order to generalize~\eqref{seq:ClK_mod_n} to this situation, we make the following definitions using \cite[Section~7]{Cox}:
\begin{itemize}
    \item $I(\O)$ denotes the group of invertible fractional $\O$-ideals;
    \item $I(\O, \f)$ denotes the group of fractional $\O$-ideals with norm prime to $\f$;
    \item $P(\O)$ and $P(\O, \f)$ denote subgroups of $I(\O)$ and $I(\O, \f)$, respectively, consisting of principal fractional $\O$-ideals; 
    \item $\Cl(\O) = I(\O) / P(\O)$ is the ideal class group of $\O$.
\end{itemize}

\begin{lemma}[See e.g.\ {\cite[Prop.\ 7.20]{Cox}, \cite[Thm. 3.8]{conrad_orders}}]
\label{lem:IOKf=IOf}
    We have a norm-preserving bijection 
    \[I(\O_K, \f) \to I(\O, \f)\]
    given by $\fraka \mapsto \fraka \cap \O$, with inverse given by $\fraka \mapsto \fraka\O_K$.
\end{lemma}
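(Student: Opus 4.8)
The plan is to exhibit contraction $\fraka \mapsto \fraka \cap \O$ and extension $\fraka \mapsto \fraka\O_K$ as mutually inverse, norm-preserving homomorphisms, using the single structural input that makes the correspondence work: away from the conductor, $\O$ and $\O_K$ are indistinguishable. Concretely, for any prime $\p$ of $\O$ with $\p \nmid \f$, pick $s \in \f \smallsetminus \p$; since $\f$ is an $\O_K$-ideal contained in $\O$ we have $s\O_K \subseteq \O$, so inverting $s$ forces $\O_\p = (\O_K)_\p$. The right-hand side is a localization of the Dedekind domain $\O_K$, hence a discrete valuation ring. Thus $\O$ and $\O_K$ differ only at primes dividing $\f$, and restricting to ideals prime to $\f$ is precisely what sidesteps the failure of $\O$ to be Dedekind.

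First I would record two consequences. An $\O$-ideal coprime to $\f$ (equivalently $\fraka + \f = \O$) is invertible, being locally principal: the unit ideal at primes dividing $\f$, and a power of the maximal ideal in the DVR $\O_\p$ otherwise. It follows that both $I(\O,\f)$ and $I(\O_K,\f)$ are free abelian groups on their respective sets of primes coprime to $\f$, so the lemma reduces to matching these generating sets under contraction and extension.

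Next I would verify the matching by checking that both operations commute with localization at every prime coprime to $\f$, where $\O_\p = (\O_K)_\p$ forces $(\fraka \cap \O)_\p = \fraka_\p$ and $(\fraka\O_K)_\p = \fraka_\p$, while at primes dividing $\f$ the ideals are locally the unit ideal and contribute nothing. Since a fractional ideal is determined by its localizations, this gives $(\fraka\cap\O)\O_K = \fraka$ for $\fraka \in I(\O_K,\f)$ and $(\fraka\O_K)\cap\O = \fraka$ for $\fraka \in I(\O,\f)$, so the two maps are mutually inverse. Norm-preservation then follows from multiplicativity of $\Nm$ and the prime case: for $\p \nmid \f$ the identification $\O_\p = (\O_K)_\p$ yields equal residue fields $\O/(\p \cap \O) \cong \O_K/\p$, hence equal norms on corresponding primes.

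The main obstacle is conceptual rather than computational: one must isolate the precise role of coprimality to $\f$. Because $\O$ is not Dedekind, neither invertibility nor unique factorization of ideals is automatic, so the argument repeatedly relies on the fact that both are restored once attention is confined to the prime-to-$\f$ locus. Setting up this local dictionary carefully --- as in \cite[Prop.~7.20]{Cox} and \cite[Thm.~3.8]{conrad_orders} --- is the crux; thereafter both bijectivity and norm-compatibility are formal.
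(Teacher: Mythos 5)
The paper offers no proof of this lemma, deferring entirely to the citations of \cite[Prop.~7.20]{Cox} and \cite[Thm.~3.8]{conrad_orders}; your argument is precisely the standard one given in those references, namely that $\O_\p = (\O_K)_\p$ at every prime $\p$ coprime to the conductor, from which invertibility, unique factorization, mutual inverseness of contraction and extension, and equality of residue fields (hence of norms) all follow by localization. Your proposal is correct and matches that approach.
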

We can present $\Cl(\O)$ by only those ideals prime to its conductor $\f$.

\begin{lemma}[See e.g.\ {\cite[Prop.\ 7.19]{Cox}, \cite[Thm.~5.2]{conrad_orders}}]
\label{lem:ClO_primetof}
    Let $\O$ be an order in a number field $K$. For any integer $M$ and ideal class $[\fraka] \in \Cl(\O)$, there exists an ideal $\mathfrak{b} \in I(\O, M)$ such that $[\mathfrak{b}] = [\fraka]$. In particular, we have
    \[\Cl(\O) \simeq I(\O, \f) / P(\O, \f).\]
\end{lemma}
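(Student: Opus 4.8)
The plan is to first establish the displayed ``moving lemma''---that every class in $\Cl(\O)$ is represented by an invertible ideal of norm prime to $M$---and then to read off the isomorphism $\Cl(\O) \simeq I(\O,\f)/P(\O,\f)$ as a formal consequence. The essential point, and the only place where the non-maximality of $\O$ intervenes, is that an \emph{invertible} fractional ideal is locally free of rank one; this is what lets me run a Chinese Remainder argument even at the finitely many primes dividing the conductor, where $\O$ fails to be a discrete valuation ring.

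For the moving lemma, I would fix an invertible fractional ideal $\fraka$ representing $[\fraka]$ and let $\p_1,\dots,\p_r$ be the (finitely many) maximal ideals of $\O$ containing $M$, i.e.\ those lying over a rational prime dividing $M$. The idea is to look for $\beta \in \fraka^{-1}\smallsetminus\{0\}$ such that $\mathfrak{b} \colonequals \beta\fraka$ is of norm prime to $M$: since $\beta \in K^\times$ and $\fraka$ is invertible, such $\mathfrak{b}$ is automatically invertible and lies in the class $[\fraka]$, while $\beta \in \fraka^{-1}$ guarantees $\mathfrak{b} = \beta\fraka \subseteq \fraka^{-1}\fraka = \O$ is integral. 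Because $\fraka^{-1}$ is invertible, each quotient $\fraka^{-1}/\p_i\fraka^{-1}$ is a one-dimensional $\O/\p_i$-vector space, so by the Chinese Remainder Theorem the map $\fraka^{-1} \to \bigoplus_i \fraka^{-1}/\p_i\fraka^{-1}$ is surjective; I would pick $\beta$ mapping to a nonzero class in every factor. By Nakayama's lemma such a $\beta$ generates the localization $(\fraka^{-1})_{\p_i}$, so that $(\fraka)_{\p_i} = \beta^{-1}\O_{\p_i}$ and hence $(\mathfrak{b})_{\p_i} = (\beta\fraka)_{\p_i} = \O_{\p_i}$ for every $\p_i$. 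Consequently no rational prime dividing $M$ divides the index $N(\mathfrak{b}) = [\O:\mathfrak{b}]$, so $\mathfrak{b} \in I(\O,M)$, as desired.

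With the moving lemma in hand, consider the homomorphism $\varphi\colon I(\O,\f) \to \Cl(\O)$ induced by the inclusion $I(\O,\f)\hookrightarrow I(\O)$ followed by the projection to $\Cl(\O) = I(\O)/P(\O)$. Applying the moving lemma with $M$ taken to be a positive integer lying in $\f$ (so that having norm prime to $M$ forces coprimality to every prime containing $\f$, as any such prime lies over a divisor of $M$) shows that every class meets $I(\O,\f)$, that is, $\varphi$ is surjective. Its kernel is $I(\O,\f)\cap P(\O)$, which is precisely the subgroup of principal fractional ideals of norm prime to $\f$, namely $P(\O,\f)$ by definition. Hence $\varphi$ descends to the claimed isomorphism $I(\O,\f)/P(\O,\f) \xrightarrow{\ \sim\ } \Cl(\O)$.

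The main obstacle I anticipate lies entirely within the moving lemma, specifically in justifying the local computation at those primes $\p_i$ that also divide the conductor $\f$: there $\O_{\p_i}$ is a one-dimensional local ring but not a DVR, so one cannot argue via valuations or unique factorization. The invertibility hypothesis is exactly what rescues the argument, since it makes both $\fraka$ and $\fraka^{-1}$ locally principal and thus susceptible to the Nakayama/CRT patching above. The only remaining care is the routine verification that local triviality of $\mathfrak{b}$ at all primes above the divisors of $M$ is equivalent to $N(\mathfrak{b})$ being prime to $M$, which follows from multiplicativity of the index over localizations.
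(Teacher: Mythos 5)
Your argument is correct: the paper gives no proof of this lemma beyond citing Cox and Conrad, and the moving-lemma-via-local-principality argument you lay out (CRT on the one-dimensional quotients $\fraka^{-1}/\p_i\fraka^{-1}$, Nakayama to get local generation, then reading off surjectivity and the kernel $I(\O,\f)\cap P(\O)=P(\O,\f)$) is exactly the standard proof found in those references. The only fact you use silently is that fractional ideals with norm prime to the conductor are automatically invertible, so that $I(\O,\f)\hookrightarrow I(\O)$ makes sense; this is built into the paper's definition of $I(\O,\f)$ as a group and is itself part of the cited material, so it is not a gap in your argument.
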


This leads to a useful exact sequence (see e.g.\ \cite[(5.6)]{conrad_orders}) that relates the class groups of $\O$ and $\O_K$.
\begin{equation}
\label{seq:ClO_to_ClK}
    \begin{tikzcd}
        1 \arrow[r] & \O_K^\times / \O^\times \arrow[r] & (\O_K/\f)^\times / (\O / \f)^\times \arrow[r] & \Cl(\O) \arrow[r] & \Cl(\O_K) \arrow[r] & 1
    \end{tikzcd}
\end{equation}

Combining~\eqref{seq:ClK_mod_n},~\eqref{seq:ClK_with_RK_mod_n}, and the analogous sequence for an order $\O$, we can assemble the following diagram with exact rows.
\begin{equation}
\label{seq:thebigdiagram}
\adjustbox{scale=0.8,center}{
    \begin{tikzcd}[column sep=small]
    & 1 \arrow[r]                               & {\Cl(\O)[n]} \arrow[r] \arrow[d]                                                  & {P(\O, \f)/nP(\O,\f)} \arrow[r] \arrow[d]                         & {I(\O,\f)/nI(\O,\f)} \arrow[r] \arrow[d, "\simeq"] & \Cl(\O)/n\Cl(\O) \arrow[r] \arrow[d, two heads] & 0 \\
    & 1 \arrow[r]                               & {\Cl(\O_K)[n]} \arrow[r] \arrow[d, "="]                                           & {P(\O_K, \f)/nP(\O_K,\f)} \arrow[r] \arrow[d]                     & {I(\O_K,\f)/nI(\O_K,\f)} \arrow[r] \arrow[d, hook] & \Cl(\O_K)/n\Cl(\O_K) \arrow[r] \arrow[d, "="]   & 0 \\
    & 1 \arrow[r]                               & {\Cl(\O_K)[n]} \arrow[r]                                                          & P(\O_K)/nP(\O_K) \arrow[r]                                        & I(\O_K)/nI(\O_K) \arrow[r]                         & \Cl(\O_K)/n\Cl(\O_K) \arrow[r]                  & 0 \\
    1 \arrow[r] & {\Cl(\O_K)[n]} \arrow[r] \arrow[ru, "="'] & {R_K^\times / ((R_K^\times)^n, \O_K^\times)} \arrow[r] \arrow[r] \arrow[ru, hook] & \oplus_{\p \in \invertedprimes} \Z/n\Z \arrow[ru, hook] \arrow[r] & \Cl(\O_K)/n\Cl(\O_K) \arrow[r] \arrow[ru, "="']    & 0                                               &  
\end{tikzcd}
}
\end{equation}

From~\eqref{seq:thebigdiagram}, we extract the following diagram.
\begin{equation}
\label{seq:def_psi}
    \begin{tikzcd}
        & {P(\O, \f)/nP(\O,\f)} \arrow[d, "\psi_\O"] \\
    R_K^\times / (R_K^\times)^n \arrow[r] & {P(\O_K)/nP(\O_K)}  
    \end{tikzcd}
\end{equation}
The map $\psi_\O$ is the composition of maps in the third column of~\eqref{seq:thebigdiagram}, while the horizontal map arises from precomposing the inclusion with the natural map to $R^\times_K / ((R_K^\times)^n, \O_K^\times)$.

\begin{remark}[Real quadratic fields]
    Let $K/\Q$ be a real quadratic field and $\O \subset \O_K$ an order. Along with the \textit{ideal} class group $\Cl(\O)$, there is the \textit{narrow} class group $\Cl^+(\O)$, obtained by taking the quotient of $I(\O)$ by the totally positive principal ideals.  Either these groups coincide or we have $[\Cl^+(\O) : \Cl(\O)] = 2$. When $n$ is odd, in either case we find
    \begin{align*}
        &\Cl^+(\O) / n\Cl^+(\O) \simeq \Cl(\O)/n\Cl(\O)& &\text{and}& &\Cl^+(\O)[n] \simeq \Cl(\O)[n].&
    \end{align*}
    In the present paper, we are concerned only with $n$ odd and with quotients of $\Cl(\O)$ by $n$, so we need not distinguish between these two notions of class group.
\end{remark}

We conclude by recording a class number formula for (relative) orders in quadratic fields.

\begin{lemma}[See {\cite[Corollary 7.28]{Cox}\footnote{The result in \cite{Cox} is stated for class groups of binary quadratic forms with negative discriminant, but the formula~\eqref{eq:class_number_formula} holds for ideal class groups of general quadratic fields; see also \cite[Thm.~5.3]{conrad_orders}.}}]
\label{lem:class_number_formula}
    Let $K$ be a quadratic number field and \(\orb' \subset \orb \) denote orders contained in \(\orb_K\) of conductors \(\f'\) and \(\f\) respectively, with relative conductor $\r = \f'/\f$. Then,
    \begin{equation}\label{eq:class_number_formula}
        \frac{\#\Cl(\O')}{\#\Cl(\O)}[\O^\times : {\O'}^\times ] = \r \prod_{p \mid \r} \left(1 - \left(\frac{\Disc(\O)}{p}\right)\frac1p\right).
    \end{equation}
\end{lemma}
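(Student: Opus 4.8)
The plan is to reduce the relative statement to the classical class number formula comparing a single order with the maximal order, and then take a ratio of the two instances.

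First I would establish the single-order formula directly from the exact sequence~\eqref{seq:ClO_to_ClK}. Applied to an order $\O \subseteq \O_K$ of conductor $f$, counting cardinalities along the four-term exact sequence gives
\[
\frac{\#\Cl(\O)}{\#\Cl(\O_K)} = \frac{\#\left((\O_K/\f)^\times/(\O/\f)^\times\right)}{[\O_K^\times : \O^\times]}.
\]
Since $\O = \Z + f\O_K$, one has $(\O/\f)^\times \cong (\Z/f\Z)^\times$, and a prime-by-prime computation of $\#(\O_K/\f)^\times$ according to whether $p$ splits, is inert, or ramifies yields the standard identity
\[
\frac{\#(\O_K/\f)^\times}{\#(\Z/f\Z)^\times} = f\prod_{p\mid f}\left(1 - \left(\frac{d_K}{p}\right)\frac1p\right),
\]
where $d_K = \Disc(\O_K)$ and $\left(\frac{\cdot}{p}\right)$ is the Kronecker symbol. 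Combining the two displays recovers the formula of \cite[Thm.~7.24]{Cox} for $\#\Cl(\O)$ in terms of $\#\Cl(\O_K)$, $f$, and $[\O_K^\times : \O^\times]$.

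Next I would apply this single-order formula to both $\O$ (conductor $\f$) and $\O'$ (conductor $\f'$) and divide. The factor $\#\Cl(\O_K)$ cancels, and the nested index identity $[\O_K^\times : {\O'}^\times] = [\O_K^\times : \O^\times]\,[\O^\times : {\O'}^\times]$ turns the unit contribution into the single index $[\O^\times : {\O'}^\times]$ appearing on the left of~\eqref{eq:class_number_formula}. Since $\f \mid \f'$ with $\f'/\f = \r$, the prefactor becomes $\f'/\f = \r$, and the ratio of Euler products collapses, after cancelling the common factors over primes dividing $\f$, to $\prod_{p\mid\f',\ p\nmid\f}\left(1 - \left(\frac{d_K}{p}\right)\frac1p\right)$.

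The final and most delicate step is to rewrite this last product as a product over \emph{all} primes dividing $\r$, using $\Disc(\O)$ in place of $d_K$. Here I would invoke the relation $\Disc(\O) = \f^2 d_K$ together with the behavior of the Kronecker symbol: for $p\nmid\f$ one has $\left(\frac{\Disc(\O)}{p}\right) = \left(\frac{d_K}{p}\right)$, while for $p\mid\f$ one has $\left(\frac{\Disc(\O)}{p}\right) = 0$, so the corresponding factor $\left(1 - \left(\frac{\Disc(\O)}{p}\right)\frac1p\right)$ equals $1$. Consequently the primes dividing $\r$ that also divide $\f$ contribute trivially, and
\[
\prod_{p\mid\r}\left(1 - \left(\frac{\Disc(\O)}{p}\right)\frac1p\right) = \prod_{\substack{p\mid\r\\ p\nmid\f}}\left(1 - \left(\frac{d_K}{p}\right)\frac1p\right) = \prod_{\substack{p\mid\f'\\ p\nmid\f}}\left(1 - \left(\frac{d_K}{p}\right)\frac1p\right),
\]
matching the product obtained in the previous step and completing the proof. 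The main obstacle is precisely this bookkeeping over primes dividing both $\f$ and $\r$: switching from $\Disc(\O_K)$ to $\Disc(\O)$ inside the Kronecker symbol is exactly what forces those factors to become $1$, reconciling the relative formula with the ratio of the two maximal-order formulas. I would also double-check the Kronecker-symbol conventions and confirm that $\O'$, $\O$, and $\O_K$ share the same ambient field, so that all index and Euler-product manipulations are legitimate.
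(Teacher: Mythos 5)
Your argument is correct and complete: the paper itself gives no proof of this lemma (it only cites Cox and Conrad), and your derivation --- the single-order formula from the exact sequence~\eqref{seq:ClO_to_ClK} together with the count of $(\O_K/\f)^\times/(\Z/\f\Z)^\times$, then a ratio of two instances --- is precisely the standard argument in those references. The one genuinely delicate point, primes dividing both $\r$ and $\f$, is handled correctly by your observation that $\left(\frac{\Disc(\O)}{p}\right)=0$ for $p\mid\f$, so those Euler factors are $1$ and the product over $p\mid\r$ agrees with the ratio of Euler products over $p\mid\f'$, $p\nmid\f$.
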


\section{Local solubility}\label{sec:local-solubility}

Let $p$ be a prime number.  In this section, we provide a characterization of when $\Y_{B,C}(\Z_p)$ is empty.  The main result here is the following proposition. 

\begin{proposition}\label{prop:localTest}
    Let $p$ be a prime and write $B = p^kB'$ and $C = p^\ell C'$ for $p \nmid B', C'$. Then $\Y_{B,C}(\Z_p) = \emptyset$ if and only if $\ell$ is odd and either
	\begin{enumerate}[label = (\roman*)]
		\item\label{part:loc-1} $k$ is odd and $\ell < k < n+\ell$; or
		\item\label{part:loc-2}
  $p \neq 2$, $k$ is even, $k < n+\ell$, and $\left(\frac{-B'}{p}\right) = -1$; or
            \item\label{part:loc-3} $p=2$, $k$ is even, $k < \ell$, and $B' \equiv 3 \pmod{8}$; or
		\item\label{part:loc-4} $p=2$, $k$ is even, $\ell < k < n+\ell-2$, and $B' \equiv 1, 3, 5 \pmod{8}$; or
		\item\label{part:loc-5} $p=2$, $k = n+\ell - 2$, and $B' \equiv 1,5 \pmod{8}$.
	\end{enumerate}
\end{proposition}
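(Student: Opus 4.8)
The plan is to analyze local solubility of $x^2 + By^2 = Cz^n$ over $\Z_p$ by a case analysis organized around the parities of $k = v_p(B)$ and $\ell = v_p(C)$, using Hensel's lemma to reduce everything to checking whether certain $p$-adic units are represented by the norm form $x^2 + B'y^2$ or are $n$-th powers. Since $n$ is odd and $\gcd(n,p)$ may or may not be $1$, I would first record the basic Hensel facts: over $\Z_p^\times$ the $n$-th power map is surjective when $p \nmid n$ (so any unit on the right-hand side can be absorbed into $z$), and the form $x^2 + B' y^2$ with $p \nmid B'$ represents a $p$-adic unit $u$ iff $u$ lies in the image of the norm map, which is detected by $(-B'/p)$ when $p$ is odd and by congruences modulo $8$ when $p = 2$.

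First I would set up the valuation bookkeeping. Writing a primitive solution $(x,y,z) \in \Z_p^3$ with $\min(v_p(x),v_p(y),v_p(z)) = 0$, I would compare $v_p(x^2)$, $v_p(By^2) = k + 2v_p(y)$, and $v_p(Cz^n) = \ell + n v_p(z)$. The equation forces the two smallest of these three valuations to coincide, which immediately constrains the possible valuation profiles. The key observation driving the whole argument is that $v_p(x^2)$ and $v_p(By^2)$ have prescribed parities ($x^2$ is even, $By^2$ has the parity of $k$), whereas $\ell + n v_p(z) \equiv \ell + v_p(z) \pmod 2$ can be tuned. When $\ell$ is even one can always solve locally (set $y = 0$ and solve $x^2 = Cz^n$, or exploit that the RHS covers enough units), which is why the statement only produces obstructions when $\ell$ is odd; I would prove this soluble direction first to dispose of all $\ell$ even cases and justify the leading ``$\ell$ is odd and'' clause.

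With $\ell$ odd fixed, I would split on the parity of $k$. If $k$ is odd (case \ref{part:loc-1}), then $x^2$ (even valuation) must balance against $By^2$ (odd valuation) or against $Cz^n$; a short valuation count shows the obstruction lives precisely in the window $\ell < k < n + \ell$, where neither rescaling trick frees up the needed unit. If $k$ is even, then $x^2 + By^2$ behaves like a norm form in a unit times a power of $p$, and solubility hinges on whether $-B'$ is a square class that the form can hit: for odd $p$ this is governed by the Legendre symbol $(-B'/p)$ (case \ref{part:loc-2}), and for $p = 2$ it splits into the residue of $B'$ modulo $8$ across the three sub-windows for $k$ relative to $\ell$ and $n + \ell - 2$ (cases \ref{part:loc-3}--\ref{part:loc-5}). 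In each sub-case I would exhibit an explicit solution on the soluble side via Hensel lifting from a mod-$p$ (or mod-$8$, mod-$16$) solution, and on the insoluble side derive a contradiction by tracking valuations and the square/norm obstruction modulo the relevant power of $p$.

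The main obstacle I expect is the $p = 2$ analysis, which is genuinely the subtle part: over $\Z_2$ the form $x^2 + B'y^2$ and the squares behave according to congruences modulo $8$ (and sometimes one needs to go to mod $16$ to lift via Hensel, since $f(x) = x^2$ has $f'(x) = 2x$ with positive valuation). Carefully delineating the three windows $k < \ell$, $\ell < k < n + \ell - 2$, and the boundary $k = n + \ell - 2$, and matching each to the correct set of residues $\{3\}$, $\{1,3,5\}$, $\{1,5\}$ modulo $8$, will require tracking which unit must be a sum of two squares (or which combination must be a $2$-adic square) after dividing out the common power of $2$. The interplay between the parity of $v_2(z)$ forced by $\ell$ odd and the representability of $-B'$ by $x^2 + B'y^2$ modulo $8$ is exactly where the asymmetry between the residue sets arises, and getting the strict inequalities and boundary case right is the delicate bookkeeping that the odd-$p$ cases only hint at.
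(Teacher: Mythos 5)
Your overall strategy is the same as the paper's: normalize the valuations of $B$ and $C$ (the paper does this via the isomorphism $\Y_{B,C}(\Z_p)\cong\Y_{B/p^2,C/p^2}(\Z_p)$ of Lemma~\ref{lem:loc-bijection}), dispose of all $\ell$ even cases and $k\ge n+\ell$ first (Lemma~\ref{lem:local-test-reduction}), then split on the parity of $k$, extract the forced divisibilities of $x,y,z$ from the valuation comparison (Lemma~\ref{lem:possible_lemma}), and decide solubility of the resulting auxiliary unit equation by the Legendre symbol for odd $p$ and by congruences modulo $8$ for $p=2$, with Hensel lifting on the soluble side. So the architecture is right and matches the paper.

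There is, however, one concretely false ingredient in your toolkit: the claim that the $n$-th power map on $\Z_p^\times$ is surjective whenever $p\nmid n$, ``so any unit on the right-hand side can be absorbed into $z$.'' This map is surjective only when $\gcd(n,p-1)=1$; e.g.\ for $n=3$, $p=7$ the cubes form an index-$3$ subgroup of $\Z_7^\times$, so you cannot in general choose $z$ to make $C'z^n$ equal to a prescribed unit. If any of your soluble-side constructions lean on this absorption, they fail. The paper never needs $n$-th roots: it exploits instead that $n+1$ is \emph{even}, choosing $z$ to be $C'$ (or $C'/B'$, etc.) so that $Cz^n=p^{\ell}C'^{\,n+1}$ is $p^{\ell}$ times a perfect square, yielding explicit points such as $[p^{\ell/2}C'^{(n+1)/2}:0:C']$ for $\ell$ even and $\bigl[0:C'^{(n+1)/2}/B'^{(n+1)/2}:pC'/B'\bigr]$ at the boundary $k=n+\ell$. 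You should replace the surjectivity claim with this parity trick (or with the correct statement that $C'\cdot(\Z_p^\times)^n$ always meets the squares because $C'^{\,n+1}$ is a square); with that repair your plan goes through along the same lines as the paper. A smaller imprecision: for odd $p$ the form $x^2+B'y^2$ represents \emph{every} unit of $\Z_p$ regardless of $\left(\frac{-B'}{p}\right)$; the symbol only obstructs primitive representation of elements of odd valuation (equivalently, nontrivial zeros mod $p$), which is the version of the criterion you actually need in cases \ref{part:loc-1} and \ref{part:loc-2}.
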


\begin{remark}
    In particular, when \(B' \equiv 7 \mod 8\) and $p = 2$, $\Y_{B,C}(\Z_{2})\not = \emptyset$ for any $k,\ell$. 
\end{remark}

We will use repeatedly use the following version of Hensel's lemma. For \(a \in \Z_p\), recall that \(|a|_p = p^{-v_p(a)}\) is the \(p\)-adic absolute value of \(a\). For \(\mathbf{a} = (a_1, a_2, \dots, a_d) \in \Z_p^d\), let \(\|\mathbf{a}\|_p = \max_i \{|a_i|_p\}.\)

\begin{lemma}[Hensel's lemma]
    Let \(f \in \Z_p[X_1, X_2 \ldots , X_d ]\) be a multivariate polynomial. Suppose \(\mathbf{a} \in \Z_p^d\) satisfies
    \[
    |f(\mathbf{a})|_p < \|\nabla f (\mathbf{a}))\|_p^2
    \]
    Then there is an \(\mathbf{\alpha} \in \Z_p^d\) such that \(f(\mathbf{\alpha}) = 0 \) and \(\|\mathbf{\alpha} - \mathbf{a}\|_p < \|\nabla f (\mathbf{a}))\|_p\).
\end{lemma}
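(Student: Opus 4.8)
The plan is to reduce the multivariate statement to the classical one-variable Hensel/Newton lemma by freezing all but one carefully chosen coordinate. Write $M \colonequals \|\nabla f(\mathbf{a})\|_p = \max_i |\partial_i f(\mathbf{a})|_p$, so the hypothesis reads $|f(\mathbf{a})|_p < M^2$. First I would choose an index $i$ achieving the maximum, i.e.\ $|\partial_i f(\mathbf{a})|_p = M$; such an $i$ exists since the $p$-adic norm takes only finitely many values among the components of $\nabla f(\mathbf{a})$.

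Having fixed this $i$, I would introduce the single-variable polynomial
\[
g(T) \colonequals f(a_1, \dots, a_{i-1}, T, a_{i+1}, \dots, a_d) \in \Z_p[T],
\]
obtained from $f$ by substituting the fixed coordinates of $\mathbf{a}$ in every slot but the $i$-th. By construction $g(a_i) = f(\mathbf{a})$ and $g'(a_i) = \partial_i f(\mathbf{a})$, so $|g'(a_i)|_p = M$ and the hypothesis becomes exactly $|g(a_i)|_p < |g'(a_i)|_p^2$. Thus the one-variable Hensel condition holds for $g$ at the point $a_i \in \Z_p$.

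The one-variable case itself I would establish by Newton's method: set $b_0 = a_i$ and $b_{k+1} = b_k - g(b_k)/g'(b_k)$. Writing $c \colonequals |g(a_i)|_p/M^2 < 1$, one checks inductively that every iterate lies in $\Z_p$, that $|g'(b_k)|_p = M$ for all $k$ (the norm of the derivative is preserved), and that $|g(b_k)|_p \le c^{2^k} M^2$, so that the $b_k$ form a Cauchy sequence with limit $\alpha_i \in \Z_p$ satisfying $g(\alpha_i) = 0$. The ultrametric inequality then gives $|\alpha_i - a_i|_p = |b_1 - b_0|_p = |g(a_i)|_p/M < M$, since all later increments are strictly smaller. (Alternatively one may simply cite the standard one-variable Hensel lemma from any reference on $p$-adic fields.) Setting $\mathbf{\alpha} \colonequals (a_1, \dots, a_{i-1}, \alpha_i, a_{i+1}, \dots, a_d)$, we then have $f(\mathbf{\alpha}) = g(\alpha_i) = 0$ and $\|\mathbf{\alpha} - \mathbf{a}\|_p = |\alpha_i - a_i|_p < M = \|\nabla f(\mathbf{a})\|_p$, as required.

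The conceptual crux is the observation that it suffices to perturb in a single coordinate---the one in which the gradient is largest---after which nothing genuinely multivariate remains. The only point requiring care is the bookkeeping in the Newton iteration: one must verify that the iterates stay $p$-adic integers (which uses $M \le 1$, as $g'(a_i) \in \Z_p$) and that the norm $|g'(b_k)|_p$ does not shrink, so that the quadratic convergence and the strict inequality $|\alpha_i - a_i|_p < M$ persist throughout.
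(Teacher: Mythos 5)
Your proof is correct, and it is essentially the same argument as in the paper: the paper proves this lemma only by citing Conrad's notes (\cite[Thm.~2.1]{conrad_Hensel}), where the proof is exactly your reduction to the one-variable Newton/Hensel iteration by perturbing only the coordinate whose partial derivative attains the maximal norm $\|\nabla f(\mathbf{a})\|_p$. (The only cosmetic point: the equality $|\alpha_i - a_i|_p = |g(a_i)|_p/M$ should be stated as $\leq$, or the trivial case $g(a_i)=0$ handled separately, but this does not affect the conclusion.)
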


\begin{proof}
    See e.g.\ \cite[Thm.~2.1]{conrad_Hensel}.
\end{proof} 

We first prove some useful lemmas. Recall  from Remark~\ref{rem:primitivity} that for a PID \(R\), when we write $[x: y: z] \in \mathcal{Y}_{B,C}(R)$, we assume the point is primitive. 

\begin{lemma} \label{lem:loc-bijection} 
    Let $R$ be any PID and suppose $p \in R$ is a prime element. If $B,C \in R$ and \(p^2\mid B, C\), then the map
    \begin{align}\label{eq:reduce_by_2}
        \Y_{B,C}(R) &\to \Y_{B\cdot p^{-2}, C\cdot p^{-2}}(R) \\
        \nonumber [x:y:z] & \mapsto [x/p : y : z]
    \end{align}
    is an isomorphism of groupoids.
\end{lemma}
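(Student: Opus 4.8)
The plan is to produce an explicit inverse functor and verify that the two are mutually inverse, which gives an honest isomorphism of groupoids rather than merely an equivalence. Since $R$ is a PID, every $\G_m$-torsor over $\Spec R$ is trivial, so by the description of points in \S\ref{subsec:prelim-points-pqr} an object of $\Y_{B,C}(R)$ is a primitive tuple $(x,y,z)\in R^3$ with $x^2+By^2=Cz^n$, and a morphism $(x,y,z)\to(x',y',z')$ is a unit $t\in R^\times$ acting through the weights $(n,n,2)$, i.e.\ with $(t^n x, t^n y, t^2 z)=(x',y',z')$. Write $B' \colonequals Bp^{-2}$ and $C' \colonequals Cp^{-2}$, which lie in $R$ by hypothesis. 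I would define the candidate inverse $G$ on objects by $(u,v,w)\mapsto(pu,v,w)$, and declare that both $F$ and $G$ send a morphism labelled by $t\in R^\times$ to the morphism labelled by the same $t$.

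First I would check that $F$ is well defined on objects. Given a primitive solution $(x,y,z)$ of $x^2+By^2=Cz^n$, the relation $x^2=p^2(C'z^n-B'y^2)$ shows $p^2\mid x^2$, hence $p\mid x$ since $R$ is a UFD, so $x/p\in R$; dividing by $p^2$ gives $(x/p)^2+B'y^2=C'z^n$. Primitivity of $(x/p,y,z)$ is automatic: any prime $q$ dividing $x/p$, $y$, and $z$ also divides $x$, hence $q\mid\gcd(x,y,z)=1$, a contradiction.

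The one genuinely delicate point is the primitivity of $(pu,v,w)$ for the inverse $G$; this is where the hypothesis $p^2\mid B,C$ is essential. Multiplying $u^2+B'v^2=C'w^n$ by $p^2$ shows $(pu,v,w)$ solves the $(B,C)$-equation. Suppose a prime $q$ divides $pu$, $v$, and $w$. If $q$ is not an associate of $p$, then $q\mid u$, so $q\mid\gcd(u,v,w)=1$, a contradiction. If $q$ is an associate of $p$, then $p\mid v$ and $p\mid w$; since $B',C'\in R$, this forces $p$ to divide both $B'v^2$ and $C'w^n$, whence $p\mid u^2=C'w^n-B'v^2$ and so $p\mid u$, again contradicting primitivity of $(u,v,w)$. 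In other words, a primitive solution of the $(B',C')$-equation can never have $p$ dividing both $v$ and $w$, which is exactly what makes $(pu,v,w)$ primitive. I expect this step --- ruling out the simultaneous divisibility $p\mid v,\ p\mid w$ using the equation together with $B',C'\in R$ --- to be the main obstacle.

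Finally I would check compatibility with morphisms and that $F$ and $G$ are inverse. A unit $t$ satisfies $t^n x=x'$ if and only if $t^n(x/p)=x'/p$ (as $p$ is a nonzerodivisor), and the conditions on the $y$- and $z$-coordinates are unchanged, so $t$ is simultaneously a morphism in the source and target groupoids; the same holds for $G$, and both clearly respect composition and identities. The formulas $G(F(x,y,z))=(x,y,z)$ and $F(G(u,v,w))=(u,v,w)$, together with the fact that both functors fix the labelling unit of each morphism, show $G\circ F$ and $F\circ G$ are the identity. Hence $F$ is an isomorphism of groupoids with inverse $G$.
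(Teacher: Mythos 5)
Your proof is correct and takes essentially the same approach as the paper's: the key step in both is showing that a primitive solution of the $(B',C')$-equation cannot have $p$ dividing both the $y$- and $z$-coordinates (since the equation then forces $p$ to divide the $x$-coordinate), which is exactly what makes the inverse map well defined. The only difference is presentational --- you spell out morphisms as units acting through the weights $(n,n,2)$ and verify functoriality directly, whereas the paper simply notes that stacky points and their automorphism groups are preserved.
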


\begin{proof}
    The map on objects is well-defined since $p \mid x$ is necessary for $[x:y:z] \in \Y_{B,C}(R)$. To show that the inverse map $[x' : y' : z'] \mapsto [px' : y' : z']$ is well-defined, note that $[px' : y' : z']$ is primitive if and only if $p \nmid y'$ or $p \nmid z'$. If \(p\mid y'\) and \(z'\), then necessarily \(p\mid x'\), contradicting the primitivity of $[x' : y' : z']$. 
    
    The map defines an isomorphism of groupoids since each stacky point is mapped to a stacky point and its automorphism group is preserved.
\end{proof}

We will use Lemma~\ref{lem:loc-bijection} in this section with $R=\Z_p$. In Section~\ref{sec:cascade}, we will set $R=\Z$. 

\begin{lemma}
\label{lem:possible_lemma}
    Maintaining the notation from Proposition~\ref{prop:localTest}, suppose \(1 < k < n+1\) and  \(\ell =1\). Then $[x:y:z] \in \Y_{B,C}(\Z_p)$ if and only if one of the following holds:
    \begin{enumerate}[label=(\roman*)]
        \item $k$ is even, $p^{k/2} \mid x$, $p \mid z$, and $\sizedbracket{\frac{x}{p^{k/2}}:y:\frac{z}{p}}$ satisfies
        \begin{equation}\label{eq:aux_even}
            x^2 + B'y^2 = p^{n+1-k}C'z^n;
        \end{equation}
        
        \item $k$ is odd, $p^{(k+1)/2} \mid x$, $p \mid z$, and $\sizedbracket{\frac{x}{p^{(k+1)/2}}:y:\frac{z}{p}}$ satisfies
        \begin{equation}\label{eq:aux_odd}
            px^2 + B'y^2 = p^{n+1-k}C'z^n.
        \end{equation}
    \end{enumerate}
\end{lemma}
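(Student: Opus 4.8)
The plan is to analyze the $p$-adic valuations of the coordinates of a primitive solution, which forces a precise structure on it, and then to reduce to the auxiliary equations~\eqref{eq:aux_even} and~\eqref{eq:aux_odd} by the explicit substitution whose inverse is recorded in the statement. Throughout I work over the PID $\Z_p$, so by Remark~\ref{rem:primitivity} a point $[x:y:z] \in \Y_{B,C}(\Z_p)$ is a primitive representative of $x^2 + p^k B' y^2 = p C' z^n$.

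First I would establish the divisibility constraints shared by both cases. Given a primitive $[x:y:z]$, comparing $p$-adic valuations across $x^2 + p^k B' y^2 = p C' z^n$ shows $p \mid x$: if $p \nmid x$ then the left-hand side has valuation $0$ (as $k \geq 2$), while the right-hand side has valuation $\geq 1$. With $p \mid x$ in hand, a second valuation comparison forces $p \mid z$, since were $p \nmid z$ the right-hand side would have valuation exactly $1$ while both terms on the left have valuation $\geq 2$. Primitivity then yields $p \nmid y$. This step pins down which coordinates carry the factors of $p$.

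Next I would determine $v_p(x)$, which is where the parity of $k$ enters. Substituting $z = p z'$ gives $x^2 + p^k B' y^2 = p^{n+1} C' (z')^n$, whose right-hand side has valuation $\geq n+1$. Since $k \leq n$ by hypothesis, this strictly exceeds $k = v_p(p^k B' y^2)$, so the two left-hand terms must cancel to high order, which is possible only if $2 v_p(x) = k$. When $k$ is even this forces $v_p(x) = k/2$, i.e.\ $p^{k/2} \mid x$, and dividing the displayed equation by $p^k$ produces exactly~\eqref{eq:aux_even} for $\sizedbracket{x/p^{k/2} : y : z/p}$. When $k$ is odd the equality $2v_p(x)=k$ is impossible, and the only way to carry out the analogous reduction is to extract $p^{(k+1)/2}$ from $x$: writing $x = p^{(k+1)/2} X$ and dividing by $p^k$ yields~\eqref{eq:aux_odd}. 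This parity obstruction is precisely what will make the curve locally insoluble for odd $k$ in the relevant range, recovering case~\ref{part:loc-1} of Proposition~\ref{prop:localTest} once one checks that~\eqref{eq:aux_odd} has no solution with $p \nmid Y$.

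Finally I would verify the converse by reversing the substitution: given a solution of~\eqref{eq:aux_even} (resp.~\eqref{eq:aux_odd}), multiplying back by the appropriate powers of $p$ returns a solution of $x^2 + By^2 = Cz^n$ by direct algebra. The subtle point — and the step I expect to require the most care — is primitivity. Since $p$ divides both the recovered $x$ and the recovered $z$, the original tuple is primitive \emph{if and only if} $p \nmid y$; primitivity of the reduced tuple alone does not suffice, because a reduced solution may well have $p \mid Y$ (this is exactly what occurs for odd $k$). Thus the correspondence is with those solutions of the auxiliary equation satisfying $p \nmid Y$, and tracking this unchanged coordinate $y$ in both directions is what upgrades the substitution to a genuine bijection, with automorphism groups preserved as in the proof of Lemma~\ref{lem:loc-bijection}.
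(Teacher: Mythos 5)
Your proof is correct and follows essentially the same route as the paper's: a $p$-adic valuation analysis forcing $p \mid x$, then $p \mid z$, then $p^{\lceil k/2 \rceil} \mid x$, followed by the substitution $(x,y,z) \mapsto (x/p^{\lceil k/2\rceil}, y, z/p)$ and division by $p^k$ to land on~\eqref{eq:aux_even} or~\eqref{eq:aux_odd}. The only differences are cosmetic: you additionally pin down $v_p(x)=k/2$ exactly in the even case and flag the primitivity bookkeeping in the converse, both of which are consistent with (and slightly finer than) what the paper records.
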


\begin{proof}
    Since $p \mid B, C$, we have $p \mid x$. By the hypotheses $k > 1$ and $\ell = 1$ we must also have $p \mid z$. Thus $p^{n+1}$ divides the right-hand-side of~\eqref{eq:Y_BC}, and the hypothesis $1 < k < n+1$ forces $p^k \mid x^2$, which implies $p^{k/2} \mid x$ if $k$ is even, and $p^{(k+1)/2} \mid x$ if $k$ is odd.

    Taking $(x', y', z') = (x/p^{\lceil k/2 \rceil}, y, z/p)$ and rearranging~\eqref{eq:Y_BC}, we obtain
    \[p^{2\lceil k/2 \rceil - k}x'^2 + B'y'^2 = p^{n+1-k} C'z'^n,\]
    which specializes to~\eqref{eq:aux_even} and~\eqref{eq:aux_odd} when $k$ is even or odd, respectively.
\end{proof}

\begin{lemma}\label{lem:local-test-reduction}
    Maintaining the notation from Proposition~\ref{prop:localTest}, we have $\Y_{B,C}(\Z_p) \neq \emptyset$ whenever $\ell$ is even or $\ell$ is odd and $k \geq n + \ell$. 
\end{lemma}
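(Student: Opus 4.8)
The plan is to produce explicit primitive $\Z_p$-points, splitting on the parity of $\ell$ and using the version of Hensel's lemma above to pass from approximate to genuine solutions. Throughout I abbreviate~\eqref{eq:Y_BC} as $x^2 + p^k B' y^2 = p^\ell C' z^n$ with $B', C' \in \Z_p^\times$, and I would repeatedly exploit that $n$ is odd, so that for every $p$ the map $w \mapsto w^n$ on $\Z_p^\times$ preserves square-classes (for $p$ odd because $w^{n(p-1)/2} = (w^{(p-1)/2})^n$, and for $p = 2$ because $w^n \equiv w \pmod 8$).

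First consider $\ell$ even. Here I would look for a point with $y = 0$ and $z = w$ a unit, so that primitivity is automatic. The equation becomes $x^2 = p^\ell C' w^n$, and since $\ell$ is even it suffices to arrange that $C' w^n$ is a square in $\Z_p^\times$: for $p$ odd choose $w$ in the square-class making $C' w^n$ a quadratic residue, and for $p = 2$ choose $w$ odd with $C' w^n \equiv 1 \pmod 8$ (possible since $C' w \bmod 8$ ranges over all of $(\Z/8)^\times$). Then $x = p^{\ell/2}\sqrt{C' w^n}$ gives the desired point of $\Y_{B,C}(\Z_p)$.

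Now suppose $\ell$ is odd with $k \geq n + \ell$. A valuation/parity count shows that any solution must have $v_p(x) \geq 1$, must have $y$ a unit, and must satisfy $p \mid z$: if $z$ were a unit the right-hand side would have odd valuation $\ell$, while the left-hand side is the sum of a square and a term of valuation $k \geq n + \ell$, both of even valuation or too large. I would therefore search among $y = 1$, $z = p^c w$ with $w \in \Z_p^\times$ and $c$ odd (so that $nc + \ell$ is even), writing $x = p^{e}X$; after dividing out the common power of $p$ this reduces to making a fixed unit a square and then applying Hensel's lemma. When $k > n + \ell$ I would take $c = 1$, where the unit to be squared is $C' w^n - p^{\,k - n - \ell} B' \equiv C' w^n \pmod p$, which I make a square by choosing the square-class of $w$ (for $p = 2$, choosing $w \bmod 8$ according to whether $k - n - \ell$ equals $1$, $2$, or is $\geq 3$).

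The boundary case $k = n + \ell$ will be the main obstacle, since then the two left-hand terms share the same valuation and no single choice of $c$ works unconditionally. For $p$ odd I would split on $\left(\tfrac{-B'}{p}\right)$: if it equals $1$, I overshoot with $c = 3$, so that the dominant unit is $\equiv -B' \pmod p$, a nonzero square, which lifts; if it equals $-1$, I keep $c = 1$ and must represent $C'$ by the binary form $s^2 + B' y^2$ with both $s$ and $y$ units, which is possible precisely because this form is then anisotropic over $\F_p$ and so represents every nonzero value, with enough representations (at least $p - 3$) to avoid $s=0$ and $y=0$; the finitely many small primes are checked by hand. The prime $p = 2$ is handled in the same spirit, with the dichotomy governed by $B' \bmod 8$ — for instance $B' \equiv 7 \pmod 8$ lets the overshoot $c = 3$ succeed, since then $-B' \equiv 1 \pmod 8$ — and the remaining residues requiring a finite computation modulo a fixed power of $2$. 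In each case the hypothesis $k \geq n + \ell$ is exactly what excludes the obstructions enumerated in Proposition~\ref{prop:localTest}, which is what guarantees that these constructions succeed.
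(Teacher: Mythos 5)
Your proposal is correct, but it takes a noticeably different and more laborious route than the paper's. The paper first invokes Lemma~\ref{lem:loc-bijection} to normalize $\ell$ to $0$ or $1$, and then exploits the fact that $n+1$ is even to write down \emph{exact} solutions in closed form with no square-class analysis at all: $[p^{\ell/2}C'^{(n+1)/2} : 0 : C']$ when $\ell$ is even, and $[0 : (C'/B')^{(n+1)/2} : pC'/B']$ when $k = n+\ell$, with a single Hensel lift only in the intermediate case $k > n+\ell$. Your version instead fixes $y$ (or $z$) and tunes the square class of the remaining unit, which works but forces you into the case analysis you identify as "the main obstacle" at $k = n+\ell$: the dichotomy on $\left(\frac{-B'}{p}\right)$, the representation count for the anisotropic form $s^2 + B'y^2$ over $\F_p$ (where the bound $p-3$ degenerates at $p=3$ and a hand check is genuinely needed, though it does succeed), and a residue computation mod $8$. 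All of that is subsumed by the observation that $x=0$, $y = (C'/B')^{(n+1)/2}$, $z = pC'/B'$ solves the equation identically when $k = n+\ell$, independent of any quadratic residue condition. Two minor cautions: your closing remark that Proposition~\ref{prop:localTest} "guarantees that these constructions succeed" should be deleted or rephrased, since that proposition is proved \emph{from} this lemma and must not be cited here; and the valuation argument showing every solution has $p \mid z$ and $y \in \Z_p^\times$ is sound but unnecessary for an existence statement, where exhibiting one point suffices.
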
 

\begin{proof}
    If $\ell$ is even, then $[p^{\ell/2} C'^{(n+1)/2} : 0 : C'] \in \Y_{B,C}(\Z_p)$. Assume now that $\ell$ is odd and $k \geq n + \ell$. By Lemma~\ref{lem:loc-bijection}, it suffices to prove $\Y_{B,C}(\Z_p) \neq \emptyset$ for $\ell = 1$ and $k \geq n+1$.

    If \(k>n+1\), then the point \([C'^{(n+1)/2}: 1: C']\) satisfies an auxiliary equation
   \begin{equation}\label{eq:auxiliary_k_odd_big}
		x^2 + p^{k-(n+1)}B'y^2 = C'z^n.
	\end{equation}
    modulo \(p\). Lifting this solution using Hensel's lemma gives us a \(\Z_p\) solution \([x':y':z']\) to~\eqref{eq:auxiliary_k_odd_big}, with \(y' \in \Z_p^{\times}\). The point \([p^{(n+1)/2}x': y': pz']\) then satisfies the equation for $\Y_{B,C}$ and is $p$-integral.

    On the other hand, if \(k = n + \ell = n+1\), then since \(B',C' \in \Z_p^{\times}\), we have that
    \[
    \left[ 0 : \frac{C'^{(n+1)/2}}{B'^{(n+1)/2}} : \frac{pC'}{B'} \right] \in \Y_{p^kB', pC'}(\Z_p)
    \]
    so that \(\Y_{B,C}(\Z_p) \neq \emptyset\) in this case.
\end{proof}

Now we are ready to prove Proposition~\ref{prop:localTest}. 

\begin{proof}[Proof of Proposition~\ref{prop:localTest}] 

By Lemma~\ref{lem:local-test-reduction}, we find $\Y_{B,C}(\Z_p) \neq \emptyset$ whenever $\ell$ is even or $k \geq n + \ell$. Thus we assume $\ell$ is odd and $k< n + \ell$. \bigskip
   
Proof of~\ref{part:loc-1}: suppose $k$ is odd. We first claim that if \(k \le \ell\), then \(\Y_{B,C}(\Z_p) \neq \emptyset\). Indeed, since both \(k\) and \(\ell\) are odd, we may invoke Lemma~\ref{lem:loc-bijection} to assume that \(k=1\). Then 
\[\left[0 : p^{(\ell - 1)/2} \frac{C'^{(n+1)/2}}{B'^{(n+1)/2}} : \frac{C'}{B'}\right] \in \Y_{B,C}(\Z_p).\] 
Suppose now that \(\ell<k< n+ \ell\) and that \(k\) is odd. We may assume by Lemma~\ref{lem:loc-bijection} that \(\ell =1\) and $1 < k < n+1$. Invoking Lemma~\ref{lem:possible_lemma}, if $[x:y:z] \in \Y_{B,C}(\Z_p)$, then we have $p^{(k+1)/2} \mid x,\ p \mid z$, and $[x/p^{(k+1)/2} : y : z/p]$ satisfies~\eqref{eq:aux_odd}. However, the latter implies $p \mid y$, contradicting the primitivity of $[x:y:z]$. Therefore $\Y_{B,C}(\Z_p) = \emptyset$.
\bigskip

Proof of~\ref{part:loc-2}: suppose $p$ is an odd prime and $k < n+\ell$ is even. We have two cases to consider: $k < \ell$ and $k > \ell$. 

If $k < \ell$, then by Lemma~\ref{lem:loc-bijection} we may assume $k=0$. Since $p \nmid B'$ and $p > 2$, the equation~\eqref{eq:Y_BC} has a nontrivial solution modulo $p$ if and only if the Legendre symbol $\left(\frac{-B'}{p}\right) = 1$. Moreover, this solution may be lifted to $\Z_p$ by Hensel's Lemma.

If $\ell < k < n + \ell$, then again by Lemma~\ref{lem:loc-bijection}, we may assume that $\ell = 1$, so $1 < k < n+1$. By Lemma~\ref{lem:possible_lemma}, any solution $[x:y:z] \in \Y_{B,C}(\Z_p)$ satisfies $p^{k/2} \mid x,\ p \mid z$, such that $[x/p^{k/2} : y : z]$ satisfy~\eqref{eq:aux_even}. The latter has a solution modulo $p$ if and only if $\left(\frac{-B'}{p} \right) = 1$, and if it exists, such a solution may be lifted to a point in $\Y_{B,C}(\Z_p)$ by Hensel's Lemma. 
\bigskip

Proof of~\ref{part:loc-3}: Suppose \(p=2\) and \(k < \ell\) is even. By Lemma~\ref{lem:loc-bijection}, we may assume \(k =0\). We claim that in this case, \(\Y_{B,C}(\Z_2) = \emptyset\) if and only if \(B' \equiv 3 \pmod 8\). Consider first the case \(\ell=1\). Note that if  \([x:y:z] \in \Y_{B',2C'}(\Z_2)\), then \(x\) and \(y\) must both be in \(\Z_2^{\times}\), since the alternative would violate primitivity. In particular, 
\begin{equation}\label{eq:loc-test-mod2}
    x^2 + B'y^2 \equiv 1 + B' \pmod 8.
\end{equation}
On the other hand, \(2C'z^n\) is \(2\) or \(6 \pmod 8\) if  \(z \in \Z_2^{\times}\), and \(0 \pmod 8\) if \(z \in 2 \Z_2\). Thus \(B' \equiv 1, 5, 7 \pmod 8\). Conversely, if \(B' \equiv 1,5,7 \pmod 8\), a solution to~\eqref{eq:loc-test-mod2} with \(x, y\) odd can be lifted to \([x:y:z] \in \Y_{B',2C'}(\Z_2)\) via Hensel's lemma, with \(x, y \in \Z_2^{\times}.\)

For \(\ell \ge 3\), we have that \([x:y:z] \in \Y_{B', 2^{\ell}C'}(\Z_2)\) must satisfy \(x^2 + B'y^2 \equiv 0 \mod 8\). If \(x, y \in \Z_2^{\times}\), then \(B' \equiv 7 \mod 8\). If either \(x, y \in 2\Z_2\), then they are both in \(2\Z_2\) and primitivity ensures that \(z \in \Z_2^{\times}\). Therefore \([x/2: y/2: z] \in \Y_{B', 2^{\ell-2}C'}(\Z_2)\). Repeating this process, we are reduced to the case \(\ell=1\) as in the previous paragraph and the rest of the proof follows as before. 
\bigskip

Proof of~\ref{part:loc-4} and~\ref{part:loc-5}: suppose \(p=2\), \(\ell < k < n+ \ell\). By Lemma~\ref{lem:loc-bijection}, we may assume that \(\ell=1\) and \(1 < k <  n+1\). By Lemma~\ref{lem:possible_lemma}, if $[x:y:z] \in \Y_{2^kB',2C'}$, then $2^{k/2} \mid x$ and $2 \mid z$ and $[x/2^{k/2} : y : z/2]$ satisfies the equation for $\Y_{B',2^{n+1-k}C'}$ and necessarily $y \in \Z_2^\times$. Now there are two cases depending on whether $k < n-1$ (part~\ref{part:loc-4}) or $k = n-1$ (part~\ref{part:loc-5}).

If $k < n-1$, then $\Y_{B',2^{n+1-k}C'}(\Z_2) \neq \emptyset$ implies $1 + B' \equiv 0 \pmod{8}$, whence $B' \equiv 7 \pmod{8}$. Indeed, whenever $B' \equiv 7 \pmod{8}$ we have $\Y_{B,C}(\Z_2) \neq \emptyset$, since $\Y_{B',2^{n+1-k}C'}$ has solutions modulo 2 that may be lifted by Hensel's Lemma. This concludes the proof of~\ref{part:loc-4}.

If $k = n-1$, then $\Y_{B',2^{n+1-k}C'}(\Z_2) = \Y_{B',4C'}(\Z_2)\neq \emptyset$ implies $B' \equiv 3 \pmod{4}$. Conversely, we see that $\Y_{B',4C'}(\Z/8\Z)$ has a point with $x,y$ odd whenever $B' \equiv 3 \pmod{4}$. In either case, such a solution may be lifted to a $\Z_2$-point $[x':y':z'] \in \Y_{B', 4C'}(\Z_2)$ with $x',y' \in \Z_2^\times$. Finally, $[2^{k/2}x' : y' : 2z'] \in \Y_{2^kB', 2C'}(\Z_2)$.
\end{proof}

\section{Descent for global points}\label{sec:global-descent}

In this section we characterize $\Z$-points on $\Y_{B,C}$. Our proof has two major components: the first step (\S\ref{subsec:descent}) is to introduce an auxiliary stacky curve $\Y'$ and perform a descent procedure for a torsor $\calC'\to\Y'$ over a localization of the ring of integers of a number field, namely $K = \Q\left (\sqrt{-B}\right )$. This gives us a set containing the integral points of \(\Y_{B,C}(\Z)\), but the containment might be strict. The second step (\S\ref{subsec:denominator-test}) is to examine the local conditions at each prime and identify conditions under which a point in the superset is indeed a point in \(\Y_{B,C}(\Z)\). This sets up the proof of the main theorems in Section~\ref{sec:mainthm}.

\subsection{Descent}
\label{subsec:descent}
Fix nonzero positive integers $B$ and $C$ such that $-B$ is not a square, and let \(\Y \colonequals \Y_{B,C}\). Also set $K \colonequals \Q\big(\sqrt{-B}\big)$ and denote by $\O_{K}$ its ring of integers. 
We introduce the following auxiliary stacky curve. 
\begin{equation}
\label{eqn:construction-for-Y'}
\Y' \colonequals [S'/\G_m],\text{ where }S' \colonequals \left(\Spec \Z[u,v,w]/(uv - Cw^n) \right)\smallsetminus \{(0,0,0)\} \subset \A_{\Z}^3. 
\end{equation}
Here, the action of \(\G_m\) on \(S'\) sends \((u,v,w) \mapsto (\lambda^nu, \lambda^n v, \lambda^2 w)\). Using the quotient stack structure as in \S\ref{subsec:prelim-points-pqr}, we may think of \(\Y'\) as being a codimension 1 substack of \(\P(n,n,2)\) cut out by the weighted homogeneous equation \(uv = Cw^n\); explicitly, if \(R\) is a PID, then the objects \([u:v:w]\in\Y'(R)\subseteq\P(n,n,2)(R)\) correspond to primitive $R$-solutions to \(uv = Cw^n\).

We will now construct a morphism of stacks
\(\YYprime \colon \Y \to \Y'\) defined over \({\orb_K}[1/2\sqrt{-B}]\). By the $2$-Yoneda lemma, it is sufficient to construct a map \(\YYprime_T\colon\Y(T) \to \Y'(T)\) for any \(\Spec \orb_K[1/2\sqrt{-B}]\)-scheme \(T\). Define\footnote{We abuse notation here by using coordinates to represent sections of line bundles. See \S\ref{subsec:prelim-points-pqr}.} 
\begin{equation}\label{eq:Y-Y'-iso}
    \begin{array}{cccc}\YYprime_T\colon&\Y(T) &\to &\Y'(T)\\
    &[
    x: y: z] & \mapsto &\left[ 
    x + \sqrt{-B}y: x - \sqrt{-B}y: z\right].
    \end{array}
\end{equation}
The stacky points
\([\pm \sqrt{-B}: 1 :0]\) map to the stacky points \([2\sqrt{-B}:0:0]\) and \([0:-2\sqrt{-B}:0]\) respectively. 

We claim that the morphism $\YYprime$ is an isomorphism of stacks over \(\orb_K[1/2\sqrt{-B}]\). Indeed by 
Proposition~\ref{prop:eqCategories}, we only need to check that \(\YYprime_T \colon \Y(T) \to \Y' (T) \) is an equivalence of groupoids for any \(\orb_K[1/2\sqrt{-B}]\)-scheme \(T\). This can be checked by constructing a quasi-inverse as follows
\begin{equation}\label{eq:Y-Y'-inverse}
     \begin{array}{cccc}
     \Y'(T) &\rightarrow &\Y(T)\\
     {[u:v:w]} &\mapsto &\displaystyle\left[\frac{u+v}{2}: \frac{u-v}{2\sqrt{-B}}: w \right].
 \end{array}
 \end{equation}

\begin{lemma}\label{lemma:Yprime-integer-points}
    Write $B=f^2B_0$ for $-1\ne B_0$ squarefree. Under the map $\YYprime$ from~\eqref{eq:Y-Y'-iso}, the image of $\Y(\Z)$ is contained in $\Y'\left (\orb_{K}\left[1/{2f}\right ]\right )$. In particular, if $B$ is squarefree, then the image of $\Y(\Z)$ is contained in $\Y'\left(\O_K\left[1/{2}\right]\right)$. 
\end{lemma}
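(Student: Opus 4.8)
The plan is to take a primitive integer representative of a point of $\Y(\Z)$ and check that its image under~\eqref{eq:Y-Y'-iso} is $\p$-integral at every prime of $\O_K$ away from $2f$. Concretely, let $(x,y,z)$ be primitive with $\gcd(x,y,z)=1$ and $x^2+By^2=Cz^n$. Since $\sqrt{-B}=f\sqrt{-B_0}$, its image is $[u:v:w]$ with
\[ u = x + f\sqrt{-B_0}\,y, \qquad v = x - f\sqrt{-B_0}\,y, \qquad w = z, \]
all lying in $\Z[\sqrt{-B_0}]\subseteq\O_K$. I would note that $v=\overline{u}$ is the Galois conjugate of $u$ and that $uv = x^2 + f^2B_0y^2 = x^2+By^2 = Cz^n = Cw^n$, so $[u:v:w]$ is a $K$-point of $\Y'\subseteq\P(n,n,2)$. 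By the integrality discussion in \S\ref{subsec:prelin-integral-pts}, it then suffices to show, for every prime $\p$ of $\O_K$ with $\p\nmid 2f$, that
\[ \mu_\p := \min\left\{\tfrac{v_\p(u)}{n},\ \tfrac{v_\p(v)}{n},\ \tfrac{v_\p(w)}{2}\right\}\in\Z. \]
Because $u,v,w\in\O_K$ we have $\mu_\p\ge 0$, and I would split into cases according to the splitting of $p := \p\cap\Z$ (noting $\p\nmid 2f$ forces $p$ odd with $p\nmid f$).

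If $p\nmid B_0$, then $p$ is unramified in $K$, and I would first record the identity $\min\{v_\p(u),v_\p(v)\} = \min\{v_\p(u+v),v_\p(u-v)\}$, which holds since $v_\p(2)=0$. As $u+v=2x$ and $u-v=2f\sqrt{-B_0}\,y$, and $v_\p(\sqrt{-B_0})=\tfrac12 v_\p(B_0)=0$ while $v_\p(2f)=0$, this gives $\min\{v_\p(u),v_\p(v)\} = \min\{v_p(x),v_p(y)\}$. If this minimum is $0$, then $\mu_\p=0$. Otherwise $p\mid x$ and $p\mid y$, so primitivity forces $p\nmid z$, whence $v_\p(w)=v_p(z)=0$ and again $\mu_\p=0$. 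Either way $\mu_\p=0\in\Z$.

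The remaining case $p\mid B_0$ is the genuinely subtle one, and I expect it to be the main obstacle. Here $p$ is odd and ramifies, say $\p^2=(p)$ with $v_\p(p)=2$. Since $\p=\bar\p$ and $v=\overline{u}$, one obtains $v_\p(v)=v_\p(u)$. Feeding this into $v_\p(u)+v_\p(v)=v_\p(C)+n\,v_\p(w)$ together with $v_\p(C)=2v_p(C)$ and $v_\p(w)=2v_p(z)$ yields $v_\p(u)=v_p(C)+n\,v_p(z)$, so that $v_\p(u)/n\ge v_p(z)=v_\p(w)/2$ and hence $\mu_\p=v_p(z)\in\Z$. The reason this case is the real content of the lemma is that $\YYprime$ is only defined over $\O_K[1/2\sqrt{-B}]$, which inverts precisely the ramified primes dividing $B_0$; improving the target to $\O_K[1/2f]$ amounts to exactly this integrality check at ramified primes. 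Notably, it is the one case where primitivity plays no role, the integrality instead coming from the norm relation $uv=Cw^n$.

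Having verified $\p$-integrality at all $\p\nmid 2f$, the image of $\Y(\Z)$ lands in $\Y'(\O_K[1/2f])$. For the final assertion, I would simply observe that if $B$ is squarefree then $B=f^2B_0$ forces $f=1$, so $\O_K[1/2f]=\O_K[1/2]$.
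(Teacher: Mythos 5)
Your proof is correct, and its overall skeleton matches the paper's: same map, same identities $u+v=2x$ and $u-v=2f\sqrt{-B_0}\,y$, a case split on whether $\p$ ramifies, and primitivity of $(x,y,z)$ doing the work at unramified primes. Where you genuinely diverge is the ramified case, which you rightly identify as the crux. The paper argues by contradiction: assuming $u,v,w\in\p$, it deduces $p\mid x$ and $p \mid z$, then rules out $p\nmid y$ via $2=v_\p(uv)=v_\p(Cw^n)\ge n\ge 3$, so that primitivity is violated; this actually yields the stronger conclusion that $\min\{v_\p(u),v_\p(v),v_\p(w)\}=0$, i.e.\ the representative is already $\p$-minimal. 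You instead use $v_\p(v)=v_\p(u)$ (Galois-invariance of a ramified prime) together with $uv=Cw^n$ to get the exact formula $v_\p(u)=v_p(C)+n\,v_p(z)$, from which $\p$-integrality falls out with no appeal to primitivity at all. Your route is cleaner and isolates what integrality at ramified primes really depends on (the norm relation), at the cost of proving only $\p$-integrality rather than $\p$-minimality of the given representative; one can note, though, that primitivity in fact forces $v_p(z)=0$ here too (otherwise $p$ would divide $x$, $y$, and $z$), so the two conclusions coincide. Either way the lemma follows.
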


\begin{proof}  
Let \([x:y:z] \in \Y(\Z)\) denote an integral point that is $p$-minimal at all rational primes $p$, and let $[u:v:w]\colonequals \YYprime([x:y:z]) \in \Y'(K)$. 
We have that $u,v,w \in \O_K$; we claim that for all primes $\p \subset \O_K[1/2f]$ we have $\min\{v_\p(u), v_\p(v), v_\p(w)\} = 0$. 
Using the equations for the quasi-inverse of $\YYprime_{T}$ in~\eqref{eq:Y-Y'-inverse}, we see
\begin{align}\label{eq:valuationsIneq}
        \val_{\p}(x) &= \val_{\p}\left(\frac{u+v}{2}\right) \ge \min\{\val_{\p}(u), \val_{\p}(v)\},\\
        \val_{\p}(y) &= \val_{\p}\left(\frac{u-v}{2\sqrt{-B}}\right) \ge \min\{\val_{\p}(u), \val_{\p}(v)\} -\val_{\p}\left(f\sqrt{-B_0}\right).\notag
    \end{align}
    We assume for the sake of contradiction that $u,v,w\in\p$ for some prime $\p\subseteq\O_K\left[1/{2f}\right]$.  In particular, $\p \mid z$.  If $\p$ is not ramified in $K$, then $\val_\p(f\sqrt{-B_0})=0$, so 
    the inequalities from~\eqref{eq:valuationsIneq} imply that $\Nm(\p) \mid x,y$, violating integrality.
    
    If $\p$ is ramified, then $\val_\p(\sqrt{-B_0})=1$. 
    In this case, $\val_\p(x)$ must be an even integer (so $\val_\p(x)\ge 2$). Assume for the sake of contradiction that  $\val_{\p}(y)=0$.  Thus
    \begin{equation*}
        \val_\p(u),\val_\p(v)= \min\left\{\val_\p(x),\val_\p\left(f\sqrt{-B_0}\right)+\val_\p(y)\right\} = 1. 
    \end{equation*}
          Then from the equation $uv = Cw^n$, we obtain
    \begin{equation*}
        2=\val_\p(u) + \val_\p(v) = \val_\p(uv) = \val_\p(Cw^n) \ge n \ge 3,
    \end{equation*} 
    a contradiction. 
\end{proof}

Next, we will perform explicit descent on $\Y'$ over a suitable open subscheme of \(\Spec \orb_K\) by describing a   cover $\pi'\colon\C'\to \Y'$ and its twists. We construct the curve \(\mathcal{C}'\) as follows. Let \[
S'' \colonequals \left(\Spec \Z[U,V,W]/(UV - CW) \right)\smallsetminus \{(0,0,0)\} \subset \A_{\Z}^3.
\]
Define
\begin{equation}\label{eqn:construction-for-C'}
    \mathcal{C}' \colonequals [S''/\G_m],
\end{equation}
where the action of \(\G_m\) on \(S''\) sends \((U,V,W) \mapsto (\lambda U, \lambda V, \lambda^2 W)\). This curve is embedded into  \(\P(1,1,2)\) by  construction. Note that $\C'$ does not intersect the stacky point of \(\P(1,1,2)\), so it is a (non-stacky) algebraic curve. Now, define a map \(\pi'  \colon \mathcal{C}' \to \Y'\) by specifying
\begin{equation}\label{CT-to-YT}
\begin{array}{cccc}
    \mathcal{C}'(T) &\rightarrow &\Y'(T)\\
    \left[U:V:W\right] &\mapsto &\displaystyle\left[\frac{U^n}{C^{(n-1)/2}}: \frac{V^n}{C^{(n-1)/2}}: W\right]
\end{array}
\end{equation}
where \(T\) is any $\Z[1/C]$-scheme. 

The map \(\pi'\colon \C' \to \Y'\) is an \'etale \(\mu_n\)-torsor over \(\Z[1/nC]\).  To see this, we first note that $\pi'$ is compatible with the \(\mu_n\) action by definition.  Also, we find that the stacky points are exactly the points fixed by the action of \(\mu_n\).  Indeed, \([\zeta U: \zeta^{-1}V: W]=[U: V: W]\) in \(\P(1,1,2)\) implies  \(\ \zeta U=\lambda U\), \(\ \zeta^{-1} V=\lambda V\) and \(\ W=\lambda^2 W\). Then  \(W = 0\), or $\zeta=\pm 1$. Since \(n\) is odd, \(-1 \notin \mu_n\), so we have $\zeta=1$.  Hence, the cover $\pi'$ is compatible with the \(\mu_n\)-action, and the fixed points of this action are precisely the stacky points, thus $\Y' = [\C' / \mu_n]$.  Finally, we invert $n$ to obtain an \'etale cover.  In total, this makes $\C' \to \Y'$ into a $\mu_n$-torsor over $\Z\big[\frac{1}{nC}\big]$.

\begin{notation}
\label{notation:RK}
    As before, we write $B=f^2 B_0$ with $-1\ne B_0$ squarefree.  Consider a finite set $\invertedprimes$ of primes in $K$ with the property that
    \begin{equation}
    \label{eq:invertedPrimes}
    \invertedprimes\text{ contains all primes above }2nCf,\text{ and } 
    \langle [\p] : \p \in \invertedprimes\rangle = \Cl(\O_K).
    \end{equation}
    Define $R_K$ as $\O_{K,\invertedprimes}$, the localization of $\O_K$ at the primes in $\invertedprimes$. In particular, \(R_K\) is a PID. 
\end{notation}

There is a natural inclusion of  $\Y'\big(\O_K\big[{1}/{2f}\big]\big)$ into $\Y'(R_K)$, so Lemma~\ref{lemma:Yprime-integer-points} implies that the image of $\Y(\Z)$ is contained in $\Y'(R_K)$. We now proceed to describe $\Y'(R_K)$ via descent.

Let \(\twist \in H^1(R_K, \mu_n)\), which has a representative in \(R_K^{\times}/(R_K^{\times})^n\) since \(R_K\) is a PID. Let \(\C_\twist'\subset \P(1,1,2)\) be the twist of $\C'$ defined by replacing $UV - CW$ in (\ref{eqn:construction-for-C'}) with \(UV - C\twist W\).
Then the twists \(\twistmap\colon \C_\twist'\to\Y'\) of the map \(\pi'\) are given by the maps:
\[
\left[U:V:W\right]\mapsto \left[\frac{U^n}{\twist C^{(n-1)/2}}:  \frac{V^n}{\twist^{n-1}C^{(n-1)/2}}:  W\right].
\]

\begin{lemma}
    Let $\twist,\tilde{\twist}\in H^1(R_{K},\mu_n)$.  Then $\twist$ and $\tilde\twist$ represent the same class if and only if $\twistmap\colon \C_\twist'\to \Y'$ and $\pi'_{\tilde{\twist}}\colon \C_{\tilde{\twist}}'\to \Y'$ are isomorphic twists. 
\end{lemma}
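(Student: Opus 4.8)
The plan is to understand exactly when two twists $\pi'_\twist\colon \C'_\twist\to\Y'$ and $\pi'_{\tilde\twist}\colon\C'_{\tilde\twist}\to\Y'$ are isomorphic as covers of $\Y'$. Recall that the twists are indexed by classes in $H^1(R_K,\mu_n)$, which (since $R_K$ is a PID, hence has trivial Picard group and all units are accounted for) is identified with $R_K^\times/(R_K^\times)^n$ via Kummer theory. An isomorphism of twists means a $\Y'$-isomorphism $\Phi\colon\C'_\twist\xrightarrow{\sim}\C'_{\tilde\twist}$ commuting with the two projections $\pi'_\twist$ and $\pi'_{\tilde\twist}$ to $\Y'$, and compatible with the $\mu_n$-action. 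The natural guess is that this happens precisely when $\twist/\tilde\twist\in(R_K^\times)^n$, i.e.\ when $\twist$ and $\tilde\twist$ represent the same class, which is what the statement asserts.

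\textbf{The forward direction ($\Rightarrow$).} Suppose $\twist\equiv\tilde\twist$ in $R_K^\times/(R_K^\times)^n$, so $\twist=\tilde\twist\, c^n$ for some $c\in R_K^\times$. First I would write down an explicit candidate isomorphism $\Phi\colon\C'_\twist\to\C'_{\tilde\twist}$ on the level of weighted-projective coordinates. Recall $\C'_\twist\subset\P(1,1,2)$ is cut out by $UV-C\twist W$. A natural map is $[U:V:W]\mapsto[cU:cV:c^2W]$ or, to match the defining equations, a scaling that sends the equation $UV=C\twist W$ to $UV=C\tilde\twist W$; the correct scaling of the variables $U,V,W$ (respecting the weights $1,1,2$) is read off by requiring the new coordinates to satisfy the twisted equation and the composite with $\pi'_{\tilde\twist}$ to recover $\pi'_\twist$. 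Then I would verify three things: that $\Phi$ is a well-defined morphism of stacks (compatible with the $\G_m$-action defining the quotient presentation), that it commutes with the $\mu_n$-action, and that $\pi'_{\tilde\twist}\circ\Phi=\pi'_\twist$ by substituting into the explicit formula~\eqref{CT-to-YT} for the twisted projections. Each of these is a direct coordinate computation, using that $c\in R_K^\times$ to guarantee invertibility, and that $c^n$ appears where the equation changes $\twist$ to $\tilde\twist$.

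\textbf{The reverse direction ($\Leftarrow$).} Conversely, suppose $\Phi\colon\C'_\twist\to\C'_{\tilde\twist}$ is an isomorphism of twists over $\Y'$. The strategy is to extract a relation in $R_K^\times/(R_K^\times)^n$. Since both covers are $\mu_n$-torsors over $\Y'$ (as established just before the lemma), the set of $\Y'$-isomorphism classes of such torsors is a torsor under $H^1(R_K,\mu_n)$ acting by twisting, and the difference class of two twists $\pi'_\twist,\pi'_{\tilde\twist}$ is exactly $[\twist]-[\tilde\twist]\in R_K^\times/(R_K^\times)^n$. An $\Y'$-isomorphism of the total spaces forces this difference class to vanish. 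Concretely, I would pull back to the generic point or to a suitable point where the $\mu_n$-action is free, and observe that the two torsors become isomorphic as $\mu_n$-torsors, which by the Kummer description means the ratio of the defining classes $\twist\tilde\twist^{-1}$ is an $n$-th power in $R_K^\times$. The most robust way to phrase this is via the long exact Kummer sequence: a $\mu_n$-torsor over $\Y'$ corresponds to a class in $H^1(\Y',\mu_n)$, the twists $\C'_\twist$ arise as images of $[\twist]\in H^1(R_K,\mu_n)$ under pullback along the structure map, and isomorphism of torsors says these images coincide.

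\textbf{Main obstacle.} The hardest part will be making the reverse direction rigorous rather than heuristic: I must rule out ``exotic'' $\Y'$-isomorphisms that do not arise from scaling the coordinates, and confirm that the $\mu_n$-torsor difference class genuinely injects into $R_K^\times/(R_K^\times)^n$ with no kernel coming from the stacky structure or from $\Pic(R_K)$. Since $R_K$ is a PID, $\Pic$ vanishes and $H^1(R_K,\mu_n)\cong R_K^\times/(R_K^\times)^n$ cleanly, which removes the Picard contribution; the remaining care is in checking that an isomorphism over the stack $\Y'$ (as opposed to over its coarse space) does not introduce extra automorphisms that could identify distinct classes. I would handle this by restricting to the non-stacky open locus $W\neq 0$ of $\Y'$, where $\pi'$ restricts to an honest étale $\mu_n$-cover of a scheme, applying the classical fact that such covers are classified by $R_K^\times/(R_K^\times)^n$, and then using that $\C'$ and the covers are determined by their restriction to this dense open (by normality/properness) to conclude the identification extends. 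This reduction from the stacky cover to its restriction over the non-stacky locus is the step requiring the most attention.
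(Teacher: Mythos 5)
Your forward direction is exactly the paper's argument for that implication: when $\tilde\twist = t^n\twist$ the paper writes down the asymmetric scaling $[U:V:W]\mapsto[tU:t^{n-1}V:W]$, which is precisely the map your ``read off the correct scaling'' procedure produces. Where you diverge is the reverse direction. The paper stays entirely in coordinates: it writes out the condition that the triangle over $\Y'$ commutes in $\P(1,1,2)$, obtains $\tfrac{\tilde\twist}{\twist}\lambda^nU^n=\tilde U^n$ (and its companions) for some $\lambda\in R_K^\times$, and reads off directly that $\tilde\twist/\twist$ is an $n$-th power. You instead invoke the cohomological classification of twists. That route can be made to work, but two of your supporting claims need repair. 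First, étale $\mu_n$-covers of the non-stacky locus of $\Y'$ are \emph{not} classified by $R_K^\times/(R_K^\times)^n$; they are classified by $H^1$ of that locus, which is larger. What you actually need is that the composite $H^1(R_K,\mu_n)\to H^1(\Y',\mu_n)$ is injective, which follows cleanly by retracting along an $R_K$-point of $\Y'$ (e.g.\ $[C:1:1]$) rather than by restricting to the open locus. Second, your assertion that ``the difference class of the two twists is exactly $[\twist]-[\tilde\twist]$'' presupposes that the explicit family $UV=C\twist W$ with the explicit maps $\twistmap$ realizes the abstract twisting by $[\twist]$ --- but that identification is essentially the content being proved, so taking it as given is mildly circular. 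The non-circular version of your idea is the one you mention in passing: pull back $\twistmap$ and $\pi'_{\tilde\twist}$ along a fixed $R_K$-point, compute the two fibers as explicit Kummer torsors $\Spec R_K[t]/(t^n-a)$, and compare classes. At that point you are doing a computation of the same flavor as the paper's, just localized at one point; the paper's global coordinate computation is arguably the more economical of the two.
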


\begin{proof} 
The twists $\C_{\twist}'\colon UV = C\twist W$ and $\C_{\tilde{\twist}}'\colon \tilde{U}\tilde{V} = C\tilde{d}\tilde{W}$ are isomorphic as $\mu_n$-twists if and only there exists a map \(\phi\) making the following diagram commute:
\begin{center}
    \begin{tikzcd}
        \mathcal{C}_d' \arrow[rr, "\phi"] \arrow[dr, "\pi'_d", swap] & &\mathcal{C}'_{\tilde{d}} \arrow[dl, "\pi'_{\tilde{d}}"]\\
     & \mathcal{Y'}
    \end{tikzcd}
\end{center}
By explicitly writing down the coordinates, the map $\phi \colon [U:V:W] \mapsto [\tilde{U}:\tilde{V}:\tilde{W}]$ between $\C_{\twist}'$ and $\C_{\tilde{\twist}}'$ is an isomorphism of $\mu_n$-twists if and only if 
\[
    \left[\frac{U^n}{\twist C^{(n-1)/2}}:  \frac{V^n}{\twist^{n-1}C^{(n-1)/2}}:  W\right] = \left[\frac{\tilde{U}^n}{\tilde{\twist} C^{(n-1)/2}}:  \frac{\tilde{V}^n}{\tilde{\twist}^{n-1}C^{(n-1)/2}}:  \tilde{W}\right]
\] That is, there exists $\lambda\in R_K^{\times}$ such that
\[
    \frac{\tilde{d}}{d}\cdot \lambda^nU^n = \tilde{U}^n,\;\; \frac{\tilde{d}}{d}\cdot \lambda^nV^n = \tilde{V}^n,\;\; \lambda^2 W = \tilde{W}.
\] 
Hence, $\frac{\tilde{d}}{d} = t^n$ for some $t \in R_K^{\times}$. 
Conversely, if $\tilde{\twist} = t^n\twist$ for some $t \in R_K^\times$, then $[U : V : W] \mapsto [tU : t^{n-1}V : W]$ gives an isomorphism $\C'_\twist \to \C'_{\tilde{\twist}}$ defined over $R_K$.
\end{proof}

From the general theory of descent for \'{e}tale torsors, we have the following.

\begin{corollary}\label{cor:descent} With the notation above,
      \[
        \mathcal{Y}'(R_K) = \coprod_{\twist\in H^1(R_K, \mu_n)} \twistmap(\mathcal{C}'_\twist(R_K)).
    \] 
\end{corollary}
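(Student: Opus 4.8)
The plan is to deduce Corollary~\ref{cor:descent} from the general descent formalism for torsors under a finite étale group scheme, specialized to our explicit family $\{\pi'_\twist\colon \C'_\twist \to \Y'\}$ of twists of the $\mu_n$-torsor $\pi'\colon \C'\to\Y'$ constructed above. The key structural input is that, because $R_K$ is a PID (hence $H^1(R_K,\mu_n) \cong R_K^\times/(R_K^\times)^n$ by Kummer theory), the preceding lemma gives a bijection between classes $\twist \in H^1(R_K,\mu_n)$ and isomorphism classes of twists $\pi'_\twist$. So the indexing set in the coproduct is exactly a complete, non-redundant set of twists.

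First I would invoke the fundamental descent exact sequence for a $G$-torsor: for any $R_K$-point $P \in \Y'(R_K)$, the fiber $\pi'^{-1}(P)$ is itself a $\mu_n$-torsor over $\Spec R_K$, and its class lies in $H^1(R_K,\mu_n)$. This gives a map $\Y'(R_K) \to H^1(R_K,\mu_n)$ sending $P$ to the class $\twist$ classifying this fiber, and the formalism says that $P$ lifts to a point of the twisted cover $\C'_\twist(R_K)$ precisely for this value of $\twist$; that is, $P \in \twistmap(\C'_\twist(R_K))$. This exhibits $\Y'(R_K)$ as the \emph{union} $\bigcup_\twist \twistmap(\C'_\twist(R_K))$. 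Second, I would check the union is disjoint: if $P = \twistmap(Q) = \pi'_{\tilde\twist}(\tilde Q)$ for lifts to two twists, then the fiber of $P$ is simultaneously classified by $\twist$ and $\tilde\twist$, forcing $\twist = \tilde\twist$ in $H^1(R_K,\mu_n)$; combined with the lemma, distinct classes give non-isomorphic twists whose images are disjoint. This upgrades the union to the coproduct $\coprod_\twist$. Since the only torsors arising over a PID are captured by Kummer theory, the sum over all of $H^1(R_K,\mu_n)$ is exhaustive.

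The main obstacle — really the only subtlety — is verifying that $H^1(R_K,\mu_n)$ is genuinely the right parametrizing object, i.e.\ that every $\mu_n$-torsor over $\Spec R_K$ does arise as some fiber $\pi'^{-1}(P)$ realized by one of our \emph{explicit} twists $\C'_\twist$, and conversely that each $\C'_\twist$ is étale over $R_K$. The latter holds because $n \in R_K^\times$ (as $\invertedprimes$ contains all primes above $n$ by~\eqref{eq:invertedPrimes}), so $\mu_n$ is étale over $R_K$ and each twist $\pi'_\twist$ is an étale $\mu_n$-torsor by the same argument used to show $\pi'$ is one over $\Z[1/nC]$. The former is exactly the content of the preceding lemma together with Kummer theory: the twists are parametrized by $R_K^\times/(R_K^\times)^n \cong H^1(R_K,\mu_n)$ with no omissions and no repetitions. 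Once these two facts are in hand, the statement is a direct application of the descent partition, so I would simply cite the standard reference for the descent exact sequence (e.g.\ the torsor-theoretic formulation for a finite étale group scheme over an affine base) and assemble the pieces.
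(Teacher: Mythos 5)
Your proposal is correct and follows essentially the same route as the paper, which simply cites the general descent partition for \'etale torsors (\cite[Lemma 2.4]{santens23}); your write-up just unpacks that standard argument, using the fiber-classifying map $\Y'(R_K) \to H^1(R_K,\mu_n)$ for exhaustion and disjointness, together with Kummer theory over the PID $R_K$ and the preceding lemma to see that the explicit twists $\C'_\twist$ form a complete, non-redundant set.
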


\begin{proof}
    See \cite[Lemma 2.4]{santens23}. 
\end{proof}

\subsection{Admissible twists}
\label{subsec:admissible-twists}

We next seek to sieve out the points of \(\Y'(R_K)\) that lie in the image of the map \(\YYprime \colon \Y(\Z) \to \Y'(R_K)\) and as such find the twists in \(H^1(R_K, \mu_n)\) that produce such points, as in Corollary~\ref{cor:descent}. In this subsection, we define a notion of admissible twists that serve this purpose for a given curve \(\Y_{B,C}\).

\begin{definition}\label{def:orders} 
Let $B = f^{2}B_{0}$, with $B_{0}$ squarefree, and suppose $\gcd(f,C) = 1$. Let $K = \Q\left (\sqrt{-B_0}\right )$ with ring of integers $\O_{K}$ and define an order $\O\subseteq\O_{K}$ as follows:  
\begin{equation*}
\O = \begin{cases}
    \Z\left [f\sqrt{-B_{0}}\right ], &\text{if } B_{0}\equiv 1,2\pmod{4};\\[1.5pt]
    \Z\left [f\sqrt{-B_{0}}\right ], &\text{if } B_{0}\equiv 3\pmod{8} \text{ and $C$ is odd};\\[2pt]
    \Z\left [f\frac{1 + \sqrt{-B_{0}}}{2}\right ], &\text{if } B_{0}\equiv 3\pmod{8} \text{ and $C$ is even; and}\\[5pt]
    \Z\left [f\frac{1 + \sqrt{-B_{0}}}{2}\right ], &\text{if } B_{0}\equiv 7\pmod{8}. 
\end{cases}
\end{equation*}
\end{definition}
Note that if \(B_0 \equiv 3 \pmod{8}\) and \(C\) is odd, then \(\O\) has conductor \(2f\); in all other cases, it has conductor \(f\).
Let $\invertedprimesCsplit \subset \invertedprimes$ denote the subset of inverted primes which both split in $K=\Q(\sqrt{-B})$ and divide $C$.  We define a set of \textit{admissible} twists of $\mathcal{C}' \to \Y'$ as follows.

\begin{definition}[$\admissible_{B,C}$]\label{def:admissible_twists}
Define $\admissible_{B,C} \subset R_K^\times / (R_K^\times)^n$ to consist of those $\twist$ satisfying
\begin{align}
\label{eq:admissible}    v_\p(d) &\equiv v_p(C)\frac{n\pm 1}{2}, \ v_{\pbar}(d) \equiv v_p(C)\frac{n\mp 1}{2}  \pmod{n} & \text{for all }\p \in \invertedprimesCsplit, \\
\nonumber   v_\p(d) &\equiv 0 \pmod{n} & \text{for all }\p \in \invertedprimes - \invertedprimesCsplit,
\end{align}
and $\div(\twist)\in\im\psi_{\O}$, where $\psi_\O$ is defined in (\ref{seq:def_psi}). 
\end{definition}

When $B_0 \equiv 3 \pmod{4}$, we will also employ a modified version of admissibility. Let $\p_2$ denote a prime above 2. 

\begin{definition}[$\widetilde{\admissible}_{B,C} $]\label{def:admissible_twists_tilde}
Define $\widetilde{\admissible}_{B,C} \subset R_K^\times / (R_K^\times)^n$ to consist of those $\twist$ satisfying
\begin{align}\label{eq:admissible_prime}
v_{\p_2}(d) & \equiv \pm \sizedparen{1 + \frac{n-1}{2}v_2(C)},\ v_{\pbar_2}(d) \equiv - v_{\p_2}(d) \pmod{n}, & \\ \nonumber
v_\p(d) &\equiv v_p(C)\frac{n\pm 1}{2}, \ v_{\pbar}(d) \equiv v_p(C)\frac{n\mp 1}{2} \!\!\! \pmod{n} &\hspace{-2.5cm} \text{for all }\p \in \invertedprimesCsplit \smallsetminus \sizedcurly{\p \mid 2\O_K}, \\\nonumber
v_\p(d) &\equiv 0 \pmod{n} &\hspace{-2.5cm}  \text{for all }\p \in \invertedprimes \smallsetminus (\invertedprimesCsplit \cup \sizedcurly{\p \mid 2\O_K}),
\end{align}
and $\div(\twist)\in\im\psi_{\O}$, where again $\psi_{\mathcal{O}}$ is defined in (\ref{seq:def_psi}). 
\end{definition}

Note that if 2 is ramified or inert in \(K\), then $\pbar_2 \equiv \p_2$, so $v_{\p_2}(d) = 0 \pmod{n}$ is enforced by the first condition. Thus actually in these cases we have $\admissible_{B,C} = \widetilde{\admissible}_{B,C}$.

\subsection{Divisibility conditions}
\label{subsec:div-lemmas}

We proceed to characterize $R_{K}$-points on $\C_{\twist}'$ for $\twist\in\admissible_{B,C}$ which determine $\Z$-points on $\Y_{B,C}$. In order to do so, we will define certain subgroupoids of \(\Y_{B,C}(\Z)\) that capture the conditions of Definitions~\ref{def:admissible_twists} and~\ref{def:admissible_twists_tilde}
and play a key role in Lemmas~\ref{lem:denominatortest_Bnot7mod8} and~\ref{lem:denominatortest_*prime}.

\begin{definition}[$\parallel$]
    Let $p$ be a rational prime and $u \in \O_K$. We write $p^r \parallel u$ to mean 
    \[u \in p^r\O_K \smallsetminus p^{r+1}\O_K.\]
    Notice that for $p$ inert (resp.\ $p = \p^2$ ramified), $p^r \parallel u$ is equivalent to $v_p(u) = r$ (resp.\ $v_\p(u) = 2r$ or $2r+1$). When $p = \p \pbar$ is split, $p^r \parallel u$ is equivalent to $\min(v_\p(u), v_{\pbar}(u)) = r$.
\end{definition}

\begin{definition}[$\annulus$]
\label{def:starp}
    Let $[x:y:z] \in \Y_{B,C}(\Z)$ and put $u = x + f\sqrt{-B_0}y$. For a prime number $p$, we say $[x:y:z]$ satisfies
    \begin{itemize}
        \item $\annulus_p^r$ if $p^{r} \parallel u$ for some nonnegative integer $r$. In particular, if $r = 0$, then $u \notin p\O_K$; 
        \item $\annulus^0$ if it satisfies $\annulus_p^0$ for all primes $p$.  
    \end{itemize} 
\end{definition}

If $p = \p\pbar$ is split, then $\annulus^0_p$ means that $u$ --- or its conjugate $\ubar$ --- lies in the $\p$-adic annulus $\O_{K,\p} \smallsetminus \pi_\p \O_{K,\p}$, where $\pi_\p$ is the uniformizer. 
Condition $\annulus^0$ will be key to characterizing points $\Y_{B,C}(\Z)$, while $*_p^r$ for $r > 0$ will be useful in Section~\ref{sec:cascade}.

For $p = 2$, we also need the following 
modification.

\begin{definition}[$\annulusTwo$]
\label{def:startildep}

    Let $[x:y:z] \in \Y_{B,C}(\Z)$ and $u = x + f\sqrt{-B_0}y$. We say $[x:y:z]$ satisfies
    \begin{itemize}
        \item ${\annulusTwo^{r}_2}$ if $2^{r+1} \parallel u$ and $2^{r} \parallel x$ for some \(r \ge 0\);
        \item $\annulusTwo^0$ if it satisfies $\annulusTwo^0_2$ and $\annulus^0_p$ for all odd primes $p$.
    \end{itemize}
\end{definition}

Recall the notation \(\Y_{B,C}(\Z; \bullet)\) from Remark~\ref{rem:groupoids}. By definition, for any \(r \ge 0\), \(\Y_{B,C}(\Z; \tilde{\annulus}^r_2) \subset \Y_{B,C}(\Z;{\annulus}^{r+1}_2)\).

\begin{lemma}
\label{lem:p_divides_u}
    Let $f,$ $B_0,$ and $C$ be nonzero integers with $B_0$ squarefree and $\gcd(f,C)=1$. Suppose $p$ is an odd prime. 
    If $[x:y:z] \in \Y_{B,C}(\Z;\annulus_p^r)$ for $r > 0$, then $p^r \mid x$ and exactly one of the following holds:
    \begin{enumerate}[label = (\alph*)]
        \item\label{part:lem-p-divides-u:C} $p^r \mid y$, $p^{2r} \mid C$, and $p \nmid z$;
        \item\label{part:lem-p-divides-u:f}  $p\nmid y$, $p^r \mid f$, and $p^{\lceil 2r/n \rceil} \mid z$.
    \end{enumerate}
    Moreover, in case~\ref{part:lem-p-divides-u:f}, if $n \nmid r$, then we have $p^r \parallel f$.
\end{lemma}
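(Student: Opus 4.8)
The plan is to study the element $u = x + f\sqrt{-B_0}\,y \in \O_K$ together with its conjugate $\ubar = x - f\sqrt{-B_0}\,y$, whose norm is controlled by the defining equation: $\Nm(u) = u\ubar = x^2 + By^2 = Cz^n$. Throughout I will translate divisibility in $\O_K$ into valuation inequalities at the primes $\p \mid p$, writing $e_\p, f_\p$ for the ramification index and residue degree; the relevant input is that $p$ is odd (so $2$ is a $p$-adic unit) and $B_0$ is squarefree (so $v_\p(\sqrt{-B_0}) = 0$ when $p \nmid B_0$ and $v_\p(\sqrt{-B_0}) = 1$ when $p$ ramifies). Recall that $p^r \parallel u$ means exactly $u \in p^r\O_K \smallsetminus p^{r+1}\O_K$, so in particular $u \in p^r\O_K$, i.e.\ $v_\p(u) \ge e_\p r$ for every $\p \mid p$.

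First I would establish $p^r \mid x$. Since complex conjugation is a ring automorphism of $\O_K$ fixing the ideal $p^r\O_K$, the containment $u \in p^r\O_K$ gives $\ubar \in p^r\O_K$; hence $2x = u + \ubar \in p^r\O_K \cap \Z = p^r\Z$, and as $p$ is odd this yields $p^r \mid x$. The same computation shows $f\sqrt{-B_0}\,y = u - x \in p^r\O_K$, a relation I will reuse.

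The dichotomy (a)/(b) is governed by whether $p \mid y$, so next I would split into these two cases. Suppose first $p \nmid y$: reading $v_\p(f\sqrt{-B_0}\,y) \ge e_\p r$ at each $\p \mid p$ forces $v_p(f) \ge r$ (the $v_\p(\sqrt{-B_0}) = 1$ contribution in the ramified case is absorbed harmlessly), so $p^r \mid f$ and hence $p \nmid C$; then $p^{2r} \mid x^2$ and $p^{2r} \mid f^2 \mid B$ give $p^{2r} \mid x^2 + By^2 = Cz^n$, whence $n\,v_p(z) \ge 2r$ and $p^{\lceil 2r/n\rceil} \mid z$, which is case (b). Suppose instead $p \mid y$. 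I first rule out $p \mid f$: combined with $p \mid x$ and primitivity it would force $p \nmid z$, hence $p \nmid Cz^n = \Nm(u)$, contradicting the fact that $r \ge 1$ gives $p \mid u$ and so $p \mid \Nm(u)$. With $p \nmid f$ in hand, $v_\p(f\sqrt{-B_0}\,y) \ge e_\p r$ now yields $p^r \mid y$; primitivity gives $p \nmid z$; and $p^{2r} \mid x^2 + By^2 = Cz^n$ with $p \nmid z$ forces $p^{2r} \mid C$. This is case (a). The two cases are clearly mutually exclusive ($p^r \mid y$ versus $p \nmid y$) and exhaustive, so exactly one holds.

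Finally, for the ``moreover'' clause I would argue by contraposition, assuming $p^{r+1} \mid f$ (we already know $p^r \mid f$) and deriving $n \mid r$. Under this assumption $f\sqrt{-B_0}\,y$ has $\p$-valuation strictly larger than $e_\p r$ at every $\p \mid p$, so comparing the two summands of $u = x + f\sqrt{-B_0}\,y$ forces $v_\p(u) = v_\p(x)$ at each $\p$; using $p^r \parallel u$ together with the fact that $v_\p(x)/e_\p = v_p(x)$ is constant across $\p \mid p$, this pins down $v_p(x) = r$ and hence $v_p(\Nm(u)) = 2\,v_p(x) = 2r$. Since $\Nm(u) = Cz^n$ with $p \nmid C$, we obtain $2r = n\,v_p(z)$, so $n \mid 2r$, and as $n$ is odd, $n \mid r$, contrary to assumption. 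I expect this exact-valuation step to be the main obstacle: in the ramified case $v_\p(u) \in \{2r, 2r+1\}$ a priori, and one must use the parity of $v_\p(x)$ (which is even) together with the strict valuation gap to conclude $v_\p(u) = 2r$ exactly, rather than merely bounding it; the remaining cases reduce to a direct ultrametric comparison.
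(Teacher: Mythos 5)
Your proof is correct and follows essentially the same route as the paper's: deduce $p^r \mid x$ and $p^r \mid fy$ from $u$ and $\ubar$, rule out $p \mid \gcd(f,y)$ using primitivity together with $\gcd(f,C)=1$, and then split on whether $p \mid y$. The only divergence is in the ``moreover'' clause, where you argue by contraposition (pinning down $v_p(x) = r$, hence $v_p(Cz^n) = 2r$, so $n \mid 2r$ and $n \mid r$) while the paper reads the equation modulo $p^{2r+1}$ to force $p^{r+1} \mid x$ and contradict $p^r \parallel u$ --- these are equivalent valuation comparisons, though note that your intermediate assertion that $v_\p(u) = v_\p(x)$ at \emph{every} $\p \mid p$ is properly a consequence of, not a step toward, the conclusion $v_p(x) = r$, which should instead be extracted directly from $p^r \parallel u$ (if $v_p(x) \geq r+1$ then both summands of $u$ lie in $p^{r+1}\O_K$).
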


\begin{proof}
    By hypothesis, $u\in p^r\O_K$, so $\ubar \in p^r\O_K$ as well.
    Since $u + \ubar = 2x$ and $p$ is odd, we have $p^r \mid x$. To see that $p^r|fy$, we note that $u - \ubar = 2f\sqrt{-B_0}y\in p^r\O_K$.  If \(p \nmid B_0\), then $p^r \mid fy$.  If \(p \mid B_0\), then $p$ is ramified, $v_{\frakp}(\sqrt{-B_0})=1$, and $p\O_K=\frakp^2$ so that \(v_{\frakp}(u - \overline{u}) \ge 2r\) must be odd. In particular, $fy\O_K\in \frakp^{2r}$ and $p^r|fy$.

    Suppose that \(r > 0\) and that \(p \mid \gcd(f,y)\). Then $p \nmid C$, so we must have $p \mid z$. This contradicts $[x:y:z]$ being primitive, so we have either $p^r \mid y$ and $p\nmid f$ or $p^r \mid f$ and $p\nmid y$. 
    
    In case~\ref{part:lem-p-divides-u:C}, we have $p \nmid z$ is implied by primitivity of $[x:y:z]$. That $p^{2r} \mid C$ follows from the equation for $\Y_{B,C}$. 

    In case~\ref{part:lem-p-divides-u:f}, again we have $p^{2r} \mid Cz^n$, and by hypothesis that \(\gcd(f, C) = 1\) it must be $p^{2r} \mid z^n$. Since the latter is an $n$th power, we deduce that $p^{\lceil 2r/n \rceil} \mid z$.
    
    For the moreover statement, suppose that $n \nmid r$ and to the contrary that $p^{r+1} \mid f$. In this case, we use the fact that $\lceil 2r/n \rceil n > 2r$.  We can now examine the equation for $\Y_{B,C}$ satisfied by $[x:y:z]$, modulo $p^{2r+1}$:
    \[x^2 + f^2B_0 y^2 \equiv Cz^n \pmod{p^{2r+1}}.\]
    The right-hand-side vanishes by the observation we just made, and the $f^2B_0y^2$ term vanishes by hypothesis. This means $p^{2r+1} \mid x^2$, so $p^{r+1} \mid x$. However, this produces a contradiction, as now $p^{r+1} \mid u$.
\end{proof}
\begin{remark}
\label{rem:starp-fc}
 It follows from Lemma~\ref{lem:p_divides_u} that if $p\nmid 2fC$ and $\gcd(f,C)=1$ then \(\Y_{B,C}(\Z) = \Y_{B,C}(\Z; \annulus^0_p)\).  In particular, \(\Y_{B,C}(\Z) = \Y_{B,C}(\Z; \annulus^0_p \text{ for } p \nmid 2fC)\).
\end{remark}
The story for $p=2$ is different because $2^r \parallel u$ does not force $2^r \mid x,fy$.

\begin{lemma}
\label{lem:2_divides_u}
    Let $f,$ $B_0,$ and $C$ be nonzero integers with $B_0$ squarefree and $\gcd(f,C)=1$.  If $[x:y:z] \in \Y_{B,C}(\Z;\annulus_2^r)$ for $r > 0$, then $2^{r-1} \mid x$. Setting $s \colonequals \min\{v_2(x), r\}$, we have that exactly one of the following holds:
    \begin{enumerate}[label = (\alph*)]
        \item\label{part:s>02|y} $s > 0$, $2^s \mid y$, $2^{2s} \mid C$, and $2 \nmid z$;
        \item\label{part:s>02!|y} $s > 0$, $2\nmid y$, $2^s \mid f$, and $2^{\lceil 2s/n \rceil} \mid z$;
        \item\label{part:s=0} $s=0$ and $2 \nmid f,y$. 
    \end{enumerate}
    Moreover, in case~\ref{part:s>02!|y}, if $n \nmid s$,  then we have $2^s \parallel f$.
\end{lemma}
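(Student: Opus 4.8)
The plan is to follow the template of Lemma~\ref{lem:p_divides_u}, the key difference being that for $p = 2$ the identities $u + \overline{u} = 2x$ and $u - \overline{u} = 2f\sqrt{-B_0}\,y$ each carry an extra factor of $2$, so they only yield $2^{r-1}$-divisibility rather than $2^r$-divisibility. The auxiliary quantity $s = \min\{v_2(x), r\}$ is designed precisely to absorb this loss, and the strategy is to show that all the clean divisibilities of the odd case survive verbatim with $s$ in place of $r$.

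First I would extract the elementary divisibility statements. From $u\in 2^r\O_K$ we get $\overline u\in 2^r\O_K$, so $2x = u + \overline u\in 2^r\O_K$; intersecting with $\Z$ gives $2^{r-1}\mid x$, the first claim. In particular $v_2(x)\geq r-1$, so $s\in\{r-1,r\}$ and $2^s\mid x$. Since also $2^s\mid u$ (as $s\leq r$), we deduce $f\sqrt{-B_0}\,y = u - x\in 2^s\O_K$. Taking norms (recalling $\Nm(\sqrt{-B_0}) = B_0$) gives $2^{2s}\mid f^2B_0y^2$, and comparing $2$-adic valuations forces $v_2(fy)\geq s$, i.e.\ $2^s\mid fy$. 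Here a little care is needed: when $B_0\equiv 2\pmod 4$ one only obtains $v_2(fy)\geq s - \tfrac12$, but since $v_2(fy)$ is an integer this still gives $\geq s$.

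Next I would split on the sign of $s$. If $s = 0$ then $x$ is odd, which with $2^{r-1}\mid x$ forces $r = 1$; reducing $u = x + f\sqrt{-B_0}\,y\equiv 0\pmod{2\O_K}$ and using $\sqrt{-B_0}\in\O_K$ shows that $2\mid f$ or $2\mid y$ would give $u\equiv x\not\equiv 0$, a contradiction, so $2\nmid f,y$, which is case~\ref{part:s=0}. If $s > 0$, then $2\mid x$, and primitivity rules out $2\mid\gcd(f,y)$: otherwise $\gcd(f,C)=1$ gives $2\nmid C$, and reducing the equation for $\Y_{B,C}$ modulo $4$ forces $2\mid z$, contradicting primitivity of $[x:y:z]$. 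Hence exactly one of $2\mid y$, $2\nmid y$ occurs. When $2\mid y$ we have $2\nmid f$, so $2^s\mid fy$ gives $2^s\mid y$; feeding $2^{2s}\mid f^2B_0y^2$ and $2^{2s}\mid x^2$ into the equation yields $2^{2s}\mid Cz^n$, and primitivity forces $2\nmid z$, whence $2^{2s}\mid C$ (case~\ref{part:s>02|y}). When $2\nmid y$ we get $2^s\mid f$, so $2\nmid C$ and $2^{2s}\mid z^n$, giving $2^{\lceil 2s/n\rceil}\mid z$ (case~\ref{part:s>02!|y}).

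Finally, for the moreover statement I would mirror Lemma~\ref{lem:p_divides_u}. Assuming $n\nmid s$ and, for contradiction, $2^{s+1}\mid f$, the oddness of $n$ gives $\lceil 2s/n\rceil\, n \geq 2s + 1$, so $2^{2s+1}\mid z^n$; since $2\nmid C$ and $2^{2s+2}\mid f^2B_0y^2$, reducing the equation modulo $2^{2s+1}$ forces $2^{2s+1}\mid x^2$, hence $v_2(x)\geq s+1$. As $s = \min\{v_2(x),r\}$ and $v_2(x) > s$, this forces $s = r$, so $2^{r+1}\mid x$ and $2^{r+1}\mid f$, giving $u\in 2^{r+1}\O_K$ and contradicting $2^r\parallel u$. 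The main obstacle throughout is simply bookkeeping the single lost power of $2$: the genuinely new casework is the $s = 0$ branch, where the norm argument degenerates and must be replaced by the direct reduction modulo $2\O_K$.
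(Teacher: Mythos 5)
Your proof is correct and follows essentially the same route as the paper's, which likewise adapts Lemma~\ref{lem:p_divides_u} by tracking the single power of $2$ lost in $u+\overline{u}=2x$ via $s=\min\{v_2(x),r\}$. The only cosmetic difference is that you obtain $2^s\mid fy$ by taking norms of $u-x$ (using integrality of $v_2(fy)$ when $2\mid B_0$), whereas the paper argues with $\p$-adic valuations at the primes above $2$, splitting on whether $2\mid B_0$; both work, and your treatment of the ``moreover'' clause correctly supplies the extra step (deducing $s=r$ before contradicting $2^r\parallel u$) that the paper leaves implicit.
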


\begin{proof}   
    The proof is essentially the same as the one for Lemma~\ref{lem:p_divides_u}, except that $2^r \mid 2x$ implies only that $2^{r-1} \mid x$. If $2^{r-1} \parallel x$ then $s=r-1$; otherwise we have $s=r$. 
    
    First we show that \(2^s \mid fy\): if \(s=r-1\), then \(2^s \mid fy\). Suppose \(s=r\), so \(2^r \mid x\). If \(2 \nmid B_0\), then for \(\p\) above \(2\), we have $v_{\p}(u) \geq \min\{v_{\p}(x), v_{\p}(fy)\}$. Moreover, by assumption, we have $v_{\p_2}(u) \geq r$, so we must have $v_{\p_2}(fy) \geq v_{\p_2}(x) \geq r$. Hence, $2^r\mid fy$. If \(2 \mid B_0\), then \(2 \O_K = \p^2\)  and \(v_{\p}(u - \ol{u}) \ge 2r\) is odd, so that \(v_{\p}(fy) \ge 2r\). Thus \(2^s \mid fy\).
    
    Now, if \(2 \mid f\), then \(2 \nmid C\). Therefore \(2 \mid z\) and \(2 \nmid y\). So the statement for \(f\) and \(z\) in~\ref{part:s>02!|y} is true. If \(2 \mid y\), then \(2 \nmid z\), so \(2 \mid C\) and \(2\nmid f\). So we have \(2^s \mid y\) and \(2^{2s} \mid C\) as in~\ref{part:s>02|y}. 
    
    For~\ref{part:s=0}, when \(s=0\), we must have \(r=1\). In particular, \(2 \nmid x\) and \(2 \mid u\). For any prime \(\p\) above 2, \(v_{\p}(\sqrt{-B_0}fy) \ge \min \{v_{\p}(u), v_{\p}(x)\}\). Therefore,  \(v_{\p}(\sqrt{-B_0}fy) = 0\) and so \(2 \nmid fy\). (In particular, if \(2 \mid B_0\), then \(\Y_{B,C}(\Z;\annulusTwo^0_2)  =\emptyset\).)
    
    The proof for the moreover statement is exactly the same as in Lemma~\ref{lem:p_divides_u}.
\end{proof}

\begin{lemma}\label{lem:fCnotcoprime-no-star0-points}
    Let \(p, f,B_0,C\) be nonzero integers such that $p$ is prime, \(B_0\) is squarefree, \(p\mid \gcd(f,C)\), and \(p \parallel C\). Then \(\Y_{B,C}(\Z; \annulus^0_p) = \emptyset\) and if \(p=2\), then \(\Y_{B,C}(\Z; \annulusTwo^0_2) = \emptyset\). Further, if \([x:y:z] \in \Y_{B,C}(\Z; \annulus_p^r)\) for some \(r >0\), then the following holds.
    \begin{enumerate}[label = {(\alph*)}]
        \item \label{part:fCgcd-odd-divisibility}  If \(p\) is odd, then \(p^r \mid x\), \(p \nmid y\), \(p^r \mid f\), and \(p^{\lceil(2r-1)/n\rceil} \mid z\); in addition, if \(n \nmid 2r-1\), then \(p^r \parallel f\). If \(n \mid (2r-1)\) then either \(p^r \parallel f\) or \(p^{(2r-1)/n} \parallel z\).
        \item \label{part:fCgcd-even-divisibility} If \(p=2\), let \(s = \min\{v_2(x), r\}\). Then \(s \ge r-1\) and \(2^{s} \mid x\), \(2^{s} \mid fy\), and \(2^{\lceil(2s-1)/n\rceil} \mid z\). If \(s>0\), then \(2 \mid z\), so that \(2 \nmid y\) and \(2^s \mid f\). As above, if \(n \nmid 2s-1\), then \(p^s \parallel f\) and if \(n \mid (2s-1)\), then either \(p^s \parallel f\) or \(p^{(2s-1)/n} \parallel z\).       
    \end{enumerate}
\end{lemma}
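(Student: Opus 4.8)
The plan is to exploit the two hypotheses $p\mid f$ and $p\parallel C$ to force strong divisibility on the coordinates of every integral point, and then read off the consequences for $u = x + f\sqrt{-B_0}y$. The first and most useful observation is that for any primitive solution $[x:y:z]\in\Y_{B,C}(\Z)$ one has $p\mid x$ and $p\mid z$, hence $p\nmid y$. Indeed, writing~\eqref{eq:Y_BC} as $x^2 = Cz^n - f^2B_0y^2$ and comparing $p$-adic valuations, the term $f^2B_0y^2$ has valuation at least $2$ (as $p\mid f$), while $v_p(Cz^n) = 1 + nv_p(z)$; if $p\nmid z$ the right-hand side would have valuation exactly $1$, forcing the even quantity $v_p(x^2)$ to equal $1$, which is absurd. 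Thus $p\mid z$, and then both summands on the right have valuation $\ge 2$, giving $p\mid x$; primitivity now rules out $p\mid y$. This single computation immediately yields the two emptiness statements: since $p\mid x$ and $p\mid f$ force $u\in p\O_K$, no point can satisfy $\annulus_p^0$ (which demands $u\notin p\O_K$), and for $p=2$ the condition $\annulusTwo_2^0$ additionally requires $2\nmid x$, contradicting $2\mid x$.

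For the divisibility statements~\ref{part:fCgcd-odd-divisibility} and~\ref{part:fCgcd-even-divisibility}, I would follow the templates of Lemmas~\ref{lem:p_divides_u} and~\ref{lem:2_divides_u}, noting that the key structural simplification is that the alternative ``case~(a)'' of those lemmas—which required $p^{2r}\mid C$—is now impossible, since $p\parallel C$ and $r\ge 1$ give $p^2\nmid C$. Concretely: from $u,\bar{u}\in p^r\O_K$ and $u+\bar{u} = 2x$ one gets $p^r\mid x$ when $p$ is odd (and $2^{r-1}\mid x$, hence $s\ge r-1$ and $2^s\mid x$, when $p=2$). For the $f$-divisibility I would use a uniform norm computation in place of a ramification case analysis: from $u - \bar{u} = 2f\sqrt{-B_0}y$ (odd $p$) or $u - x = f\sqrt{-B_0}y$ together with the bound $v_\p(u-x)\ge se_\p$ (when $p=2$) one sees $f\sqrt{-B_0}y\in p^r\O_K$ (resp.\ $2^s\O_K$), and since $\Nm(f\sqrt{-B_0}y) = (fy)^2B_0$ with $v_p(B_0)\in\{0,1\}$, the inequality $2v_p(fy)+v_p(B_0)\ge 2r$ (resp.\ $\ge 2s$) yields $p^r\mid fy$ (resp.\ $2^s\mid fy$). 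Combined with $p\nmid y$ this gives $p^r\mid f$ (resp.\ $2^s\mid f$).

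The $z$-divisibility comes from the same norm applied to $u$ itself: $u\in p^r\O_K$ gives $v_p(\Nm u)\ge 2r$, while $\Nm u = u\bar{u} = Cz^n$ has valuation $1 + nv_p(z)$; hence $1 + nv_p(z)\ge 2r\ge 2s$, i.e.\ $v_p(z)\ge\lceil(2r-1)/n\rceil$ (resp.\ $\lceil(2s-1)/n\rceil$). For the sharp ``$\parallel$'' conclusions I would argue by contradiction exactly as in the moreover statement of Lemma~\ref{lem:p_divides_u}: assuming $p^{r+1}\mid f$ (together with $p^{m+1}\mid z$ in the case $n\mid 2r-1$, where $m=(2r-1)/n$), the valuations of both summands of $Cz^n - f^2B_0y^2$ exceed $2r$, forcing $p^{r+1}\mid x$; then $p^{r+1}\mid x,f$ push $u$ into $p^{r+1}\O_K$, contradicting $p^r\parallel u$.

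The main obstacle, and the only place requiring genuine care, is the prime $p=2$. There the identities $u+\bar{u} = 2x$ and $u-\bar{u} = 2f\sqrt{-B_0}y$ each lose a factor of $2$, so one cannot conclude $2^r\mid x$ but only $2^{r-1}\mid x$; this is precisely why the statement is phrased in terms of $s=\min\{v_2(x),r\}$. I expect two subtleties: first, the norm computation must absorb the ramified case $2\mid B_0$ (where $v_2(B_0)=1$), which it does uniformly, since $2v_2(fy)\ge 2s-1$ still forces $v_2(fy)\ge s$; and second, in the sharp ``$\parallel$'' argument one must track whether $s=r$ or $s=r-1$—in the latter case the derived inequality $v_2(x)\ge s+1$ already contradicts $v_2(x)=s$, while in the former it produces $2^{r+1}\mid x,f$ and hence the desired contradiction with $2^r\parallel u$.
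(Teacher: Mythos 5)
Your proof is correct and follows essentially the same route as the paper, which derives the emptiness statements from the forced divisibility $p \mid x$ (hence $u \in p\O_K$) and then defers the divisibility claims to the arguments of Lemmas~\ref{lem:p_divides_u} and~\ref{lem:2_divides_u}, with the sharp ``$\parallel$'' conclusions obtained by the same contradiction via the valuation of $Cz^n - f^2B_0y^2$. Your only deviations are cosmetic improvements: you establish $p \mid x$, $p \mid z$, $p \nmid y$ up front for every primitive point, and you replace the ramified/unramified case analysis for the $f$-divisibility by a uniform norm computation using $v_p(B_0) \le 1$ — both of which are valid and, if anything, make the write-up more self-contained than the paper's.
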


\begin{proof}
 If \([x:y:z] \in \Y_{B,C}(\Z; \annulus^0_p)\), then assumption on \(f, C\) forces \(p \mid x\), which forces \(u = x + f\sqrt{-B_0}y \in p \orb_K\). So \(\Y_{B,C}(\Z; \annulus^0_p) = \emptyset\) and similarly, so is \(\Y_{B,C}(\Z; \annulusTwo^0_2)\). The rest follows by the same arguments as the proofs of Lemmas~\ref{lem:p_divides_u} and~\ref{lem:2_divides_u} (the relevant parts do not use \(\gcd(f,C)=1\)). To see the why the conditions on \(n\) dividing \(2r-1\) appear, observe that in part~\ref{part:fCgcd-odd-divisibility}, the \(n \nmid 2r-1\) assumption guarantees that \(\lceil \frac{2r-1}{n}\rceil n > 2r-1 \) so that \(Cz^n\) is divisible by \(p^{2r+1}\). If \(p^{2r+1} \mid Cz^n\), then we may use the same argument to conclude that \(p^r \parallel f\). Thus, in this case, if \(p^{r+1} \mid f\), then we must have that \(p^{2r} \parallel Cz^n\). The conditions on \(n\) dividing \(2s-1\) appear in part~\ref{part:fCgcd-even-divisibility} for a similar reason.
\end{proof}

\begin{remark}\label{rem:star0and3mod4}
    Suppose $B_0 \equiv 3 \pmod{8}$. Then by Definitions~\ref{def:starp}~and~\ref{def:startildep}, we have that
    \begin{align}
        \Y_{B,C}(\Z; \annulus^0) \neq \emptyset &\implies C \text{ odd},\\
        \Y_{B,C}(\Z; \annulusTwo^0) \neq \emptyset &\implies 4 \parallel C. 
    \end{align}
    When $B_0 \equiv 7 \pmod{8}$ and $C$ is odd, it is possible for $\Y_{B,C}(\Z)$ to have both points satisfying $\annulus^0$ and $\annulusTwo^0$, with the former having odd $z$-coordinate and the latter having even $z$-coordinate.
\end{remark}

\subsection{Detecting integral points}
\label{subsec:denominator-test}

In this subsection, we show that $\annulus^0$ points on $\Y_{B,C}$ are detected by admissible twists, i.e. are contained in the refined descent obstruction from \S\ref{subsec:intro-descent}.

\begin{lemma}
\label{lem:denominatortest_Bnot7mod8}
    Suppose $B=f^2B_0$ for $B_0 \ne -1$ squarefree and $\gcd(f,C) = 1$. Then under the map $\Y_{B,C}(\Z) \to \Y'(R_K)$ we have an inclusion of groupoids
    \[\Y_{B,C}(\Z; \annulus^0) \subseteq \coprod_{\twist \in A_{B,C}} \pi_\twist'(\C_\twist'(R_K)).\]
    In particular, if $A_{B,C} = \emptyset$ then $\Y_{B,C}(\Z;\annulus^0) = \emptyset$.
\end{lemma}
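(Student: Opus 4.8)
The plan is to transport a point along $\YYprime$, apply the descent decomposition of Corollary~\ref{cor:descent}, and then check that the resulting twist is admissible. Concretely, let $[x:y:z]\in\Y_{B,C}(\Z;\annulus^0)$, set $u = x + f\sqrt{-B_0}\,y$ and $\ubar = x - f\sqrt{-B_0}\,y$ so that $u\ubar = x^2 + By^2 = Cz^n$, and recall $\YYprime([x:y:z]) = [u:\ubar:z]\in\Y'(R_K)$ (using Lemma~\ref{lemma:Yprime-integer-points} and the inclusion $\Y'(\O_K[1/2f])\hookrightarrow\Y'(R_K)$). By Corollary~\ref{cor:descent} there is a \emph{unique} class $\twist\in H^1(R_K,\mu_n) = R_K^\times/(R_K^\times)^n$ with $[u:\ubar:z]\in\twistmap(\C_\twist'(R_K))$. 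Since $\YYprime$ is an isomorphism of stacks and Corollary~\ref{cor:descent} decomposes $\Y'(R_K)$ as a coproduct of groupoids, the asserted inclusion of groupoids reduces to the single statement that this $\twist$ lies in $\admissible_{B,C}$; the final ``in particular'' then follows formally.

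First I would pin down $\twist$. Unwinding the formula for $\twistmap$ and comparing coordinates up to the weight-$(n,n,2)$ action shows that $[u:\ubar:z]\in\twistmap(\C_\twist'(R_K))$ forces $\twist C^{(n-1)/2}u = (U/\lambda)^n$ for suitable $U\in R_K$ and $\lambda\in R_K^\times$; that is, $\twist C^{(n-1)/2}u$ is an $n$-th power in $R_K$. Passing to valuations gives $v_\p(\twist)\equiv -\tfrac{n-1}{2}v_\p(C) - v_\p(u)\pmod n$ at every prime $\p$ of $\O_K$. Because $\invertedprimes$ contains all primes over $2nCf$, we have $C\in R_K^\times$, so $\twist\O_K$ is supported on $\invertedprimes$ and the displayed congruences determine $\twist$ modulo $n$-th powers.

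Next I would verify the valuation conditions of Definition~\ref{def:admissible_twists} using $\annulus^0$ together with $u\ubar = Cz^n$. For $\p\notin\invertedprimes$ the condition $\annulus_p^0$ gives $v_\p(u)\equiv 0\pmod n$ (immediate for $p$ inert or ramified, and for $p$ split by taking $v_{\pbar}(u)=0$ and using $v_\p(u)+v_{\pbar}(u)=nv_\p(z)$, as $p\nmid C$ here). This both confirms the $n$-th-power condition off $\invertedprimes$ and shows $\twist$ is supported on $\invertedprimes$. For $\p\in\invertedprimesCsplit$ with $p=\p\pbar\mid C$, the condition $\annulus_p^0$ reads $\min(v_\p(u),v_{\pbar}(u))=0$; taking $v_{\pbar}(u)=0$ (the symmetric case producing the sign swap) and using $v_\p(u)+v_{\pbar}(u)=v_p(C)+nv_\p(z)$ yields $v_\p(u)\equiv v_p(C)$, whence $v_\p(\twist)\equiv\tfrac{n-1}{2}v_p(C)$ and $v_{\pbar}(\twist)\equiv\tfrac{n+1}{2}v_p(C)\pmod n$, exactly the $\tfrac{n\mp1}{2},\tfrac{n\pm1}{2}$ pattern. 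A short case analysis on whether $p$ divides $C$ and on its splitting behavior gives $v_\p(\twist)\equiv 0\pmod n$ for the remaining $\p\in\invertedprimes\smallsetminus\invertedprimesCsplit$.

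The main obstacle is the class-group condition $\div(\twist)\in\im\psi_\O$, where the order $\O$ of Definition~\ref{def:orders} enters. I would first check case by case that $u\in\O$, and that $\annulus^0$ forces $\gcd(x,f)=1$ and $\gcd(z,f)=1$ (so $\p\nmid u$ for every $\p\mid f$), with $z$ additionally odd when $\f=2f$; combined with $\gcd(f,C)=1$ this shows $C^{(n-1)/2}u\in\O$ has norm $C^n z^n$ prime to the conductor $\f$, so that $C^{(n-1)/2}u\,\O\in P(\O,\f)$. By the description of $\psi_\O$ in~\eqref{seq:def_psi} (via Lemma~\ref{lem:IOKf=IOf}), $\psi_\O$ carries this class to $[C^{(n-1)/2}u\,\O_K]\in P(\O_K)/nP(\O_K)$. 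On the other hand, the valuation computation above shows precisely that $\div(\twist)=[\twist\O_K]=-[C^{(n-1)/2}u\,\O_K]$ in $P(\O_K)/nP(\O_K)$, and since $\im\psi_\O$ is a subgroup we conclude $\div(\twist)\in\im\psi_\O$. Therefore $\twist\in\admissible_{B,C}$, completing the inclusion. I expect the delicate points to be verifying $u\in\O$ in the conductor-$2f$ case and extracting $\gcd(x,f)=\gcd(z,f)=1$ from $\annulus^0$, as these are exactly what tie the descent class to the order $\O$.
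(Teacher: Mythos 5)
Your proposal is correct and follows essentially the same route as the paper's proof: transport the point along $\YYprime$ into $\Y'(R_K)$, invoke Corollary~\ref{cor:descent} to extract the unique twist class $\twist$, verify the valuation congruences of Definition~\ref{def:admissible_twists} prime-by-prime using condition $\annulus^0$ (the paper handles the ramified and inert primes by subtracting the two valuation congruences for $u$ and $v=\ubar$ rather than via the product relation $u\ubar = Cz^n$, but this is cosmetic), and establish $\div(\twist)\in\im\psi_\O$ from the coprimality of $u$ to the conductor together with the relation $u\twist = U^n/C^{(n-1)/2}$. The delicate points you flag (membership $u\in\O$, the conductor-$2f$ case with $z$ odd, and the vacuity when $B_0\equiv 3\pmod 8$ and $C$ is even) are exactly the ones the paper addresses.
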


\begin{proof}
    Let $[x:y:z] \in \Y_{B,C}(\Z; \annulus^0)$. Let $[u:v:w]$ be its image in $\Y'(R_K)$ and note that $u = \bar{v}$. By Corollary~\ref{cor:descent},
    we have $[u:v:w] = \pi'_\twist([U:V:W])$ for some $\twist \in R_K^\times/(R_K^\times)^n$ and $[U:V:W] \in \C'_\twist(R_K)$. Thus for any $\lambda\in R_{K}^{\times}$,
    \[[u:v:w] = \left[ \frac{U^n}{\twist C^{\frac{n-1}{2}}} : \frac{V^n}{\twist^{n-1} C^{\frac{n-1}{2}}} : W\right] = \left[ \frac{\lambda^nU^n}{\twist C^{\frac{n-1}{2}}} : \frac{\lambda^nV^n}{\twist^{n-1}C^{\frac{n-1}{2}}} : \lambda^2 W\right].\] 
    Taking $\p$-adic valuations for primes $\p \in \invertedprimes$ and reducing modulo $n$, we have
    \begin{align}
    \label{eq:val_res_u}    v_\p(u) &\equiv -v_\p(\twist) - \frac{n-1}{2}v_\p(C) \pmod{n}, \\
    \label{eq:val_res_v}    v_\p(v) &\equiv v_\p(\twist) - \frac{n-1}{2}v_\p(C) \pmod{n},
    \end{align}
    which are well-defined since $\twist \in R_K^\times/(R_K^\times)^n$. We will show that these imply $\twist \in \admissible_{B,C}$.

    First, we consider $\p \in \invertedprimes$ ramified. Since $u = \bar{v}$ we have $v_\p(u) = v_\p(v)$. Subtracting~\eqref{eq:val_res_u} from~\eqref{eq:val_res_v} we have $2 v_\p(\twist) \equiv 0$, which implies $v_\p(\twist) \equiv 0 \pmod{n}$. 
    Similarly, if $\p = p\O_K$ is inert, we have $v_\p(u) = v_\p(v)$ and deduce that $v_\p(d) \equiv 0 \pmod{n}$. 

    Assume now that $\p \in \invertedprimessplit$, i.e.\ there exists a prime $\pbar\ne\frakp$ with $\p\pbar = p\O_K$. 
    Condition~$\annulus^0$ implies that $\min\{v_\p(u),v_{\pbar}(u)\}\equiv 0\pmod n$.  Hence, $\min\{v_\p(u),v_{\p}(v)\}\equiv 0\pmod n$.  Up to (possibly) swapping $\p$ and $\pbar$, we may assume $v_\p(v) \equiv 0\pmod n$. 

    If $p \nmid C$ then we have $v_\p(\twist) \equiv 0 \pmod n$. Otherwise we have $\p \in \invertedprimesCsplit$, so  
    $$v_\p(\twist) \equiv \frac{n-1}{2}v_\p(C) \equiv \frac{n-1}{2}v_p(C) \pmod{n};$$
    $$v_{\pbar}(\twist) \equiv -\frac{n-1}{2}v_{\pbar}(C) \equiv \frac{n+1}{2}v_p(C) \pmod{n}.$$ Swapping the roles of $\p, \pbar$ merely swaps the sign.  
    This shows $[x : y : z]\in \pi_{\twist}'(\C'(R_{K}))$ for some $\twist$ satisfying the conditions in~\eqref{eq:admissible}.

    To show $\div(d) \in\im(\psi_{\O})$, there is a slight difference depending on the residue of $B_{0}$ modulo ${8}$ and the parity of $C$. Here $\psi_{\mathcal{O}}$ is defined in (\ref{seq:def_psi}).  First, assume $B_{0} \not\equiv 3\pmod{8}$, so that the order $\O \subset \O_K$ defined in Definition~\ref{def:orders}
    has conductor $\f = f$. For $p \mid f$, the condition $*_p^0$ implies $u$ is coprime to $f$, so $u\O_K = \psi_\O(u\O)$ by the definition of $P(\O,f)$.   
    To see $\twist\O_K \in \im (\psi_\O)$, we observe that \([u:v:w]\in \im(\pi_\twist')\) implies
    \[
    u\twist = \frac{U^n}{C^{(n-1)/2}}
    \]
    for some \(U \in K^{\times}\).
    Since $\gcd(f,C) = 1$ and $\frac{n-1}{2}$ is coprime to $n$, we have
    \[d \O_K\equiv \psi_{\O}((C^{\frac{n-1}{2}}u\mathcal{O})^{-1}) \pmod{nP(\O_K)},\] 
    hence $\twist \in \admissible_{B,C}$.

   If $B_0 \equiv 3 \pmod{8}$, then by Remark~\ref{rem:star0and3mod4} the conclusion is vacuously satisfied for $C$ even. When $C$ is odd, $\O \subset \O_K$ has conductor $\f = 2f$. By the same argument as above, $u$ is coprime to $f$. Since $x,y$ are not both even by the $*^0$ condition, the congruence $x^2 + f^2B_0 y^2 \equiv 0 \pmod{8}$ has no solutions. Thus, $z$ is odd and $u$ is coprime to $2f$.

    This completes the inclusion of $\Y_{B,C}(\Z;\annulus^0)$  
    into $\coprod_{\twist \in \admissible_{B,C}} \pi'_\twist(\C'_\twist(R_K))$.   
    By hypothesis, $-B$ is not a square, so by Remark~\ref{rem:groupoids}, these groupoids are sets, so there is nothing extra to check.  
\end{proof}

Similarly, recall the definition of the subset $\widetilde{\admissible}_{B,C}$ from Definition~\ref{def:admissible_twists_tilde}.

\begin{lemma}
\label{lem:denominatortest_*prime}
    Suppose $B=f^2B_0$ for $B_0 \neq -1$ squarefree, $B_0 \equiv 3 \pmod{4}$, and $\gcd(f,C) = 1$. Then under the map $\Y_{B,C}(\Z) \to \Y'(R_K)$ we have an inclusion of groupoids
    \[\Y_{B,C}(\Z; \annulusTwo^0) \subset \coprod_{\twist \in \widetilde{\admissible}_{B,C}} \pi_\twist'(\C'(R_K)).\]
    In particular, if $\widetilde{\admissible}_{B,C} = \emptyset$ then $\Y_{B,C}(\Z;\annulusTwo^0) = \emptyset$.
\end{lemma}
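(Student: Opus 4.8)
The plan is to mirror the proof of Lemma~\ref{lem:denominatortest_Bnot7mod8} essentially verbatim, isolating the prime $2$ as the only place where the argument genuinely changes. First I would take a point $[x:y:z] \in \Y_{B,C}(\Z; \annulusTwo^0)$, let $[u:v:w]$ be its image in $\Y'(R_K)$ (so that $v = \bar u$ and $uv = Cw^n$), and apply Corollary~\ref{cor:descent} to write $[u:v:w] = \pi_\twist'([U:V:W])$ for some $\twist \in R_K^\times/(R_K^\times)^n$ and $[U:V:W] \in \C_\twist'(R_K)$. Taking $\p$-adic valuations and reducing modulo $n$ yields the congruences \eqref{eq:val_res_u} and \eqref{eq:val_res_v} exactly as before, and the goal is to show that they force $\twist \in \widetilde{\admissible}_{B,C}$. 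For every odd prime $\p$ the condition $\annulus^0_p$ holds by Definition~\ref{def:startildep}, so the analysis is word-for-word that of Lemma~\ref{lem:denominatortest_Bnot7mod8}: ramified and inert $\p$ give $v_\p(\twist) \equiv 0$, split $\p$ with $p \nmid C$ give $v_\p(\twist) \equiv 0$, and split $\p$ with $p \mid C$ (i.e.\ $\p \in \invertedprimesCsplit \smallsetminus \{\p \mid 2\O_K\}$) give $v_\p(\twist) \equiv \frac{n-1}{2}v_p(C)$ and $v_{\pbar}(\twist) \equiv \frac{n+1}{2}v_p(C)$ modulo $n$. This recovers the second and third lines of \eqref{eq:admissible_prime}.

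The new input is at $2$. Here I would first invoke Lemma~\ref{lem:2_divides_u}: since $\annulusTwo^0_2$ means $2^1 \parallel u$ and $x$ is odd, we have $s = \min\{v_2(x),1\} = 0$, so we are in case~\ref{part:s=0} and $f$ and $y$ are odd; in particular $2 \nmid f$, which will matter for the conductor computation. The heart of the argument is the split case $B_0 \equiv 7 \pmod 8$, where $2\O_K = \p_2\pbar_2$ and $v_{\p_2}(C) = v_2(C)$. The condition $\annulusTwo^0_2$ gives $\min\{v_{\p_2}(u), v_{\pbar_2}(u)\} = 1$, so after possibly swapping $\p_2$ and $\pbar_2$ I may assume $v_{\p_2}(u) = 1$. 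Plugging this into \eqref{eq:val_res_u} gives $v_{\p_2}(\twist) \equiv -(1 + \frac{n-1}{2}v_2(C)) \pmod n$, the ``$-$'' branch of the first line of \eqref{eq:admissible_prime}. For the conjugate prime I would use the defining equation $uv = Cw^n$: taking valuations at $\p_2$ and using $v_{\p_2}(v) = v_{\pbar_2}(u)$ yields $v_{\pbar_2}(u) \equiv v_2(C) - 1 \pmod n$, and feeding this back into \eqref{eq:val_res_u} at $\pbar_2$ gives $v_{\pbar_2}(\twist) \equiv 1 - \frac{n+1}{2}v_2(C) \equiv 1 + \frac{n-1}{2}v_2(C) \equiv -v_{\p_2}(\twist) \pmod n$, as required.

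When instead $B_0 \equiv 3 \pmod 8$ the prime $2$ is inert (so $4 \parallel C$ by Remark~\ref{rem:star0and3mod4}) and $\p_2 = \pbar_2$ forces $v_{\p_2}(u) = v_{\p_2}(v)$; subtracting \eqref{eq:val_res_u} from \eqref{eq:val_res_v} gives $2v_{\p_2}(\twist) \equiv 0$, hence $v_{\p_2}(\twist) \equiv 0 \pmod n$, which is exactly the collapsed form of \eqref{eq:admissible_prime} in this case (so that $\widetilde{\admissible}_{B,C} = \admissible_{B,C}$, matching the remark following Definition~\ref{def:admissible_twists_tilde}). It then remains to verify $\div(\twist) \in \im\psi_\O$, and here the argument from Lemma~\ref{lem:denominatortest_Bnot7mod8} carries over unchanged once we know $f$ is odd: since $\annulus^0_p$ holds for every (odd) prime $p \mid f$, the element $u$ is coprime to $f$, so $u\O \in P(\O, \f)$ with $\f = f$ (the conductor of $\O$ equals $f$ in both relevant subcases $B_0 \equiv 3, 7 \pmod 8$) and $u\O_K = \psi_\O(u\O)$. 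Combining with $u\twist = U^n/C^{(n-1)/2}$, the coprimality $\gcd(f,C)=1$, and the fact that $\tfrac{n-1}{2}$ is coprime to $n$, I conclude $\twist\O_K \equiv \psi_\O((C^{(n-1)/2}u\O)^{-1}) \pmod{nP(\O_K)}$, so $\div(\twist) \in \im\psi_\O$ and therefore $\twist \in \widetilde{\admissible}_{B,C}$. Since $B_0 \neq -1$ forces $-B$ to not be a square, these groupoids are sets by Remark~\ref{rem:groupoids} and nothing further need be checked.

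The main obstacle is the split-$2$ bookkeeping: one must combine the shifted annulus condition $\min\{v_{\p_2}(u),v_{\pbar_2}(u)\} = 1$ (rather than $0$) with the conic $uv = Cw^n$ to produce precisely the $\pm(1 + \frac{n-1}{2}v_2(C))$ appearing in Definition~\ref{def:admissible_twists_tilde}, where the extra ``$+1$'' is exactly the shift in the exponent. The other delicate point is confirming that the inert and split cases are mutually consistent with the collapse $\widetilde{\admissible}_{B,C} = \admissible_{B,C}$ when $2$ does not split, and that the deduction $f$ odd (hence $2 \nmid \f$) is what allows the conductor argument for $\div(\twist) \in \im\psi_\O$ to run identically to the non-modified case.
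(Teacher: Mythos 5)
Your proposal is correct and follows essentially the same route as the paper's proof: reduce to the argument of Lemma~\ref{lem:denominatortest_Bnot7mod8} at all odd primes, handle the inert case $B_0 \equiv 3 \pmod 8$ via Remark~\ref{rem:star0and3mod4} and the vanishing of $v_{\p_2}(\twist)$ mod $n$, treat the split case $B_0 \equiv 7 \pmod 8$ by using the shifted condition $\min\{v_{\p_2}(u), v_{\pbar_2}(u)\} = 1$ to land in the $\pm\bigl(1 + \tfrac{n-1}{2}v_2(C)\bigr)$ branch of~\eqref{eq:admissible_prime}, and use Lemma~\ref{lem:2_divides_u}\ref{part:s=0} to force $f$ odd so that the conductor argument for $\div(\twist) \in \im\psi_\O$ carries over. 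Your explicit computation of $v_{\pbar_2}(\twist)$ from the conic $uv = Cw^n$ is a slightly more detailed version of the paper's one-line ``conjugating gives the value of $v_{\pbar_2}(\twist)$,'' but the content is identical.
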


\begin{proof}
    The proof is nearly identical to that of Lemma~\ref{lem:denominatortest_Bnot7mod8}, with nothing changing at the odd primes. When $B_0 \equiv 3 \pmod{8}$, by Remark~\ref{rem:star0and3mod4} the conclusion is vacuously satisfied unless $4 \parallel C$. Since 2 is inert we have $v_{2}(d) \equiv 0 \pmod{n}$, showing that $d$ satisfies~\eqref{eq:admissible_prime}. Since in this case $\O$ has conductor $\f = f$, necessarily odd by the hypothesis $\gcd(f,C) = 1$, we need not check that $u$ is coprime to 2, and the verification that the image of $d$ lies in that of $\psi_\O$ is the same as in the proof of Lemma~\ref{lem:denominatortest_Bnot7mod8}.

    Suppose now that $B_0 \equiv 7 \pmod{8}$. At the split prime 2, since our points satisfy $\annulusTwo^0_2$, we instead find that that $v_{\p_2}(u)$ or $v_{\p_2}(v)$ is congruent to 1 mod $n$. Assuming $v_{\p_2}(v) \equiv 1 \pmod n$, we have
    \[v_{\p_2}(\twist) \equiv 1 + \frac{n-1}{2} v_{\p_2}(C) \pmod{n}.\]
    Conjugating gives the value of $v_{\pbar_2}(\twist)$, revealing that $\twist$ satisfies the conditions in~\eqref{eq:admissible_prime}.
    
    The proof that $\div(\twist)\in\im(\psi_{\O})$ is similar to Lemma~\ref{lem:denominatortest_Bnot7mod8}; if $f$ is even, then $\Y_{B,C}(\Z; \annulusTwo^0) = \emptyset$ (see Lemma~\ref{lem:2_divides_u}\ref{part:s=0}), so there is nothing to check.
\end{proof}

\section{Main theorems}\label{sec:mainthm}
In the next two sections we prove the main results about the Hasse principle for integral points on the stacky curves $\Y_{B,C}$ (Theorems~\ref{thm:TFAE-intro} and \ref{thm:MainCascadeTheorem}). Proposition~\ref{prop:localTest} already characterizes the values of $B$ and $C$ for which $\Y_{B,C}$ is everywhere locally soluble, so it remains to describe the existence of global points on these stacky curves. To do this, we first characterize when $\Y_{B,C}(\Z; \annulus^0,\ \gcd(z,M) = 1)$ or $\Y_{B,C}(\Z; \annulusTwo^0, \gcd(z,M) = 1)$ are non-empty for $B=f^2B_0$ with $B_0 \ne -1$ squarefree, $\gcd(f,C) = 1$, and $M$ an arbitrary integer.  In Section~\ref{sec:cascade}, we will show how to reduce to one of these cases. 

For many cases, \cite[Prop.~8.1]{DarmonGranville} is a good template for detecting local and global points on $\Y_{B,C}$. However, it fails when certain hypotheses are removed, as the next example shows. 

\begin{ex}
\label{ex:relorderfactors}
\cite[Prop.~8.1]{DarmonGranville} is false when $B_{0}\not\equiv 1\pmod{4}$.
Explicitly, if there is some~$C$ which factors as $C\O_K = \J_{+}\J_{-}$ with $[\J_{+}]\in 3\Cl(\O_{K})$ but $[\J_{+}\cap\O]\not\in 3\Cl(\O)$, then \cite[Prop.~8.1]{DarmonGranville} would predict that $x^{2} + B_{0}y^{2} = Cz^{3}$ has primitive $\Z$-solutions, but Theorem~\ref{thm:TFAE-intro} correctly predicts that there are no such solutions. 
For example, consider the equation $x^2+339y^2 = 29z^3$. Let $K \coloneqq \Q(\sqrt{-339})$, so Definition~\ref{def:orders} defines the order $\O = \Z[\sqrt{-339}]$.  A Magma computation shows that $\Cl(\O_{K}) \cong \Z/6\Z$ and $\Cl(\O) \cong \Z/3\Z\times\Z/6\Z$.
The ideal $[\J_{+}]$ has order $2$ in $\Cl(\O_K)$ and is therefore in $3\Cl(\O_K)$. Note that $[\J_{+}\cap\O]$ has order $6$ in $\Cl(\O)$, but no element of order $6$ is in $3\Cl(\O)$.
\end{ex}

Of course, one cannot expect the conditions from \cite{DarmonGranville} to work for general \(B\), since \(x^2 + By^2\) is not necessarily the norm form of the maximal order. Thus, in this section, we use the order \(\O\) in $\O_{K}$ as in Definition~\ref{def:orders}. When \(B_0 \equiv 1, 2 \pmod{4}\), this order has the desired norm form, which allows us to obtain a criterion similar to that of \cite{DarmonGranville}. For an admissible \(\twist\), we show \(\div(\twist\O_K)\) can be decomposed into a linear combination of coprime ideals \(\J_{+}\) and \(\J_{-}\) satisfying the conditions that the classes $[d\O_K]$ and \([\J_{+}]\) differ by an \(n\)th power in \(\Cl(\O_K)\) and that \([\J_{+} \cap \O]\) belongs to a prescribed coset of \(n \Cl(\O)\). Furthermore, \(C\O_K\) factors as $\J_+\J_- \r^2$ where \(\J_{\pm}\) are supported at primes split in \(\O_K\) and \(\r\) is supported at ramified primes. The condition that \([\J_{+} \cap \O]\) belongs to a prescribed coset of \(n \Cl(\O)\) produces a candidate \(u = x + \sqrt{-B}y\) such that \(x,y \in \Z\) and \(\Nm(u) = Cz^n\) with \([x:y:z] \in \Y_{B,C}(\Z; \gcd(z,M)=1).\) One then checks that $u$ can be chosen such that $[x:y:z]$ satisfies $\annulus^0$. When $B_0 \equiv 3 \pmod{4}$, extra complications arise due to the behavior of the prime 2. 

\subsection{Base cases}

In what follows, $K = \Q\left (\sqrt{-B_{0}}\right )$ with ring of integers $\O_{K}$. For any fractional ideal \(\fraka\), \(\overline{\fraka}\) will denote its Galois conjugate. 

\begin{theorem}
\label{thm:TFAEunified}
Suppose $B = f^{2}B_{0}$ with $B_{0} \neq -1$ squarefree and $\gcd(f,C) = 1$. If $B_0 \equiv 3 \pmod{4}$,  
additionally assume $C$ is odd. Let \(\O\) be the order defined in Definition~\ref{def:orders}. Then the following are equivalent: 
\begin{enumerate}[label = (\roman*)]
    \item\label{part:TFAEi} for all $M\in\Z$, $\Y_{B,C}(\Z;\annulus^{0},\gcd(z,M) = 1)\not = \emptyset$;
    \item\label{part:TFAEii} $p \nmid C$ for all inert primes $p$, $v_p(C) \leq 1$ for all ramified primes $p$, and $\admissible_{B,C}\not = \emptyset$; 
    \item\label{part:TFAEiii} $C\O_{K} = \J_{+}\J_{-}\r^{2}$ as ideals in $\O_{K}$, with $\J_{\pm}$ coprime and supported on split primes, satisfying $\J_{+} = \overline{\J_{-}}$ and $[\J_{\pm}\cap\O]\in n\Cl(\O)$, and with $\r$ a product of ramified primes. 
\end{enumerate}
If $B_0 \equiv 3 \pmod{4}$ and $C$ is even, we have $\Y_{B,C}(\Z;\annulus^{0}) = \emptyset$.
\end{theorem}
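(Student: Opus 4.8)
The plan is to prove the cycle \ref{part:TFAEi} $\Rightarrow$ \ref{part:TFAEii} $\Rightarrow$ \ref{part:TFAEiii} $\Rightarrow$ \ref{part:TFAEi}, and then dispose of the final clause separately. Throughout, for a point $[x:y:z]$ I write $u = x + f\sqrt{-B_0}\,y$, which lies in the order $\O$ of Definition~\ref{def:orders} and satisfies $\Nm(u) = x^2 + By^2 = Cz^n$; the conditions $\annulus^0_p$ are then statements about the $\p$-valuations of $u$. For \ref{part:TFAEi} $\Rightarrow$ \ref{part:TFAEii}, taking $M = 1$ gives $\admissible_{B,C} \neq \emptyset$ immediately from Lemma~\ref{lem:denominatortest_Bnot7mod8}. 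For the divisibility constraints on $C$, I fix an inert (resp.\ ramified) prime $p$ and apply \ref{part:TFAEi} with $p \mid M$, producing a point with $p \nmid z$. Then $\annulus^0_p$ forces $v_\p(u) = 0$ (resp.\ $v_\p(u) \le 1$) at the prime $\p$ above $p$, and comparing $v_p(\Nm u)$ with $v_p(Cz^n) = v_p(C)$ yields $p \nmid C$ in the inert case and $v_p(C) = v_\p(u) \le 1$ in the ramified case, using $p\O_K = \p^2$ there; Lemmas~\ref{lem:p_divides_u} and~\ref{lem:2_divides_u} control the interaction of $\annulus^0$ with the coordinates.

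The core of the theorem is the equivalence \ref{part:TFAEii} $\Leftrightarrow$ \ref{part:TFAEiii}. Given \ref{part:TFAEii}, the constraints $p \nmid C$ (inert) and $v_p(C) \le 1$ (ramified) let me factor $C\O_K = \prod_{p\text{ split}} (\p\pbar)^{v_p(C)} \cdot \prod_{p\text{ ram},\, v_p(C)=1} \p^2$, and I set $\r = \prod \p$ over the ramified factors. It remains to split the split part into coprime conjugate ideals $\J_{+} = \overline{\J_{-}}$ with $[\J_{\pm} \cap \O] \in n\Cl(\O)$. The choice of which prime above each split $p \mid C$ lands in $\J_{+}$ is dictated by the congruences $v_\p(\twist) \equiv v_p(C)\frac{n\pm 1}{2}$ defining a chosen $\twist \in \admissible_{B,C}$, and the condition $\div(\twist) \in \im\psi_{\O}$ then translates, through a chase of the diagram~\eqref{seq:thebigdiagram} and the identity $\twist\O_K \equiv \psi_{\O}((C^{(n-1)/2} u\O)^{-1}) \pmod{nP(\O_K)}$ from the proof of Lemma~\ref{lem:denominatortest_Bnot7mod8}, into exactly the membership $[\J_{+} \cap \O] \in n\Cl(\O)$. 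Running this dictionary backwards gives \ref{part:TFAEiii} $\Rightarrow$ \ref{part:TFAEii}. I expect this order-theoretic bookkeeping — working with $\O$ rather than $\O_K$ via the exact sequences~\eqref{seq:ClK_with_RK_mod_n} and~\eqref{seq:thebigdiagram} — to be the main obstacle, since Example~\ref{ex:relorderfactors} shows that the naive $\O_K$-version fails.

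For the construction \ref{part:TFAEiii} $\Rightarrow$ \ref{part:TFAEi}, set $\mathfrak{I} = (\J_{+} \cap \O)(\r \cap \O) \in I(\O,\f)$ via Lemma~\ref{lem:IOKf=IOf}. Since $n$ is odd, each ramified class $[\p \cap \O]$ is $2$-torsion (as $(\p\cap\O)^2 = p\O$ is principal) and hence equals its own $n$-th power, so $[\r \cap \O] \in n\Cl(\O)$; combined with $[\J_{+} \cap \O] \in n\Cl(\O)$ this gives $[\mathfrak{I}] = n[\mathfrak a] \in n\Cl(\O)$. By Lemma~\ref{lem:ClO_primetof} I choose an integral invertible $\O$-ideal $\mathfrak b$ prime to $\f M$ with $[\mathfrak b] = -[\mathfrak a]$, so that $\mathfrak{I}\,\mathfrak b^{n} = u\O$ is principal. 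Then $u\bar u\,\O = (\J_{+}\cap\O)(\J_{-}\cap\O)(\r\cap\O)^2(\mathfrak b\bar{\mathfrak b})^n$ has norm $Cz^{n}$ with $z = \Nm(\mathfrak b)$ coprime to $M$, giving $\gcd(z,M)=1$. What remains is to check that (after scaling by a unit) $u = x + f\sqrt{-B_0}\,y$ with $x,y \in \Z$ — i.e.\ that $u$ lands in the suborder $\Z[f\sqrt{-B_0}]$ carrying the norm form $x^2+By^2$ — and that $[x:y:z]$ satisfies $\annulus^0$, namely $u \notin p\O_K$ for all $p$. Coprimality of $\mathfrak a,\mathfrak b$ to $\f M$ settles the odd primes via Lemma~\ref{lem:p_divides_u}. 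The delicate case is $p=2$ with $B_0 \equiv 7 \pmod 8$, where $\O = \Z[f\frac{1+\sqrt{-B_0}}{2}]$ strictly contains $\Z[f\sqrt{-B_0}]$: here one verifies that a unit-at-$2$ element of $\O$ automatically has even $\frac{1+\sqrt{-B_0}}2$-coordinate and hence lies in $\Z[f\sqrt{-B_0}]$, which is precisely why Definition~\ref{def:orders} is calibrated as it is; this I would settle using Lemma~\ref{lem:2_divides_u} and Remark~\ref{rem:star0and3mod4}.

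Finally, for the last clause assume $B_0 \equiv 3 \pmod 4$ and $C$ even. Then $f$ is odd (as $\gcd(f,C)=1$) and $B = f^2 B_0$ is odd, so reducing $x^2 + By^2 = Cz^n$ modulo $2$ gives $x^2 + y^2 \equiv 0$, forcing $x \equiv y \pmod 2$. On the other hand, writing $\sqrt{-B_0} = 2\omega - 1$ with $\omega = \frac{1+\sqrt{-B_0}}{2}$ gives $u = (x - fy) + 2fy\,\omega$, so $u \in 2\O_K$ if and only if $x \equiv fy \equiv y \pmod 2$. Thus the condition $\annulus^0_2$ (namely $u \notin 2\O_K$) is incompatible with solubility modulo $2$, and therefore $\Y_{B,C}(\Z;\annulus^{0}) = \emptyset$.
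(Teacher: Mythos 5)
Your overall plan coincides with the paper's proof: the same cycle \ref{part:TFAEi}$\Rightarrow$\ref{part:TFAEii}$\Rightarrow$\ref{part:TFAEiii}$\Rightarrow$\ref{part:TFAEi}, with $M=1$ and Lemma~\ref{lem:denominatortest_Bnot7mod8} giving $\admissible_{B,C}\neq\emptyset$, valuation arguments at inert and ramified primes, the ideals $\J_\pm$ built from the valuation congruences of an admissible twist plus a chase of diagram~\eqref{seq:thebigdiagram}, and the principalization $(\J_+\r\cap\O)\mathfrak{b}^n=u\O$ followed by taking norms. (One small slip: in \ref{part:TFAEii}$\Rightarrow$\ref{part:TFAEiii} you invoke the identity $\twist\O_K\equiv\psi_\O((C^{(n-1)/2}u\O)^{-1})$, but no point $u$ is available in that direction; the paper's argument there is a pure diagram chase using a preimage $\widetilde{\twist}\O$ of $\twist\O_K$ under $\psi_\O$.)

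The genuine gap is in \ref{part:TFAEiii}$\Rightarrow$\ref{part:TFAEi}, in verifying that the constructed point satisfies $\annulus^0$. You take $\mathfrak{b}$ coprime only to $\f M$ and assert that ``coprimality of $\mathfrak{a},\mathfrak{b}$ to $\f M$ settles the odd primes via Lemma~\ref{lem:p_divides_u}.'' This fails in two ways. First, nothing stops $\mathfrak{b}$ from containing $\pbar$ for a split prime $\p\mid\J_+$ dividing $C$; then $u\O_K$ is divisible by $\p$ (from $\J_+$) and by $\pbar$ (from $\mathfrak{b}^n$), so $u\in p\O_K$ and $\annulus^0_p$ fails. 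Coprimality of the auxiliary ideal to $C$ is essential — the paper chooses $\z$ coprime to $2fB_0CM$, not merely to $\f M$. Second, even after that fix, $\mathfrak{b}$ may contain \emph{both} primes above some split $p\nmid 2fB_0CM$, again forcing $u\in p\O_K$; the paper handles this with the replacement step ($u\mapsto u/p^n$, $\z\mapsto\z/p$, iterated), which your proposal omits (alternatively one can insist $\mathfrak{b}$ be primitive). Note also that Lemma~\ref{lem:p_divides_u} runs the wrong way for your purpose: it derives divisibility consequences from a point already known to satisfy $\annulus_p^r$, and cannot certify $\annulus^0$ for a point you have just constructed. Both gaps are local and repairable, but as written the defining property of the points in \ref{part:TFAEi} is not established. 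The same coprimality issue (to $2$) must be fixed for your $p=2$ argument to start.

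Your treatment of $B_0\equiv 7\pmod 8$ is, however, a genuinely different route from the paper's, and it is correct once $\mathfrak{b}$ is chosen coprime to $2$. The paper descends to the suborder $\O'=\Z[f\sqrt{-B_0}]$ of relative conductor $2$ and uses the class number formula (Lemma~\ref{lem:class_number_formula}) to transfer $[\J_+\r\cap\O]\in n\Cl(\O)$ to $n\Cl(\O')$, generating $u$ there. You instead generate $u$ in $\O=\Z\bigl[f\tfrac{1+\sqrt{-B_0}}{2}\bigr]$ and observe that any generator of an $\O$-ideal coprime to $2$ is of the form $a+bf\tfrac{1+\sqrt{-B_0}}{2}$ with $a$ odd and $b$ even (check modulo the two primes above $2$), hence lies in $\Z[f\sqrt{-B_0}]$ and carries the norm form $x^2+By^2$. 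This is elementary, correct, and avoids Lemma~\ref{lem:class_number_formula} entirely — though the citations you offer for it (Lemma~\ref{lem:2_divides_u}, Remark~\ref{rem:star0and3mod4}) do not actually prove the claim; the direct computation does. Your proof of the final clause ($B_0\equiv 3\pmod 4$, $C$ even) agrees with the paper's.
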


\begin{proof}
   \ref{part:TFAEi} $\implies$~\ref{part:TFAEii} It suffices to take $M=1$. The statement $\admissible_{B,C}\not = \emptyset$ follows from Lemma~\ref{lem:denominatortest_Bnot7mod8}. Let $[x:y:z] \in \Y_{B,C}(\Z; \annulus^0)$. Suppose $p \mid C$ for an inert prime $p$, necessarily coprime to $f$ and necessarily odd by our assumptions. Then $x^2 + f^2B_0y^2 \equiv 0 \pmod{p}$, but for $p$ inert this congruence has only the trivial solution; this forces $p \mid x,y$, contradicting the $*^0$ condition. Next, suppose that $p^2 \mid C$ for a ramified prime $p$. If $p \mid B_0$, then we have $p \mid x$, which implies $p \mid fy$; since $p \nmid f$, we have $p \mid y$, again violating $*^0$. When $B_0 \equiv 1 \pmod{4}$, the prime $2$ is also ramified; in this case, examining $x^2 + f^2 B_0 y^2 \equiv 0 \pmod{4}$ reveals that $2 \mid x,y$, again contradicting $*^0$.
       
   \ref{part:TFAEii} $\implies$~\ref{part:TFAEiii} We construct the ideals $\J_\pm$ and $\r$ as follows: since $\twist \in \admissible_{B,C}$, for all primes $\p \in \invertedprimesCsplit$ we have $v_\p(\twist) \equiv \frac{n\pm 1}{2}v_\p(C) \pmod{n}$. Let 
    \begin{align*}
        &\J_+ \colonequals \prod_{\substack{\p \in \invertedprimesCsplit\\v_\p(\twist) \equiv \frac{n + 1}{2}v_\p(C) \ (n)}}\p^{v_\p(C)}&\text{and }\hspace{0.8cm}
        &\J_- \colonequals \prod_{\substack{\p \in \invertedprimesCsplit\\v_\p(\twist) \equiv \frac{n - 1}{2}v_\p(C) \ (n)}}\p^{v_\p(C)}.
    \end{align*} 
    We then define 
    \[
        \r = \prod_{\substack{\p \text{ ramified in }K \\ \p \mid C}} \p. 
    \] 
    By hypothesis, we have $C\O_K = \J_+\J_-\r^2$, 
    with $\J_+$ coprime and conjugate to $\J_-$, and satisfying the desired support conditions.

    It remains to show $[\J_\pm \cap \O] \in n \Cl(\O)$. By construction, we have $\div(d\O_K) = \frac{n+1}{2} \div(\J_+) + \frac{n-1}{2} \J_- \pmod{n}$.
    Let $\widetilde{d}\O \in P(\O, f)/nP(\O,f)$ satisfy $\psi_{\mathcal{O}}(\widetilde{d}\O) = d\O_{K}$, where $\psi_{\mathcal{O}}$ is as in (\ref{seq:def_psi}). Then the commutativity of~\eqref{seq:thebigdiagram} and the injectivity of the map $I(\O,f)/nI(\O,f) \to I(\O_K)/nI(\O_K)$ reveal that $\div(\widetilde{d}\O) = \frac{n+1}{2} (\J_+ \cap \O) + \frac{n-1}{2} (\J_- \cap \O)$. By taking into account that $[\J_+] = -[\J_-]$, we find that $\div(\widetilde{\twist}\O) = [\J_+ \cap \O]$, so by the exactness of the top row of~\eqref{seq:thebigdiagram}, we have $[\J_+ \cap \O] \in n \Cl(\O)$.

   \ref{part:TFAEiii} $\implies$~\ref{part:TFAEi} Write $C\O_K = \J_+\J_-\r^2$ as in the hypothesis. 
    The argument is slightly different depending on $B_{0}$ modulo ${8}$ and the parity of $C$, so first assume 
    $B_0 \not \equiv 3 \pmod{4}$. Since $\gcd(f, C) = 1$, we necessarily have $\J_\pm \cap \O, \r \cap \O \in I(\O,\f)$. 
    Moreover, $[\r \cap \O] \in \Cl(\O)[2]$, so since $n$ is odd, $[\r \cap \O] \in n \Cl(\O)$.
    Thus $[\J_+\r \cap \O] \in n \Cl(\O)$.

    Let $\z$ be an $\O$-ideal such that $(\J_+\r \cap \O)\z^n$ is principal, say $(\J_+\r \cap \O)\z^n = u\O$ for some $u = x + f\sqrt{-B_0}y \in \O$. By Lemma~\ref{lem:ClO_primetof}, we may further assume that $\z$ is coprime to $2fB_0CM$. By Lemma~\ref{lem:IOKf=IOf}, the norms of $\z\O_K$ and $\z\O$ coincide for \(\z \in \O\) relatively prime to  \(\mathfrak{f}\), so setting $z = \Nm(\z)$ we have
    \[\Nm(u) = x^2 + f^2B_0y^2 = \Nm(\J_+\r\z^n) = Cz^n\]
    for integers $x,y,z$ with $\gcd(z,2fB_0CM)=1$. 

    It remains to show that condition $*^0$ is satisfied for some such $u$; this implies primitivity of $[x:y:z]$. We claim that if $u \in p\O_K$ for a rational prime $p$, then $\z \in p\O_K$. We can then replace $u$ and $\z$ by $u/p^n$ and $\z/p$, respectively, until $u \notin p\O_K$ for any rational prime $p$. Thus we have $[x:y:z]$ is primitive and satisfies $*^0$.

    If $u\in p\O_K$ and $p \nmid C$, then we already have $p \mid \z$, so suppose $p \mid C$. If $p = \p\pbar$ splits in $K$, then since $\J_+$ and $\J_- = \overline{\J_+}$ are coprime by hypothesis, and both \(\p, \pbar \mid u\), we must have one of $\p$ or $\pbar$ dividing $\z$. However, we chose $\z$ above to be coprime to $C$, a contradiction. If $p = \p^2$ is ramified in $K$, then again we find that $\p \mid \z$, contradicting our choice of $\z$ above. Therefore in all cases, if $u \in p\O_K$ we have $\z \in p\O_K$, as desired.

    We now address $B_0 \equiv 3 \pmod{4}$, provided $C$ is odd. If $B_0 \equiv 3 \pmod{8}$, then $\O \subset \O_K$ has conductor $\f =2f$. Thus $\J_\pm \cap \O, \r \cap \O \in I(\O, \f)$ and the same argument as above applies to produce a point $[x:y:z] \in \Y_{B,C}(\Z; \annulus^0,\ \gcd(z,M) = 1)$.

    If $B_{0}\equiv 7\pmod{8}$,  
    let $\O' = \Z[f\sqrt{-B_0}]$ be the order in $\O$ of relative conductor $2$ (absolute conductor $2f$ in $\O_K$). By Lemma~\ref{lem:class_number_formula}, we have $\#\Cl(\O')/\#\Cl(\O) \in \{1,2\}$, so in either case,  $n\Cl(\O') \simeq n\Cl(\O)$, induced by the isomorphism $I(\O', 2f) \to I(\O, 2f)$ on ideals coprime to $2f$.
    Thus $[\J_+ \r \cap \O'] \in n\Cl(\O')$, so we find an ideal $\z$ in $\O'$ coprime to $2fB_0CM$ such that 
    \[(\J_+ \r \cap \O')\z^n = u\O'.\]
    Taking norms produces
    \[Cz^n = x^2 + f^2B_0y^2\]
    with $\gcd(z,M) = 1$. The choices of $\J_\pm$ and $\z$ ensure that $[x:y:z]$ is primitive, via the same 
    argument, so once again $\Y_{B,C}(\Z;\annulus^{0},\gcd(z,M) = 1)$ is nonempty. This completes the proof of~\ref{part:TFAEiii} $\implies$~\ref{part:TFAEi} in all cases. 

    Finally, if $B_0 \equiv 3 \pmod{4}$ and $C$ is even, then $f$ is odd. Thus for any $[x:y:z] \in \Y_{B,C}(\Z)$, $x$ and $y$ must have the same parity. In either case they do not satisfy $\annulus_2^0$, so we have $\Y_{B,C}(\Z; \annulus_2^0) = \emptyset$; see Remark~\ref{rem:star0and3mod4}. 
\end{proof}

Next, 
define a subset of integers $\Z_{B,C}$ according to 
$$
\Z_{B,C} = \begin{cases}
    \Z, &\text{if } B_{0}\not\equiv 7\pmod{8};\\
    \Z, &\text{if } B_{0}\equiv 7\pmod{8} \text{ and } 2\nmid C \text{ or } 8\mid C;\\
    2\Z + 1, &\text{if } B_{0}\equiv 7\pmod{8} \text{ and } 2\mid C \text{ but } 8\nmid C. 
\end{cases}
$$

\begin{theorem}
\label{thm:TFAEunified_prime}
Suppose $B = f^{2}B_{0}$ with $-1 \neq B_0 \equiv 3 \pmod{4}$ squarefree and  $\gcd(f,C) = 1$. Let \(\O\) be the order defined in Definition~\ref{def:orders}. Then the following are equivalent: 
\begin{enumerate}[label = $\widetilde{(\roman*)}$]
    \item\label{part:TFAE2i} for all $M\in\Z_{B,C}$, $\Y_{B,C}(\Z;\annulusTwo^{0},\ \gcd(z,M) = 1)\not = \emptyset$;
    \item\label{part:TFAE2ii} $p \nmid C$ for all odd inert primes $p$, $v_{p}(C) \leq 1$ for all ramified primes $p$, $4\parallel C$ if $B_0\equiv 3\pmod{8}$, and $\widetilde{\admissible}_{B,C}\not = \emptyset$;
    \item\label{part:TFAE2iii} $C\O_{K} = 2^{\ell}\J_{+}\J_{-}\r^{2}$ as ideals in $\O_{K}$, with $\ell = 2$ when $B_0\equiv 3\pmod{8}$,  $\J_{\pm}$ coprime to $2$ and each other and supported on split primes, satisfying $\J_{+} = \overline{\J_{-}}$ and $[\J_{+}\cap\O]\in \pm[\p_{2}^{2 - \ell}] + n\Cl(\O)$, and with $\r$ a product of ramified primes. 
\end{enumerate}
Moreover, if $B_0 \equiv 3\mod{8}$ and $v_{2}(C)\not = 2$, then $\Y_{B,C}(\Z;\annulusTwo^0) = \emptyset$ (recall Remark~\ref{rem:star0and3mod4}).
\end{theorem}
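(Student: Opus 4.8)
The plan is to establish the cycle $\widetilde{(i)} \Rightarrow \widetilde{(ii)} \Rightarrow \widetilde{(iii)} \Rightarrow \widetilde{(i)}$, following the proof of Theorem~\ref{thm:TFAEunified} essentially verbatim away from the prime $2$ and isolating the new $2$-adic input. The conceptual point is that $\annulusTwo^0$ demands $2 \parallel u$ together with $x$ odd, where $u = x + f\sqrt{-B_0}y$, in place of the condition $u \notin 2\O_K$ that governs $\annulus^0$; the modified admissibility set $\widetilde{\admissible}_{B,C}$ of Definition~\ref{def:admissible_twists_tilde} and the shifted class condition $[\J_+ \cap \O] \in \pm[\p_2^{2-\ell}] + n\Cl(\O)$ are built precisely to record this shift by one unit of $\p_2$-valuation.

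For $\widetilde{(i)} \Rightarrow \widetilde{(ii)}$ I would take $M = 1$, which lies in $\Z_{B,C}$ in every case, deduce $\widetilde{\admissible}_{B,C} \neq \emptyset$ from Lemma~\ref{lem:denominatortest_*prime}, and recover the conditions on $C$ at the odd inert and ramified primes by the same local computations as in Theorem~\ref{thm:TFAEunified}; these use only odd primes and the $\annulus^0_p$ part of $\annulusTwo^0$, so they carry over unchanged, while $4 \parallel C$ for $B_0 \equiv 3 \pmod 8$ is exactly the content of Remark~\ref{rem:star0and3mod4}. For $\widetilde{(ii)} \Rightarrow \widetilde{(iii)}$ I would build $\J_\pm$ and $\r$ from a twist $d \in \widetilde{\admissible}_{B,C}$ as in Theorem~\ref{thm:TFAEunified}, letting the ideal $2^\ell$ absorb the $2$-part of $C\O_K$. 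The only genuinely new computation is the class of $[\J_+ \cap \O]$: the extra summand $\pm 1$ in the congruence for $v_{\p_2}(d)$ makes the $2$-primary contribution to the class of $d\O_K$ equal to $2 v_{\p_2}(d) [\p_2]$ (using $[\pbar_2] = -[\p_2]$), and since $2 v_{\p_2}(d) \equiv \pm(2 - \ell) \pmod{n}$ this is exactly $\pm[\p_2^{2-\ell}]$. Feeding this through the commutative diagram~\eqref{seq:thebigdiagram} and the map $\psi_\O$ as in Theorem~\ref{thm:TFAEunified} then gives $[\J_+ \cap \O] \in \pm[\p_2^{2-\ell}] + n\Cl(\O)$: a trivial shift when $2$ is inert ($\ell = 2$) and a genuine one when $2$ splits.

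The crux, and the step I expect to be the main obstacle, is $\widetilde{(iii)} \Rightarrow \widetilde{(i)}$. I would reproduce the generator construction from Theorem~\ref{thm:TFAEunified}, working in the order $\O' = \Z[f\sqrt{-B_0}]$, whose norm form is exactly $x^2 + f^2 B_0 y^2$ and which has index $2$ in $\O$, and transporting the class condition from $\Cl(\O)$ to $n\Cl(\O')$ via Lemma~\ref{lem:class_number_formula}. The shift $\pm[\p_2^{2-\ell}]$ is what lets me choose $\z$ coprime to $2fB_0CM$ so that, after inserting the power of $\p_2$ dictated by the target $2$-valuation of $u$, the relevant $\O'$-ideal becomes principal with a generator $u = x + f\sqrt{-B_0}y$ satisfying $\Nm(u) = Cz^n$ and $2 \parallel u$ (when $2$ is inert, $2 \parallel u$ follows instead automatically from $4 \parallel C$). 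The delicate part is upgrading $2 \parallel u$ to the full condition $\annulusTwo^0_2$ by arranging $x$ odd: since $u \in 2\O_K$ forces $x \equiv fy \pmod{2}$ and $f$ is odd, this is equivalent to $y$ odd, which in turn is equivalent to $u/2$ lying in the nontrivial coset of $\O/\O' \cong \Z/2\Z$. Securing this coset condition while keeping $u$ a generator of the prescribed ideal, using the limited unit freedom together with the choice of $\z$ within its class, is where essentially all the work beyond Theorem~\ref{thm:TFAEunified} resides.

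Finally, the set $\Z_{B,C}$ records the $2$-adic valuation forced on $z$: when $B_0 \equiv 7 \pmod{8}$ with $2 \mid C$ but $8 \nmid C$, the relations $x, y$ odd and $x^2 + f^2 B_0 y^2 = Cz^n$ force $8 \mid Cz^n$ and hence $2 \mid z$, so $\gcd(z, M) = 1$ can only be imposed for odd $M$, giving $\Z_{B,C} = 2\Z + 1$; in the remaining cases $z$ may be chosen coprime to any $M$, so $\Z_{B,C} = \Z$. The closing ``moreover'' assertion is then immediate from Remark~\ref{rem:star0and3mod4}: for $B_0 \equiv 3 \pmod{8}$ every $\annulusTwo^0$ point forces $4 \parallel C$, so if $v_2(C) \neq 2$ no such point exists and $\Y_{B,C}(\Z; \annulusTwo^0) = \emptyset$.
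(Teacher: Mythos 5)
Your cycle $\widetilde{(i)}\Rightarrow\widetilde{(ii)}\Rightarrow\widetilde{(iii)}\Rightarrow\widetilde{(i)}$ matches the paper's, and the first two implications are handled the same way (take $M=1$, invoke Lemma~\ref{lem:denominatortest_*prime} and Remark~\ref{rem:star0and3mod4}; then read off $[\J_+\cap\O]\in\pm[\p_2^{2-\ell}]+n\Cl(\O)$ from the $\pm 1$ shift in~\eqref{eq:admissible_prime} via $[\pbar_2]=-[\p_2]$ and $\psi_\O$ --- your explicit computation $2v_{\p_2}(d)\equiv\pm(2-\ell)$ is a correct unpacking of what the paper calls ``tracing through''). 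The gap is exactly where you predicted it: $\widetilde{(iii)}\Rightarrow\widetilde{(i)}$. You propose to work in the index-$2$ suborder $\O'=\Z[f\sqrt{-B_0}]$ and then force $y$ odd by steering $u$ into the nontrivial coset of $\O/\O'$ ``using the limited unit freedom together with the choice of $\z$ within its class'' --- but you never carry this out, and it is the entire content of the implication. The paper sidesteps the coset problem altogether: for $B_0\equiv 7\pmod 8$ it stays in $\O=\Z[f\tfrac{1+\sqrt{-B_0}}{2}]$ and makes the ideal itself do the work, choosing $\z$ so that $(\p_2^{n-2}\J_+\r\cap\O)\z^n=u\O$ (the class hypothesis makes this possible since $[\p_2^{n-2}]\equiv-[\p_2^{2}]\pmod{n\Cl(\O)}$). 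The generator $u=a+bf\tfrac{1+\sqrt{-B_0}}{2}$ then lies in $\p_2\O_K\smallsetminus 2\O_K$, which \emph{automatically} forces $b$ odd (if $b$ were even, $u\equiv a\pmod{2\O_K}$ would reduce to a diagonal class in $\O_K/2\O_K\cong\F_2\times\F_2$, contradicting $u\in\p_2\smallsetminus\p_2\pbar_2$); multiplying by $2$ lands in $\Z[f\sqrt{-B_0}]$ with $x=2a+bf$, $y=b$ odd, giving $\annulusTwo^0_2$ with no unit juggling. Without some such mechanism your argument does not close, and it is not clear the unit/coset approach can be made to work: when $\O^\times=\O'^\times$ there is no unit freedom at all at $2$.

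A second omission: establishing $\widetilde{(i)}$ for \emph{all} $M\in\Z_{B,C}$ requires producing points with prescribed parity of $z$, and the paper needs three separate constructions --- $C$ odd (forcing $z$ even, via the $\p_2^{n-2}$ device above), $C$ even with $M$ odd (via $\p_2^{n+\ell-2}$, yielding even $z$), and $v_2(C)\geq 3$ with $M$ even, where one must produce an \emph{odd} $z$ by starting from $(\p_2^{\ell-2}\J_+\r\cap\O)\z^n=u\O$ and running a parity analysis on $a,b$ that uses $\ell>2$. Your sketch explains why $\Z_{B,C}=2\Z+1$ when $2\parallel C$ or $4\parallel C$ (correctly), but asserts without argument that ``in the remaining cases $z$ may be chosen coprime to any $M$''; the $8\mid C$, $M$ even case is a genuine additional construction, not a consequence of the generic one.
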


\begin{proof}
    First, note that if $C$ is odd, then any $[x : y : z]\in\Y_{B,C}(\Z;\annulusTwo^{0})$ must have $z$ even, since $2 \parallel u$ implies $2\mid Cz^{n}$. Thus when $C$ is odd, it is necessary to take $M$ odd in~\ref{part:TFAE2i}. 

    Similar to the proof of Theorem~\ref{thm:TFAEunified},~\ref{part:TFAE2i} $\implies$~\ref{part:TFAE2ii} follows from Lemma~\ref{lem:denominatortest_*prime} and Remark~\ref{rem:star0and3mod4}.

   \ref{part:TFAE2ii} $\implies$~\ref{part:TFAE2iii} When $\twist\in\widetilde{\admissible}_{B,C}$, tracing through~\eqref{eq:admissible_prime} reveals 
    \[0 \equiv [\twist\O_K] \equiv [\J_+] \pm [\p_2^{2-\ell}] \pmod{n},\]
    where $\ell = v_{2}(C)$ and where swapping $\p_2$ and $\pbar_2$ merely
    changes the sign. Since both $\J_+$ and $\p_2$ are coprime to $f$ (for the latter, see Lemma~\ref{lem:2_divides_u}\ref{part:s=0}), the conductor of $\O$, and $d\O_K \in \im\psi_\O$, where $\psi_{\mathcal{O}}$ is as in (\ref{seq:def_psi}), we obtain 
    \[[\J_+ \cap \O] \in \pm[\p_{2}^{2-\ell} \cap \O] + n \Cl(\O).\]

   \ref{part:TFAE2iii} $\implies$~\ref{part:TFAE2i}  We begin with $B_0 \equiv 7 \pmod{8}$. First, suppose $C$ is odd. Without loss of generality, suppose that $[\J_+ \cap \O] \in [\p_2^2 \cap \O] + n \Cl(\O)$. Here we find $\z$ coprime to $2BCM$ such that $(\p_2^{n-2} \J_+ \r \cap \O) \z^n = u\O$ for $u \in \O$.
   Taking norms, we obtain
    \[2^{n-2}Cz^n = \Nm(u),\]
    for $u = a + bf\frac{1 + \sqrt{-B_0}}{2}$ with \(a,b \in \Z\). Note that $u \in \p_2\O_K \smallsetminus 2\O_K$, from which it follows that $b$ is odd. 
    Replacing $u$ by $2u$ and rearranging, we have
    \[C(2z)^n = \Nm(2u) = x^2 + f^2B_0y^2,\]
    where $x = 2a + bf$ and $y = b$. Since $b$ is odd, this ensures that $[x:y:z]$ satisfies $\tilde{*}_2^0$. The same arguments ensure it is primitive and satisfies $*^0_p$ at odd primes $p$.

    When $C$ is even and $M$ is odd, we can follow a similar argument to construct points $[x : y : z]\in\Y_{B,C}(\Z;\annulusTwo^0,\gcd(z,M) = 1)$ with \emph{even} $z$-coordinate. The only difference here is that we find $\z$ coprime to $2BCM$ satisfying $(\p_{2}^{n + \ell - 2}\J_{+}\r\cap\O)\z^{n} = u\O$. Taking norms, multiplying by $4$ and moving a factor of $2^{n}$ into $\Nm(\z)^n$ produces the desired point with even $z$-coordinate. 

    When $v_2(C) \geq 3$ and $M$ is even, we also need to be able to construct points with \textit{odd} $z$-coordinate. Starting with 
    \[(\p_2^{\ell-2}\J_+\r \cap \O)\z^n = u\O = \left(a + bf\frac{1+\sqrt{-B_0}}{2}\right)\O,\]
    we recognize that $b$ is necessarily odd; either $a$ is even, or $a,b$ have the same parity, but not both, since the left-hand-side is divisible by only $\p_2$ and not $\pbar_2$. Note that here we are using that $\ell > 2$. Multiplying by 2 at this stage, we have 
    \[(\p_2^{\ell-1}\pbar_2\J_+\r \cap \O)\z^n = 2u\O = \left((2a + bf) + bf\sqrt{-B_0}\right)\O,\]
    and taking norms yields
    \[Cz^n = (2a+bf)^2 + (f^2B_0)b^2 = x^2 + (f^2B_0)y^2.\]
    Since $b$ {and \(f\) are} odd, $2 \nmid \gcd(x,y)$, and the justification that $p \nmid \gcd(x,y)$ for odd primes $p$ follows the same argument as in the proof of Theorem~\ref{thm:TFAEunified}.  

    Lastly, consider $B_0 \equiv 3 \pmod{8}$. Since 2 is inert, $[\J_+ \cap \O] \in n \Cl(\O)$, so we find $\z$ such that $(\J_+\r \cap \O)\z^n = u\O$ for $u \in \O$ and $\z$ coprime to $2BCM$. Taking norms we have 
    \[\frac{C}{4}z^n = \Nm(u).\]
    Multiplying both sides by 4, we obtain $[x:y:z] \in \Y_{B,C}(\Z; \annulusTwo^0,\ \gcd(z,M) = 1)$.    
\end{proof}

Theorem~\ref{thm:TFAEunified} specializes to \cite[Prop.~8.1]{DarmonGranville} when $B_0\equiv 1 \mod 4$, $f = 1$ and $C$ is odd and coprime to $B$, since in this case $\O = \O_K$ and $C$ is not divisible by any ramified primes.

The following examples illustrate how the main theorems determine the existence of primitive solutions to generalized Fermat equations. 

\begin{ex}[{\cite[\S8]{DarmonGranville}}]
\label{ex:29,3}
For the generalized Fermat equation $x^{2} + 29y^{2} = 3z^{3}$, we have $K = \Q\left (\sqrt{-29}\right )$ which has cyclic class group of order $6$ generated by a prime lying over $3$, say $\p_{3} = \left (3,\alpha\right )$ where $\alpha = 1 + \sqrt{-29}$. Therefore the only primes we need to invert are $2$ and $3$, so we may take \(R_K = \O_K[1/6]\).
In this case, one can check that $A_{29,3}$ is empty, meaning there are \emph{no admissible twists} of $\C'\to\Y'$. Then Theorem~\ref{thm:TFAEunified}(ii) says $x^{2} + 29y^{2} = 3z^{3}$ has no nontrivial primitive $\Z$-solutions, confirming the example in \cite[Sec.~8]{DarmonGranville}. 
\end{ex}

\begin{ex}
\label{ex:29,19}
Let $B = 29$ again, and take $C = 19$.  In this case, $R_K = \O_K\left [\frac{1}{114}\right ]$ and the admissible twists $\C_{d}'\to(\Y_{29,19})_{R_{K}}$ are parametrized by 
\begin{align*}
    \admissible_{29,19} = \{ & 418  - 19\sqrt{-29},
    -418  + 19 \sqrt{-29},
    3059  - 1140 \sqrt{-29},
    -3059  + 1140 \sqrt{-29},\\ &
    3610  + 1083 \sqrt{-29},
    -3610  - 1083 \sqrt{-29},
    159182  + 17575 \sqrt{-29},\\&
    -159182  - 17575 \sqrt{-29},
    164255  - 15884 \sqrt{-29},
    -164255  + 15884 \sqrt{-29},\\&
    650522  - 534641 \sqrt{-29},
    -650522  + 534641\sqrt{-29}\}.
\end{align*}
In particular, $A_{B,C}\not = \emptyset$ so by Theorem~\ref{thm:TFAEunified}, $x^{2} + 29y^{2} = 19z^{3}$ has primitive $\Z$-solutions, including $(7,4,3)$, $(8,47,15)$ and $(22,1,3)$. 
\end{ex}

\begin{ex}\label{ex:3,31}
    Consider the equation $x^2 + 3y^2 = 31z^3$. Setting $K = \Q\left (\sqrt{-3}\right )$ and $\O = \Z\left [\sqrt{-3}\right ]$, we see that $\Cl(\O) = \Cl(\O_K) = 1$, so Theorem ~\ref{thm:TFAEunified} implies that there exist primitive $\Z$-solutions. Moreover, we claim that \(3\mid y\) for any such solution. Factoring 
    \[C\O_K = 31\O_K = \J_+\J_- = (2 + 3\sqrt{-3})(-2 - 3\sqrt{-3}),\] 
    we notice that both $\J_\pm\cap\O$ are contained in the order $\O_3 \subset \O$ of relative conductor 3. Suppose that $[x:y:z] \in \Y_{3,31}(\Z)$ with $u = x + \sqrt{-3}y$. Then $u = \J_+ \mathfrak{z}^3$, where $\mathfrak{z} \subset \O$ is an ideal with norm $z$. Since $\O$ has trivial class group, we must have $\z = (a + b\sqrt{-3})$, so $\z^3 = (a + b \sqrt{-3})^3$. By the binomial formula, $\z^3 \cap \O_3$ is principal. Thus, $u$ is an element of $\O_3$. By taking norms, we find that $3 \mid y$. That is, \emph{every} primitive solution $[x : y : z]$ must have $3\mid y$. 

    This foreshadows the discussion in \S \ref{subsec:points-with-restricted-y-coordinate}, where we explore how to determine whether or not $\Y_{B,C}(\Z; *^0,\ p \nmid y)$ is nonempty. In particular, Lemma \ref{lem:p-part-kernel} generalizes the approach taken in this example. See also the examples in \S \ref{subsubsec:examples-death-case}, \ref{subsec:examples-brute-force}.
\end{ex}

\subsection{Cohomological obstructions to the Hasse principle}
\label{subsec:coho-descent}

Theorems~\ref{thm:TFAEunified} and~\ref{thm:TFAEunified_prime} show that the refined descent obstruction, as defined below, is the only obstruction to the integral Hasse principle for \(\Y_{B,C}(\Z; \annulus^0)\) (resp. \(\Y_{B,C}(\Z; \annulusTwo^0)\)).

In \cite[Thm.~1.1]{santens23}, Santens proved that the finite descent obstruction is the only obstruction to strong approximation for tame stacky curves $X$ over a number field \(k\) of genus $g<1$ and therefore the only obstruction to the Hasse principle for all the integral models $\mathcal{X}$ of $X$. That is, there exists a finite \'etale group scheme $G$ and a $G$-torsor $\pi \colon \mathcal{C} \to \mathcal{X}$ such that the descent along $\pi$ locus 
\[
    \mathcal{X}(\A_U)^{\pi} = \coprod_{\sigma\in H^1(U, G)} \pi_{\sigma}(\mathcal{C}_{\sigma}(\A_U)) \neq \emptyset \quad\Longrightarrow\quad \mathcal{X}(U) \neq \emptyset
\] for a Dedekind domain $U$ with fraction field \(k\). Here $\A_U$ is the subring of integral ad\`eles of $\A_k$, and 
\[
    \mathcal{X}(\A_U) \coloneqq \prod_{v\in S} \X(k_v) \times \prod_{v\not\in S}\mathcal{X}(\O_v)
\] endowed with the product topology
(see \cite[Prop. 13.0.2]{christensen2020}),
where $S$ is the finite set of archimedean places of $k$ and the product is over places of \(U\).  

Although descent along $\pi$ captures the failure of the integral Hasse principle for a stacky curve $\Y_{B,C}$, it is hard in practice to use it to show whether  $\Y_{B,C}$ satisfies the integral Hasse principle over $U = \Z$, since it is difficult to construct \'etale covers of $\Y_{B,C}$ over \(\Z\). We are able to construct \'etale covers over \(\Z[1/2n]\), and to make the twists of this cover easy to compute, we pass to the ring \(R_K\).  
We note that 
\[
    \Y_{B,C}(\Z) \subseteq \Y_{B,C}\Big(\Z\Big[\frac{1}{2n}\Big]\Big) \subseteq \Y_{B,C}(R_K),
\] 
and \(\Y_{B,C}(\A_{\Z}) \subset \Y_{B,C}(\A_{R_K}) \neq \emptyset\). Descent along \(\pi\colon \mathcal{C}' \to (\Y_{B,C})_{R_K}\) now only allows one to describe the obstruction to the existence of $R_K$-points instead of $\Z$-points.  
Our main theorems thus provide a ``refined'' descent locus,
\[\Y_{B,C}^{\pi, \textnormal{refined}}(\A_{R_K}) \coloneqq \coprod_{\twist \in {\admissible}_{B,C}} \pi_\twist'(\C_d(\A_{R_K})) \subseteq \coprod_{\twist\in H^1(R_K, \mu_n)} \pi_{\twist}'(\C_d(\A_{R_K})) \] 
whose non-emptiness implies that of $\Y_{B,C}(\Z;\annulus^0)$.

\section{The ``cascade'': reduction to the base cases}\label{sec:cascade}
The goal of this section is to reduce to the case where the conditions of one of the main theorems (Theorems~\ref{thm:TFAEunified}~and~\ref{thm:TFAEunified_prime}) are satisfied. More precisely, we would like to prove that for any \(B,C\in\Z\smallsetminus\{0\}\), there exists a finite set of pairs \(\mathcal{P}=\{(B', C')\}\) such that
\[
\Y_{B,C}(\Z) \ne \emptyset \iff \Y_{B',C'}(\Z; \bullet) \neq \emptyset \text{ for some } (B',C')\in\mathcal{P}
\] 
for some condition \(\bullet \) depending on \(B\) and \(C\) that can be tested using the main theorems in  Section~\ref{sec:mainthm}. For a given \((B,C)\) with \(\gcd(f,C)=1\), the set of pairs \(\mathcal{P}\) depends on two kinds of primes: the first kind where \(p^2\mid \gcd(B,C)\), which are dealt with using Lemma~\ref{lem:loc-bijection}. The second kind are those for which \(\Y_{B,C}(\Z; \annulus^r_p)\) is non-empty for some \(r >0\). In~\S\ref{subsec:div-lemmas}, we proved some preliminary lemmas characterizing divisibility conditions for points in \(\Y_{B,C}(\Z;\annulus_p^r)\) for such primes. 
In~\S\ref{subsec:cascadefC1} and \S\ref{subsec:cascadepfC} we use these to prove isomorphisms between groupoids of the form \(\Y_{B,C}(\Z; \bullet)\) and \(\Y_{B',C'}(\Z; \bullet')\)  which in turn, help us compute the set \(\mathcal{P}\). We consider the case \(\gcd(f,C)=1\) in \S\ref{subsec:cascadefC1} and the case when primes \(p \mid \gcd(f,C)\) in~\S\ref{subsec:cascadepfC}. In \S\ref{subsec:points-with-restricted-y-coordinate}, we present a modification of Theorem~\ref{thm:TFAEunified} so that we can  construct a point $[x:y:z]$ with $y$ relatively prime to a fixed integer. 
In~\S\ref{subsec:cascadeRoadMap}, we provide an algorithm that takes any nonzero $B,C\in\Z$ as inputs and outputs whether \(\Y_{B,C}(\Z)\) is empty or not. Together, these results prove Theorem~\ref{thm:MainCascadeTheorem}.

\subsection{Cascade lemmas: \texorpdfstring{$\text{\textbf{gcd}}\bm{(f,C) = 1}$}{gcd(f,C)=1}}
\label{subsec:cascadefC1}
The  lemmas in this section fall naturally into two categories: Lemmas~\ref{lem:cascade_f},~\ref{lem:cascade_f_2} for primes dividing \(f\) and Lemmas~\ref{lem:cascade_y},~\ref{lem:cascade_y_2} for primes dividing \(C\).

\begin{observation}\label{observation:pnmidy}
    We begin with a simple, yet useful observation that follows from the definition of \(\annulus^0\) points. Let $f, B_0,$ and \(C\) be non-zero integers with \(B_0\) squarefree and let \(B = f^2B_0\). Suppose \(p \mid C\) but \(p \nmid f\). Then 
    \[
    \Y_{B,C}(\Z;\annulus^0_p) \cong \Y_{B,C}(\Z;\annulus^0_p, p \nmid y).
    \]
    Indeed, for any \([x:y:z] \in \Y_{B,C}(\Z;\annulus^0_p)\), since $p\mid C$, then \(p \mid (x^2 + f^2B_0 y^2)\). Because \(p \nmid f\) and \(B_0\) is squarefree, we have that \(p \mid x \) if and only if \( p \mid y\). The \(\annulus^0_p\) assumption now implies that \(p \nmid y\). Similarly, Lemma~\ref{lem:2_divides_u} implies that
    \[
    \Y_{B,C}(\Z; \annulusTwo^0) =  \Y_{B,C}(\Z; \annulusTwo^0, 2 \nmid y).
    \]
\end{observation}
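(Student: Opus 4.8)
The plan is to recognize that in both assertions the right-hand side is, by the convention of Remark~\ref{rem:groupoids}, the full subgroupoid of the left-hand side carved out by imposing one further condition on the $y$-coordinate ($p\nmid y$, respectively $2\nmid y$). Because a full subgroupoid determined by a property of objects automatically contains all the relevant morphisms, it will suffice to verify that this extra condition holds for \emph{every} object of the larger groupoid; then the two groupoids have identical objects and morphisms, hence coincide (a fortiori are isomorphic). Thus I reduce everything to two pointwise assertions: that every $[x:y:z]\in\Y_{B,C}(\Z;\annulus^0_p)$ satisfies $p\nmid y$, and that every $[x:y:z]\in\Y_{B,C}(\Z;\annulusTwo^0)$ satisfies $2\nmid y$.

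For the first assertion I would start from the defining equation $x^2+f^2B_0y^2=Cz^n$ and reduce it modulo $p$; since $p\mid C$ this gives $x^2+f^2B_0y^2\equiv 0\pmod p$. Arguing by contradiction, suppose $p\mid y$. Then the middle term vanishes modulo $p$, forcing $p\mid x^2$ and hence $p\mid x$. But now both summands of $u=x+f\sqrt{-B_0}\,y$ lie in $p\O_K$, so $u\in p\O_K$, directly contradicting the defining condition $\annulus^0_p$ (Definition~\ref{def:starp}) that $u\notin p\O_K$. Therefore $p\nmid y$. I would deliberately phrase this using only the single implication $p\mid y\Rightarrow p\mid x$, rather than the full biconditional $p\mid x\iff p\mid y$, so that the argument stays uniform across split, inert, ramified, and even the prime $p=2$.

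For the second assertion I would invoke Lemma~\ref{lem:2_divides_u}. By Definition~\ref{def:startildep}, a point satisfying $\annulusTwo^0_2$ has $2\parallel u$ and $2\nmid x$; in particular it lies in $\Y_{B,C}(\Z;\annulus^1_2)$ and has $v_2(x)=0$. Matching this to the notation of the lemma gives $r=1$ and $s=\min\{v_2(x),r\}=0$, so we land precisely in case~\ref{part:s=0}, which asserts $2\nmid f$ and $2\nmid y$. This yields the required $2\nmid y$ and completes the reduction.

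I do not anticipate a substantive obstacle, since both arguments are short; the care needed is entirely in the bookkeeping. The one genuinely delicate point is that the biconditional $p\mid x\iff p\mid y$ used as a shortcut is not literally valid at a ramified prime $p\mid B_0$ (where the equation forces $p\mid x$ regardless of $y$) nor at $p=2$ when $B_0\equiv 3\pmod4$ (where $u\in 2\O_K$ can occur with $x,y$ both odd). Restricting to the implication $p\mid y\Rightarrow p\mid x$, as above, sidesteps this entirely: the implication $p\mid x,\ p\mid y\Rightarrow u\in p\O_K$ that I actually use can be checked directly in each of the bases $\Z[\sqrt{-B_0}]$ and $\Z[\frac{1+\sqrt{-B_0}}{2}]$. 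The remaining point requiring attention is the correct translation of the $\annulusTwo^0_2$ data into the indices $r$ and $s$ of Lemma~\ref{lem:2_divides_u} so as to land in case~\ref{part:s=0}.
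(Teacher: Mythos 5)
Your argument is correct and follows the paper's own reasoning: reduce the defining equation mod $p$ to get $p\mid y\Rightarrow p\mid x$, conclude $u\in p\O_K$ in contradiction with $\annulus^0_p$, and invoke Lemma~\ref{lem:2_divides_u} (with $r=1$, $s=0$, landing in case~\ref{part:s=0}) for the statement at $2$. Your replacement of the paper's biconditional ``$p\mid x$ if and only if $p\mid y$'' by the single implication $p\mid y\Rightarrow p\mid x$ is a sensible tightening --- the converse direction can indeed fail at ramified primes $p\mid B_0$ --- but it does not change the substance of the proof.
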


\begin{lemma}\label{lem:cascade_f}
     Let $f,$ $B_0,$ and $C$ be nonzero integers with $-1\ne B_0$ squarefree and $\gcd(f,C)=1$. Suppose $p$ is an odd prime such that $p^r \parallel f$ for some \(r>0\). Write \(r' = \lceil 2r/n\rceil n - 2r\). Then we have the following for any \(t \in \Z_{>0}\):
     \begin{align*}
         \Y_{B,C}(\Z; \annulus^t_p)
\begin{cases}
    = \emptyset & t>r;\\
    = \emptyset & 0<t<r \text{ and } n \nmid t;\\
    \cong \Y_{Bp^{-2t}, C}(\Z; \annulus^0_p, p \nmid y) & 0<t \le r \text{ and } n \mid t;\\
     \cong \Y_{Bp^{-2r}, Cp^{r'}}(\Z; \annulus^0_p) & t=r \text{ and } n \nmid t.
\end{cases}         
     \end{align*}
\end{lemma}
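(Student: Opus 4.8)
The plan is to treat all four cases with a single structural input, namely Lemma~\ref{lem:p_divides_u} applied at the odd prime $p$, and then to exhibit the two nontrivial equivalences by explicit divide-and-rescale maps modeled on Lemma~\ref{lem:loc-bijection}. The key initial observation is that since $\gcd(f,C)=1$ and $p^r \parallel f$ with $r>0$, we have $p\nmid C$, so case~\ref{part:lem-p-divides-u:C} of Lemma~\ref{lem:p_divides_u} (which demands $p^{2t}\mid C$) can never occur. Hence every $[x:y:z]\in\Y_{B,C}(\Z;\annulus^t_p)$ with $t>0$ automatically lies in case~\ref{part:lem-p-divides-u:f}: it satisfies $p^t\mid x$, $p\nmid y$, $p^t\mid f$, and $p^{\lceil 2t/n\rceil}\mid z$, and moreover $p^t\parallel f$ whenever $n\nmid t$. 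This single extraction drives the whole argument.

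For the two emptiness claims I would read off a contradiction with $p^r\parallel f$. If $t>r$, a point would force $p^t\mid f$, impossible since $p^{r+1}\nmid f$. If $0<t<r$ and $n\nmid t$, the ``moreover'' clause forces $p^t\parallel f$, yet $t<r$ gives $p^{t+1}\mid f$, again a contradiction. Both cases are therefore vacuous.

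For the two isomorphisms I would write down the rescaling map and its inverse. When $0<t\le r$ and $n\mid t$, set $[x:y:z]\mapsto[x/p^t : y : z/p^{2t/n}]$; dividing the defining equation by $p^{2t}$ turns it into the equation for $\Y_{Bp^{-2t},C}$ with coefficient $(fp^{-t})^2B_0$, the quantity $u' = u/p^t$ attached to the image certifies $\annulus^0_p$, and $p\nmid y$ is inherited from case~\ref{part:lem-p-divides-u:f}. When $t=r$ and $n\nmid t$, I instead use $[x:y:z]\mapsto[x/p^r : y : z/p^{\lceil 2r/n\rceil}]$; here $z^n/p^{2r} = p^{r'}(z/p^{\lceil 2r/n\rceil})^n$ with $r' = \lceil 2r/n\rceil n-2r$ (noting $0<r'<n$, since $n$ is odd and $n\nmid r$), producing the equation for $\Y_{Bp^{-2r},Cp^{r'}}$, and Observation~\ref{observation:pnmidy} upgrades $\annulus^0_p$ to the $p\nmid y$ condition that the image automatically satisfies. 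In each case the inverse multiplies the first coordinate by the relevant power of $p$ and the last coordinate by $p^{2t/n}$ (resp.\ $p^{\lceil 2r/n\rceil}$); I would verify the defining equation, that $u = p^t u'$ (resp.\ $p^r u'$) with $u'\notin p\O_K$ pins the exponent at exactly $\annulus^t_p$ rather than merely $\ge t$, and that primitivity is preserved in both directions (using $p\nmid y$ at $p$, and the fact that removing or inserting powers of $p$ leaves $v_q$ unchanged for $q\ne p$).

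I expect the main obstacle to be bookkeeping rather than conceptual: ensuring that the two maps are genuinely mutually inverse bijections of \emph{primitive} representatives, that the annulus exponent lands on the nose, and that the $p\nmid y$ clause is treated correctly---it must be imposed by hand in the $n\mid t$ case (where the target still has $p\mid f'$, so Observation~\ref{observation:pnmidy} does not apply), whereas it is automatic in the $t=r$ case (where $p\mid C'$ and Observation~\ref{observation:pnmidy} does apply). Since $-B$ is not a square, $\Y_{B,C}(\Z)$ and its subgroupoids are sets by Remark~\ref{rem:groupoids}, so ``isomorphism of groupoids'' reduces to a bijection of sets, and the automorphism-preservation check is exactly as in Lemma~\ref{lem:loc-bijection}.
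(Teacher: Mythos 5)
Your proposal is correct and follows essentially the same route as the paper's proof: both rely on Lemma~\ref{lem:p_divides_u} (with $p\nmid C$ forcing case~\ref{part:lem-p-divides-u:f} and its ``moreover'' clause) for the two emptiness claims, use the same rescaling map $[x:y:z]\mapsto[x/p^t:y:z/p^{\lceil 2t/n\rceil}]$ with inverse well defined because $p\nmid y$, and invoke Observation~\ref{observation:pnmidy} to drop the $p\nmid y$ condition exactly when $r'>0$. Your write-up merely spells out a few verifications (the exact annulus exponent, primitivity in both directions) that the paper leaves implicit.
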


\begin{proof}
Let \(p\) be as in the statement of the lemma.
Suppose $t>r$. If $[x:y:z] \in \Y_{B, C}(\Z; *_p^t)$, using Lemma~\ref{lem:p_divides_u} we arrive at a contradiction since $p\nmid C$ and $p^r\parallel f$. On the other hand, if $0<t<r$ and \(n \nmid t\), then by Lemma~\ref{lem:p_divides_u} we again arrive at a similar contradiction.

For the remaining cases, consider the map
\[
\Y_{B,C}(\Z; \annulus^t_p) \rightarrow \Y_{B\cdot p^{-2t}, C \cdot p^{\lceil \frac{2t}{n}\rceil n - 2t}}(\Z; \annulus^0_p, p\nmid y); \qquad [x:y:z] \mapsto \left[x/p^t : y : z/p^{\lceil 2t/n \rceil}\right].
\]
This map is well defined because of the divisibility conditions imposed by the equations, as well as part~\ref{part:lem-p-divides-u:f} of Lemma~\ref{lem:p_divides_u}. It is an isomorphism: the inverse map is well defined since \(p \nmid y\). Notice that if \(n \nmid r\), then \(r' >0\), so that \(p \mid Cp^{r'}\) and the statement for that case follows from Observation~\ref{observation:pnmidy}. If \(n \mid r\), or \(n \mid t\) for \(0<t<r\), we cannot make any such reduction, and must indeed restrict to the subgroupoid with \(p \nmid y\).
\end{proof}

\begin{lemma}\label{lem:cascade_f_2}
    Let $f,$ $B_0,$ and $C$ be nonzero integers with $-1 \ne B_0$ squarefree and $\gcd(f,C)=1$. Suppose $2^r \parallel f$ for $r > 0$. If \(t > r+1\), then \( \Y_{B,C}(\Z; \annulus^t_2) = \emptyset\). Otherwise, we have the following.
\begin{align}
\label{eqn:YBC-tilde-2-cascade}
     \Y_{B,C}(\Z; \annulusTwo^{t-1}_2)&
     \begin{cases}
           = \emptyset & 0<t \le r;\\
             \cong \Y_{B \cdot 2^{-2r}, C \cdot 2^{\lceil \frac{2r}{n}\rceil n - 2r}}(\Z; \annulusTwo^0_2) & t = r+1;\\ 
     \end{cases}
     \\
     \label{eqn:YBC-tilde-2-complement-cascade}
     \Y_{B,C}(\Z; \annulusTwo^{t-1}_2)^c &
     \begin{cases}
           = \emptyset & 0<t<r \text{ and } n \nmid t;\\
             \cong \Y_{B \cdot 2^{-2t}, C}(\Z; \annulus^0_2, 2 \nmid y) & 0<t \le r \text{ and } n \mid t;\\
             \cong  \Y_{B \cdot 2^{-2r}, C \cdot 2^{\lceil \frac{2r}{n}\rceil n - 2r}}(\Z; \annulus^0_2) & t=r \text{ and } n \nmid t;\\
             = \emptyset & t = r+1          
     \end{cases}
\end{align}
where \(\Y_{B,C}(\Z; \annulusTwo^{t-1}_2)^c\) denotes the complement of \(\Y_{B,C}(\Z; \annulusTwo^{t-1}_2)\) inside \(\Y_{B,C}(\Z; \annulus^t_2)\).
\end{lemma}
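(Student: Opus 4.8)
The plan is to mirror the proof of Lemma~\ref{lem:cascade_f}, replacing the odd-prime divisibility analysis of Lemma~\ref{lem:p_divides_u} with its prime-$2$ analogue Lemma~\ref{lem:2_divides_u} and carrying the refined condition $\annulusTwo$ along throughout. I would begin with the setup common to every case. Since $2^r\parallel f$ with $r>0$ we have $2\mid f$, hence $2\nmid C$ because $\gcd(f,C)=1$; thus any $[x:y:z]\in\Y_{B,C}(\Z;\annulus^t_2)$ with $t>0$ lies in case~\ref{part:s>02!|y} of Lemma~\ref{lem:2_divides_u} (case~\ref{part:s>02|y} is ruled out by $2\nmid C$ and case~\ref{part:s=0} by $2\mid f$). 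Writing $s=\min\{v_2(x),t\}$, this gives $s>0$, $2\nmid y$, $2^s\mid f$, and $2^{\lceil 2s/n\rceil}\mid z$, together with $2^s\parallel f$ whenever $n\nmid s$. The key elementary observation is that $f\sqrt{-B_0}\,y\in 2^r\O_K$, so that $u=x+f\sqrt{-B_0}\,y\equiv x\pmod{2^r\O_K}$. Since $2^s\mid f$ forces $s\le r$ while $v_2(x)\ge t-1$ gives $s\ge t-1$, we obtain $t\le r+1$; in particular $\Y_{B,C}(\Z;\annulus^t_2)=\emptyset$ for $t>r+1$.

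Next I would split $\Y_{B,C}(\Z;\annulus^t_2)$ according to $v_2(x)$: the sublocus $\annulusTwo^{t-1}_2$ where $v_2(x)=t-1$ (so $s=t-1$) and its complement where $v_2(x)\ge t$ (so $s=t$), treating~\eqref{eqn:YBC-tilde-2-cascade} and~\eqref{eqn:YBC-tilde-2-complement-cascade} in parallel. For the emptiness of $\annulusTwo^{t-1}_2$ when $0<t\le r$, the congruence above does the work: here $r\ge t$, so $u\equiv x\pmod{2^t\O_K}$, whereas $v_2(x)=t-1$ means $2^t\nmid x$ and hence $u\notin 2^t\O_K$, contradicting $2^t\mid u$. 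I want to stress that this argument is uniform in the splitting behaviour of $2$ and, crucially, avoids the ``moreover'' clause of Lemma~\ref{lem:2_divides_u}, which produces no contradiction in the delicate subcase $n\mid(t-1)$. For $t=r+1$ one has $s=r$, and I would use the scaling map $[x:y:z]\mapsto[x/2^r:y:z/2^{\lceil 2r/n\rceil}]$; substituting $x=2^rx'$, $z=2^{\lceil 2r/n\rceil}z'$ into $x^2+f^2B_0y^2=Cz^n$ and dividing by $2^{2r}$ yields the defining equation of $\Y_{B\cdot 2^{-2r},\,C\cdot 2^{\lceil 2r/n\rceil n-2r}}$, and the conditions $2^{r+1}\parallel u$, $2^r\parallel x$ translate to $2\parallel u'$ with $x'$ odd, i.e.\ $\annulusTwo^0_2$. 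The inverse $[x':y':z']\mapsto[2^rx':y':2^{\lceil 2r/n\rceil}z']$ is well defined and primitivity-preserving because $\annulusTwo^0_2$ forces $2\nmid y'$ (case~\ref{part:s=0} of Lemma~\ref{lem:2_divides_u} applied downstairs), giving the stated isomorphism.

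For the complement ($s=t$) I would argue exactly as in Lemma~\ref{lem:cascade_f}. When $0<t<r$ and $n\nmid t$, the ``moreover'' clause gives $2^t\parallel f$, contradicting $2^r\parallel f$, so the complement is empty; when $t=r+1$, $2^s\mid f$ reads $2^{r+1}\mid f$, again impossible, so it too is empty. In the two surviving cases the scaling map $[x:y:z]\mapsto[x/2^t:y:z/2^{\lceil 2t/n\rceil}]$ lands in $\Y_{B\cdot 2^{-2t},\,C\cdot 2^{\lceil 2t/n\rceil n-2t}}$, where $2^t\parallel u$ becomes $2\nmid u'$, i.e.\ $\annulus^0_2$. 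The only bookkeeping is the $2\nmid y$ condition: when $n\mid t$ the exponent $\lceil 2t/n\rceil n-2t$ is $0$, so the new $C$ remains odd and I must retain $2\nmid y$, matching $\Y_{B\cdot 2^{-2t},C}(\Z;\annulus^0_2,\,2\nmid y)$; when $t=r$ and $n\nmid t$ the exponent $r'=\lceil 2r/n\rceil n-2r$ is positive, the new $C$ is even, and the $p=2$ instance of Observation~\ref{observation:pnmidy} absorbs the condition (using $2\nmid B_0$), leaving the target as $\annulus^0_2$.

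I expect the main obstacle to be precisely the emptiness of $\annulusTwo^{t-1}_2$ for $0<t\le r$ in the range $n\mid(t-1)$: there Lemma~\ref{lem:2_divides_u} alone is insufficient, and one must exploit the ambient structure of $\O_K$ through the congruence $u\equiv x\pmod{2^r\O_K}$. A secondary nuisance is tracking the parity of $y$ through the scaling maps across the inert, split, and ramified behaviours of $2$ (and the degenerate case $2\mid B_0$, where the $\annulusTwo$-loci vanish by Lemma~\ref{lem:2_divides_u}); this is exactly the role played by the $\annulusTwo$ refinement, which is what distinguishes this lemma from its odd-prime counterpart.
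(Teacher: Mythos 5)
Your proof is correct and follows essentially the same route as the paper's (which simply declares the argument ``analogous'' to Lemma~\ref{lem:cascade_f}, using the same scaling maps together with Lemma~\ref{lem:2_divides_u} and Observation~\ref{observation:pnmidy}). Your direct congruence argument $u \equiv x \pmod{2^r\O_K}$ for the emptiness of $\Y_{B,C}(\Z;\annulusTwo^{t-1}_2)$ when $0<t\le r$ is a genuine improvement in that it uniformly covers the subcase $n\mid(t-1)$, where the ``moreover'' clause of Lemma~\ref{lem:2_divides_u} is silent and the paper's sketch leaves the reader to supply exactly this step; your only (harmless) imprecision is the parenthetical suggesting Observation~\ref{observation:pnmidy} needs $2\nmid B_0$, which it does not.
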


\begin{proof}
 When $t>r+1$, the proof is similar to the analogous case in Lemma~\ref{lem:cascade_f}, using Lemma~\ref{lem:2_divides_u}.

For~\eqref{eqn:YBC-tilde-2-cascade}, recall that if \([x:y:z] \in \Y_{B,C}(\Z; \annulusTwo^{t-1}_2) \subset \Y_{B,C}(\Z; \annulus^{t}_2) \), then \(2^{t-1} \parallel x\) and \(2^t \parallel u\). The proof is now analogous to that of Lemma~\ref{lem:cascade_f} using the map
\[
[x:y:z ] \mapsto \left[\frac{x}{2^{t-1}}: y: \frac{z}{2^{\lceil \frac{2t-2}{n}\rceil}} \right]
\]
and the divisibility conditions imposed by Lemma~\ref{lem:2_divides_u} and Observation~\ref{observation:pnmidy}. 

For the proof of~\eqref{eqn:YBC-tilde-2-complement-cascade}, we observe that \(2^t \mid x\) and \(2^t \parallel u\) and follow a similar strategy, using Lemma~\ref{lem:2_divides_u} and Observation~\ref{observation:pnmidy} and the map
\[
[x:y:z ] \mapsto \left[\frac{x}{2^{t}}: y: \frac{z}{2^{\lceil \frac{2t}{n}\rceil}} \right].
\]
\end{proof}

\begin{lemma}\label{lem:cascade_y}
Let $f$, $B_0$, and $C$ be nonzero integers with $-1\ne B_0$ squarefree and $\gcd(f,C)=1$. Suppose $p$ is an odd prime. If $p \nmid f$, then for all \(0 < r \le \lfloor v_p(C)/2 \rfloor\)
    we have an isomorphism of groupoids
    \[\Y_{B, C}(\Z; *_p^r) \simeq \Y_{B, C \cdot p^{-2r}}(\Z; *_p^0, p \nmid z).\]
   Further, if \(r>\lfloor v_p(C)/2 \rfloor\),  then \(\Y_{B,C}(\Z; \annulus_p^r) = \emptyset\).
\end{lemma}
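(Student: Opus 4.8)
The plan is to mirror the structure of the proof of Lemma~\ref{lem:cascade_f}: first pin down exactly how $p$ divides the coordinates of any point in $\Y_{B,C}(\Z;\annulus_p^r)$ using Lemma~\ref{lem:p_divides_u}, and then write down an explicit coordinate-scaling map together with its inverse. The crucial input is that here $p \nmid f$, so for any $[x:y:z] \in \Y_{B,C}(\Z;\annulus_p^r)$ with $r>0$ the divisibility $p^r \mid f$ demanded by part~\ref{part:lem-p-divides-u:f} of Lemma~\ref{lem:p_divides_u} is impossible; thus we must land in part~\ref{part:lem-p-divides-u:C}, giving $p^r \mid x$, $p^r \mid y$, $p^{2r} \mid C$, and $p \nmid z$. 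This single observation drives the whole argument.

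From $p^{2r}\mid C$ I immediately read off $2r \le v_p(C)$, i.e.\ $r \le \lfloor v_p(C)/2\rfloor$, which settles the emptiness claim: if $r > \lfloor v_p(C)/2\rfloor$ there can be no point satisfying $\annulus_p^r$. For the remaining range $0 < r \le \lfloor v_p(C)/2\rfloor$, I would define
\[
[x:y:z] \longmapsto [x/p^r : y/p^r : z],
\]
which makes sense because $p^r$ divides both $x$ and $y$, and whose target equation $x'^2 + By'^2 = (C\cdot p^{-2r})z^n$ is obtained by dividing $x^2+By^2=Cz^n$ through by $p^{2r}$. The inverse is the rescaling $[x':y':z'] \mapsto [p^r x' : p^r y' : z']$.

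The verifications to carry out are all local at $p$ and are essentially bookkeeping: (i) primitivity is preserved in both directions because the only coordinates touched are $x$ and $y$, while $p \nmid z$ guarantees $p$ does not become a new common factor; (ii) writing $u = x + f\sqrt{-B_0}y$, the image element is $u/p^r$, so $p^r \parallel u$ is exactly equivalent to $u/p^r \notin p\O_K$, translating $\annulus_p^r$ into $\annulus_p^0$ and back; and (iii) the extra target condition $p \nmid z$ is inherited for free, since it is forced on the source by part~\ref{part:lem-p-divides-u:C} and $z$ is untouched. I expect no genuine obstacle here, as the content lies entirely in the clean dichotomy of Lemma~\ref{lem:p_divides_u}; the step requiring the most care is confirming that the two maps are honest inverses as maps of \emph{groupoids}. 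Because $p\nmid z$ forces $z \neq 0$, none of the points involved is a stacky point $[\pm\sqrt{-B}:1:0]$, so all automorphism groups are trivial and the groupoid isomorphism reduces to the evident bijection of underlying sets.
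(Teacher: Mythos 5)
Your proposal is correct and takes essentially the same route as the paper: both use the hypothesis $p\nmid f$ to force case~\ref{part:lem-p-divides-u:C} of Lemma~\ref{lem:p_divides_u} (giving $p^r\mid x,y$, $p^{2r}\mid C$, $p\nmid z$, and hence the emptiness claim), and both then exhibit the scaling map $[x:y:z]\mapsto[x/p^r:y/p^r:z]$ with inverse $[x':y':z']\mapsto[p^rx':p^ry':z']$, checking primitivity and the translation of $\annulus_p^r$ into $\annulus_p^0$ via $u\mapsto u/p^r$. Your closing remark about the groupoid structure being a bijection of sets is consistent with Remark~\ref{rem:groupoids}, since $-B$ is not a square in the relevant range.
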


\begin{proof}
    Because $p\nmid f$, Lemma~\ref{lem:p_divides_u} implies that $p^r \mid y$ and $p \nmid z$.  The map 
    \[[x:y:z] \mapsto [x/p^r : y/p^r : z]\]
    is well-defined since $p\nmid z$, and since $p^r\parallel\left( x+f\sqrt{-B_0}y\right)$ implies $p\nmid (x+f\sqrt{-B_0}y)/p^{r}$.  The inverse map is well-defined because for all $[x':y':z']\in\Y_{B, C/p^{2r}}(\Z; *_p^0, p \nmid z)$, we have that $p \nmid z'$ and $p^r\parallel\left(x'p^r+f\sqrt{-B_0}y'p^r\right)$.
   If \(r>\lfloor v_2(C)/2 \rfloor\), the statement follows from part~\ref{part:lem-p-divides-u:C} of Lemma~\ref{lem:p_divides_u}.
\end{proof}

\begin{lemma}\label{lem:cascade_y_2}
    Let $B_0$ be squarefree and $\gcd(f,C)=1$. Suppose $2 \nmid f$ and that $0 < r \le \lfloor v_2(C)/2 \rfloor$.
    Then we have an isomorphism of groupoids
    \begin{align*}
        \Y_{B, C}(\Z; *_2^{r}) &\cong \Y_{B,C}(\Z; \tilde{*}_2^{r-1}) \sqcup \Y_{B,C}(\Z; \tilde{*}_2^{r-1})^c\\
        & \cong \Y_{B, C\cdot 2^{-2(r-1)}}(\Z; *_2^0, 2 \nmid z) \sqcup \Y_{B, C\cdot 2^{-2r}}(\Z; *_2^0, 2 \nmid z).
    \end{align*}
    Further, 
    \begin{align*}
        \Y_{B,C}(\Z; \annulus^r_2) = \emptyset \qquad &\text{ if } r>  \lfloor v_2(C)/2 \rfloor +1;\\
        \Y_{B,C}(\Z; \annulusTwo^{r-1}_2)^c = \emptyset \qquad &\text{ if } r =  \lfloor v_2(C)/2 \rfloor +1.
    \end{align*}
\end{lemma}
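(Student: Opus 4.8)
The plan is to treat this as the prime-$2$ analogue of Lemma~\ref{lem:cascade_y}, substituting the divisibility dictionary of Lemma~\ref{lem:2_divides_u} for that of Lemma~\ref{lem:p_divides_u}. The essential new feature at $2$ is that $\annulus_2^r$ no longer forces $2^r\mid x$: a point of $\Y_{B,C}(\Z;\annulus_2^r)$ only satisfies $2^{r-1}\mid x$, and with $s=\min\{v_2(x),r\}$ one has $s\in\{r-1,r\}$. Since $2\nmid f$, case~\ref{part:s>02!|y} of Lemma~\ref{lem:2_divides_u} cannot occur, so (for $r\ge 2$) every point falls into case~\ref{part:s>02|y}, giving $2^s\mid y$, $2^{2s}\mid C$, and $2\nmid z$. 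The value $s=r$ is exactly the condition $2^r\mid x$, i.e.\ the complement $\Y_{B,C}(\Z;\annulusTwo_2^{r-1})^c$, whereas $s=r-1$ is the condition $2^{r-1}\parallel x$ defining $\Y_{B,C}(\Z;\annulusTwo_2^{r-1})$ (Definition~\ref{def:startildep}). Thus the first isomorphism in the statement is tautological: it is the partition of $\annulus_2^r$ into $\annulusTwo_2^{r-1}$ and its complement inside $\annulus_2^r$, exactly as in Lemma~\ref{lem:cascade_f_2}.

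For the second isomorphism I would exhibit an explicit division map on each piece. On the complement ($s=r$) we have $2^r\mid x,y$, $2^{2r}\mid C$, and $2\nmid z$, so
\[
[x:y:z]\ \longmapsto\ [\,x/2^{r}:y/2^{r}:z\,]
\]
is well defined into $\Y_{B,C\cdot 2^{-2r}}(\Z;\annulus_2^{0},2\nmid z)$: the image has $v_2(u/2^r)=0$, and the inverse (multiplication by $2^r$) preserves primitivity precisely because $2\nmid z$, just as in Lemma~\ref{lem:cascade_y}. On the $\annulusTwo_2^{r-1}$ piece ($s=r-1$) one first checks, using Lemma~\ref{lem:2_divides_u}\ref{part:s>02|y} and $v_2(u)=r$, that in fact $2^{r-1}\parallel x$ \emph{and} $2^{r-1}\parallel y$; dividing through by $2^{2(r-1)}$ gives
\[
[x:y:z]\ \longmapsto\ [\,x/2^{r-1}:y/2^{r-1}:z\,]\in\Y_{B,C\cdot 2^{-2(r-1)}}(\Z).
\]
Here the delicate point is to pin down the condition satisfied by the image: writing $u=x+f\sqrt{-B_0}y$, the image has $u'=u/2^{r-1}$ with $v_2(u')=1$ and $x/2^{r-1}$ odd, so it lands in the base-case groupoid cut out by $\annulusTwo_2^{0}$ (with $z$ odd when $r\ge 2$) — exactly the shape to which Theorem~\ref{thm:TFAEunified_prime} applies. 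Both maps are isomorphisms of groupoids by the argument of Lemma~\ref{lem:cascade_y}, with well-definedness of the inverses supplied by $2\nmid z$ and with stacky points and their automorphism groups visibly preserved; Observation~\ref{observation:pnmidy} handles the passage to the $2\nmid y$ subgroupoid where needed.

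The two emptiness assertions I would read directly off Lemma~\ref{lem:2_divides_u}. Any point of $\Y_{B,C}(\Z;\annulus_2^r)$ with $r\ge 2$ lies in case~\ref{part:s>02|y}, hence satisfies $2^{2s}\mid C$ with $s\ge r-1$, so $r-1\le s\le\lfloor v_2(C)/2\rfloor$, forcing $r\le\lfloor v_2(C)/2\rfloor+1$; this gives the vanishing for $r>\lfloor v_2(C)/2\rfloor+1$. For the second, the complement requires $s=r$, i.e.\ $2^{2r}\mid C$, whence $r\le\lfloor v_2(C)/2\rfloor$; when $r=\lfloor v_2(C)/2\rfloor+1$ this is impossible, so the complement is empty (while the $\annulusTwo_2^{r-1}$ piece, needing only $2^{2(r-1)}\mid C$, may survive).

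The main obstacle is the prime-$2$ bookkeeping inside $\O_K$. Unlike the odd case, $2^r\parallel u$ and $2^r\parallel x$ are not interchangeable, and the exact relation among $v_2(u)$, $v_2(x)$, and $v_2(y)$ depends on the splitting type of $2$ in $K=\Q(\sqrt{-B_0})$, governed by $B_0\bmod 8$ (ramified for $B_0\equiv 1,2$, inert for $B_0\equiv 3$, split for $B_0\equiv 7$). The crux is to verify, across these subcases, that a nonempty $\annulusTwo_2^{r-1}$ piece really forces $2^{r-1}\parallel y$ and that its image satisfies the correct $\annulusTwo_2^{0}$ condition together with the stated $z$-parity; once this identification is settled, the division maps and the valuation bounds follow the template of Lemmas~\ref{lem:cascade_y} and~\ref{lem:cascade_f_2}.
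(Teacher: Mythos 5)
Your proof is correct and takes exactly the route the paper intends: the paper's own proof of this lemma is the single sentence that the argument is identical to that of Lemma~\ref{lem:cascade_y}, making use of Lemma~\ref{lem:2_divides_u}, and your write-up is precisely the fleshed-out version of that --- the partition according to $s=\min\{v_2(x),r\}\in\{r-1,r\}$ produces the two pieces $\annulusTwo_2^{r-1}$ and its complement, the division maps $[x:y:z]\mapsto[x/2^{s}:y/2^{s}:z]$ with inverses well defined because $2\nmid z$ give the isomorphisms, and both emptiness assertions follow from the constraint $2^{2s}\mid C$ in Lemma~\ref{lem:2_divides_u}\ref{part:s>02|y}. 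One substantive point your computation surfaces, and which you resolve correctly: the image of the $\annulusTwo_2^{r-1}$ piece has $u'=u/2^{r-1}$ with $2\parallel u'$ and $2\nmid x'$, so it lands in the groupoid cut out by $\annulusTwo_2^{0}$ rather than $\annulus_2^{0}$ as the first component of the displayed isomorphism literally reads (indeed, for $r=1$ the printed version would assert an identity-on-coordinates isomorphism between the disjoint conditions $2\parallel u$ and $2\nmid u$). Your version is the right one, consistent with the analogous target $\annulusTwo_2^0$ in Lemma~\ref{lem:cascade_f_2} and with Step~\ref{step:cascade-iteration} of \S\ref{subsec:cascadeRoadMap}, which explicitly allows either an $\annulus_2^0$ or an $\annulusTwo_2^0$ target; the statement as printed appears to contain a typo rather than your argument containing a gap.
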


\begin{proof}
    The proofs are identical to that of Lemma~\ref{lem:cascade_y}, making use of Lemma~\ref{lem:2_divides_u}.
\end{proof}

\subsection{Cascade lemmas: \texorpdfstring{$\bm{p \mid } \text{\textbf{gcd}}\bm{(f, C)}$}{p|gcd(f,C)} and \texorpdfstring{$\bm{p \parallel C}$}{p||C}}
\label{subsec:cascadepfC}
\
We now consider the case when \(f\) and \(C\) and not coprime. By Lemma~\ref{lem:loc-bijection}, may assume that \(p \mid \gcd(f,C)\) implies \(p \parallel C\).

\begin{lemma}\label{lem:cascade_nnot|2r-1}
    Let $f,$ $B_0,$ and $C$ be nonzero integers with $-1 \ne B_0$ squarefree.  Let \(p\) be an odd prime with \(p \mid f\) and  \(p \parallel C\). Suppose \(p^r \parallel f\) with \(r>0\) and let \(r' = \lceil (2r-1)/n \rceil n - 2r\). Then we have the following
\begin{align}
 \label{eqn:fc-not-coprime-map}
    \Y_{B,C}(\Z; \annulus^t_p) 
    \begin{cases}
    = \emptyset & t >r;\\
    = \emptyset & 0<t<r \text{ and } n \nmid 2t-1;\\
    \cong \Y_{B\cdot p^{-2r} , C \cdot p^{r'}}(\Z; *_p^0) & t=r \text{ and } n \nmid 2t-1;\\
    \cong \Y_{B\cdot p^{-2t}, C\cdot p^{-1}}(\Z; *_p^0, p \nmid y)  & 0<t \le r \text{ and } n \mid 2t-1.
    \end{cases}
\end{align}
\end{lemma}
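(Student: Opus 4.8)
The plan is to follow the template of Lemma~\ref{lem:cascade_f}, replacing the input Lemma~\ref{lem:p_divides_u} by its non-coprime analogue Lemma~\ref{lem:fCnotcoprime-no-star0-points}\ref{part:fCgcd-odd-divisibility}, which records the divisibility consequences of $p \mid \gcd(f,C)$ together with $p \parallel C$. The starting observation is that any $[x:y:z] \in \Y_{B,C}(\Z; \annulus_p^t)$ with $t > 0$ must satisfy $p^t \mid x$, $p \nmid y$, $p^t \mid f$, and $p^{\lceil(2t-1)/n\rceil} \mid z$, with the refinement that $p^t \parallel f$ when $n \nmid 2t-1$. The two emptiness cases follow immediately by contradiction: if $t > r$ then a point would force $p^t \mid f$, contradicting $p^r \parallel f$; and if $0 < t < r$ with $n \nmid 2t-1$ then a point would force $p^t \parallel f$, impossible since $p^{t+1} \mid p^r \mid f$.

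For the two isomorphism cases the key computation is to substitute $[x:y:z] \mapsto [x/p^t : y : z/p^{\lceil(2t-1)/n\rceil}]$ into $x^2 + By^2 = Cz^n$ and divide through by $p^{2t}$, obtaining $X^2 + (Bp^{-2t})Y^2 = (Cp^{\lceil(2t-1)/n\rceil n - 2t})Z^n$. Here $Bp^{-2t}$ is an integer because $p^{2t} \mid p^{2r} \mid f^2 \mid B$, and the new $C$-coefficient has $p$-valuation $1 + \lceil(2t-1)/n\rceil n - 2t$. This valuation equals $0$ exactly when $n \mid 2t-1$ (so $\lceil(2t-1)/n\rceil = (2t-1)/n$), and is at least $1$ otherwise. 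When $t = r$ and $n \nmid 2r-1$ (so $p^r \parallel f$), the new conductor $f/p^r$ is coprime to $p$ while the new coefficient is $Cp^{r'}$ with $r' = \lceil(2r-1)/n\rceil n - 2r$ still divisible by $p$; since $p \mid Cp^{r'}$ and $p \nmid f/p^r$, Observation~\ref{observation:pnmidy} makes $p \nmid y$ automatic, matching the target $\Y_{Bp^{-2r}, Cp^{r'}}(\Z; \annulus_p^0)$. When $n \mid 2t-1$, the same map instead absorbs the single factor $p \parallel C$ into $z$, landing in $\Y_{Bp^{-2t}, Cp^{-1}}$ with $p \nmid Cp^{-1}$; here Observation~\ref{observation:pnmidy} no longer applies, so the restriction $p \nmid y$ must be imposed explicitly.

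In each isomorphism case I would verify well-definedness and primitivity exactly as in Lemma~\ref{lem:cascade_f}: the divisibilities from Lemma~\ref{lem:fCnotcoprime-no-star0-points}\ref{part:fCgcd-odd-divisibility} guarantee the image coordinates are integral; the identity $u = x + f\sqrt{-B_0}\,y = p^t\bigl(X + (f/p^t)\sqrt{-B_0}\,Y\bigr)$ converts $\annulus_p^t$ into $\annulus_p^0$ and back, since $p^t \parallel u$ is equivalent to $p \nmid u/p^t$; and $p \nmid y$ ensures $p$ is absent from the $\gcd$. The inverse map $[X:Y:Z] \mapsto [p^t X : Y : p^{\lceil(2t-1)/n\rceil}Z]$ is well-defined precisely because $p \nmid y$ holds on the target (automatically in the first case, by hypothesis in the second).

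The main obstacle here is bookkeeping rather than anything conceptual: one must correctly pin down the exponent $r'$ and recognize that the dichotomy $n \mid 2t-1$ versus $n \nmid 2t-1$ is exactly the dichotomy between whether the distinguished factor $p \parallel C$ gets absorbed into $z$ (thereby removed from the coefficient $C$, forcing the explicit $p \nmid y$ restriction) or remains in $C$ (so that $p \nmid y$ is supplied for free by Observation~\ref{observation:pnmidy}). Once this is tracked carefully, every step reduces to the arguments already established for Lemma~\ref{lem:cascade_f}.
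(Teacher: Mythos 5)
Your proposal is correct and follows essentially the same route as the paper, which simply cites the map $[x:y:z]\mapsto[x/p^{t}:y:z/p^{\lceil(2t-1)/n\rceil}]$ together with Lemma~\ref{lem:fCnotcoprime-no-star0-points} and the template of Lemma~\ref{lem:cascade_f}. Your accounting of the dichotomy $n\mid 2t-1$ versus $n\nmid 2t-1$ (whether the factor $p\parallel C$ is absorbed into $z$, and hence whether Observation~\ref{observation:pnmidy} supplies $p\nmid y$ for free) matches the intended argument.
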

\begin{proof}
    The proof is similar to that of Lemma~\ref{lem:cascade_f} using the
map
\[
[x:y:z] \mapsto [x/p^{t}: y : z/p^{(2t-1)/n}]
\]
and by using Lemma~\ref{lem:fCnotcoprime-no-star0-points}.
\end{proof}
Similarly, for \(p=2\), we have the following.
\begin{lemma}\label{lem:p2:cascade_nnot|2r-1}
 Let \(f, B_0, C\) be as before with \(2^r \parallel f\) for some \(r >0\) and \(2 \parallel C\). Then, if \(t>r+1\), then \(\Y_{B,C}(\Z; \annulus^t_2) = \emptyset\). Moreover,
\begin{align}
     \Y_{B,C}(\Z; \annulusTwo^{t-1}_2)&
     \begin{cases}
     \label{eqn:YBC-tilde-2-fc-cascade}
     = \emptyset & 0 < t \le r;\\
      \cong \Y_{B \cdot 2^{-2r}, C \cdot 2^{\lceil (2r-1)/n\rceil n - 2r} }(\Z; \annulusTwo^0_2) & t=r+1. \\
     \end{cases}     
     \\
     \Y_{B,C}(\Z; \annulusTwo^{t-1}_2)^c &
     \begin{cases}
     \label{eqn:YBC-tilde-2-complement-fc-cascade}
             = \emptyset & 0<t\le r 
             \text{ and } n \nmid 2t-1;\\
             \cong \Y_{B \cdot 2^{-2t}, C\cdot 2^{-1}}(\Z; \annulus^0_2, 2 \nmid y) & 0<t < r \text{ and } n \mid 2t-1;\\
             \cong  \Y_{B \cdot 2^{-2r}, C \cdot 2^{\lceil (2r-1)/n\rceil n - 2r}}(\Z; \annulus^0_2) & t=r \text{ and } n \nmid 2t-1;\\
             \cong 
             \Y_{B \cdot 2^{-2r}, C\cdot 2^{-1}}(\Z; \annulus^0_2, 2 \nmid y) & t=r \text{ and } n \mid 2t-1;\\
             \cong \emptyset & t = r+1.  
     \end{cases}
\end{align}

\end{lemma}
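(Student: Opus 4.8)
The plan is to run the $p=2$ analogue of the argument in Lemma~\ref{lem:cascade_nnot|2r-1}, while tracking the extra annular data encoded by $\annulusTwo_2$ exactly as in Lemma~\ref{lem:cascade_f_2}. The only input about a point $[x:y:z]\in\Y_{B,C}(\Z;\annulus^t_2)$ that I would use is Lemma~\ref{lem:fCnotcoprime-no-star0-points}\ref{part:fCgcd-even-divisibility}: writing $s=\min\{v_2(x),t\}$, it gives $s\ge t-1$, $2^s\mid x$, $2^s\mid fy$, and $2^{\lceil(2s-1)/n\rceil}\mid z$, together with the sharp statement that when $s>0$ we have $2\nmid y$ and $2^s\mid f$, and furthermore $2^s\parallel f$ if $n\nmid 2s-1$, while if $n\mid 2s-1$ then $2^s\parallel f$ or $2^{(2s-1)/n}\parallel z$. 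Recalling Definition~\ref{def:startildep}, a point of $\annulus^t_2$ lies in $\annulusTwo^{t-1}_2$ precisely when $2^{t-1}\parallel x$, i.e.\ $s=t-1$, and lies in the complement precisely when $2^t\mid x$, i.e.\ $s=t$; this dichotomy is exactly what separates the two displayed systems of cases.

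First I would dispose of the vanishing rows. For $t>r+1$ any point of $\annulus^t_2$ has $s\ge t-1>r$ with $s>0$, so $2^s\mid f$ contradicts $2^r\parallel f$; the same contradiction (now with $s=t=r+1$) kills the complement at $t=r+1$. For the $\annulusTwo^{t-1}_2$ rows with $0<t\le r$, where $s=t-1$, the key point is that $v_2(x)=t-1$ while $v_2\!\left(f\sqrt{-B_0}\,y\right)\ge v_2(f)=r\ge t>t-1$, so the $x$-term strictly dominates and $v_{\p_2}(u)=t-1$ at every prime $\p_2\mid 2$ (reading the valuation off according to whether $2$ is split, inert, or ramified in $K$); this forces $2^{t-1}\parallel u$, contradicting the requirement $2^t\parallel u$ built into $\annulusTwo^{t-1}_2$, so the groupoid is empty. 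Finally, for the complement with $t<r$ and $n\nmid 2t-1$ we have $s=t$, and the sharp statement gives $2^t\parallel f$, contradicting $2^r\parallel f$ since $t<r$.

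For the isomorphism rows I would write down the single rescaling map
\[
[x:y:z]\longmapsto \bigl[\,x/2^{s}:\,y:\,z/2^{\lceil(2s-1)/n\rceil}\,\bigr],
\]
taking $s=t-1$ on $\annulusTwo^{t-1}_2$ and $s=t$ on the complement, and check it is an isomorphism of groupoids onto the stated target exactly as in Lemma~\ref{lem:cascade_f_2}. The divisibilities $2^s\mid x$ and $2^{\lceil(2s-1)/n\rceil}\mid z$ make it well defined, and substituting into $x^2+f^2B_0y^2=Cz^n$ and dividing by $2^{2s}$ produces the equation for $\Y_{B\cdot 2^{-2s},\,C\cdot 2^{\lceil(2s-1)/n\rceil n-2s}}$; when $n\mid 2s-1$ the exponent $\lceil(2s-1)/n\rceil n-2s$ equals $-1$, which is precisely the $C\cdot 2^{-1}$ appearing in the statement, and when $n\nmid 2s-1$ it is nonnegative. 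The target annular condition is automatic from $2^t\parallel u$: one gets $2\nmid(u/2^t)$ when $s=t$, hence $\annulus^0_2$, and $2\parallel(u/2^{t-1})$ together with $2\nmid(x/2^{t-1})$ when $s=t-1$, hence $\annulusTwo^0_2$. The restriction $2\nmid y$ is recorded exactly when it is not already free: for $s=t$ with $n\mid 2s-1$ the new coefficient $C\cdot 2^{-1}$ is odd, so Observation~\ref{observation:pnmidy} does not apply and $2\nmid y$ (valid since $s>0$) must be imposed, whereas when $n\nmid 2s-1$ the new coefficient is still even and Observation~\ref{observation:pnmidy} absorbs the restriction. Well-definedness of the inverse and primitivity of the image follow as in the earlier cascade lemmas, and in the ramified case the target $\annulusTwo^0_2$ groupoids vanish by Lemma~\ref{lem:2_divides_u}\ref{part:s=0}, keeping everything consistent.

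The main obstacle, as in Lemma~\ref{lem:cascade_nnot|2r-1}, is the bookkeeping in the borderline case $n\mid 2s-1$: there Lemma~\ref{lem:fCnotcoprime-no-star0-points}\ref{part:fCgcd-even-divisibility} does not decide whether the surplus power of $2$ sits in $f$ or in $z$, so I must argue that the map still lands in --- and exhausts --- the subgroupoid cut out by $2\nmid y$, and that precisely one factor of $2$ (and no more) is stripped from $C$. Everything else is a routine transcription of the odd-prime argument, with the valuations at $2$ interpreted through its splitting type in $K$.
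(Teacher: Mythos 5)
Your proposal is correct and follows essentially the same route as the paper, whose proof simply cites the rescaling maps $[x:y:z]\mapsto[x/2^{t-1}:y:z/2^{(2t-3)/n}]$ and $[x/2^{t}:y:z/2^{(2t-1)/n}]$ (your single map with $s=t-1$ resp.\ $s=t$) together with Lemma~\ref{lem:fCnotcoprime-no-star0-points} and the template of Lemma~\ref{lem:cascade_f_2}. The details you supply --- the valuation comparison killing $\Y_{B,C}(\Z;\annulusTwo^{t-1}_2)$ for $t\le r$, the sharpness of $2^s\parallel f$ when $n\nmid 2s-1$, and the role of Observation~\ref{observation:pnmidy} in deciding when $2\nmid y$ must be imposed --- are exactly the ones the paper leaves implicit, and they check out.
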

\begin{proof}
    The proof is similar to that of Lemma~\ref{lem:cascade_f_2} using the
maps
\[
[x:y:z] \mapsto [x/2^{t-1}: y : z/2^{(2t-3)/n}] \text{ and } [x/2^{t}: y : z/2^{(2t-1)/n}]
\]
for~\eqref{eqn:YBC-tilde-2-fc-cascade} and~\eqref{eqn:YBC-tilde-2-complement-fc-cascade}, respectively and by using Lemma~\ref{lem:fCnotcoprime-no-star0-points}.
\end{proof}

\begin{ex}\label{ex:243,93}
Consider the equation $x^2 + 243y^2 = 93z^3$. It is straightforward to see that we must have $3 \mid x$, $3 \mid z$, and $3 \nmid y$. In fact, we must have $3^2 \mid x$ and $3 \parallel z$; if the latter is not satisfied, we force $3^3 \mid x$, and therefore $3 \mid y$, contradicting the primitivity of $[x:y:z]$. Therefore, we consider the cascade map
\[
    \Y_{243,9}(\Z;\annulus_3^2) \to \Y_{3,31}(\Z;\annulus_3^0) \colon [x:y:z] \mapsto [x/3^2 : y : z/3], 
\] where points in the image of this map satisfy $3\nmid y$ by Lemma~\ref{lem:cascade_nnot|2r-1}. Then it suffices to find points on the target groupoid $\Y_{3,31}(\Z)$ with $3\nmid y$. However, Example ~\ref{ex:3,31} says that any $[x:y:z] \in \Y_{3,31}(\Z)$ must satisfy $3\mid y$. This implies that
  \[
    \Y_{3^5, 3\cdot 31}(\mathbb{Z}) = \Y_{3^5, 3 \cdot 31}(\mathbb{Z}; \annulus_3^2) \overset{\sim}{\rightarrow} \Y_{3, 31}(\mathbb{Z}; \annulus_3^0, 3 \nmid y) = \emptyset.
  \]
Therefore there are no primitive $\Z$-solutions to this generalized Fermat equation. 
\end{ex}

\subsection{Points with restricted \texorpdfstring{\(y\)}{y}-coordinates}
\label{subsec:points-with-restricted-y-coordinate}

In several of the results in the previous section, the target groupoids are of the form $\Y_{B', C'}(\Z; \annulus_p^0, p\nmid y)$ for $p$ an odd prime or $\Y_{B', C'}(\Z; \annulusTwo_2^0, 2\nmid y)$. When Observation~\ref{observation:pnmidy} applies, we can conclude that these are equal to $\Y_{B', C'}(\Z; \annulus_p^0)$ or $\Y_{B', C'}(\Z; \annulusTwo_2^0)$, respectively. In general, as illustrated in Example~\ref{ex:3,31}, it is possible to have \(B',C'\) with \(p \nmid C'\), but \(p \mid y\) for every \([x:y:z] \in \Y_{B',C'}(\Z)\). 

Examining the the proofs of Theorems \ref{thm:TFAEunified} and \ref{thm:TFAEunified_prime}, we deduce the following. Here, for an integer $N \geq 1$, we denote by $\O_N \subset \O$ the order of relative conductor $N$. 

\begin{corollary}\label{cor:p_not_dividing_y}
    Suppose $\gcd(f,C) = 1$ and fix an integer $N\geq 1$. We have that 
    \[\Y_{B,C}(\Z; \annulus^0,\ \gcd(y,N) = 1) \neq \emptyset \quad (\text{resp}.\ \Y_{B,C}(\Z; \annulusTwo^0,\ \gcd(y,N) = 1) \neq \emptyset)\]
    is equivalent to the existence of an ideal $\z \subset \O_N$ such that $\J_+ \r (\z \O)^n$ is principal in $\O$ (resp.\ $\p_2^{k} \J_+ \r (\z \O)^n$ is principal in \(\O\), for the appropriate exponent $k$ prescribed by Theorem~\ref{thm:TFAEunified_prime}), but $(\J_+ \r(\z \O)^n) \cap \O_p$ is not principal in \(\O_p\) for any $p \mid N$ (resp.\ $(\p_2^{k} \J_+ \r (\z \O)^n) \cap \O_p$ is not principal in \(\O_p\) for any $p \mid N$).
\end{corollary}

When $\gcd(N,n)=1$, it is relatively straightforward to construct such $\z$ using similar methods as in \S\ref{sec:mainthm}, yielding the proposition below.

\begin{proposition}\label{prop:cascade-with-gcds}
    Let $\gcd(f,C)=1$ and fix an integer $N$ satisfying $\gcd(N,nC)=1$. Then for any integer \(M\),
    \[\Y_{B,C}(\Z; \annulus^0, \gcd(z,M)=1) \neq \emptyset \iff \Y_{B,C}(\Z; \annulus^0,\ \gcd(y,N)=1,\ \gcd(z,M)=1) \neq \emptyset\]
    and
    \[\Y_{B,C}(\Z; \annulusTwo^0, \gcd(z,M)=1) \neq \emptyset \iff \Y_{B,C}(\Z; \annulusTwo^0,\ \gcd(y,N)=1,\ \gcd(z,M)=1) \neq \emptyset.\]
\end{proposition}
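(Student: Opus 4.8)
The plan is to prove the $\annulus^0$ statement in detail; the $\annulusTwo^0$ statement is entirely analogous, with the simplification that $2 \nmid y$ is automatic at the prime $2$ by Observation~\ref{observation:pnmidy} and Lemma~\ref{lem:2_divides_u}. The reverse implication ($\Longleftarrow$) is immediate, since imposing $\gcd(y,N)=1$ only shrinks the groupoid. For the forward implication I would first reduce to the case where $N$ is a positive squarefree integer coprime to $nC$, as $\gcd(y,N)=1$ depends only on the radical of $N$. A nonempty $\Y_{B,C}(\Z;\annulus^0,\gcd(z,M)=1)$ contains in particular a point of $\Y_{B,C}(\Z;\annulus^0)$, so by Lemma~\ref{lem:denominatortest_Bnot7mod8} and the chain of implications in Theorem~\ref{thm:TFAEunified} the factorization condition~\ref{part:TFAEiii} holds, and that condition is independent of $M$ and $N$. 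It therefore suffices to re-run the construction of~\ref{part:TFAEiii}$\implies$\ref{part:TFAEi} while imposing extra local conditions at the primes dividing $N$.

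Recall that in that construction one chooses an $\O$-ideal $\z$, coprime to a prescribed modulus and lying in a fixed coset of $\Cl(\O)[n]$ (namely with $n[\z] = -[\J_+\r\cap\O]$), so that $(\J_+\r\cap\O)\z^n = u\O$ with $u = x + f\sqrt{-B_0}y$ and $\Nm(u) = Cz^n$, $z = \Nm(\z)$. Passing to $\O_K$ via Lemma~\ref{lem:IOKf=IOf}, the ideal $u\O_K = \J_+\r(\z\O_K)^n$ has $p$-part coming only from $(\z\O_K)^n$ for each $p \mid N$, since such $p$ is coprime to $C$ and hence to $\J_+\r$. Using Lemma~\ref{lem:ClO_primetof} I would choose $\z$ within its class to satisfy, simultaneously for all $p \mid N$, a prescribed local behavior at the primes above $p$, while remaining coprime to $2fB_0CM$; these finitely many conditions are independent and combine by the Chinese Remainder Theorem.

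The local recipe at an odd prime $p \mid N$ (so $p \nmid nC$, and after the cascade of \S\ref{subsec:cascadefC1} we may assume $p \nmid f$) is as follows. If $p = \p\pbar$ \emph{splits}, take $\z$ with $v_\p(\z\O_K)=1$ and $v_{\pbar}(\z\O_K)=0$; then $v_\p(u)=n$ and $v_{\pbar}(u)=0$, so $u \notin p\O_K$ (giving $\annulus^0_p$) while $v_\p(u-\bar u)=0$, forcing $p\nmid y$. If $p$ is \emph{inert}, take $\z$ coprime to $p$, so $u$ is a unit modulo $p\O_K=\p$; as $\z$ ranges over its class the residue $u \bmod \p$ ranges over the coset $u_0\,(\F_{p^2}^\times)^n$ (adjusting $\z$ by a principal ideal multiplies $u$ by an $n$-th power of a $p$-adic unit). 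Since $p \nmid y$ is equivalent to $u \bmod \p \notin \F_p$, and since $\F_p^\times = (\F_{p^2}^\times)^{p+1}$ with $p+1$ even but $n$ odd, the subgroup $(\F_{p^2}^\times)^n$ is not contained in $\F_p^\times$; hence the coset meets $\F_{p^2}\smallsetminus\F_p$ and we may force $p\nmid y$. If $p=\p^2$ is \emph{ramified} (so $p \mid B_0$), take $\z$ coprime to $p$, giving $v_\p(u)=0$ and $\annulus^0_p$; writing $\O_K/\p^2 = \F_p \oplus \F_p\sqrt{-B_0}$, the principal units $1 + \F_p\sqrt{-B_0}$ are all $n$-th powers modulo $\p^2$ because $p\nmid n$, so multiplying $u$ by a suitable such unit makes its $\sqrt{-B_0}$-component nonzero, i.e.\ $p\nmid y$.

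Finally I would run the remainder of the Theorem~\ref{thm:TFAEunified} argument unchanged: divide $u$ (and $\z$) by rational primes $p$ with $u \in p\O_K$ until $\annulus^0$ holds at \emph{all} primes. This does not disturb the conditions imposed at $p \mid N$, where $u \notin p\O_K$ by construction, and it preserves $\gcd(z,M)=1$; the resulting $[x:y:z]$ lies in $\Y_{B,C}(\Z;\annulus^0,\gcd(y,N)=1,\gcd(z,M)=1)$. I expect the crux to be the inert case: one must rule out the possibility that \emph{every} solution has $p \mid y$, a phenomenon that does genuinely occur for ramified primes with $p \mid n$ (see Example~\ref{ex:3,31}), and it is precisely the hypothesis $\gcd(N,nC)=1$ together with $n$ odd — manifesting as $(p+1)\nmid n$ — that prevents it. The only residual bookkeeping is the prime $2$ in the $\annulus^0$ statement, which is handled by the same local analysis according to the splitting type of $2$ dictated by $B_0 \bmod 8$.
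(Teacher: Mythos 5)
Your overall strategy---re-running the implication (iii)$\implies$(i) of Theorem~\ref{thm:TFAEunified} while prescribing the local behavior of $\z$ (equivalently, multiplying $u$ by $n$-th powers of elements with chosen residues) at each prime dividing $N$---is genuinely different from the paper's proof, which inducts on the prime divisors of $N$ and, for each new prime $p$, compares the orders $\O_N \subset \O_{Np}$ via the class number formula (Lemma~\ref{lem:class_number_formula}) and the correspondence with binary quadratic forms to produce a multiplier $w = a + Nf\sqrt{-B_0}b$ with $p \nmid b$. Your inert and ramified recipes are correct and in fact more transparent: the observations that $(\F_{p^2}^\times)^n \not\subseteq \F_p^\times$ for $p,n$ odd, and that $n$-th powering is surjective on $1+\sqrt{-B_0}\,\F_p$ when $p\nmid n$, are exactly the right local statements.

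There are, however, three concrete gaps. First, your split-prime recipe sets $v_\p(\z\O_K)=1$, which forces $p \parallel z$; since $M$ is an \emph{arbitrary} integer, this destroys $\gcd(z,M)=1$ whenever $p \mid \gcd(M,N)$. (Fixable: keep $\z$ coprime to $p$ and adjust the pair of residues of $u$ in $(\O/p\O)^\times \cong \F_p^\times\times\F_p^\times$ by $n$-th powers, using that $(\F_p^\times)^n$ is nontrivial because $n$ is odd and $p-1$ is even.) Second, you may \emph{not} assume $p\nmid f$: the proposition concerns the fixed pair $(B,C)$ and is invoked by the cascade precisely when the residual $f$ is still divisible by $p$ (e.g.\ the target of Lemma~\ref{lem:cascade_f} with $t<r$), and deferring to the cascade here is circular. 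When $p\mid f$ one has $u\equiv x\pmod{p\O_K}$ regardless of $y$, so your analysis in $\O_K/p\O_K$ cannot see $y\bmod p$; one must work in $\O/p\O\cong\F_p[T]/(T^2)$, which is where the paper's case $p\mid B$ and the hypothesis $p\nmid n$ actually do the work. Third, the prime $2$ in the $\annulus^0$ statement is not ``the same local analysis'': if $2$ splits, then $u-\bar u = 2f\sqrt{-B_0}y$ forces $v_{\p_2}(u)=v_{\pbar_2}(u)$ for any $\annulus^0_2$ point, so the valuation trick is unavailable, and $\F_2^\times$ is trivial so the residue-coset argument in $\O_K/2\O_K$ gives nothing; one must again pass to $\Z[f\sqrt{-B_0}]/2\cong\F_2[T]/(T^2)$. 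Relatedly, your closing remark misplaces the role of $\gcd(N,n)=1$: the inert case needs only that $n$ is odd, whereas $p\nmid n$ is genuinely needed in the ramified and $p\mid f$ cases (compare Example~\ref{ex:3,31}).
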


\begin{proof}
    The reverse implication is immediate. For the forward implication, it suffices to take ~$N$ positive and squarefree. The conclusion is vacuous when $N = 1$, so it suffices to proceed by induction on the number of prime divisors of $N$. Equivalently, if we have a point \([x:y:z]\) with $\gcd(y,N) = 1$ and a prime $p$ satisfying $p \nmid nCN$, we must find a point \([x':y':z']\) with $\gcd(y',Np)=1$. We will first focus on the case of $B_0 \equiv 1,2 \pmod{4}$, then note the necessary adjustments for that of $B \equiv 3 \pmod{4}$.

    Suppose we have $[x:y:z] \in \Y_{B,C}(\Z; \annulus^0,\ \gcd(y,N)=1,\ \gcd(z,M)=1)$ and a prime $p \nmid nCN$. Recall $\O = \Z[f\sqrt{-B_0}]$ and $\O_p, \O_N, \O_{Np} \subset \O$ denote the orders of relative conductor $p,N,Np$, respectively. By Theorem~\ref{thm:TFAEunified}, we have $C\O_K = \J_+ \J_- \r^2$ for $[\J_\pm \cap \O] \in n\Cl(\O)$, satisfying the appropriate splitting conditions. By the proof of Theorem~\ref{thm:TFAEunified}, letting \(u = x + \sqrt{-B}y\), we have 
    \begin{equation}\label{eqn:factorization-nm-eqn}(\J_+\r \cap \O)\z^n = u\O\end{equation}
    and since  $\gcd(y,N)=1$, we must have $u \notin \O_N$.
  
    If additionally $u \notin \O_p$, we are done, since $p \nmid y$ in this case. Therefore, assume $u \in \O_p$; the fact that \(p\) is coprime to \(C\) along with~\eqref{eqn:factorization-nm-eqn} imply that  $[\J_\pm \cap \O_p] \in n \Cl(\O_p)$. We now have two cases: $p \mid B$ and $p \nmid B$. In both cases, our goal is to modify $u = x + f\sqrt{-B_0}y$ by an element $w =a + Nf\sqrt{-B_0}b$ with $p \nmid b$, so the resulting product $uw$ is in $\O \smallsetminus (\O_N \cup \O_p)$, and hence produces a point $[x' : y': z'] \in \Y_{B,C}(\Z)$ with $\gcd(y',Np)=1$.
    
    Consider first the case of $p \mid B$. From~\eqref{seq:ClO_to_ClK} we deduce 
    \[\frac{\#\Cl(\O_p)}{\#\Cl(\O)} [\O^\times : \O_p^\times] = p - \left(\frac{-B}{p}\right) = p.\]
    
    If $[\O^\times : \O_p^\times] = p$, then $[\O_N^\times : \O_N^\times\O_p^\times] = p$ by the second isomorphism theorem. Therefore there exists a unit $w \in \O_N^\times \smallsetminus \O_N^\times\O_p^\times$. Writing $w = a + Nf\sqrt{-B_0}b$, we have $p \nmid b$, and we can simply replace the ideal $u\O$ by $uw\O$ without changing the left-hand-side of~\eqref{eqn:factorization-nm-eqn}, and $uw \in \O \smallsetminus (\O_N \cup \O_p)$. 

    If $[\O^\times : \O_p^\times] = 1$, then $\O^\times = \O_p^\times$, which implies $\O_N^\times = \O_{Np}^\times$. Now by Lemma~\ref{lem:class_number_formula} we have
\begin{align}\label{eqn:kernel-ONp-to-Op}
    \ker\left(\Cl(\O_{Np}) \to \Cl(\O_N)\right) \simeq \Z/p\Z.
\end{align}
    Since $p \nmid n$, multiplication by $n$ is an isomorphism of this kernel, allowing us to find a nontrivial $[\mathfrak{w}] \in \ker\left(\Cl(\O_{Np}) \to \Cl(\O_N)\right)$. We see that $(\J_+ \r \cap \O)(\z (\mathfrak{w}\O))^n = uw\O$, where $w = \mathfrak{w}^n \in \O_N \smallsetminus \O_{Np}$. Once again, after taking norms the resulting point has $\gcd(y',Np) = 1$. 
 
    Finally, suppose $p \nmid B$. Recall that $\Cl(\O) \simeq \Cl(-4B)$, the class group of forms of discriminant $-4B$,\footnote{We abuse notation here. If \(B_0>0\), i.e. in the imaginary quadratic case, there is an isomorphism between the class group of primitive positive definite quadratic forms of discriminant \(-4B\) and the class group of the corresponding order. When \(B_0<0\), the isomorphism is between the class group of primitive quadratic forms of discriminant \(-4B\) and the narrow class group (see for instance \cite[\S5.2]{Cohen-ANT}). We use the relevant isomorphism in each case, and noting that the class group and the narrow class group only differ by \(2\)-torsion, and thus statements about \(n\)th powers are the same in both.} in which $x^2 + By^2$ corresponds to the identity. Similarly, $\Cl(\O_p) \simeq \Cl(-4p^2B)$, with $x^2 + p^2By^2$ the identity class. The class of $p^2x^2 + By^2$ is 2-torsion in $\Cl(-4p^2B)$, and hence in $n \Cl(-4p^2B)$. Thus, we can find an ideal $\mathfrak{w}$ in $\O_p$ such that $\mathfrak{w}^n$ corresponds to the class of $p^2x^2 + By^2$. Moreover, we can choose $\mathfrak{w}$ coprime to $C$, $\z$, and $Np$. By the theory of quadratic forms (see e.g.~\cite[Thm.~7.7(iii)]{Cox}), we have that the norm of $\mathfrak{w}^n \O_p$ is $p^2a'^2 + Bb'^2$ for some \(a',b' \in \Z\) with $\gcd(N,a')=1$ and \(p \nmid b'\). Using the analogue of Lemma~\ref{lem:IOKf=IOf}, we have norm preserving bijections between \(I(\O_{Np}, Np), I(\O_p, Np)\) and \(I(\O, Np)\). In particular, \(\mathfrak{w}^n \O\) has norm \(p^2a^2 + NBb^2\) with \(\gcd(a,N)=1\) and \(p \nmid b\).

    The coprimality conditions on \(a\) and \(b\), together with the assumption \(p \nmid B\), imply that the ideals \((pa + Nf\sqrt{-B_0}b)\O\) and \((pa - Nf\sqrt{-B_0}b)\O \) are coprime. In particular, \((pa + Nf\sqrt{-B_0}b)\O\) (resp. \((pa - Nf\sqrt{-B_0}b)\O \))  is an \(n\)th power of some ideal in \(\O\), since their product is an \(n\)th power. Therefore, $\mathfrak{w'}^n\O = (pa + Nf\sqrt{-B_0}b)\O$ for some \(\mathfrak{w'}\). From this, we deduce
    \begin{align*}
        (\J_+\r \cap \O)(\z(\mathfrak{w}'\O))^n &= (x + f\sqrt{-B_0}y)(pa + Nf\sqrt{-B_0}b) \\
        &= (pax - Nf^2B_0by) + f\sqrt{-B_0}(pay + Nbx)\\ 
        &= x' + f\sqrt{-B_0}y'.
    \end{align*}
    Since $p \mid y$, we have $p \nmid y'$. Similarly, $\gcd(N, y') = 1$. Taking norms gives the desired point and the same arguments from the proof of Theorem~\ref{thm:TFAEunified} ensure $[x:y:z]$ is primitive.
   
    When $B_0 \equiv 3 \pmod{8}$ and $C$ is odd, we still have $\O = \Z[f\sqrt{-B_0}]$, but this order has conductor $2f$ and discriminant \(-4B\). No further adjustments are necessary to the proof of the statement about $\annulus^0$; that for $\annulusTwo^0$ holds vacuously since both groupoids are empty (Remark~\ref{rem:star0and3mod4}). When $B_0 \equiv 3 \pmod{8}$ and $4 \parallel C$, we have $\O = \Z[f \frac{\sqrt{-B_0} + 1}{2}]$, but the relevant $u$ is in $\Z[f\sqrt{-B_0}]$, so the same arguments apply to prove the statement for $\annulusTwo^0$.

    When $B_0 \equiv 7 \pmod{8}$, the relevant order in Theorem~\ref{thm:TFAEunified} is $\O = \Z[f\frac{\sqrt{-B_0} + 1}{2}]$. However, by Lemma~\ref{lem:class_number_formula}, $\Cl(\O_2)/n\Cl(\O_2) \simeq \Cl(\O)/n\Cl(\O)$ in this case, so when \(C\) is odd, one may run the same argument by replacing \(\O\) by \(\O_2 = \Z[f\sqrt{-B_0}]\), whose discriminant is \(-4B\). If $C$ is even, then $\Y_{B,C}(\Z; *^0) = \emptyset$, so the first statement is trivially satisfied.
    
    To prove the second statement we follow essentially the same approach, now using Theorem~\ref{thm:TFAEunified_prime}. Note that points in \(\Y_{B,C}(\Z; \annulusTwo^0)\) have odd $y$ coordinate, so we may assume $N,p$ are odd. Given a point $[x:y:z] \in \Y_{B,C}(\Z; \annulusTwo^0, \gcd(z,M) = 1)$, we have
    \[(\p_2^{v_2(C)-1}\pbar_2\J_+\r \cap \O)\z^n = u\O = x + f\sqrt{-B_0} y,\]
    with $\gcd(y,N)=1$. If $p \mid y$, it suffices to find one of the following:
    \begin{itemize}
        \item a unit $w = a + Nf\sqrt{-B_0}b \in \O_{2N}^\times \smallsetminus \O_{2Np}^\times$, or
        \item an ideal $\mathfrak{w} \subset \O_{2Np}$ such that $\mathfrak{w}^n\O_{2N}$ is principal, but $\mathfrak{w}^n$ is not.
    \end{itemize}
  The proof of the existence of such a \(w\) or \(\mathfrak{w}\) is similar to the above, noting that \(p\) and \(N\) are both odd.  Then modifying the equation above, we find
    \[(\p_2^{v_2(C)-1}\pbar_2\J_+\r\I \cap \O)(\z (\mathfrak{w}\O))^n = uw\O = (x + f\sqrt{-B_0} y)(a + Nf\sqrt{-B_0}b) = x' + f\sqrt{-B_0}y',\]
    with divisibility conditions forcing $\gcd(y', Np) = 1$. Taking norms yields the desired point.
\end{proof}

In the proof above, it is important that \(\gcd(N,n)=1\). If \(p \mid n\), then multiplication by \(n\) is no longer an isomorphism on the kernel in~\eqref{eqn:kernel-ONp-to-Op}. And indeed, one can produce counterexamples to this proposition if \(\gcd(N,n) \neq 1\) (see Examples~\ref{ex:3,31} and~\ref{ex:243,93}). 

To illustrate this in the case $\gcd(N,n) > 1$, suppose $N=p$ for a prime $p \mid n$. Recall from Lemma \ref{lem:class_number_formula} we have
\[
\frac{\#\Cl(\O_{p})}{\#\Cl(\O)} [\O^{\times}: \O_p^{\times}] = p\left(1 - \left(\frac{\Disc(\O)}{p} \right)\frac1p\right).
\]
If $[\O^\times : \O_p^\times] > 1$ or $p \nmid \Disc(\O)$, then we can apply the techniques of the proof of Proposition \ref{prop:cascade-with-gcds} to show
\[\Y_{B,C}(\Z; \annulus^0) \neq \emptyset \iff \Y_{B,C}(\Z; \annulus^0,\ p \nmid y) \neq \emptyset\]
(and similarly for $\annulusTwo^0$ where appropriate). 

Suppose then that $[\O^\times : \O_p^\times] = 1$ and $p \mid \Disc(\O)$. Let $q\colon \Cl(\O_p) \to \Cl(\O)$ denote the natural surjection. Lemma \ref{lem:class_number_formula} gives 
\[\ker(q) \simeq \Z/p\Z.\]
Let $\eta \colon \Cl(\O_p) \to \Cl(\O)$ denote the composition $[n] \circ q = q \circ [n]$ and consider the diagram \eqref{eq:eta_diagram}.

\begin{equation}\label{eq:eta_diagram}
    \begin{tikzcd}
    \ker(\eta) \arrow[dr] & \ker(q) \arrow[d] \arrow[r, "0"] & \ker(q) \arrow[d]\\
    \Cl(\O_p)[n] \arrow[r] \arrow[d] &\Cl(\O_{p})\arrow[d, two heads,"q" ]\arrow[r, "{[n]}"] \arrow[dr, "\eta"] & \Cl(\O_{p}) \arrow[d, two heads, "q"] \\
    \Cl(\O)[n] \arrow[r] & \Cl(\O) \arrow[r, "{[n]}"] & \Cl(\O).
    \end{tikzcd}
\end{equation}

\begin{lemma}\label{lem:p-part-kernel}
    Suppose \(\gcd(f,C)=1\) and $p \mid n$, $p \nmid C$. If \(\Y_{B,C}(\Z; \annulus^0) \neq \emptyset\) (resp.\ $\Y_{B,C}(\Z; \annulusTwo^0) \neq \emptyset$), let \(\J\) satisfy $[\J] = -[\J_+]$ provided by Theorem \ref{thm:TFAEunified} (resp.\ $[\J] = -[\p_2^k \J_+]$ as in Corollary \ref{cor:p_not_dividing_y}) satisfying $[\J] \in n\Cl(\O)$. Then, \(\Y_{B,C}(\Z; \annulus^0, p\nmid y) \neq \emptyset\) (resp.\ $\Y_{B,C}(\Z; \annulusTwo^0,\ p \nmid y) \neq \emptyset$) if and only if either  \([\J \cap \O_p] \notin n \Cl(\O_p)\) or \( \ker(\eta) \not\subseteq \Cl(\O_p)[n]\).
\end{lemma}

\begin{proof}
 Note that since \(\eta = q \circ [n]\), we already have that \(\Cl(\O_p)[n] \subseteq \ker(\eta)\).  Let \(\J\) be as in the theorem. By the assumption, we have that \([\J \cap \O] \in n\Cl(\O)\) and since \(p \mid n \), \([\J \cap \O]\) is in the image of the bottom map \([n] \colon \Cl(\O) \to \Cl(\O)\) of \eqref{eq:eta_diagram}. The preimage \(q^{-1}([\J \cap \O])\) contains the class \([\J \cap \O_p]\). 

By Corollary~\ref{cor:p_not_dividing_y}, \(\Y_{B,C}(\Z;\annulus^0, p \nmid y) \neq \emptyset\) (resp.\ \(\Y_{B,C}(\Z;\annulusTwo^0, p \nmid y) \neq \emptyset\)) if and only if there is a \([\z] \in \Cl(\O_p)\) such that \(n[\z] \neq [\J \cap \O_p]\) and \(\eta([\z]) = [\J \cap \O]\).
    
If \([\J \cap \O_p] \notin n\Cl(\O_p)\), then we are done. If not, suppose \(n[\z] = [\J \cap \O_p]\) so that \(\eta([\z]) = [\J \cap \O]\). Then for any \(\alpha \in \ker(\eta) \setminus \Cl(\O_p)[n]\), \(\eta([\z] + \alpha) = [\J \cap \O]\) but \(n([\z] + \alpha) \neq [\J \cap \O_p]\). Conversely, if \([\J \cap \O_p] \in n\Cl(\O_p)\), then any \([\z] \in \Cl(\O_p)\) with \(\eta([\z]) = [\J \cap \O]\) and \(n[\z] \neq [\J \cap \O_p]\) produces a non-trivial element in \(\ker(\eta) \setminus \Cl(\O_p)[n]\). 
\end{proof}

\subsection{Road map to the cascade method}
\label{subsec:cascadeRoadMap}

The lemmas presented in \S\ref{subsec:cascadefC1} and~\S\ref{subsec:cascadepfC} cover most cases for modifying non-zero $B,C\in \Z$ prime-by-prime to reduce to the cases covered in our main theorems (Theorems~\ref{thm:TFAEunified}~and~\ref{thm:TFAEunified_prime}). In this section, we outline the strategy to apply these lemmas to any non-zero \(B,C\), proving Theorem~\ref{thm:MainCascadeTheorem}. 

We start with the following observation, which follows from definitions and the independence of behaviors at different primes.

\begin{observation}
\label{obs:iterate_cascade}
    The isomorphisms of Lemmas~\ref{lem:cascade_f},~\ref{lem:cascade_f_2}, and~\ref{lem:cascade_nnot|2r-1} are compatible with choosing conditions at multiple primes simultaneously; that is, if distinct primes $p$ and $q$ satisfy $p^{r_p}\parallel f$ and $q^{r_q}\parallel f$ with $r_p, r_q\not\equiv 0\pmod{n}$, then the following diagram commutes.
    \begin{center}
    \begin{tikzcd}
    {\Y_{B,C}(\mathbb{Z}; *_p^{r_p}, *_q^{r_q})} \arrow[d, "\sim"'] \arrow[r, "\sim"] & {\Y_{B'',C''}(\mathbb{Z}; *_p^{r_p}, *_q^{0})} \arrow[d, "\sim"] \\
    {\Y_{B',C'}(\mathbb{Z}; *_p^{0}, *_q^{r_q})} \arrow[r, "\sim"']           & {\Y_{B''',C'''}(\mathbb{Z}; *_p^{0}, *_q^{0})}
    \end{tikzcd}
    \end{center}
    Similar diagrams commute when using the isomorphisms from Lemmas~\ref{lem:cascade_y},~\ref{lem:cascade_y_2}, and~\ref{lem:p2:cascade_nnot|2r-1}, or those involving restrictions on the \(y\)-coordinates, including when combining isomorphisms from all lemmas. 
\end{observation}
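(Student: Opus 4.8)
The plan is to observe that every cascade isomorphism from \S\ref{subsec:cascadefC1} and \S\ref{subsec:cascadepfC} admits a single uniform description that makes its interaction with a second prime transparent. Recall from Definition~\ref{def:starp} that $u = x + \sqrt{-B}\,y = x + f\sqrt{-B_0}\,y$. I would first check, running through Lemmas~\ref{lem:cascade_f}, \ref{lem:cascade_f_2}, \ref{lem:cascade_nnot|2r-1}, \ref{lem:cascade_y}, \ref{lem:cascade_y_2}, and~\ref{lem:p2:cascade_nnot|2r-1}, that the map attached to a prime $p$ always acts by $u \mapsto u/p^{a}$ and $z \mapsto z/p^{b}$ for exponents $a,b$ determined by the local data at $p$, and modifies $(B,C)$ only in their $p$-adic parts. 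Whether $p \mid f$ (so the map rescales $x$ and the conductor $f$) or $p \mid C$ with $p \nmid f$ (so it rescales $x$ and $y$), the net effect on $u$ is the same division by a power of $p$; and since $B$ is altered only by an even power of $p$, the squarefree part $B_0$, the field $K = \Q(\sqrt{-B_0})$, and the ring $\O_K$ are unchanged by every cascade map.

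The key point is then that the decoration $\annulus^{r_q}_q$ (respectively $\annulusTwo^{r_q}_q$, or a restriction $q \nmid y$) depends only on the $q$-adic valuations of $u$ in $\O_K$. Because $p \neq q$, the scalar $p^{-a}$ is a unit at every prime $\q$ of $\O_K$ above $q$, so $v_\q(u/p^a) = v_\q(u)$; the $q$-condition is preserved verbatim by the $p$-map, and symmetrically. This is exactly what is needed for the two intermediate corners of the square to carry the correct decorations and for the hypotheses of the lemma applied at $q$ in the second step --- such as $q^{r_q}\parallel f$, the divisibility of $\gcd(f,C)$ by $q$, or $q \parallel C$ --- to survive the $p$-map, which touches only the $p$-parts of $f$ and $C$.

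Granting these preliminaries, commutativity is a formal matter rather than a substantive one. Traversing either path sends $[x:y:z]$ to the point with $u \mapsto u/(p^{a}q^{a'})$ and $z \mapsto z/(p^{b}q^{b'})$ and sends $(B,C)$ to the pair rescaled independently at $p$ and at $q$; since $p$ and $q$ are distinct, these operations on integers commute. As $-B$ is not a square, all the groupoids in sight are equivalent to sets (Remark~\ref{rem:groupoids}), so commutativity reduces to the equality of the two composite maps on points, which is immediate. The squares involving the $y$-coordinate restrictions compose in the same way, the conditions $p \nmid y$ and $q \nmid y$ being independent for $p \neq q$.

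The one place requiring genuine care, and which I view as the main obstacle, is bookkeeping at the prime $2$: the maps of Lemmas~\ref{lem:cascade_f_2}, \ref{lem:cascade_y_2}, and~\ref{lem:p2:cascade_nnot|2r-1} split the locus $\annulus^{t}_2$ into the $\annulusTwo^{t-1}_2$-sublocus and its complement, and so invert not to a single groupoid but to a disjoint union. To state a strictly commuting square I would phrase the diagram at the level of each individual piece, where the relevant map is an honest isomorphism, rather than for the coproduct as a whole; after this reduction the verification is again just the commutativity of division by coprime prime powers and uses no class-group input.
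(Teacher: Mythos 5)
Your proposal is correct and matches the paper's approach: the paper offers no argument beyond asserting that the observation ``follows from definitions and the independence of behaviors at different primes,'' and your verification --- that every cascade map divides $u$ by a power of a single prime and alters $(B,C)$ only in their parts at that prime, so that conditions and hypotheses at a distinct prime $q$ are untouched --- is exactly the content of that assertion, carefully spelled out. Your caveat about phrasing the squares piecewise at $p=2$ (where the maps target a disjoint union) is a sensible and correct refinement.
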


\noindent
Let \(B,C \) be non-zero integers.
\begin{enumstep}
    \item\label{step:cascade-remove-squares}
    For all primes $p$ with \(p^2 \mid \gcd(B,C)\), use Lemma~\ref{lem:loc-bijection} to conclude that 
    $$\Y_{B,C}(\Z)\ne \emptyset \iff \Y_{B\cdot p^{-2}, C\cdot p^{-2}}(\Z)\ne\emptyset.$$  This reduces to the case in which \(\gcd(B,C)\) is squarefree, i.e., to the case where \(p~\mid~\gcd(f,C) \) implies \(v_p(C) \le 1 \).\medskip

We note that for any prime \(p\), a point in \(\Y_{B,C}(\Z)\) lies in a \(\annulus^r_p\) set for some \(r \ge 0\). So in fact, for a fixed \(p\), \(\Y_{B,C}(\Z) = \bigcup_{t \ge 0} \Y_{B,C}\left(\Z; \annulus^t_p\right)\). For any prime \(p\), let \(T_p\) denote the set of \(t \in \Z_{\ge 0}\), such that \(\Y_{B,C}(\Z;\annulus^t_p)\) is potentially non-empty. By \S\ref{subsec:cascadefC1}~and~\S\ref{subsec:cascadepfC}, this set is finite and can be computed explicitly. In particular, 
\[
\Y_{B,C}(\Z) =  \coprod_{t \in T_p} \Y_{B,C}(\Z; \annulus^t_p).
\]
Further, note that for \(t \geq 1\), \(\Y_{B,C}(\Z; \annulus^t_2) = \Y_{B,C}(\Z; \annulusTwo^{t-1}_2) \sqcup \Y_{B,C}(\Z; \annulusTwo^{t-1}_2)^c \), and the above decomposition still makes sense for \(p=2\).

\begin{notation}
    For convenience, for any subgroupoid $S \subset \Y_{B,C}(\Z)$, let \(S(\annulus^t_p)\) (resp. \(S(\annulusTwo^t_2)\) denote the subgroupoid of points in \(S\) satisfying \(\annulus^t_p\) (resp. \(\annulusTwo^t_2\)).
\end{notation} 

\medskip

\item\label{step:cascade-clear-gcd(f,C)} 

If \(\gcd(f,C)=1\), then set $\calP\colonequals\{\Y_{B,C}(\Z)\}$. Otherwise, write \(\{ p : p \mid \gcd(f,C)\}= \{p_1, p_2 \ldots p_k\}\). Compute \(T_{p_i}\) according to \S\ref{subsec:cascadepfC}.  

By Observation~\ref{obs:iterate_cascade}, write
\[
\Y_{B,C}(\Z) = \coprod_{(t_1, t_2, \ldots , t_k) \in T_{p_1} \times T_{p_2} \times \ldots \times T_{p_k}} \Y_{B,C}(\Z; \annulus^{t_1}_{p_1}, \ldots,\annulus^{t_k}_{p_k})
\]
and apply the cascade (Lemma~\ref{lem:cascade_nnot|2r-1}~or~\ref{lem:p2:cascade_nnot|2r-1})  for all the \(p_i\)'s together to each set in the disjoint union. 
This gives us a set of \(\{\left(B_{((p_i),(t_i))}, C_{((p_i),(t_i))}\right)\colon (t_i) \in \prod T_{p_i}\}\) such that \(C_{((p_i),(t_i))}\) is coprime to the square part of \(B_{((p_i),(t_i))}\) and  \(\Y_{(B,C)}(\Z)\) is possibly nonempty
if and only if one of the groupoids in 
\begin{equation}
\label{equ:SetP}
\calP\colonequals \{\Y_{B_{((p_i),(t_i))}, C_{((p_i),(t_i))}}(\Z; (\annulus^0_{p_i})_i)\}
\end{equation}
is nonempty (with appropriate adjustments at \(2\) and possible divisibility restrictions on $y$). Perform the following steps for each of these target groupoids in \(\calP\) until $\calP$ is empty. 
\medskip

\item\label{step:cascade-apply-main-theorem}
Let \(S = \Y_{B', C'}(\Z; \bullet) \in\calP\). Apply Theorem~\ref{thm:TFAEunified} (or Theorem~\ref{thm:TFAEunified_prime} if \(B_0 \equiv 3 \mod 4\)) to decide if \(S(\annulus^0)=\Y_{B',C'}(\Z;\annulus^0)\) (or \(S(\tilde{\annulus}^0)=\Y_{B',C'}(\Z;\tilde{\annulus}^0)\)) is nonempty. If \(S(\annulus^0) \neq \emptyset\)  (or \(S(\annulusTwo^0) \neq \emptyset\))
then the original curve has an integral point, so one can stop. If not, then
\begin{enumalg}
    \item If $S = S(\annulus^0) = \emptyset$ (or $S = S(\annulusTwo^0) = \emptyset$), remove $S$ from $\calP$ and return to the start of Step~\ref{step:cascade-apply-main-theorem}; or,

    \item Let $\{p_i\}_i$ denote the (finite) set of primes such that points in $S$ satisfy \((\annulus^0_{p_i})_i\) or \((\tilde\annulus^0_{p_i})_i\) (note that this set of primes may differ from the set of primes in~\eqref{equ:SetP}).
    If it exists, pick $p \mid f'C'$ with $p\notin\{p_i\}_i$ and go to Step~\ref{step:cascade-iteration}.  Otherwise, replace $S$ by $S(\annulus^0)$ (or $S(\annulusTwo^0)$) in $\calP$ and return to the start of Step~\ref{step:cascade-apply-main-theorem}.
\end{enumalg} 
\medskip

\item\label{step:cascade-iteration} 
With the notation from Step~\ref{step:cascade-apply-main-theorem}, remove $S$ from $\calP$ and write
\[
\Y_{B',C'}(\Z) = \coprod_{t \in T_{p}} \Y_{B',C'}(\Z; \annulus^{t}_{p}).
\]
For each $t\in T_p$, do the following and then return to the start of Step~\ref{step:cascade-apply-main-theorem}.  If \(p \mid f'\) (resp. \(p \mid C'\)), apply Lemma~\ref{lem:cascade_f} or Lemma~\ref{lem:cascade_f_2} if $p=2$ (resp. Lemma~\ref{lem:cascade_y} or~\ref{lem:cascade_y_2} if \(p=2\)) to $\Y_{B',C'}(\Z; \annulus^{t}_{p})$ to reduce to the case \(\Y_{B'(p,t),C'(p,t)}(\Z;\bullet,\annulus^0_{p})\) 
or \(\Y_{B'(p,t),C'(p,t)}(\Z;\bullet,\annulusTwo^0_{p})\), possibly with divisibility restrictions on \(y\) and \(z\) as well.  Add that new subgroupoid to~$\calP$.

\begin{remark}
    This process terminates, since it iterates over primes dividing \(fC\) and for each such prime \(p\), the set \(T_{p}\) is finite. While in principle, this process seems extremely expensive (we could have \(2^{2^{w(fC)}}\) sets to test), in practice, it appears to end much more quickly. 
\end{remark}

\end{enumstep}

\subsection{A family of examples in detail}
\label{subsec:detailedExample}

To illustrate some of the main features of the cascade, we will describe the integral Hasse principle on $\Y_{B,C}$ corresponding to the following family of generalized Fermat equations: 
\begin{equation}\label{eq:intro-ex}
x^2 + 3^{2r}\cdot 83y^{2} = 3^{r'}\cdot 23z^{3},
\end{equation}
where $r$ and $r'$ are positive integers.
Note that if $r > 0$ and $r' > 1$, we can replace $(B,C)$ by $(B/9,C/9)$, so assume without loss of generality that $r'\leq 1$ if $r > 0$. 

First, by Proposition~\ref{prop:localTest}, $\Y_{B,C}$ has $\Z_{p}$-points for all primes $p$; that is, (\ref{eq:intro-ex}) is everywhere locally soluble. We proceed to analyze the Hasse principle for a few curves in this family by computing the set of $\Z$-points. Magma code used in computing the following examples can be found in \url{https://github.com/juanitaduquer/stacky22n/tree/main/Examples} \cite{githubRepo}.

\subsubsection{Coprime case}\label{subsubsec:coprimecase}
When $r = r' = 0$, we get the pair of coefficients $(B,C) = (83,23)$ which are squarefree, coprime integers. Set $K \colonequals \Q\left (\sqrt{-83}\right )$ with ring of integers $\O_{K} = \Z[\alpha]$, where $\alpha = \frac{1 + \sqrt{-83}}{2}$. Notice that $C\O_{K} = \J_{+}\J_{-} = (1+\alpha)(2-\alpha)$ is a product of principal prime ideals. Let $\O \colonequals \Z\left [\sqrt{-83}\right ]$, which has class group $\Cl(\O) \cong \Z/9\Z$. Here, a Magma computation shows that the ideal $23\O$ factors into a pair of non-principal ideals $\J_{+}\cap\O$ and $\J_{-}\cap\O$, both of order $3$ in $\Cl(\O)$. Since any such ideal class is a cube in $\Cl(\O)$, we can write $(\J_{+}\cap\O)\z^{3} = u\O$ for some $u = x + \sqrt{-83}y\in\O$. Then $\Nm(u) = x^{2} + 83y^{2} = 23\Nm(\z)^{3}$. This shows there are primitive\footnote{If $p\mid u$ for some integer prime $p$, replace $u$ and $\z$ by $u/p^{3}$ and $\z/p$, respectively, repeating if necessary.} solutions to this generalized Fermat equation, therefore infinitely many by \cite[Thm.~1.2]{Beukers}. In this example, there are exactly 18 admissible twists detecting all the $\Z$-points on $\Y_{83,23}$ satisfying~$\annulus^0$.

For $r = 2$ and $r' = 0$, we get $(B,C)=(6723,23)$. Set $\O_{9} \colonequals \Z\left [9\sqrt{-83}\right ]$, which has class group $\Cl(\O_{9}) \cong \Z/3\Z\times\Z/18\Z$. For one of the ideals $\J_{+}$ dividing $23\O_{K}$ from above, $[\J_{+}\cap\O_{9}]$ is in $3 \Cl(\O_{9})$, namely it is the class corresponding to $(0,3)\in\Z/3\Z\times\Z/18\Z$. Hence, by the proof of Theorem~\ref{thm:TFAEunified}, there is some $u = x + 9\sqrt{-83}y\in\O_{9}$ producing a primitive $\Z$-solution $\Nm(u) = 23z^{3} = x^{2} + 6723y^{2}$ to~\eqref{eq:intro-ex}. On the descent side, there $6$ admissible twists $\mathcal{C}_{d}'\to(\Y_{6723,23})_{R_K}$ parametrizing infinitely many points in $\Y_{6723,23}(\Z)$.

\subsubsection{Coprime case with no \(\annulus^0\) points}
\
When \(r=0\) and \(r' = 2\), we have \((B,C) = (83,207)\) and we find that that there are no admissible twists. Further, by Lemma \ref{lem:p_divides_u} and Remark~\ref{rem:starp-fc}, \(\Y_{B,C}(\Z; \annulus^0) = \Y_{B,C}(\Z; \annulus_3^0 )\), and thus the latter is empty. However, by Lemma~\ref{lem:cascade_y}, we see that
\[
\Y_{83,207}(\Z; \annulus^1_3) \cong \Y_{83,23}(\Z; \annulus_3^0, 3 \nmid z),
\]
which we find is non-empty, since \(A_{83,23} \neq \emptyset\).

\subsubsection{Non-coprime case}
    For $r = 2$ and $r' = 1$, one can show that $\Y_{6723,69}(\Z;\annulus_{3}^{t}) = \emptyset$ for all $t\not = 2$ (see Lemma~\ref{lem:cascade_nnot|2r-1}). When $t = 2$, there is an isomorphism 
    $$
    \Y_{6723,69}(\Z;\annulus_{3}^{2}) \xlongrightarrow{\sim} \Y_{83,23}(\Z;\annulus_{3}^{0},3\nmid y). 
    $$
    Since we already found such points in this groupoid above, $\Y_{6723,69}(\Z;\annulus_{3}^{2})\not = \emptyset$ and we have solutions to~\eqref{eq:intro-ex}.

    \subsubsection{An example resolved by Lemma~\ref{lem:p-part-kernel}}
    \label{subsec:examples-brute-force}
    For $r = 4$ and $r' = 1$, one has $\Y_{544563,69}(\Z;\annulus_{3}^{t}) = \emptyset$ for all $t\not = 2,4$. When $t = 4$, Lemma~\ref{lem:cascade_nnot|2r-1} gives
    $$
    \Y_{544563,69}(\Z;\annulus_{3}^{4}) \xlongrightarrow{\sim} \Y_{83,{207}}(\Z;\annulus_{3}^{0}), \quad [x : y : z] \longmapsto [x/81 : {y} : {z/27}]. 
    $$
    We saw above that $\Y_{83,{207}}(\Z;\annulus_{3}^{0}) = \emptyset$, so $\Y_{544563,69}(\Z;\annulus_{3}^{4}) = \emptyset$.  
    For $t = 2$, Lemma \ref{lem:cascade_nnot|2r-1} gives instead an isomorphism 
    $$
    \Y_{544563,69}(\Z;\annulus_{3}^{2}) \xlongrightarrow{\sim} \Y_{6723,23}(\Z;\annulus_{3}^{0},3\nmid y), \quad [x : y : z] \longmapsto [{x/9} : {y} : {z/3}]. 
    $$
    Then Lemma~\ref{lem:p-part-kernel} predicts $\Y_{6723,23}(\Z;\annulus_3^0,3\nmid y)\not = \emptyset$; explicitly, 
    \begin{align*}
        \Cl(\O) = \Cl(\Z[3^2\sqrt{-83}]) &\simeq \Z/3\Z \times \Z/18\Z, \\
        \Cl(\O_3) = \Cl(\Z[3^3\sqrt{-83}]) &\simeq \Z/3\Z \times \Z/54\Z,
    \end{align*}
    and $\ker(\eta) \not\subseteq \Cl(\O_3)[3]$ (e.g.\ it contains $(0,6) \in \Z/3\Z \times \Z/54\Z$). Indeed, we have $[61:2:11]\in \Y_{6723,23}(\Z;\annulus_{3}^{0}, 3 \nmid y)$. Note that this is not guaranteed by Proposition~\ref{prop:cascade-with-gcds} since $n=3$; this is explored further in the next example. Therefore $\Y_{544563,69}(\Z;\annulus_{3}^{2})\not = \emptyset$.

\subsubsection{An example with no points resolved by Lemma \ref{lem:p-part-kernel}}
\label{subsubsec:examples-death-case}
Let $r = 3$ and $r' = 1$. By Lemma \ref{lem:cascade_nnot|2r-1} we have
\begin{align*} 
    \Y_{60507,69}(\Z) &= \Y_{60507,69}(\Z; \annulus_3^2) \sqcup \Y_{60507,69}(\Z; \annulus_3^3)\\
    &= \Y_{747,23}(\Z; \annulus_3^0,\ 3 \nmid y) \sqcup \Y_{83,69}(\Z; \annulus_3^0).
\end{align*}
By Lemma~\ref{lem:p_divides_u} and Remark~\ref{rem:starp-fc}, we have that \(\Y_{83,69}(\Z; \annulus^0 )= \Y_{83,69}(\Z; \annulus_3^0)\). Thus an application of Theorem \ref{thm:TFAEunified} shows $\Y_{83,69}(\Z; \annulus_3^0) = \emptyset$.

An initial computer search reveals that there exist points $[x' : y' : z']\in\Y_{747,23}(\Z;\annulus_{3}^{0})$ but they all appear to satisfy $3\mid y'$, which suggests $\Y_{60507,69}(\Z;\annulus_{3}^{2}) = \emptyset$ and therefore there are no primitive solutions to~\eqref{eq:intro-ex} at all when $r = 3,r' = 1$. 
This is confirmed by Lemma \ref{lem:p-part-kernel} as follows. We have
\begin{align*}
    \Cl(\O) = \Cl(\Z[3\sqrt{-83}]) &\simeq \Z/18\Z, \\
    \Cl(\O_3) = \Cl(\Z[3^2\sqrt{-83}]) &\simeq \Z/3\Z \times \Z/18\Z,
\end{align*}
with $q \colon \Cl(\O_3) \to \Cl(\O)$ given by projection. Thus the map $\eta = q \circ [3]$ has kernel precisely $\Cl(\O_3)[3]$. Moreover, writing $23\O_K = \J_+\J_-$ as in \S \ref{subsubsec:coprimecase}, we must have $[\J_+ \cap \O_3] \in 3 \Cl(\O_3)$, since there do exist points in $\Y_{747,23}(\Z; *_3^0,\ 3 \mid y)$. Therefore by Lemma~\ref{lem:p-part-kernel}, $\Y_{747,23}(\Z;\annulus_3^0,3\nmid y) = \emptyset$ and it follows that $\Y_{60507,69}(\Z) = \emptyset$. 

\subsubsection{Summary}

The cases described above are summarized in Table~\ref{tab:detailedExample}. For each $\Y_{B,C}(\Z;\annulus_{3}^{t})$ which is mapped into some $\Y_{B',C'}(\Z)$, the set of admissible twists $\{\mathcal{C}_{d}'\to(\Y_{B',C'})_{R_K}\}$, i.e.~those which detect $\Z$-points on $\Y_{B',C'}$, is denoted by $\admissible_{B',C'}$. 

\begin{table}[ht]
    \caption{Summary of results from \S\ref{subsec:detailedExample} for members of the family in~\eqref{eq:intro-ex}.}
    \label{tab:detailedExample}
    \centering
\begin{tabular}{|c|c|c|c|c|l|}
    \hline
    $(r,r')$ & $(B,C)$      & $t$ & $(B',C')$   & \((\annulus^0)\)-points?   & \multicolumn{1}{|c|}{Reason}\\
    \hline
    $(0,0)$ & $(83,23)$     & $0$ & $(83,23)$   & yes       & $\admissible_{83,23}\not = \emptyset$ (Theorem~\ref{thm:TFAEunified})\\
    \hline
     $(0,2)$ & $(83,207)$ & $0$ & $(83,207)$   & no        & $\admissible_{83,207} = \emptyset$ (Theorem~\ref{thm:TFAEunified})\\
            &               & $1$ & $(83,23)$  & yes       & $\admissible_{83,23}\not = \emptyset$ (Theorem~\ref{thm:TFAEunified}) and\\
            &               &     &             &           & $\Y_{83,23}(\Z;\annulus_{3}^{0},3\nmid z)\not = \emptyset$ (Lemma~\ref{lem:cascade_y})\\
            \hline
    $(2,0)$ & $(6723,23)$   & $0$ & $(6723,23)$ & yes       & $\admissible_{6723,23}\not = \emptyset$ (Theorem~\ref{thm:TFAEunified})\\
    \hline
    $(2,1)$ & $(6723,69)$   & $2$ & $(83,23)$   & yes       & $\admissible_{83,23}\not = \emptyset$ (Theorem~\ref{thm:TFAEunified}) and\\
            &               &     &             &           & $\Y_{83,23}(\Z;\annulus_{3}^{0},3\nmid y)\not = \emptyset$ (Lemma~\ref{lem:cascade_nnot|2r-1})\\
            \hline
    $(4,1)$ & $(544563,69)$ & $4$ & $(83,{207})$   & no        & $\admissible_{83,{207}} = \emptyset$ (Theorem~\ref{thm:TFAEunified})\\
            &               & $2$ & $(6723,23)$  & yes       & $\admissible_{6723,23}\not = \emptyset$ (Theorem~\ref{thm:TFAEunified}) and\\
            &               &     &             &           & $\Y_{6723,23}(\Z;\annulus_{3}^{0},3\nmid y)\not = \emptyset$ (Lemmas~\ref{lem:cascade_nnot|2r-1}, \ref{lem:p-part-kernel})\\
            \hline
    $(3,1)$ & $(60507,69)$  & $3$ & $(83,69)$   & no        & $\admissible_{83,69} = \emptyset$ (Theorem~\ref{thm:TFAEunified})\\
            &               & $2$ & $(747,23)$  & no & $\Y_{747,23}(\Z;\annulus_{3}^{0},3\nmid y) = \emptyset$ (Lemma~\ref{lem:p-part-kernel})\\
    \hline
\end{tabular}
\end{table}

\begin{remark}
    Replacing $p = 3$ with $p = 2$ in equation (\ref{eq:intro-ex}), one can similarly analyze a family of stacky curves for which Theorem~\ref{thm:TFAEunified_prime} and Lemma~\ref{lem:p2:cascade_nnot|2r-1} are required. Note that some of these curves will fail to have $\Z_{2}$-points. 
\end{remark}

\subsection{Arbitrary coefficients}\label{subsec:Acoeff}

The cascade strategy also allows us to determine the existence of global points on stacky curves $\Y_{A,B,C}$ constructed similarly as in~\eqref{eq:Y_BC}, corresponding to solutions to 
\begin{equation*}
    Ax^{2} + By^{2} = Cz^{n}
\end{equation*}
for non-zero squarefree $A$. For a general integer \(A\), write $A =A_0 A_\Box^2$ with $A_0$ squarefree. There is an isomorphism of groupoids
 \begin{align*}
        \Y_{A,B,C}(\Z[1/A_{\Box}]) &\to \Y_{1,A_0B,A_0C}(\Z[1/A_\Box])\\
        [x:y:z] &\mapsto [A_0A_\Box x : y : z],\\
        \sizedbracket{\frac{x'}{A_0A_{\Box}} : y' : z'} & \mapsfrom [x' : y' : z'].
    \end{align*}
Indeed, if \(p \nmid A_{\Box}\) divides \(y\) and \(z\), then \(p^2 \mid Ax^2\). Since \(p\) does not divide \(A_{\Box}\) and \(A_0\) is squarefree, \(p\) must divide \(x\), which contradicts primitivity on the source. The inverse map is well-defined, since we can change the representative \([x': y':z']\) to \([A_{\Box}^nx': A_{\Box}^n y': A_{\Box}^2z']\).
Thus, when \(A\) is squarefree, i.e. \(A_{\Box} =1\), this map induces an isomorphism of groupoids of \(\Z\) points and we can use the methods of this paper to decide non-emptiness. 

If \(A_{\Box} \neq 1\), the problem becomes trickier and requires the ability to detect points with restricted \(x\)-coordinates, similar to the case discussed in \S\ref{subsec:points-with-restricted-y-coordinate}. We do not do this here, but we anticipate that methods from Sections~\ref{sec:mainthm} and~\ref{sec:cascade} can be generalized to these cases.

\section{Statistics}
\label{sec:statistics}

Maintaining the notation from the previous sections, let $n$ be an odd integer.  For $T>0$, define counting functions
\begin{align*}
    N_n(T) &\colonequals \#\{(B,C) \in \Z^2 : |B|, |C| < T,\ \Y_{B,C}(\Z) \neq \emptyset \},\\
    N^{\loc}_n(T) &\colonequals \#\{(B,C) \in \Z^2 : |B|, |C| < T,\ \Y_{B,C}(\Z_p) \neq \emptyset \text{ for all primes } p\}.
\end{align*}
These quantities capture how often the stacky curve $\Y_{B,C}$ has integral points and integral points everywhere locally, respectively. In this section we describe and compare their rates of growth, to get at how often $\Y_{B,C}$ satisfies the integral Hasse principle.

\begin{theorem}
\label{thm:local_stats}
    For all odd $n$ we have
    \[N_n^\loc(T) \leq \frac{4\pi^4 \constFI T^2}{9(1-\alpha)^{1/2}\sqrt{\log T}} + o\left(\frac{T^2}{\sqrt{\log T}}\right),\]
    where $\constFI > 0$ is a positive constant and $\frac12 < \alpha < 1$. On the other hand, we have 
    \[N_n^{\loc}(T) \geq \frac{16 \constFI \constTaub T^2}{5\sqrt{\log T}} + o\left(\frac{T^2}{\sqrt{\log T}}\right),\]
    where $\constTaub > 0$ is a positive constant.
\end{theorem}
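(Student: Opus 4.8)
The plan is to translate everywhere-local solubility into an explicit splitting condition via Proposition~\ref{prop:localTest} and then carry out an analytic count. By Lemma~\ref{lem:local-test-reduction}, a prime $p$ can only obstruct solubility when $v_p(C)$ is odd. For an odd prime $p \nmid B$ (so $k=0$ is even and $B' = B$), the only relevant clause of Proposition~\ref{prop:localTest} is~\ref{part:loc-2}, which obstructs exactly when $\left(\frac{-B}{p}\right) = -1$, i.e.\ when $p$ is inert in $K = \Q(\sqrt{-B})$. Since such $p$ is either split or inert (never ramified), local solubility at $p$ holds iff $v_p(C)$ is even or $p$ splits in $K$. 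The remaining clauses of Proposition~\ref{prop:localTest}, together with the finitely many $2$-adic conditions, only concern the primes dividing $2B$. Writing $C = s t^2$ with $s$ squarefree, the conclusion is that $(B,C)$ is everywhere locally soluble precisely when every prime dividing $s$ coprime to $2B$ splits in $K$, subject to a bounded set of congruence conditions at $2$ and at $p \mid B$. This is the source of the $1/\sqrt{\log T}$ saving, since the density of inert primes is $\tfrac12$ in every $K$.

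For the upper bound I would fix $B$ (discarding the $O(\sqrt T)$ degenerate $B$ with $-B$ a square) and bound the number of admissible $C$ with $|C| < T$. Grouping $C = s t^2$ and summing $\#\{t : s t^2 < T\} = \lfloor \sqrt{T/s}\rfloor$ over squarefree $s$ whose prime factors away from $2B$ split in $K$ reduces the inner count to $\sqrt T \sum_s s^{-1/2}$ over this multiplicative set. A Landau/Selberg--Delange estimate gives $\#\{s \le x : p \mid s \Rightarrow p \text{ split}\} \ll x/\sqrt{\log x}$, so partial summation yields $\sum_{s \le T} s^{-1/2} \ll \sqrt T/\sqrt{\log T}$; equivalently one encodes the splitting condition through $\left(\frac{-B}{\cdot}\right)$ and invokes the mean-value bound $\sum_{m \le T} 2^{-\omega(m)} \ll T/\sqrt{\log T}$. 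To keep this explicit and uniform in $B$, I would introduce the parameter $\tfrac12 < \alpha < 1$ and run a Rankin-type argument, truncating the large-$s$ tail; the factor $(1-\alpha)^{-1/2}$ records the cost of the truncation, and the prefactor $\tfrac{4\pi^4}{9}\constFI$ collects the two sign choices for $(B,C)$, the $\zeta(2)$-factors arising from the square parts, and the density constant $\constFI$. Summing the per-$B$ bound $\ll \constFI\, T/\sqrt{\log T}$ over $|B| < T$ then gives the stated $O(T^2/\sqrt{\log T})$.

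For the lower bound I would exhibit an explicit admissible family. Restricting to $B > 0$ squarefree with $B \equiv 7 \pmod 8$ makes $\Y_{B,C}(\Z_2) \neq \emptyset$ automatic for every $C$ (by the remark following Proposition~\ref{prop:localTest}), and taking $C$ squarefree, coprime to $B$, with all prime factors split in $K = \Q(\sqrt{-B})$ forces everywhere-local solubility: at the split primes dividing $C$ we have $v_p(C) = 1$ with $\left(\frac{-B}{p}\right) = 1$, at primes dividing $B$ we have $v_p(C) = 0$ even, and at $2$ we are in the benign case. A Landau-type lower bound gives $\#\{C \le T : C \text{ squarefree},\ (C,2B) = 1,\ p \mid C \Rightarrow p \text{ split}\} \gg \constTaub\, T/\sqrt{\log T}$, and summing over the positive proportion of $B$ in the chosen squarefree congruence class, together with the free sign of $C$, produces $\gg \constFI\constTaub\, T^2/\sqrt{\log T}$, the explicit constant $\tfrac{16}{5}$ reflecting the sign choices and the density of the admissible class of $B$.

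The main obstacle is the uniformity in $B$. The splitting condition on the prime factors of $C$ is governed by the quadratic character $\left(\frac{-B}{\cdot}\right)$, so the Landau/Selberg--Delange constant depends on the field $K = \Q(\sqrt{-B})$, and one must control this dependence uniformly as $B$ ranges over the box and then average it (upper bound) or bound it below (lower bound). Ensuring that the error terms in the density estimate are uniform in the conductor, and that the auxiliary conditions at $2$ and at $p \mid B$ do not erode the main term, is where the genuine work lies; the parameter $\alpha$ is precisely the device that keeps the upper-bound estimate simultaneously explicit and uniform.
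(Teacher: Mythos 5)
Your reduction of everywhere-local solubility to the splitting condition ``$p \mid C$ squarefree part, $p \nmid 2B$ $\Rightarrow$ $p$ splits in $\Q(\sqrt{-B})$'' is correct and matches the paper's starting point, and you have correctly identified that the $\sqrt{\log T}$ saving comes from the half-density of split primes. But your primary route --- fix $B$, count admissible $C$ by a Landau/Selberg--Delange estimate for integers composed of split primes of $K = \Q(\sqrt{-B})$, then sum over $B$ --- has a genuine unresolved gap that you flag but do not close: the Landau constant and the error term both depend on $K$ (essentially on $L(1,\chi_{-B})$), and $B$ ranges up to $T$ while $C$ also ranges up to $T$, which is exactly the regime where conductor-uniform versions of such estimates are delicate. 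This is not a technicality one can wave at; it is the crux. The paper's proof is organized to avoid it entirely: the splitting condition is expanded as $\prod_{p \mid C_2}\frac{1}{2}\bigl(1 + \bigl(\frac{-B}{p}\bigr)\bigr)$, the $D=1$ term of the expansion is $B$-independent (up to the local factors $\prod_{p\mid 2B}(1+\tfrac{1}{2p})^{-1}$) and is evaluated by the Friedlander--Iwaniec mean value $\sum_{m \le X}\mu(m)^2 2^{-\omega(m)} \sim \constFI X/\sqrt{\log X}$, while every term with $D>1$ is killed by applying P\'olya--Vinogradov to the \emph{inner sum over $B$} of $\bigl(\frac{B}{D}\bigr)$. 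The parameter $\alpha$ enters not as a Rankin truncation of the large squarefree part of $C$, but as a cutoff $C_1 C_\Box^2 \le T^\alpha$ on the part of $C$ that divides $B$ or appears squared, so that $\log(T/(C_1 C_\Box^2)) \ge (1-\alpha)\log T$ uniformly in the Friedlander--Iwaniec main term; the $\tfrac{4\pi^4}{9} = 16\zeta(2)^2$ then comes from the sums over $C_1$ and $C_\Box$. You gesture at this character-sum encoding as an ``equivalent'' alternative, but the proof lives or dies by actually running it.

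There is a second, smaller problem with your lower bound: restricting to $B>0$ with $B \equiv 7 \pmod 8$ (and all prime factors of $C$ split) certainly yields a positive proportion and hence \emph{some} lower bound of order $T^2/\sqrt{\log T}$, but it cannot produce the specific constant $\tfrac{16\constFI\constTaub}{5}$ asserted in the statement, which the paper obtains by taking \emph{all} squarefree $B$ of both signs and all odd squarefree $C$ coprime to $B$ (the $\tfrac{16}{5}$ is $2 \cdot 2 \cdot \tfrac45$, from the two signs and the Euler factor at $2$, and $\constTaub$ is the residue of the Dirichlet series $\sum_B \mu(B)^2 \prod_{p\mid B}(1+\tfrac{1}{2p})^{-1} B^{-s}$ computed in Lemma~\ref{lem:standard_tauberian}). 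Since this constant feeds directly into the positive-proportion computation of Theorem~\ref{thm:pos_prop_HP_n=3}, the discrepancy matters. Your per-$B$ ``Landau-type lower bound $\gg \constTaub T/\sqrt{\log T}$'' also suffers the same uniformity-in-$K$ issue as the upper bound, since the constant in Landau's theorem for split-prime integers degenerates with $L(1,\chi_{-B})$; again the fix is to expand $\prod_{p \mid C}\bigl(1+\bigl(\frac{-B}{p}\bigr)\bigr)$, discard the oscillating terms by character cancellation, and average the surviving $B$-dependent local factors via a Tauberian argument.
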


We observe that although the constants in the upper and lower bounds presented in Theorem~\ref{thm:local_stats} do not depend on \(n\), we expect that an asymptotic formula for \(N_n^{\loc}(T)\) would indeed depend on it. Numerical estimates for $\constFI$ and $\constTaub$ are given in~\eqref{eq:constFI} and~\eqref{eq:constTaub}, respectively.

\begin{remark}
    \label{remark:local-densities}
    Theorem~\ref{thm:local_stats} implies that $N_n^\loc(T)/T^2 \to 0$ as $T \to \infty$, or 0\% of $\Y_{B,C}$ are everywhere locally soluble. This can also be observed by calculating local densities at each prime. Indeed let $\mu_p$ denote the $p$-adic Haar measure on $\Z_p^2$ normalized so $\mu_p(\Z_p^2) = 1$. Then a straightforward calculation using Proposition~\ref{prop:localTest} shows that for each odd prime \(p\)
\[\mu_p\left( \left\{(B,C) \in \Z_p^2 : \Y_{B,C}(\Z_p)\neq\emptyset \right\} \right)
=1 -  \frac{p^n + p^{n-2} + 2p^{n-3} - p + 2}{2p^{n-3}(p+1)(p^3+p^2 + p + 1)}.
\]
In particular, the product of these local factors over all primes $p$ diverges to 0. 
\end{remark}

On the global side of things, we have the following result for the $n=3$ case.
\begin{theorem}
\label{thm:pos_prop_HP_n=3}
    We have
    \[\liminf_{T \to \infty} \frac{N_3(T)}{N_3^\loc(T)} > 0.00988.\]
\end{theorem}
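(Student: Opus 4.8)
The plan is to produce a positive-density lower bound for $N_3(T)$ by exhibiting an explicit, easily-checkable subfamily of pairs $(B,C)$ for which $\Y_{B,C}(\Z) \neq \emptyset$, and then showing this subfamily already accounts for a positive fraction of the locally soluble pairs counted by $N_3^\loc(T)$. The natural candidates are pairs with $\gcd(f,C) = 1$ (so that the base-case Theorem~\ref{thm:TFAEunified} or~\ref{thm:TFAEunified_prime} applies directly, with no cascade needed) and for which the global criterion part~(iii) of those theorems is \emph{guaranteed} to hold. The cleanest sufficient condition is to force the existence of a split prime factor $p \mid C$ whose prime ideal $\J_+$ above it lies in $3\Cl(\O)$ --- or even better, to restrict to $B$ with $\Cl(\O)$ of order coprime to $3$, so that $3\Cl(\O) = \Cl(\O)$ and \emph{every} ideal class is a cube. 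In the latter situation condition~(iii) reduces purely to the splitting behavior of $C$ in $K = \Q(\sqrt{-B_0})$, which is a congruence condition detectable by local densities.

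First I would set up the lower-bound count. Restrict to, say, $B = B_0$ squarefree with $B_0 \equiv 1,2 \pmod 4$ (so $\O = \O_K$ and the norm form is exactly $x^2 + B_0 y^2$), and to $C$ squarefree with $\gcd(B_0,C)=1$ and $C$ a product of primes that split in $K$. For such $(B,C)$, writing $C\O_K = \J_+\J_-$ with $\J_+ = \overline{\J_-}$, Theorem~\ref{thm:TFAEunified}\ref{part:TFAEiii} holds as soon as $[\J_+] \in 3\Cl(\O_K)$. To make this automatic I would further impose $3 \nmid \#\Cl(\O_K)$, whence $3\Cl(\O_K) = \Cl(\O_K)$ and the cubic condition is vacuous. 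Thus the entire global solubility reduces to: $B_0$ squarefree with $3 \nmid h(-B_0)$, and $C$ squarefree, coprime to $B_0$, and totally split in $K$. This is a union of congruence/splitting conditions, and I would estimate the count of such pairs below using the same analytic machinery (selection of $B$ with controlled class number, and counting $C$ whose prime factors all split) that underlies the proof of Theorem~\ref{thm:local_stats}; indeed the count of $C$ all of whose prime factors split in a fixed imaginary quadratic field again grows like $T/\sqrt{\log T}$ by a Landau/Selberg--Delange argument, matching the order of magnitude of $N_3^\loc$.

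The comparison step is then to divide this lower bound for $N_3(T)$ by the upper bound for $N_3^\loc(T)$ from Theorem~\ref{thm:local_stats}. Since both are of exact order $T^2/\sqrt{\log T}$, their ratio tends to a positive constant, and tracking the explicit constants ($\constFI$, $\constTaub$, the density of $B_0$ with $3\nmid h(-B_0)$, and the Euler-product constant governing totally-split $C$) yields a numerical lower bound; the claim is that a careful accounting beats the stated threshold $0.00988$. The main obstacle I anticipate is \emph{quantitative}: one must control the proportion of squarefree $B_0$ with $3 \nmid \#\Cl(\O_K)$ (which requires a Cohen--Lenstra--type input, available unconditionally for the $3$-part by Davenport--Heilbronn) and simultaneously pin down the Landau-type constant for the split-$C$ count, and then verify the product of these densities against the upper-bound constant for $N_3^\loc$ is large enough. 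Getting a clean, provable constant --- rather than a heuristic one --- is where the real work lies; in particular one should check that the subfamily chosen is not so lossy that the ratio dips below the target, which may force including the $B_0 \equiv 3 \pmod 4$ cases via Theorem~\ref{thm:TFAEunified_prime} to enlarge the numerator.
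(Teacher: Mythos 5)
Your strategy coincides with the paper's in all essentials: restrict to squarefree coprime $(B,C)$, kill the class-group obstruction of Theorem~\ref{thm:TFAEunified} by demanding trivial $3$-torsion (using Davenport--Heilbronn to bound the density of excluded $B$), note that for such pairs local solubility forces $C$ to be supported on split primes and hence already implies condition~\ref{part:TFAEiii}, and compare the resulting count against the upper bound for $N_3^{\loc}(T)$ from Theorem~\ref{thm:local_stats}. Your ``Landau/Selberg--Delange'' count of split $C$ is exactly the sum $\sum_C \frac{\mu(C)^2}{2^{\omega(C)}}\prod_{p \mid C}\left(1+\left(\frac{-B}{p}\right)\right)$ that the paper evaluates with \cite[Lemma~1]{FriedlanderIwaniec}, so the analytic input is the same as well.

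The genuine gap is quantitative, and for this particular theorem the constant leaves no slack to absorb it. The paper's final bound is
\[
\liminf_{T\to\infty}\frac{N_3(T)}{N_3^{\loc}(T)} \;\geq\; \frac{9(1-\alpha)^{1/2}}{\pi^4}\left(\frac{4\constTaub}{5} - \frac{3\left(\widetilde{\delta_3}^{\imag}+\widetilde{\delta_3}^{\real}\right)}{\pi^2}\right) \approx 0.009885,
\]
which clears $0.00988$ by less than $10^{-5}$; it does so only because the family includes \emph{both} signs of $B$, i.e.\ both imaginary and real quadratic $K$, with the real case contributing the much smaller bad density $\widetilde{\delta_3}^{\real}\leq 11/36$. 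Your subfamily takes only $B>0$: redoing the computation with the imaginary contribution alone gives $\frac{9(1-\alpha)^{1/2}}{2\pi^4}\left(\frac{4\constTaub}{5} - \frac{6\widetilde{\delta_3}^{\imag}}{\pi^2}\right)\approx 0.0022$, which proves positivity but falls far short of the stated constant. Restricting further to $B_0\equiv 1,2\pmod 4$ (so that $\O=\O_K$) discards additional density. The paper avoids this second loss by working with the order $\Z[\sqrt{-B}]$ for \emph{all} squarefree $B$ and using Lemma~\ref{lem:class_number_formula} to account for the forced $3$-torsion when $B\equiv 3\pmod 8$, yielding $\widetilde{\delta_3}^{\imag}=\frac56\delta_3^{\imag}+\frac16\leq\frac{7}{12}$. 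You flag both issues in your closing paragraph, but the plan as written does not resolve them; to prove the inequality as stated you must incorporate the real quadratic fields and all residue classes of $B$ modulo $4$, exactly as the paper does.
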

That is, when $n=3$, a positive proportion of everywhere locally soluble $\Y_{B,C}$ satisfy the Hasse principle for integral points. The approach of the proof is to bound $N_3(T)$ from below by considering only those squarefree $B > 0$ for which the imaginary quadratic extension $K=\Q(\sqrt{-B})$ has class group with trivial 3-part. Assuming the average rank of the $n$-part of $\Cl(\Q(\sqrt{\pm B}))$ (counted by $|B|$ rather than the discriminant) agrees with the prediction supplied by the Cohen--Lenstra heuristics (see Assumption~\ref{assumption:CL}), we can extend this argument to all odd primes $n$.
\begin{theorem}
\label{thm:pos_prop_HP_n>3}
	Let $n > 3$ be an odd prime and suppose Assumption~\ref{assumption:CL} holds. Then
	\[\liminf_{T \to \infty} \frac{N_n(T)}{N_n^\loc(T)} > 0.\]
\end{theorem}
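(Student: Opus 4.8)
The plan is to follow the blueprint of the $n=3$ case (Theorem~\ref{thm:pos_prop_HP_n=3}), replacing its unconditional input on the $3$-part of class groups by Assumption~\ref{assumption:CL}. The key structural point is that Theorem~\ref{thm:TFAEunified} manufactures global points ``for free'' whenever the relevant class group has trivial $n$-part. Concretely, I would restrict to $B>0$ squarefree with $B\equiv 1\pmod 4$, so that $f=1$ (hence $\gcd(f,C)=1$ automatically) and the order $\O$ of Definition~\ref{def:orders} equals $\O_K$, and I would further impose $n\nmid\#\Cl(\O_K)$. Since $n$ is prime this forces $\Cl(\O_K)[n]=0$, i.e.\ $n\Cl(\O_K)=\Cl(\O_K)$, so the condition $[\J_{\pm}\cap\O]\in n\Cl(\O)$ in Theorem~\ref{thm:TFAEunified}\ref{part:TFAEiii} becomes vacuous. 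Thus for any such $B$ and any odd $C>0$ all of whose prime factors split in $K=\Q(\sqrt{-B})$, the factorization $C\O_K=\J_+\J_-$ with $\J_+=\overline{\J_-}$ (and empty ramified part $\r$) exists, so Theorem~\ref{thm:TFAEunified}\ref{part:TFAEiii} holds and $\Y_{B,C}(\Z;\annulus^0)\neq\emptyset$, giving a genuine integral point. Every such ``good'' pair is counted by $N_n(T)$.

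It then remains to show the good pairs number $\gg T^2/\sqrt{\log T}$, and I would do this by double counting. Writing $\rad$ for the radical,
\[
N_n(T) \;\geq\; \sum_{\substack{0 < C < T \\ C \text{ odd}}} \# \Bigl\{\, 0 < B < T \;:\; B \equiv 1 \!\!\pmod 4,\ B \text{ squarefree},\ n \nmid \#\Cl(\O_K),\ \bigl(\tfrac{-B}{q}\bigr) = 1 \ \forall\, q \mid C \,\Bigr\}.
\]
The condition that every $q\mid C$ split in $\Q(\sqrt{-B})$ is a system of congruences on $B$ modulo $\rad(C)$, so, ignoring the class-group constraint, the inner count is an elementary count of squarefree integers in arithmetic progressions of modulus $\rad(C)$, contributing an amount of size $\asymp T\,2^{-\omega(C)}$ uniformly in $C$. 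Summing the elementary part over $C<T$ reproduces $\sum_{C<T}2^{-\omega(C)}\asymp T/\sqrt{\log T}$ (the associated Dirichlet series is $\zeta(s)^{1/2}$ times a factor holomorphic and nonzero at $s=1$), giving the desired order $T^2/\sqrt{\log T}$. Dividing by the upper bound $N_n^{\loc}(T)\ll T^2/\sqrt{\log T}$ from Theorem~\ref{thm:local_stats} then yields a positive $\liminf$; for $n>3$ we only need positivity, not the explicit constant obtained for $n=3$.

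The main obstacle — and precisely the source of conditionality — is inserting the constraint $n\nmid\#\Cl(\O_K)$ into the congruence count uniformly in $C$. What is required is that, on average over the moduli $\rad(C)$ above, a positive proportion (bounded below independently of the residue class) of the squarefree $B$ in each admissible progression have trivial $n$-part. Assuming the Cohen--Lenstra prediction of Assumption~\ref{assumption:CL} for the average size of $\Cl(\O_K)[n]$ counted by $|B|$ holds within these progressions, the first moment $\mathbb{E}[\#\Cl(\O_K)[n]]=2$ together with a Chebyshev/Markov argument bounds the proportion of $B$ with nontrivial $n$-part by $1/(n-1)<1$, leaving a positive density of $B$ with $\Cl(\O_K)[n]=0$, uniformly enough across the relevant moduli to survive the double sum. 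For $n=3$ this equidistribution of the $3$-torsion across congruence classes is supplied unconditionally by the Davenport--Heilbronn theorem with congruence conditions; for $n>3$ no such result is available, which is exactly why Assumption~\ref{assumption:CL} is invoked. Granting it, the analytic estimates are identical to the $n=3$ case and the theorem follows.
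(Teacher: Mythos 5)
Your structural reduction is the same as the paper's: trivial $n$-part of the relevant class group makes the condition $[\J_{\pm}\cap\O]\in n\Cl(\O)$ in Theorem~\ref{thm:TFAEunified}\ref{part:TFAEiii} vacuous, so any everywhere-locally-soluble pair with $B$ squarefree and $C$ squarefree, odd, supported on split primes produces a genuine integral point, and it remains to show such pairs number $\gg T^2/\sqrt{\log T}$. The gap is in how you count them. You sum over $C$ first and then count $B$ in arithmetic progressions modulo $\rad(C)$, which forces you to impose the class-group condition \emph{inside} each progression; to salvage this you explicitly invoke ``Assumption~\ref{assumption:CL} within these progressions, uniformly enough across the relevant moduli.'' That is a strictly stronger hypothesis than Assumption~\ref{assumption:CL}, which only asserts the first moment of $\#\Cl(\Q(\sqrt{-B}))[n]$ averaged over \emph{all} squarefree $B\le T$. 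Worse, your moduli $\rad(C)$ range up to $T$, so even a fixed-modulus progressions version of Cohen--Lenstra would not suffice without uniformity in the modulus --- and the same uniformity problem already afflicts your ``elementary count of squarefree integers in arithmetic progressions'' before the class-group condition is even inserted. As written, the theorem does not follow from the stated assumption.

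The paper avoids this entirely by reversing the order of summation. For each fixed squarefree $B$ one first performs the $C$-sum via \cite[Lemma 1]{FriedlanderIwaniec} (with Polya--Vinogradov absorbing the nontrivial character contributions), obtaining a main term $\frac{2\constFI T}{\sqrt{\log T}}\prod_{p\mid 2B}(1+\frac{1}{2p})^{-1}$. The sum of this weight over $B$ with trivial $n$-part is then written as the sum over \emph{all} $B$ (which is $\sim \frac{4\constTaub T}{5}$ by Lemma~\ref{lem:standard_tauberian}) minus the sum over $B$ with nontrivial $n$-part, and the latter is bounded by simply replacing the weight by $1$ and using the global density bound $\widetilde{\delta_n}^{\imag}=\delta_n^{\imag}\le \frac{1}{n-1}$ (Lemma~\ref{lem:statistics-cohen-lenstra-delta-bounds}, i.e.\ Markov applied to the plain first moment of Assumption~\ref{assumption:CL}), together with $\widetilde{\delta_n}^{\real}\le\frac{1}{n(n-1)}$. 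Since $\frac{4\constTaub}{5}-\frac{3(n+1)}{\pi^2 n(n-1)}>0$, positivity follows with no information about how the bad $B$ distribute in congruence classes. If you reorganize your count this way, your argument goes through under the assumption actually stated; two minor simplifications you can also drop are the restriction to $B\equiv 1\pmod 4$ (for $n>3$ prime, Lemma~\ref{lem:class_number_formula} gives $\Cl(\Z[\sqrt{-B}])[n]=\Cl(\O_K)[n]$ for all squarefree $B$, so no congruence restriction is needed) and the restriction to imaginary fields (though keeping only $B>0$ is harmless for positivity).
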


\subsection{Proof of Theorem~\ref{thm:local_stats}}
\label{subsec:proof-of-local-stats}
\subsubsection{Upper bound}
\label{subsubsec:local-stats-upper-bound}
    For the upper bound, it suffices to count a weaker condition than that of Proposition~\ref{prop:localTest}: notice that if $\Y_{B,C}(\Z_p) \neq \emptyset$ for $p$ odd, we have that if $p$ divides the squarefree part of $C$, then either $p\mid B$ or $\left(\frac{-B}{p}\right) = 1$. Write $C = C_1C_2C_\Box^2$, where $C_1C_2$ is squarefree, such that $C_1 \mid B$ and $\gcd(C_2, B) = 1$. Note that while there are many ways of writing the squarefree part of \(C\) as a product of \(2\) integers, for a fixed \(B\), there is a unique way to factor it into a part that divides \(B\) and one coprime to it.
    
    We may assume without loss of generality that \(C_{\Box}, C_2 > 0\). We have
    \begin{align}
        \nonumber N_n^{\loc}(T) &\leq \sum_{1 \leq C_\Box \leq \sqrt{T}} \sum_{\substack{|C_1| \leq \frac{T}{C_\Box^2} \\ C_1 \text{ squarefree}}} \sum_{\substack{1 \leq C_2 \leq \frac{T}{C_\Box^2C_1}\\ C_2 \text{ squarefree}\\ \gcd(C_1, C_2) = 1}} \sum_{\substack{|B| \leq T \\ C_1 \mid B \\ \gcd(B,C_2) = 1}} \mathds{1}_{p \mid C_1C_2 \implies \left(\frac{-B}{p}\right) \in \{0,1\}}\\
        \label{eq:bigsum} &= \sum_{1 \leq C_\Box \leq \sqrt{T}} \sum_{{|C_1| \leq \frac{T}{C_\Box^2}}} \mu(C_1)^2 \sum_{\substack{1 \leq C_2 \leq \frac{T}{C_\Box^2C_1} \\ \gcd(C_1, C_2) = 1}} \mu(C_2)^2 \sum_{\substack{|B'| \leq T/C_1 \\ \gcd(B',C_2) = 1}} \prod_{\substack{p \mid C_2 \\ p \text{ odd}}} \frac12 \left( 1 + \left(\frac{-C_1B'}{p}\right)\right),
    \end{align}
    where $\mu$ denotes the M\"obius function. First, we argue that the contribution from $(B,C)$ pairs with $C_1C_\Box^2$ large is negligible.
    \begin{lemma}
    \label{lem:restricting-parameters}
        Let \(T, B, B', C, C_1, C_2, C_{\Box}\) be as above. Let \(0< U<T\) be a real parameter. Then,
        \[
        \sum_{U< |C_1 C_{\Box}^2| <T} \; \sum_{1 \le C_2 \le \frac{T}{C_1C_{\Box}^2}} \; \sum_{|B'| \le T/C_1} 1 \ll \frac{T^{5/2}}{U}, 
        \]
        where the implied constant is absolute. In particular, if \(U = T^{\frac{1}{2} + \epsilon}\) for some \(\epsilon >0\), then the above sum is \(o(T^2/\sqrt{\log(T)})\).
    \end{lemma}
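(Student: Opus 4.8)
The plan is to discard all the arithmetic conditions (coprimality, squarefreeness, the Legendre-symbol weights) and simply count the tuples $(C_1,C_\Box,C_2,B')$ in the prescribed ranges, then reduce to a clean two-variable sum. Writing $m=|C_1|$ and $k=C_\Box$, the outer constraint $U<mk^2<T$ guarantees that both $T/(mk^2)$ and $T/m$ exceed $1$, so the number of admissible $C_2$ is $\ll T/(mk^2)$ and the number of admissible $B'$ is $\ll T/m$ (the sign of $C_1$ contributes only a factor $2$). Multiplying, the inner double sum is $\ll T^2/(m^2k^2)$, and hence
\[
\sum_{U<|C_1C_\Box^2|<T}\ \sum_{1\le C_2\le T/(C_1C_\Box^2)}\ \sum_{|B'|\le T/C_1}1 \ \ll\ T^2\sum_{\substack{m,k\ge 1\\ U<mk^2<T}}\frac{1}{m^2k^2}.
\]

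Next I would estimate the two-variable sum by splitting on the size of $k$ relative to $\sqrt{U}$. For $k\le\sqrt{U}$ the constraint forces $m>U/k^2\ge 1$, so $\sum_{m>U/k^2}m^{-2}\ll k^2/U$; the $k$-summand is then $\ll k^{-2}(k^2/U)=1/U$, and summing over the $\ll\sqrt{U}$ values of $k$ in this range contributes $\ll 1/\sqrt{U}$. For $k>\sqrt{U}$ I would instead use $\sum_{m\ge 1}m^{-2}\ll 1$, so the $k$-summand is $\ll 1/k^2$ and $\sum_{k>\sqrt{U}}k^{-2}\ll 1/\sqrt{U}$. Combining the two ranges, the two-variable sum is $\ll 1/\sqrt{U}$, so the full count is $\ll T^2/\sqrt{U}$. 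Since $U<T$ we have $T^2/\sqrt{U}\le T^{5/2}/U$, which gives the stated bound with an absolute implied constant.

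For the final clause I would substitute $U=T^{1/2+\epsilon}$, obtaining $T^{5/2}/U=T^{2-\epsilon}$, and note that
\[
\frac{T^{2-\epsilon}}{T^2/\sqrt{\log T}} = T^{-\epsilon}\sqrt{\log T}\ \longrightarrow\ 0,
\]
so the sum is $o(T^2/\sqrt{\log T})$ as claimed.

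I expect the only genuine subtlety to be arranging the split at $k\asymp\sqrt{U}$ so as to extract the full saving of $1/\sqrt{U}$ (equivalently the exponent in $T^{5/2}/U$) without losing a logarithm: the naive move of pulling $1/(mk^2)<1/U$ out of the sum and bounding the leftover $\sum 1/m$ trivially produces a spurious $\log T$ factor. The dyadic splitting above avoids this and in fact yields the slightly stronger $T^2/\sqrt{U}$; everything else is routine lattice-point counting.
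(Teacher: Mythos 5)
Your argument is correct. The paper itself gives no details here --- its entire proof reads ``a standard application of the Abel summation formula'' --- so your explicit lattice-point count, with the split of the $k$-sum at $k\asymp\sqrt{U}$ to avoid the spurious logarithm, is a perfectly valid (and more self-contained) substitute; it even yields the slightly stronger bound $T^2/\sqrt{U}$, which dominates $T^{5/2}/U$ precisely because $U<T$, and the final substitution $U=T^{1/2+\epsilon}$ is handled correctly.
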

    \begin{proof}
        The proof is a standard application of the Abel summation formula. 
    \end{proof}
    
    Moving forward, we look to bound~\eqref{eq:bigsum} with $1 \leq C_\Box \leq \sqrt{U}$ and $|C_1| \leq U/C_\Box^2$. Expanding the innermost product in~\eqref{eq:bigsum}, we have
    \begin{align*}
        \prod_{\substack{p \mid C_2 \\ p \text{ odd}}} \frac12 \left( 1 + \left(\frac{-C_1B'}{p}\right)\right) 
        &= \frac{1}{2^{\omega(C_2) - v_2(C_2)}} \sum_{\substack{D \mid C_2\\ D \text{ odd}}} \left(\frac{-C_1B'}{D}\right),      
    \end{align*}
    where $\omega$ denotes the arithmetic function which counts distinct prime divisors. We will treat the contributions from $D=1$ and $D > 1$ separately. 

    Fix an odd $D > 1$ such that \(D \mid C_2\). By the Polya--Vinogradov theorem \cite[Thm 13.15]{Apostol76}, we have
    \[\sum_{|B'| \leq T/C_1} \left(\frac{B'}{D}\right) \ll \sqrt{D} \log(D),\]
   where the implied constant is absolute. The relevant terms from~\eqref{eq:bigsum} become
 \begin{align*}
        & \sum_{1 \leq C_\Box \leq \sqrt{U}} \sum_{{|C_1| \leq \frac{U}{C_\Box^2}}} \mu(C_1)^2 \sum_{\substack{1 \leq C_2 \leq \frac{T}{C_\Box^2C_1} \\ \gcd(C_1, C_2) = 1}} \frac{\mu(C_2)^2}{2^{\omega(C_2) - v_2(C_2)}} \sum_{\substack{D \mid C_2\\ D>1 \text{ odd}}} \left(\frac{-C_1}{D}\right) \sum_{\substack{|B'| \leq T/C_1 \\ \gcd(B',C_2) = 1}} \left(\frac{B'}{D}\right) \\
        \ll& \sum_{1 \leq C_\Box \leq \sqrt{U}} \sum_{{|C_1| \leq \frac{U}{C_\Box^2}}} \mu(C_1)^2 \sum_{\substack{1 \leq C_2 \leq \frac{T}{C_\Box^2C_1} \\ \gcd(C_1, C_2) = 1}} \frac{\mu(C_2)^2}{2^{\omega(C_2)}} \sum_{\substack{ D \mid C_2\\ D>1 \text{ odd}}} \sqrt{D} \log(D) \\
        \ll& \sum_{1 \leq C_\Box \leq \sqrt{U}} \sum_{{|C_1| \leq \frac{U}{C_\Box^2}}} \mu(C_1)^2 \sum_{\substack{1 \leq E \leq \frac{T}{C_\Box^2C_1} \\ \gcd(C_1, E) = 1}}  \sum_{\substack{1 < D  < \frac{T}{C_{\Box}^2C_1 E} \\ \gcd(D, C_1E)=1} }  \mu(E)^2\mu(D)^2 \sqrt{D} \log(D) \\
         \ll& \sum_{1 \leq C_\Box \leq \sqrt{U}} \sum_{{|C_1| \leq \frac{U}{C_\Box^2}}} \mu(C_1)^2 \sum_{\substack{1 \leq E \leq \frac{T}{C_\Box^2C_1} \\ \gcd(C_1, E) = 1}}  \mu(E)^2 \sum_{1 < D < \frac{T}{C_{\Box}^2 C_1 E}} \frac{T^{1/2}\log(T)}{C_{\Box}\sqrt{C_1 E}}\\
         \ll& T^{3/2} \log(T) \sum_{1 \leq C_\Box \leq \sqrt{U}} \sum_{{|C_1| \leq \frac{U}{C_\Box^2}}} \frac{\mu(C_1)^2}{C_{\Box}^3 C_1^{3/2}} \sum_{\substack{1 \leq E \leq \frac{T}{C_\Box^2C_1} \\ \gcd(C_1, E) = 1}}  \frac{\mu(E)^2}{E^{3/2} } \\
        \ll & T^{3/2}\log(T).
    \end{align*}

    Suppose now that $D=1$. It suffices to bound the relevant terms of~\eqref{eq:bigsum} while ignoring some of the coprimality conditions:
    \begin{align*}
        & \sum_{1 \leq C_\Box \leq \sqrt{U}} \sum_{{|C_1| \leq \frac{U}{C_\Box^2}}} \mu(C_1)^2 \sum_{\substack{1 \leq C_2 \leq \frac{T}{C_\Box^2C_1} \\ \gcd(C_1, C_2) = 1}} \frac{\mu(C_2)^2}{2^{\omega(C_2) - v_2(C_2)}}\sum_{\substack{|B'| \leq T/C_1 \\ \gcd(B',C_2) = 1}} 1 \\
        \leq & 2T \sum_{1 \leq C_\Box \leq \sqrt{U}} \sum_{{|C_1| \leq \frac{U}{C_\Box^2}}} \frac{\mu(C_1)^2}{C_1} \sum_{\substack{1 \leq C_2 \leq \frac{T}{C_\Box^2C_1} \\ \gcd(C_1, C_2) = 1}} \frac{\mu(C_2)^2}{2^{\omega(C_2) - v_2(C_2)}} \\
        \leq & 4T \sum_{1 \leq C_\Box \leq \sqrt{U}} \sum_{{|C_1| \leq \frac{U}{C_\Box^2}}} \frac{\mu(C_1)^2}{C_1} \sum_{1 \leq C_2 \leq \frac{T}{C_\Box^2C_1}} \frac{\mu(C_2)^2}{2^{\omega(C_2)}} \\
        \leq & 4\constFI T \sum_{1 \leq C_\Box \leq \sqrt{U}} \sum_{{|C_1| \leq \frac{U}{C_\Box^2}}} \left( \frac{\mu(C_1)^2 T}{C_1^2C_\Box^2 \sqrt{\log\left(\frac{T}{C_1C_\Box^2}\right)}} + \frac{1}{C_1} E\left( \frac{T}{C_\Box^2 C_1}\right) \right).
    \end{align*}
    The last line follows from \cite[Lemma 1]{FriedlanderIwaniec} (applied with \(d=q=1\) and \(\chi\) trivial) where $\constFI$ is an explicit constant (see \S\ref{subsubsec:local-stats-constants}), and \(E\) is an error term that we will see shortly can be neglected. Continuing with the main term of the upper bound, we have
    \begin{align*}
        4\constFI T^2 \sum_{1 \leq C_\Box \leq \sqrt{U}} \sum_{{|C_1| \leq \frac{U}{C_\Box^2}}} \frac{\mu(C_1)^2}{C_1^2C_\Box^2 \sqrt{\log\left(\frac{T}{C_1C_\Box^2}\right)}} 
        & \leq \frac{4\constFI T^2}{\sqrt{\log (T/U)}} \sum_{1 \leq C_\Box \leq \sqrt{U}} \frac{1}{C_\Box^2} \sum_{{|C_1| \leq \frac{U}{C_\Box^2}}} \frac{\mu(C_1)^2}{C_1^2} \\ 
        & \leq \frac{8 \zeta(2) \constFI T^2}{\sqrt{\log (T/U)}} \sum_{1 \leq C_\Box \leq \sqrt{U}} \frac{1}{C_\Box^2} \\ 
        & \leq \frac{16 \zeta(2)^2 \constFI T^2}{\sqrt{\log (T/U)}} = \frac{4 \pi^2 \constFI T^2}{9 \sqrt{\log (T/U)}}.
    \end{align*}
    Setting $U = T^\alpha$ for $\frac12 < \alpha < 1$, we have
    \[\frac{4 \pi^2 \constFI T^2}{9 \sqrt{\log (T^{1-\alpha})}} = \frac{4 \pi^2 \constFI T^2}{9 (1-\alpha)^{1/2} \sqrt{\log T}},\]
    giving the claimed constant in the main term of the upper bound. This is minimized by taking $\alpha$ close to $1/2$. 
    
    The error term introduced by \cite[Lemma 1]{FriedlanderIwaniec} is
    \[E(X) = O\left(\frac{X}{(\log X)^{3/2}}\right) + O_\lambda \left(X(\log X)^{-\lambda}\right),\]
    where $\lambda > 0$ is any positive constant. Taking \(\lambda \ge 3/2\), a straightforward calculation shows
    \[\sum_{1 \leq C_\Box \leq \sqrt{U}} \sum_{{|C_1| \leq \frac{U}{C_\Box^2}}} \frac{1}{C_1} E\left(\frac{T}{C_\Box^2C_1}\right) \ll \frac{T}{\log(T/U)^{3/2}} \sum_{1 \leq C_\Box \leq \sqrt{U}} \sum_{{|C_1| \leq \frac{U}{C_\Box^2}}} \frac{1}{C_{\Box}^2C_1^2} =   o \left( \frac{T}{\sqrt{\log T}}\right),\]
    so this does not overtake the main term for \(U = T^{\alpha}\). We have thus established the upper bound of Theorem~\ref{thm:local_stats}.

    \subsubsection{Lower bound}
    \label{subsubsec:local-stats-lower-bound}
    For the lower bound, we may consider $B$ squarefree and $C$ odd, squarefree, and coprime to $B$. In this case, the criterion for $\Y_{B,C}(\Z_p) \neq \emptyset$ for all primes $p$ is precisely $p \mid C \implies \left(\frac{-B}{p}\right) = 1$. That is, we wish to bound from below
    \begin{equation}\label{eq:lowerboundsum}
        N_n^{\loc}(T) \geq \sum_{\substack{|B| \leq T \\ B \text{ squarefree}}} \sum_{\substack{|C| \leq T \\ \gcd(2B,C)=1}} \frac{\mu(C)^2}{2^{\omega(C)}} \prod_{p \mid C} \left( 1 + \left(\frac{-B}{p}\right)\right).
    \end{equation}
    Expanding the product, we again find that the contribution from the nontrivial Jacobi symbols is $o(T^2/\sqrt{\log T})$, hence negligible. 

    Applying \cite[Lemma 1]{FriedlanderIwaniec}, we have
    \begin{align*}
        \sum_{\substack{|B| \leq T \\ B \text{ squarefree}}} \sum_{\substack{|C| \leq T \\ \gcd(2B,C)=1}} \frac{\mu(C)^2}{2^{\omega(C)}} & =
        \frac{2\constFI T}{\sqrt{\log T}} \sum_{|B| \leq T} \left(\mu(B)^2 \prod_{p \mid 2B} \left( 1 + \frac1{2p}\right)^{-1}\right) + o\left(\frac{T^2}{\sqrt{\log T}}\right) \\
        &= \frac{16 \constFI \constTaub T^2}{5\sqrt{\log T}} + o\left(\frac{T^2}{\sqrt{\log T}}\right),
    \end{align*}
    where the last line follows by Lemma~\ref{lem:standard_tauberian} below. 
    
\begin{lemma}\label{lem:standard_tauberian}
    There exists a positive constant $\constTaub$ such that
    \[\sum_{1 \leq B < T} \mu(B)^2\prod_{p\mid B}\left(1 + \frac1{2p}\right)^{-1} \sim \constTaub T.\] 
\end{lemma}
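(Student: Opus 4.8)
The plan is to recognize the summand as a multiplicative function whose Dirichlet series differs from $\zeta(s)$ by an Euler product convergent past $s=1$, and then to extract the main term by an elementary convolution argument. Set
\[
f(B) = \mu(B)^2 \prod_{p \mid B}\left(1 + \frac{1}{2p}\right)^{-1},
\]
which is multiplicative, supported on squarefree integers, with $f(p) = \tfrac{2p}{2p+1}$ at each prime. Its Dirichlet series factors as an Euler product
\[
F(s) = \sum_{B \geq 1} \frac{f(B)}{B^s} = \prod_p \left(1 + \frac{f(p)}{p^s}\right),
\]
absolutely convergent for $\Re(s) > 1$.

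Next I would isolate the pole at $s=1$ by writing $F(s) = \zeta(s) H(s)$, where
\[
H(s) = \prod_p \left(1 + \frac{f(p)}{p^s}\right)\left(1 - \frac{1}{p^s}\right).
\]
The key computation is that each local factor equals $1 + \frac{f(p)-1}{p^s} - \frac{f(p)}{p^{2s}}$, and since $f(p)-1 = -\frac{1}{2p+1} = O(1/p)$, the series $\sum_p \frac{f(p)-1}{p^s}$ and $\sum_p \frac{f(p)}{p^{2s}}$ converge for $\Re(s) > 0$ and $\Re(s) > 1/2$ respectively. Hence the product defining $H$ converges absolutely and uniformly on $\Re(s) \geq 1/2 + \varepsilon$; in particular $H$ is holomorphic and nonvanishing there, with
\[
H(1) = \prod_p \frac{(2p+3)(p-1)}{(2p+1)p} > 0.
\]

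Finally I would translate $F = \zeta H$ into the Dirichlet convolution $f = 1 * h$, where $h$ is the multiplicative function with series $H$, supported at $1, p, p^2$ with $h(p) = f(p)-1$ and $h(p^2) = -f(p)$. The convergence of $H$ on $\Re(s) > 1/2$ yields $\sum_{d < T} |h(d)| \ll_\sigma T^\sigma$ for every $\sigma \in (1/2,1)$, by factoring $|h(d)| = |h(d)|d^{-\sigma}\cdot d^\sigma$. Then the elementary identity
\[
\sum_{B < T} f(B) = \sum_{d < T} h(d)\left\lfloor \frac{T}{d} \right\rfloor = T \sum_{d < T} \frac{h(d)}{d} + O\!\left(\sum_{d<T} |h(d)|\right)
\]
gives the main term $T\cdot H(1)$, since $\sum_d h(d)/d = H(1)$ converges absolutely and the tail $\sum_{d \geq T} |h(d)|/d$ is $O(T^{\sigma-1})$, while the error is $O(T^\sigma) = o(T)$. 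Setting $\constTaub = H(1)$ completes the argument, in fact with a power-saving error term.

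The main obstacle is the second step: verifying that the correction Euler product $H(s)$ extends holomorphically and non-vanishingly past $s=1$. This rests entirely on the cancellation $f(p) - 1 = O(1/p)$, which is what makes $\sum_p \frac{f(p)-1}{p^s}$ converge for $\Re(s) > 0$; everything after that is routine. One could alternatively conclude from the factorization $F = \zeta H$ via the Wiener--Ikehara or Delange Tauberian theorem, but the convolution $f = 1 * h$ delivers the stated asymptotic directly and quantitatively.
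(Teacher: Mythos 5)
Your argument is correct. The key step --- factoring the Dirichlet series as $\zeta(s)$ times an Euler product $H(s)$ whose local factors are $1+\frac{f(p)-1}{p^s}-\frac{f(p)}{p^{2s}}$ with $f(p)-1=O(1/p)$, hence convergent for $\Re(s)>1/2$ --- is exactly the paper's decomposition $L(s)=\zeta(s)M(s)$, and your $H(1)$ agrees with the paper's residue $\constTaub=M(1)$. Where you diverge is the finish: the paper invokes a standard Tauberian theorem as a black box, while you unwind $F=\zeta H$ into the Dirichlet convolution $f=\mathds{1}*h$ and sum $\sum_{d<T}h(d)\lfloor T/d\rfloor$ directly, using $\sum_{d<T}|h(d)|\ll_\sigma T^\sigma$ for $\sigma\in(1/2,1)$. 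This is more elementary and strictly stronger, since it produces a power-saving error term $O(T^{\sigma})$ rather than a bare asymptotic; the cost is a little bookkeeping with the support of $h$ (at $1$, $p$, $p^2$) that the Tauberian route avoids. One cosmetic point: absolute convergence of the Euler product on $\Re(s)\geq 1/2+\varepsilon$ does not by itself rule out a zero of an individual local factor at some complex $s$, so the blanket claim that $H$ is nonvanishing on that half-plane is slightly too strong --- but you only use positivity of $H(1)$, which you verify directly, so nothing is affected.
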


\begin{proof}
    Consider the Dirichlet series given by
    \begin{align*}
       L(s) =  \sum_{B > 0} \frac{\mu(B)^2}{B^{s}} \prod_{p \mid B} \left(1 + \frac{1}{2p}\right)^{-1} &=  \sum_{B > 0} \mu(B)^2 \prod_{p \mid B} \left(1 + \frac{1}{2p}\right)^{-1}  p^{-s}\\
       &= \prod_{p} \left(1 + \left(1 + \frac{1}{2p}\right)^{-1} p^{-s}\right)\\
       &= \prod_{p} \left( 1 + p^{-s} \left( 1 - \frac{1}{2p} + \frac{1}{(2p)^2} \ldots \right) \right)\\
       &= \prod_p \left( 1 + p^{-s} - \frac1{2}p^{-s-1}\sum_{i \geq 0} \left(\frac{-1}{2p}\right)^i \right).
    \end{align*}
    Now note that 
    \[L(s) \zeta(s)^{-1} = L(s) \prod_p (1-p^{-s}) = \prod_p \left(1 - p^{-2s} - \frac12 p^{-s-1}(1 - p^{-s}) \sum_{i \geq 0} \left(\frac{-1}{2p}\right)^{i} \right) = M(s), \]
    which converges on the half-plane \(\mathrm{Re}(s) > 1/2\). Thus $L(s)$ converges for $\mathrm{Re}(s) > 1$ with a simple pole at $s=1$ and residue $\constTaub = M(1)$. The lemma follows from a standard application of a Tauberian theorem (see e.g.\ \cite{tauberian-ref} or \cite[Chapter 7]{bateman-diamond}).
   \end{proof}
   
  \subsubsection{Evaluation of the constants}
  \label{subsubsec:local-stats-constants}
    It is useful to estimate the constants $\constFI, \constTaub$. The constant $\constFI$ is given in \cite[Lemma 1]{FriedlanderIwaniec} by
    \[\constFI = \frac1{\sqrt{\pi}} \prod_p \left(1 + \frac1{2p}\right) \left(1 - \frac1p\right)^{1/2}.\]
    This is equivalent to 
    \[{\pi} \constFI^2 =  \prod_p \left(1 + \frac1{2p}\right)^2 \left(1 - \frac1p\right) = \prod_p \left(1 - \frac{3}{4p^2} - \frac1{4p^3}\right).\]

Since the Euler factors are decreasing in \(p\), we immediately get an upper bound by truncating the product after finitely many primes. On the other hand, observe that
\[
\left(1 - \frac{3}{4p^2} - \frac{1}{4p^3} \right) - \left( 1 - \frac{1}{p^2} \right) = \frac{1}{4p^2} - \frac{1}{4p^3} > 0.
\]
In particular, for any real \(X>0\), 
\[
\pi \constFI^2 \ge \prod_{p \le X}  \left(1 - \frac{3}{4p^2} - \frac{1}{4p^3} \right)\frac{\zeta(2)^{-1}}{ \prod_{p \le X}(1 - p^{-2})}.
\]
Computing this for \(X=50,000\), we find
\begin{align}
        \label{eq:constFI} 0.4581814 &\leq \constFI \leq 0.4581819.
\end{align}

For \(\constTaub = M(1)\) from Lemma~\ref{lem:standard_tauberian}, we use the same idea (the Euler factors are still decreasing), now comparing it to \(\zeta(1.95)\) instead of \(\zeta(2)\). This gives us
\begin{align}
        \label{eq:constTaub}  0.526859 &\leq \constTaub \leq 0.526861.
\end{align}

In particular, taking \(\alpha = 1/2 + 10^{-5}\), we see that
\[\liminf_{T \to \infty} \frac{N_n^\loc(T)}{\frac{T^2}{\sqrt{\log T}}} \leq \frac{4\pi^4 \constFI}{9(1-\alpha)^{1/2}} \le 28.05267255,\] and
 \[\limsup_{T \to \infty} \frac{N_n^\loc(T)}{\frac{T^2}{\sqrt{\log T}}} \geq  \frac{16 \constFI \constTaub}{5} \ge 0.772470381.\]

\subsection{Lower bounds for global points}
\label{subsec:global-statistics}

In this section, we prove Theorems~\ref{thm:pos_prop_HP_n=3} and~\ref{thm:pos_prop_HP_n>3}. We begin with some notation. Let $n$ be an odd prime and define $\delta_n^{\real}, \delta_n^{\imag}$ to be the density of squarefree $B$ for which $\Cl(\Q(\sqrt{B}))[n]$ is nontrival, given by the limits
\begin{align*}
    \delta_n^{\imag} &\colonequals \lim_{T \to \infty}\frac{\#\{1 \leq B \leq T : B \text{ squarefree}, \Cl(\Q(\sqrt{-B}))[n] \neq 0\}}{\#\{1 \leq B \leq T : B \text{ squarefree}\}}, \\
    \delta_n^{\real} & \colonequals \lim_{T \to \infty}\frac{\#\{1 \leq B \leq T : B \text{ squarefree}, \Cl(\Q(\sqrt{B}))[n] \neq 0\}}{\#\{1 \leq B \leq T : B \text{ squarefree}\}}.
\end{align*}
Davenport and Heilbronn showed in the family of imaginary (resp.\ real) quadratic fields $K = \Q(\sqrt{-B})$ counted by discriminant, the average size of $ {\Cl(\O_K)}[3]$ is 2 (resp.\ $\frac43$) \cite[Thm~3]{DavenportHeilbronn}. It follows from the fact that these averages are well behaved in residue classes \cite[Thm.~1]{NakagawaHorie} that the same average holds when counting instead by the squarefree $B$ instead of the discriminant.

For an odd prime $n > 3$, the Cohen--Lenstra heuristics (\cite[Section 9]{CohenLenstra84}) predict that the average size of ${\Cl(\O_K)}[n]$, when ordered by discriminant, is \(2\) for imaginary quadratic fields and ($1 + \frac1n$) for real quadratic fields. It is expected that these averages also behave well in residue classes of discriminants, leading to the same values when ordered by squarefree \(B\) as by discriminant. We record this as Assumption~\ref{assumption:CL} below.

\begin{assumption}[Modified Cohen--Lenstra]
\label{assumption:CL}
    Let \(n >3\) be an odd prime. Then 
   \begin{align*}
    \sum_{\substack{1 \le B \le T\\ B \text{ squarefree}}} \# \Cl(\Q(\sqrt{-B}))[n] &\sim \frac{2T}{\zeta(2)}\\
        \sum_{\substack{1 \le B \le T\\ B \text{ squarefree}}} \# \Cl(\Q(\sqrt{B}))[n] &\sim \frac{n+1}{n} \cdot \frac{T}{\zeta(2)}\\
   \end{align*}
\end{assumption}

\begin{lemma}
\label{lem:statistics-cohen-lenstra-delta-bounds}
    For \(n \ge 3\) an odd prime, under Assumption~\ref{assumption:CL} if $n > 3$, we have
  \begin{align*}
      \delta_n^\imag \leq \frac{1}{n-1}, \quad \text{ and } \quad \delta_n^\real \leq \frac{1}{n(n-1)}.
  \end{align*}
\end{lemma}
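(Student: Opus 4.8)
The plan is to exploit the elementary fact that, for $n$ prime, each of $\Cl(\Q(\sqrt{-B}))[n]$ and $\Cl(\Q(\sqrt{B}))[n]$ is an $\F_n$-vector space, so its cardinality equals $1$ exactly when the $n$-torsion is trivial and is at least $n$ otherwise. This converts the \emph{average-size} inputs (Assumption~\ref{assumption:CL} when $n > 3$, and the Davenport--Heilbronn/Nakagawa--Horie averages of $2$ and $\tfrac43 = \tfrac{n+1}{n}$ when $n = 3$) into an \emph{upper} bound on the number of squarefree $B$ with nontrivial torsion. Writing $S(T) \colonequals \#\{1 \le B \le T : B \text{ squarefree}\} \sim T/\zeta(2)$, and letting $S_1^{\imag}(T)$ (resp.\ $S_1^{\real}(T)$) count the squarefree $B \le T$ with $\Cl(\Q(\sqrt{-B}))[n] \ne 0$ (resp.\ $\Cl(\Q(\sqrt{B}))[n] \ne 0$), the goal becomes $\delta_n^{\imag} = \lim S_1^{\imag}(T)/S(T) \le \tfrac{1}{n-1}$ and $\delta_n^{\real} = \lim S_1^{\real}(T)/S(T) \le \tfrac{1}{n(n-1)}$.

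First I would record the pointwise inequality
\[
\#\Cl(\Q(\sqrt{\pm B}))[n] \ge 1 + (n-1)\,\mathds{1}\!\left[\Cl(\Q(\sqrt{\pm B}))[n] \ne 0\right],
\]
which holds because a nonzero $\F_n$-vector space has at least $n$ elements. Summing over squarefree $B \le T$ then gives
\[
\sum_{\substack{1 \le B \le T \\ B \text{ squarefree}}} \#\Cl(\Q(\sqrt{-B}))[n] \ge S(T) + (n-1)\,S_1^{\imag}(T),
\]
and the identical statement with $\sqrt{B}$ and $S_1^{\real}(T)$ in the real case.

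Next I would substitute the average on the left. By Assumption~\ref{assumption:CL} (or Davenport--Heilbronn for $n=3$) the left-hand side is $\sim 2T/\zeta(2)$ in the imaginary case and $\sim \tfrac{n+1}{n}\cdot T/\zeta(2)$ in the real case; both cases are covered uniformly since $\tfrac{n+1}{n} = \tfrac43$ at $n=3$. Combining with $S(T) \sim T/\zeta(2)$ and rearranging yields
\[
(n-1)\,S_1^{\imag}(T) \le \frac{T}{\zeta(2)} + o(T)
\qquad\text{and}\qquad
(n-1)\,S_1^{\real}(T) \le \frac{1}{n}\cdot\frac{T}{\zeta(2)} + o(T).
\]
Dividing by $S(T) \sim T/\zeta(2)$ and passing to the limit then produces exactly the two claimed bounds.

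I do not expect any serious obstacle: the entire argument is a one-line convexity/counting estimate dressed in asymptotics. The only care required is the bookkeeping of error terms --- every count here is asymptotic to a constant multiple of $T/\zeta(2)$, so the $o(T)$ contributions vanish after dividing by $S(T)$ --- together with the observation that the $n=3$ averages fit the same formulas $2$ and $\tfrac{n+1}{n}$, allowing both the $n=3$ and $n>3$ cases to be handled in one stroke.
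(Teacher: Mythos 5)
Your argument is correct and is essentially the paper's own proof: both rest on the pointwise bound $\#\Cl(\Q(\sqrt{\pm B}))[n] \ge 1 + (n-1)\,\mathds{1}[\Cl(\Q(\sqrt{\pm B}))[n] \ne 0]$ (valid since the $n$-torsion is an $\F_n$-vector space for $n$ prime), combined with the average sizes $2$ and $\tfrac{n+1}{n}$ from Davenport--Heilbronn/Nakagawa--Horie for $n=3$ and Assumption~\ref{assumption:CL} for $n>3$. The only cosmetic difference is that the paper phrases the inequality via the $n$-rank stratification $\#\Cl[n] = n^r$ rather than an indicator function.
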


\begin{proof}
For the imaginary case, observe that,
    \begin{align*}
   \nonumber 2 &= \lim_{T \to \infty} \Bigg(\sum_{r \geq 0} \sum_{\substack{1 \leq B \leq T \\ B \text{ squarefree}\\ \mathrm{rk}_n \Cl(\Q(\sqrt{-B})) = r}} n^r\Bigg)\Bigg( \sum_{\substack{1 \leq B \leq T \\ B \text{ squarefree}}} 1\Bigg)^{-1} \\
   &\geq \lim_{T \to \infty} \Bigg({ \sum_{\substack{1 \leq B \leq T \\ B \text{ squarefree}\\ \Cl(\Q(\sqrt{-B}))[n]=0}} 1 + \sum_{\substack{1 \leq B \leq T \\ B \text{ squarefree}\\ \mathrm{rk}_n \Cl(\Q(\sqrt{-B}))[n] \neq 0}} n}\Bigg)\Bigg({ \sum_{\substack{1 \leq B \leq T \\ B \text{ squarefree}}} 1}\Bigg)^{-1} \\
   \nonumber & = 1 - \delta_n^\imag + n\delta_n^\imag,
\end{align*}
which shows that \(\delta_n^\imag \leq \frac{1}{n-1}.\) The proof for \(\delta_n^\real\) follows similarly.
\end{proof}

In order to give a lower bound on \(N_n(T)\), 
we will be interested in $\Cl(\Z[\sqrt{-B}])$ for \(B\) squarefree. Since this order is not always maximal, we define $\widetilde{\delta_n}^\imag$ (resp.\ $\widetilde{\delta_n}^\real$) to be the density of positive (resp.\ negative) squarefree integers $B$ such that $\Cl(\Z[\sqrt{-B}])[n] \neq 0$. By Lemma~\ref{lem:class_number_formula}, we have $\Cl(\Q(\sqrt{-B}))[n] = \Cl(\Z[\sqrt{-B}])[n]$ for all odd primes $n > 3$, so in this case we have $\widetilde{\delta_n}^\imag = \delta_n^\imag$ and $\widetilde{\delta_n}^\real = \delta_n^\real$.

When $n = 3$, an application of Lemma~\ref{lem:class_number_formula} shows $\Cl(\Q(\sqrt{-B}))[3] = \Cl(\Z[\sqrt{-B}])[3]$ whenever $B \not\equiv 3 \pmod{8}$; when $B \equiv 3 \pmod{8}$ we have $\Cl(\Z[\sqrt{-B}])[3] \neq 0$. By \cite{NakagawaHorie}, the average ranks are well behaved in residue classes, and we have
\begin{align}
    \label{eq:delta_tilde_3_imag}
    \widetilde{\delta}_3^{\imag} &= \lim_{T \to \infty} \Bigg({\sum_{\substack{1 \leq B \leq T \\ B \text{ squarefree} \\ B \not \equiv 3 \pmod{8} \\ \Cl(\Z[\sqrt{-B}])[n] \neq 0}} 1 + \sum_{\substack{1 \leq B \leq T \\ B \text{ squarefree} \\ B \equiv 3 \pmod{8}}} 1}\Bigg)\Bigg({\sum_{\substack{1 \leq B \leq T\\ B \text{ squarefree}}} 1}\Bigg)^{-1} = \frac{5}{6}\delta_3^\imag + \frac16 \leq \frac7{12},   \\
    \label{eq:delta_tilde_3_real}
    \widetilde{\delta}_3^{\real} &= \lim_{T \to \infty} \Bigg({\sum_{\substack{1 \leq B \leq T \\ B \text{ squarefree} \\ -B \not \equiv 3 \pmod{8} \\ \Cl(\Z[\sqrt{B}])[n] \neq 0}} 1 + \sum_{\substack{1 \leq B \leq T \\ B \text{ squarefree} \\ -B \equiv 3 \pmod{8}}} 1}\Bigg)\Bigg({\sum_{\substack{1 \leq B \leq T\\ B \text{ squarefree}}} 1}\Bigg)^{-1} = \frac{5}{6}\delta_3^\real + \frac16 \leq \frac{11}{36}.   
\end{align}

\begin{proof}[Proof of Theorem~\ref{thm:pos_prop_HP_n=3}]
    As in the proof of Theorem~\ref{thm:local_stats}, we achieve a lower bound by restricting our attention to $B$ and $C$ squarefree, with $C$ odd. Let
    \begin{align*}
        S^{\imag} &\colonequals \left\{ (B,C) \in \Z^2 : B \geq 1, \gcd(2B,C) = 1, BC \text{ squarefree}, \Cl(\Z[\sqrt{-B}])[3] = 0 \right\},\\
        S^{\real} &\colonequals \left\{ (B,C) \in \Z^2 : B \leq -1, \gcd(2B,C) = 1, BC \text{ squarefree}, \Cl(\Z[\sqrt{-B}])[3] = 0\right\},
    \end{align*}
    and define for $T > 0$
    \begin{align*}
        N_3^{\imag}(T) &\colonequals \#\left\{ (B,C) \in S^{\imag} : \max\{|B|,|C|\} \leq T,  \Y_{B,C}(\Z_p) \neq \emptyset \text{ for all }p\right\},\\
        N_3^{\real}(T) &\colonequals \#\left\{ (B,C) \in S^{\real} : \max\{|B|,|C|\} \leq T,  \Y_{B,C}(\Z_p) \neq \emptyset \text{ for all }p\right\}.
    \end{align*}
    We have
    \[N_3(T) \geq N_3^{\imag}(T) + N_3^{\real}(T),\]
    since when $\Cl(\Z[\sqrt{-B}])[3]$ is trivial, there is no obstruction to the Hasse principle by Theorem~\ref{thm:TFAEunified}. We can then produce lower bounds for $N_3^\imag(T)$ and $N_3^\real(T)$ by modifying our local solubility counts from the proof of Theorem~\ref{thm:local_stats}.

    Starting in the same manner as~\eqref{eq:lowerboundsum}, we have
    \begin{align*}
        N_3^{\imag}(T) &\geq \sum_{\substack{1 \leq B \leq T\\ B \text{ squarefree}\\ \Cl(\Z[\sqrt{-B}])[3] = 0}} \sum_{\substack{|C| \leq T \\ \gcd(2B,C) = 1}} \frac{\mu(C)^2}{2^{\omega(C)}} \prod_{p \mid C} \left( 1 + \left(\frac{-B}{p}\right) \right)\\ 
        & = \sum_{\substack{1 \leq B \leq T\\ B \text{ squarefree}\\ \Cl(\Z[\sqrt{-B}])[3] = 0}} \sum_{\substack{|C| \leq T \\ \gcd(2B,C) = 1}} \frac{\mu(C)^2}{2^{\omega(C)}} \sum_{D \mid C} \left(\frac{-B}{D} \right) \\
        & = \sum_{\substack{1 \leq B \leq T\\ B \text{ squarefree}\\ \Cl(\Z[\sqrt{-B}])[3] = 0}} \sum_{\substack{|C| \leq T \\ \gcd(2B,C) = 1}} \frac{\mu(C)^2}{2^{\omega(C)}}  +  \sum_{|C| \leq T, \text{ odd}} \frac{\mu(C)^2}{2^{\omega(C)}} \sum_{\substack{D \mid C\\ D>1}} \sum_{\substack{1 \leq B \leq T\\ B \text{ squarefree}\\ \Cl(\Z[\sqrt{-B}])[3] = 0\\ \gcd(B,C)=1}} \left(\frac{-B}{D} \right) \\
        & = \frac{2\constFI T}{\sqrt{\log T}} \left(\sum_{\substack{1 \leq B \leq T\\ B \text{ squarefree}\\ \Cl(\Z[\sqrt{-B}])[3] = 0}} \prod_{p \mid 2B} \left(1 + \frac1{2p}\right)^{-1} + o(T)\right)\\
        & = \frac{2\constFI T}{\sqrt{\log T}} \left(
        \sum_{1 \leq B \leq T} \prod_{p \mid 2B} \left(1 + \frac1{2p}\right)^{-1} 
        - \sum_{\substack{1 \leq B \leq T\\ B \text{ squarefree}\\ \Cl(\Z[\sqrt{-B}])[3] \neq 0}} \prod_{p \mid 2B} \left(1 + \frac1{2p}\right)^{-1} 
        + o(T)\right)\\
        &\geq \frac{2\constFI T}{\sqrt{\log T}} \left(
        \frac{4\constTaub T}{5} 
        - \sum_{\substack{1 \leq B \leq T\\ B \text{ squarefree}\\ \Cl(\Z[\sqrt{-B}])[3] \neq 0}} 1 
        + o(T)\right)\\
        &= \frac{2\constFI T^2}{\sqrt{\log T}} \left(
        \frac{4\constTaub}{5} 
        - \frac{6\widetilde{\delta_3}^{\imag}}{\pi^2}\right) + o\left(\frac{T^2}{\sqrt{\log T}}\right).\\
    \end{align*}
    Combining this with the estimates~\eqref{eq:constTaub} for $\constTaub$ and~\eqref{eq:delta_tilde_3_imag} for $\widetilde{\delta_3}^{\imag}$, we see that $\frac{4\constTaub}{5} - \frac{6\widetilde{\delta_3}^{\imag}}{\pi^2} > 0$. The same calculation shows 
    \[N_3^{\real}(T) \geq \frac{2\constFI T^2}{\sqrt{\log T}} \left(\frac{4\constTaub}{5} - \frac{6\widetilde{\delta_3}^{\real}}{\pi^2}\right) + o\left(\frac{T^2}{\sqrt{\log T}}\right).\]
Finally, using our lower bounds computed for $N_3(T)$ and our upper bounds for $N_3^{\loc}(T)$, we find 
    \[\liminf_{T \to \infty} \frac{N_3(T)}{N_3^{\loc}(T)} \geq \frac{4\constFI \left(\frac{4\constTaub}{5} - \frac{3(\widetilde{\delta_3}^\imag + \widetilde{\delta_3}^\real)}{\pi^2} \right)}{\frac{4\pi^4\constFI}{9(1-\alpha)^{1/2}}} = \frac{9(1-\alpha)^{1/2}}{\pi^4} \left( \frac{4\constTaub}{5} - \frac{3(\widetilde{\delta_3}^\imag + \widetilde{\delta_3}^\real)}{\pi^2} \right).\]
Taking $\alpha = \frac12 + 10^{-5}$ yields a value of at least 0.00988.
\end{proof}

\begin{proof}[Proof of Theorem~\ref{thm:pos_prop_HP_n>3}]

Given Assumption~\ref{assumption:CL}, we have $\widetilde{\delta_n}^\imag = \delta_n^{\imag} \leq \frac1{n-1}$ and $\widetilde{\delta_n}^\real = \delta_n^{\real} \leq \frac1{n(n-1)}$. Following the proof of Theorem~\ref{thm:pos_prop_HP_n=3} yields
\[\liminf_{T \to \infty} \frac{N_n(T)}{N_n^{\loc}(T)} \geq \frac{9(1-\alpha)^{1/2}}{\pi^4} \left( \frac{4\constTaub}{5} - \frac{3(n+1)}{\pi^2n(n-1)} \right) > 0.\]
In the limit as $n \to \infty$, the lower bound approaches 0.0275.
\end{proof}

\vfill
 
\bibliographystyle{alpha}
\bibliography{refs}

\newcommand{\etalchar}[1]{$^{#1}$}
\begin{thebibliography}{DRKK{\etalchar{+}}25}

\bibitem[Apo76]{Apostol76}
T.~M. Apostol.
\newblock {\em Introduction to Analytic Number Theory}.
\newblock Springer-Verlag, New York, 1976.

\bibitem[BD04]{bateman-diamond}
Paul~T. Bateman and Harold~G. Diamond.
\newblock {\em Analytic number theory}, volume~1 of {\em Monographs in Number
  Theory}.
\newblock World Scientific Publishing Co. Pte. Ltd., Hackensack, NJ, 2004.
\newblock An introductory course.

\bibitem[Beu98]{Beukers}
Frits Beukers.
\newblock The {D}iophantine equation {$Ax^p+By^q=Cz^r$}.
\newblock {\em Duke Mathematical Journal}, 91:61--88, 1998.

\bibitem[BP22]{BhargavaPoonen}
Manjul Bhargava and Bjorn Poonen.
\newblock The local-global principle for integral points on stacky curves.
\newblock {\em Journal of Algebraic Geometry}, 31(4):773--782, 2022.

\bibitem[Chr20]{christensen2020}
Atticus Christensen.
\newblock A topology on points on stacks, 2020.
\newblock Preprint,
  \href{https://arxiv.org/abs/2005.10231}{\texttt{arXiv:2005.10123}}.

\bibitem[CL84]{CohenLenstra84}
H.~Cohen and H.~W. Lenstra, Jr.
\newblock Heuristics on class groups of number fields.
\newblock In {\em Number theory, {N}oordwijkerhout 1983 ({N}oordwijkerhout,
  1983)}, volume 1068 of {\em Lecture Notes in Math.}, pages 33--62. Springer,
  Berlin, 1984.

\bibitem[Coh93]{Cohen-ANT}
Henri Cohen.
\newblock {\em A course in computational algebraic number theory}, volume 138
  of {\em Graduate Texts in Mathematics}.
\newblock Springer-Verlag, Berlin, 1993.

\bibitem[Cona]{conrad_Hensel}
Keith Conrad.
\newblock {A multivariable Hensel's Lemma}.
\newblock Available at
  \url{https://kconrad.math.uconn.edu/blurbs/gradnumthy/multivarhensel.pdf}.

\bibitem[Conb]{conrad_orders}
Keith Conrad.
\newblock {The conductor ideal of an order}.
\newblock Available at
  \url{https://kconrad.math.uconn.edu/blurbs/gradnumthy/conductor.pdf}.

\bibitem[Cox22]{Cox}
David Cox.
\newblock {\em Primes of the Form {$x^2+ny^2$}: Fermat, Class Field Theory, and
  Complex Multiplication}, volume 387 of {\em American Mathematical Society
  Chelsea Publishing}.
\newblock American Mathematical Society, Providence, RI, 2022.

\bibitem[DG95]{DarmonGranville}
Henri Darmon and Andrew Granville.
\newblock On the equations {$z^m=F(x,y)$} and {$Ax^p+By^q=Cz^r$}.
\newblock {\em Bull. London Math. Soc.}, 27(6):513--543, 1995.

\bibitem[DH71]{DavenportHeilbronn}
H.~Davenport and H.~Heilbronn.
\newblock On the density of discriminants of cubic fields. {II}.
\newblock {\em Proc. Roy. Soc. London Ser. A}, 322(1551):405--420, 1971.

\bibitem[DRKK{\etalchar{+}}25]{githubRepo}
Juanita Duque-Rosero, Christopher Keyes, Andrew Kobin, Manami Roy,
  Soumya~Sankar Sankar, and Yidi Wang.
\newblock Stacky $(2,2,n)$ curves code.
\newblock GitHub repository, \url{https://github.com/juanitaduquer/stacky22n},
  2025.

\bibitem[FI10]{FriedlanderIwaniec}
John Friedlander and Henryk Iwaniec.
\newblock Ternary quadratic forms with rational zeros.
\newblock {\em Journal de Théorie des Nombres de Bordeaux}, 22(1):97--113,
  2010.

\bibitem[Kob21]{kob}
Andrew Kobin.
\newblock Artin–{S}chreier root stacks.
\newblock {\em Journal of Algebra}, 586:1014--1052, 2021.

\bibitem[KPSS25]{KPSS_locsolfermat}
Peter Koymans, Ross Paterson, Tim Santens, and Alec Shute.
\newblock Local solubility of generalised fermat equations, 2025.
\newblock Preprint,
  \href{https://arxiv.org/abs/2501.17619}{\texttt{arXiv:2501.17619}}.

\bibitem[LS16]{LoughranSmeets}
D.~Loughran and A.~Smeets.
\newblock Fibrations with few rational points.
\newblock {\em Geom. Funct. Anal.}, 26(5):1449--1482, 2016.

\bibitem[NH88]{NakagawaHorie}
Jin Nakagawa and Kuniaki Horie.
\newblock Elliptic curves with no rational points.
\newblock {\em Proc. Amer. Math. Soc.}, 104(1):20--24, 1988.

\bibitem[Ols16]{ols}
Martin Olsson.
\newblock {\em Algebraic spaces and stacks}, volume~62 of {\em American
  Mathematical Society Colloquium Publications}.
\newblock American Mathematical Society, Providence, RI, 2016.

\bibitem[Poo10]{Poonen}
Bjorn Poonen.
\newblock Curves over every global field violating the local-global principle.
\newblock {\em J.~Math.~Sci.}, 171(6), 2010.

\bibitem[PSS07]{PSS}
Bjorn Poonen, Edward~F. Schaefer, and Michael Stoll.
\newblock Twists of {X}(7) and primitive solutions to $x^2+y^3=z^7$.
\newblock {\em Duke Mathematical Journal}, 137(1), {M}ar 2007.

\bibitem[PTBZ25]{tauberian-ref}
Lillian~B. Pierce, Caroline~L. Turnage-Butterbaugh, and Asif Zaman.
\newblock A guide to {T}auberian theorems for arithmetic applications, 2025.
\newblock Preprint,
  \href{https://arxiv.org/abs/2504.16233}{\texttt{arXiv:2504.16233}}.

\bibitem[RG25]{wilcox-grechuk24}
Ashleigh Ratcliffe and Bogdan Grechuk.
\newblock Generalized {F}ermat equation: A survey of solved cases.
\newblock {\em Expositiones Mathematicae}, 43(4):125688, 2025.

\bibitem[San23]{santens23}
Tim Santens.
\newblock The {B}rauer-{M}anin obstruction for stacky curves, 2023.
\newblock Preprint,
  \href{https://arxiv.org/abs/2210.17184}{\texttt{arXiv:2210.17184}}.

\bibitem[{Sta}25]{stacks-project}
The {Stacks project authors}.
\newblock The stacks project.
\newblock \url{https://stacks.math.columbia.edu}, 2025.

\bibitem[VZB22]{vzb}
John Voight and David Zureick-Brown.
\newblock The canonical ring of a stacky curve.
\newblock {\em Memoirs of the American Math Society}, 277(1362), 2022.

\end{thebibliography}

\end{document}